\DeclareMathAlphabet{\pazocal}{OMS}{zplm}{m}{n}
\DeclareMathAlphabet{\mathpzc}{OT1}{pzc}{m}{it}
\newtheorem{theorem}{Theorem}
\newtheorem{definition}[theorem]{Definition}
\newtheorem{proposition}[theorem]{Proposition}
\newtheorem{lemma}[theorem]{Lemma}
\newtheorem{claim}[theorem]{Claim}
\newtheorem{corollary}[theorem]{Corollary}
\theoremstyle{remark}
\newtheorem{example}[theorem]{Example}
\newtheorem{remark}[theorem]{Remark}
\newtheorem{question}[theorem]{Question}
\newtheorem{notation}[theorem]{Notation}
\newcommand{\bs}{\boldsymbol}
\newcommand{\op}{\operatorname}
\newcommand{\s}{\vskip.15in}
\newcommand{\n}{\noindent}
\newcommand{\bdry}{\partial}
\newcommand{\R}{\mathbb{R}}
\newcommand{\C}{\mathbb{C}}
\newcommand{\Z}{\mathbb{Z}}
\newcommand{\Q}{\mathbb{Q}}
\renewcommand{\H}{\mathbb{H}}
\newcommand{\be}{\begin{enumerate}}
\newcommand{\ee}{\end{enumerate}}
\numberwithin{equation}{section}
\numberwithin{theorem}{section}
\numberwithin{figure}{section}
\title[Morse theory of loop spaces and Hecke algebras]{Morse theory of loop spaces and Hecke algebras}
\author{Ko Honda}
\address{University of California, Los Angeles, Los Angeles, CA 90095}
\email{honda@math.ucla.edu} \urladdr{http://www.math.ucla.edu/\char126 honda}
\author{Roman Krutowski}
\address{University of California, Los Angeles, Los Angeles, CA 90095}
\email{romankrut@ucla.edu} \urladdr{http://romakrut.com}
\author{Yin Tian}
\address{School of Mathematical Sciences, Beijing Normal University; 
Laboratory of Mathematics and Complex Systems, Ministry of Education, Beijing 100875, China}
\email{yintian@bnu.edu.cn} \urladdr{}
\author{Tianyu Yuan}
\address{School of Mathematical Sciences, Eastern Institute of Technology, Ningbo, Zhejiang, 315200, China}
\email{tyyuan@eitech.edu.cn} \urladdr{}
\date{\today}
\keywords{Higher-dimensional Heegaard Floer homology, braid group, Hecke algebra, loop space, string bracket, string topology}
\subjclass[2010]{Primary 53D40; Secondary 55P50, 57K31.}
\thanks{KH supported by NSF Grant DMS-2003483. YT supported by NSFC Grant No. 11971256 and 12471064.}
\begin{document}

\maketitle

\begin{abstract}
Given a smooth closed $n$-manifold $M$ and a $\kappa$-tuple of basepoints $\bm q\subset M$, we define a Morse-type $A_\infty$-algebra $CM_{-*}(\Omega(M,\bm q))$, called the {\em based multiloop $A_\infty$-algebra}, as a graded generalization of the braid skein algebra due to \cite{morton2021dahas}. For example, when $M=T^2$ the braid skein algebra is the Type A double affine Hecke algebra (DAHA).  The $A_\infty$-operations couple Morse gradient trees on a based loop space with Chas-Sullivan type string operations~\cite{chas-sullivan1999}. We show that, after a certain ``base change'', $CM_{-*}(\Omega(M,\bm q))$ is $A_\infty$-equivalent to the wrapped higher-di\-men\-sional Heegaard Floer $A_\infty$-algebra of $\kappa$ disjoint cotangent fibers which was studied in \cite{honda2022higher}.  We also compute the based multiloop $A_\infty$-al\-gebra for $M=S^2$, which we can regard as a derived Hecke algebra of the $2$-sphere.
\end{abstract}

\tableofcontents

\section{Introduction}\label{sec-intro}

We continue the longstanding study of the relationship between the topology of a smooth manifold and the symplectic geometry of its cotangent bundle. Let $M$ be a smooth closed $n$-manifold with $n\geq 2$ and let $\bm q=(q_1,\dots,q_\kappa)$, $q_i\in M$ for $i=1,\dots,\kappa$, be an ordered disjoint $\kappa$-tuple of basepoints. Motivated by the braid skein algebra of Morton-Samuelson \cite{morton2021dahas}, we consider chains of the based loop space of the unordered configuration space $\mathrm{UConf}_\kappa(M,\bm q)$, modulo certain skein relations. 

The case of $\kappa=1$ was studied by Abbondandolo-Schwarz \cite{abbondandolo2010floer} and Abou\-zaid \cite{abouzaid2012wrapped}, who showed that the wrapped Fukaya category of a cotangent fiber is $A_\infty$-equivalent to the differential graded algebra (dga) of chains of the based loop space. 

When $\kappa\geq1$, a natural generalization of the wrapped Fukaya category is the wrapped higher-dimen\-sional Heegaard Floer homology (HDHF) of Colin, Honda and Tian \cite{colin2020applications}. When $M$ is a closed oriented surface of genus greater than $0$, Honda, Tian and Yuan \cite{honda2022higher} showed that the wrapped HDHF $A_\infty$-algebra of $\kappa$ disjoint cotangent fibers of $T^*M$ is isomorphic to the Hecke algebra  --- also known as the braid skein algebra --- of $\kappa$ strands associated to $M$. (Analogous results also hold for punctured surfaces, but in this paper we assume $M$ is closed for the sake of simplicity.)
When $M=S^2$, however, the above isomorphism does not hold.  Indeed, the homology of the HDHF $A_\infty$-algebra is not supported in degree zero, whereas the braid skein algebra is supported in degree zero and is just an algebra.  

This paper is comprised of the following three parts, with an eye towards understanding the $M=S^2$ case:

\vskip.15in
\noindent
(1) In Section~\ref{section: Morse complex} we define a Morse $A_\infty$-algebra $CM_{-*}(\Omega(M, \bm q))$, which we call the {\em based multiloop $A_\infty$-algebra of the manifold $M$} and which can be viewed as a graded generalization of the braid skein algebra. The based multiloop $A_\infty$-algebra a priori depends on some auxiliary choices including a Riemannian metric on $M$, a Lagrangian action functional $\mathcal{A}_V$, and a pseudogradient vector field $X$ of $\mathcal{A}_V$. Note that instead of the space of chains, we work with a smaller set of generators, i.e.,  the Morse chain complex. The generators are based loops of $\mathrm{UConf}_\kappa(M,\bm q)$ which are critical points of the Lagrangian action functional $\mathcal{A}_V$ and the $A_\infty$-structure is defined by counting certain Morse gradient trees with respect to pseudogradient vector fields, coupled with a secondary type string operation~\cite{chas-sullivan1999}, as was introduced to Lagrangian Floer theory in unpublished work of Fukaya. 

Corollary~\ref{cor: invariance} below allows us to conclude that $CM_{-*}(\Omega(M, \bm q))$ is an invariant of the smooth manifold $M$ by appealing to Floer theory. This can be shown by solely working on the Morse-theoretic side by an argument similar to the one presented in \cite{mazuir2021I}, where the invariance of Morse homology (and the associated $A_\infty$-algebra) of a finite-dimensional manifold is established without invoking the isomorphism with singular homology. This corollary also suggests that the based multiloop homology is of its own interest from the perspective of string topology. As Corollary~\ref{cor: equivalent to braid skein algebra} and Theorem~\ref{thm: multiloop algebra for S2 is Hn} show, the based multiloop homology constitutes a non-trivial (at least in these cases) deformation of a variant of the homology of $\kappa$-th product of the based loop space with itself. We anticipate that an interpretation of this deformation in terms of singular chains, as in e.g., \cite{chataur2005, irie2018, laudenbach2011}, on the $\kappa$-th power of the based loop space $\Omega(M, \bm q)$ may provide tools for further computations.

We additionally point out that the construction of the differential in the Morse model is akin to the Morse-theoretic construction of the secondary loop coproduct of Sullivan \cite{sullivan2004open-closed} and Goresky-Hingston \cite{GH2009loop-product, HW2023product-coproduct}, as explained in \cite{CHO2023coproduct}. Specifically, the specialization $\hbar^2=0$, which requires counting Morse flow lines with up to one switching, should be closely related to the loop coproduct. Furthermore, we anticipate that string topology operations and the whole TQFT structure \cite{CG2004polarized, godin2007higher, tamanoi2010tqft} extend to the based (free) multiloop homology, though we do not pursue this direction here.

\vskip.15in
\noindent
(2) In Section~\ref{section: isomorphism with HDHF} we construct an $A_\infty$-morphism $\mathcal{F}$ from $CW^*(\sqcup_{i=1}^\kappa T_{q_i}^*M)$, the wrap\-ped HDHF chain group of $\kappa$ disjoint cotangent fibers, to the based multiloop $A_\infty$-algebra $CM_{-*}(\Omega(M, \bm q))$, by counting rigid elements in a mixed moduli space which combines pseudoholomorphic curves and Morse gradient trees.  

\begin{remark}[Remark on ground rings] \label{rmk: coefficient rings}
    The ground ring for the based multiloop $A_\infty$-algebra is $R=\Z$.  The ground ring for the wrapped HDHF group $CW^*(\sqcup_{i=1}^\kappa T_{q_i}^*M)$ and the map $\mathcal{F}$ is $R=\Z$ when $\kappa=1$ or $n=2$, and $R=\Q$ when $\kappa>1$ and $n\geq 3$.  The use of $R=\Q$ is due to the use of Kuranishi replacements (see Section~\ref{subsubsection: ghost bubbles}), which require branched manifolds.
\end{remark}

\begin{remark} [Deformation parameter $\hbar$ and base change] \label{rmk: deformation parameter}
    There is a deformation parameter $\hbar$ of degree $|\hbar|=2-n$ that we informally refer to as the ``Planck constant''.  When $n>2$, $CM_{-*}(\Omega(M, \bm q))$ and $CW^*(\sqcup_{i=1}^\kappa T_{q_i}^*M)$ have the form $CM_{0,-*}(\Omega(M, \bm q))\otimes_R R[\hbar]$ and $CW^*_0(\sqcup_{i=1}^\kappa T_{q_i}^*M)\otimes_R R[\hbar]$, and when $n=2$,  $CM_{-*}(\Omega(M, \bm q))$ and $CW^*(\sqcup_{i=1}^\kappa T_{q_i}^*M)$ have the form $CM_{0,-*}(\Omega(M, \bm q))\otimes_R R\llbracket\hbar\rrbracket$ and $CW^*_0(\sqcup_{i=1}^\kappa T_{q_i}^*M)\otimes_R R\llbracket\hbar\rrbracket$. The precise definition of $CM_{0,-*}(\Omega(M, \bm q))$ is given in Section~\ref{subsection: based multiloop complex} and Definition~\ref{defn: based multiloop} and that of $CW^*_0(\sqcup_{i=1}^\kappa T_{q_i}^*M)$ is given in Section~\ref{subsection: review of HDHF}. When $n>2$, we define \emph{$\hbar$-adic completions}
    \begin{gather} \label{eqn: tensoring1}
    CM_{-*}(\Omega(M, \bm q))\llbracket\hbar \rrbracket= CM_{0,-*}(\Omega(M, \bm q))\widehat{\otimes}_{R} R\llbracket\hbar \rrbracket \coloneqq  \prod_{j=0}^\infty CM_{0,-*+(n-2)j}(\Omega(M, \bm q)) \hbar^j,\\
    \label{eqn: tensoring2}
    CW^*(\sqcup_{i=1}^\kappa T_{q_i}^* M)\llbracket\hbar \rrbracket= CW^*_0(\sqcup_{i=1}^\kappa T_{q_i}^* M)\widehat{\otimes}_{R} R\llbracket\hbar \rrbracket\coloneqq  \prod_{j=0}^\infty CW^{*+(n-2)j}_0(\sqcup_{i=1}^\kappa T_{q_i}^* M) \hbar^j, 
    \end{gather}
    The definition above is motivated by the fact that we are considering deformations of initial chain complexes ($A_\infty$-algebras) by a (possibly) non-zero graded formal variable $\hbar$; see also \cite[Section 2.3]{ganatra2023cyclic}.
\end{remark}

We show that:

\begin{theorem}
\label{thm-main}
    $\mathcal{F}$ is an $A_\infty$-equivalence when $n=2$. When $n>2$ it induces an $A_\infty$-equivalence of $\hbar$-adic completions.
\end{theorem}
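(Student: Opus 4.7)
The strategy is to reduce the asserted $A_\infty$-equivalence to the statement that the linear term $\mathcal{F}^1$ is a quasi-isomorphism of underlying cochain complexes. It is a standard fact that an $A_\infty$-morphism whose linear part is a quasi-isomorphism is an $A_\infty$-equivalence (on a nose when $n=2$, and after $\hbar$-adic completion when $n>2$). So the essential content is chain-level quasi-isomorphism. To establish this, I would introduce the $\hbar$-adic filtration $F^p=\hbar^p\cdot(-)$ on both complexes of Remark~\ref{rmk: deformation parameter}. The map $\mathcal{F}^1$ should preserve this filtration because each ``defect'' (ghost bubble or interior nodal intersection on the HDHF side, switching on the Morse side) appearing in the mixed moduli space definition of $\mathcal{F}$ carries one power of $\hbar$. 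On the associated graded $\mathrm{gr}_\hbar^\bullet$, the induced map $\mathrm{gr}\,\mathcal{F}^1$ compares the undeformed complexes $CW^*_0(\sqcup_{i=1}^\kappa T^*_{q_i}M)$ and $CM_{0,-*}(\Omega(M,\bm q))$.

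The undeformed comparison at $\hbar=0$ is a configuration-space generalization of the Abbondandolo-Schwarz/Abouzaid theorem. At $\hbar=0$ the $\kappa$ strands do not interact, so the $\hbar=0$ based multiloop complex computes Morse chains on the based loop space $\Omega(\mathrm{UConf}_\kappa(M,\bm q))$, while the $\hbar=0$ wrapped HDHF complex counts pseudoholomorphic strips with boundary on disjoint cotangent fibers and no interior crossings. Adapting Abbondandolo-Schwarz's hybrid moduli space argument --- in which one interpolates between a Floer half-strip in $T^*M$ and a Morse gradient half-trajectory on $M$ via a degeneration parameter --- to this setting strand-by-strand gives that $\mathrm{gr}\,\mathcal{F}^1$ is a quasi-isomorphism. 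Positivity of intersection (together with the open condition defining $\mathrm{UConf}_\kappa$) ensures that disjoint strands stay disjoint throughout the hybrid homotopy, so the $\kappa=1$ theorem of Abbondandolo-Schwarz/Abouzaid is the core input and no genuinely new analytic ingredient is needed at $\hbar=0$.

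To promote the associated-graded quasi-isomorphism to the full filtered complexes, one applies the standard spectral sequence comparison theorem. When $n>2$, $|\hbar|=2-n<0$ forces the filtration to be bounded below in each total degree, so convergence holds after $\hbar$-adic completion, yielding the second conclusion of the theorem; when $n=2$ the complexes are $\hbar$-adically complete by construction and convergence is automatic, yielding the first conclusion. The \emph{main obstacle} is the compactness-and-gluing analysis for the mixed moduli space: one must rigorously verify that its stratification by ghost bubbles / nodes realizes the $\hbar$-filtration on the Morse side, and that the $\hbar^p$-component of $\mathcal{F}^1$ reduces, on the associated graded, to the $\hbar=0$ configuration-space Abbondandolo-Schwarz map (independent of $p$). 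A secondary technical issue is the use of Kuranishi replacements for $\kappa>1$ and $n\geq 3$: the abstract branched-manifold perturbations must be chosen compatibly with the $\hbar$-filtration, which is the structural reason the ground ring is enlarged to $\Q$ in that regime, as flagged in Remark~\ref{rmk: coefficient rings}.
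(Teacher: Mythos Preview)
Your proposal is correct and follows essentially the same route as the paper: reduce to showing $\mathcal{F}^1$ is a quasi-isomorphism, check this on the associated graded of the $\hbar$-filtration by invoking the (multi-strand version of the) Abbondandolo--Schwarz/Abouzaid theorem, and then pass to the full complex by a deformation/spectral-sequence argument. The paper packages the last two steps as Proposition~\ref{prop-F1} (with the $\hbar=0$ case outsourced to \cite{abouzaid2012wrapped}, \cite[Lemma~6.5]{honda2022higher}, \cite[Lemma~4.18]{KY}, and the deformation step to \cite[Lemma~4.19]{KY}), and combines this with Proposition~\ref{prop-a-infty-map} (that $\mathcal{F}$ is an $A_\infty$-morphism) to conclude; your ``main obstacle'' paragraph correctly anticipates the boundary-strata analysis of the mixed moduli spaces underlying both propositions.
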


The theorem has the following corollaries:

\begin{corollary} \label{cor: invariance}
The $A_\infty$-equivalence class of $CM_{-*}(\Omega(M, \bm q))$ when $n=2$ (resp.\ $CM_{-*}(\Omega(M, \bm q))\widehat{\otimes}_{R[\hbar]} \Q\llbracket \hbar\rrbracket$ when $n>2$) is an invariant of the smooth manifold $M$ and the positive integer $\kappa$.
In particular, $CM_{-*}(\Omega(M, \bm q))$ when $n=2$ (resp.\ $CM_{-*}(\Omega(M, \bm q)) \widehat{ \otimes}_{R[\hbar]} \Q\llbracket \hbar\rrbracket$ when $n>2$) does not depend on the auxiliary choices used in the definition.
\end{corollary}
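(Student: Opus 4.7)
The plan is to bootstrap from Theorem~\ref{thm-main}. Once $\mathcal{F}$ is known to realize an $A_\infty$-equivalence (respectively, an equivalence of $\hbar$-adic completions after base change to $\Q\llbracket \hbar \rrbracket$ when $n>2$), the invariance of the based multiloop algebra reduces to the invariance of the wrapped HDHF $A_\infty$-algebra, which is a well-established Floer-theoretic input.

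First I would argue that $CW^*(\sqcup_{i=1}^\kappa T_{q_i}^*M)$ depends, up to $A_\infty$-equivalence, only on the smooth manifold $M$ and on $\kappa$. The standard continuation-map machinery for wrapped Fukaya-type categories (in the HDHF setting as in \cite{honda2022higher}) gives independence from the choice of admissible wrapping Hamiltonian, almost complex structure, and Floer perturbation data. To remove the dependence on the specific basepoints $\bm q = (q_1,\dots,q_\kappa)$, I would note that any two ordered configurations of $\kappa$ distinct points in $M$ are joined by a smooth path in the ordered configuration space, and that such a path lifts to a compactly supported Hamiltonian isotopy of $T^*M$ carrying one ordered collection of cotangent fibers to the other; invariance of wrapped HDHF under Hamiltonian isotopy of exact cylindrical Lagrangians then yields the desired $A_\infty$-equivalence.

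Next I would carry out a formal diagram chase. Given two sets of auxiliary Morse-side data producing algebras $B_1$ and $B_2$, I fix any choice of HDHF data yielding $A$ and let $\mathcal{F}_i : A \to B_i$ denote the corresponding $A_\infty$-map from Section~\ref{section: isomorphism with HDHF} for $i=1,2$. Theorem~\ref{thm-main} says each $\mathcal{F}_i$ is an $A_\infty$-equivalence (after $\hbar$-adic completion and base change when $n>2$), so $\mathcal{F}_1$ admits an $A_\infty$-homotopy inverse $\mathcal{G}_1$, and the composition $\mathcal{F}_2 \circ \mathcal{G}_1 : B_1 \to B_2$ gives the required $A_\infty$-equivalence. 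Combining with the previous paragraph handles dependence on the basepoints as well.

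The only residual subtlety is verifying, when $n > 2$, that the base change to $\Q\llbracket\hbar\rrbracket$ is compatible with this diagram chase: the map $\mathcal{F}$ is tautologically $R[\hbar]$-linear since $\hbar$ is a formal variable recording a grading shift (Remark~\ref{rmk: deformation parameter}), so tensoring with $\Q\llbracket\hbar\rrbracket$ over $R[\hbar]$ preserves $A_\infty$-equivalences and the composition $\mathcal{F}_2\circ \mathcal{G}_1$ descends to the completed, base-changed algebras. The genuinely hard part is absorbed into Theorem~\ref{thm-main} itself; the deduction of the corollary is essentially formal, which is also why the introduction mentions the possibility of an alternative, purely Morse-theoretic invariance proof in the spirit of \cite{mazuir2021I}.
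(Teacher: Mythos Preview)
Your proposal is correct and follows essentially the same approach as the paper: the paper's proof consists of the single sentence ``This follows from the fact that the wrapped HDHF chain group is an invariant of the smooth manifold $M$ and the positive integer $\kappa$,'' and you have simply unpacked this by making the diagram chase explicit and sketching the Floer-theoretic reasons behind the HDHF invariance.
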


\begin{proof}
This follows from the fact that the wrapped HDHF chain group is an invariant of the smooth manifold $M$ and the positive integer $\kappa$.
\end{proof}

\begin{corollary} \label{cor: equivalent to braid skein algebra}
When $M$ is a closed surface $\not=S^2$, $CM_{-*}(\Omega(M, \bm q))$ is quasi-equi\-valent to the braid skein algebra $\mathrm{BSk}_\kappa(M,\bm q)\otimes_{R[\hbar]} R\llbracket \hbar\rrbracket$ of Morton-Samuelson~\cite{morton2021dahas}.
\end{corollary}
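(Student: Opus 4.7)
The plan is a short two-step chain: specialize Theorem~\ref{thm-main} to $n=2$, then invoke the computation of wrapped HDHF for surfaces from \cite{honda2022higher}.

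First, since the hypothesis forces $n=2$, Theorem~\ref{thm-main} provides an honest $A_\infty$-equivalence
\[
\mathcal{F} : CW^*\bigl(\sqcup_{i=1}^\kappa T_{q_i}^*M\bigr) \xrightarrow{\sim} CM_{-*}(\Omega(M,\bm q)),
\]
and by Remark~\ref{rmk: deformation parameter} both sides already incorporate the $R\llbracket\hbar\rrbracket$-structure (with $|\hbar|=2-n=0$) in their very definition. Thus the corollary reduces to producing a quasi-equivalence between $CW^*(\sqcup_{i=1}^\kappa T_{q_i}^*M)$ and $\mathrm{BSk}_\kappa(M,\bm q)\otimes_{R[\hbar]}R\llbracket\hbar\rrbracket$.

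Second, I would appeal to the main result of \cite{honda2022higher}, which asserts that for a closed oriented surface $M\neq S^2$ the wrapped HDHF cohomology is concentrated in degree $0$ and is isomorphic as an algebra to $\mathrm{BSk}_\kappa(M,\bm q)\otimes_{R[\hbar]}R\llbracket\hbar\rrbracket$. Because the cohomology is supported in a single degree, the $A_\infty$-algebra $CW^*(\sqcup_{i=1}^\kappa T_{q_i}^*M)$ is automatically formal: for inputs $a_1,\dots,a_k$ in degree-$0$ cohomology and any $k\geq 3$, the operation $\mu^k(a_1,\dots,a_k)$ lies in degree $2-k<0$ and must therefore vanish. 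Hence the minimal model coincides with $\mathrm{BSk}_\kappa(M,\bm q)\otimes_{R[\hbar]}R\llbracket\hbar\rrbracket$ viewed as a dga concentrated in degree $0$, and composing this quasi-isomorphism with $\mathcal{F}$ delivers the desired quasi-equivalence for $CM_{-*}(\Omega(M,\bm q))$.

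The only real subtlety — more a bookkeeping check than an obstacle — is verifying that the $\hbar$-adic topologies and ground rings on the three sides (HDHF, multiloop, braid skein) agree after the stated base change. This is immediate from Remark~\ref{rmk: deformation parameter} once one recalls that $|\hbar|=0$ when $n=2$, so no completion issues arise beyond those already built into the definitions. No new moduli-space analysis is required: all the transcendental input is packaged into Theorem~\ref{thm-main} and \cite{honda2022higher}, and the corollary is a formal consequence.
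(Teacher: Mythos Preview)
Your proposal is correct and follows exactly the same route as the paper's own proof, which simply reads ``Combine Theorem~\ref{thm-main} and the main result of \cite{honda2022higher}.'' Your additional formality argument (cohomology concentrated in degree $0$ forces $\mu^k=0$ for $k\geq 3$ on the minimal model) is a reasonable unpacking of what that combination entails, though strictly speaking \cite{honda2022higher} already establishes the $A_\infty$-level isomorphism directly.
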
 

\begin{proof} 
Combine Theorem~\ref{thm-main} and the main result of \cite{honda2022higher}.
\end{proof}   

\begin{remark}
A curious observation is that, while the HOMFLY-PT relations were algebraically imposed as local relations for the braid skein algebra, in the case of the multiloop $A_\infty$-algebra $CM_{-*}(\Omega(M, \bm q))$, they are {\em not algebraically imposed} and {\em appear naturally} in the computation of $\mu^2$. 
\end{remark}

\noindent
(3) In Section~\ref{section: algebraic model} we prove the following theorem:

\begin{theorem} \label{thm: multiloop algebra for S2 is Hn} 
The based multiloop $A_\infty$-algebra $CM_{-*}(\Omega(S^2, \bm q))$ is quasi-equivalent to the dga $H_\kappa$, described below, for a particular choice of ${\bm q}$ and auxiliary data.
\end{theorem}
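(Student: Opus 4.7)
The plan is to exhibit a particular choice of basepoints $\bm q \subset S^2$ and auxiliary data (Riemannian metric, potential $V$, pseudogradient $X$) for which the Morse complex $CM_{-*}(\Omega(S^2, \bm q))$ admits an explicit basis in bijection with a basis of $H_\kappa$ and for which the $A_\infty$-operations can be computed directly. By Corollary~\ref{cor: invariance} the $A_\infty$-equivalence class is independent of these choices, so a single convenient model suffices. My choice would be the round metric on $S^2$ with $\kappa$ basepoints clustered near a fixed point $p_0 \in S^2$ and generically perturbed so that each pair of $q_i$'s is joined by a unique minimizing geodesic arc, together with $V$ chosen so that $\mathcal{A}_V$ is a small perturbation of the energy functional on the based loop space of $\mathrm{UConf}_\kappa(S^2, \bm q)$. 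The pseudogradient $X$ is then taken to be generic so as to be Morse and so that strata of iterated geodesics can be resolved by standard symmetry-breaking perturbations.

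With this data, the critical locus of $\mathcal{A}_V$ consists of braid-like concatenations of short geodesic arcs between the $q_i$, together with insertions of full great circles that record winding classes in $\pi_2(S^2) = \Z$. I would enumerate these combinatorially, compute their Morse indices via Bott's iteration formula, and produce a grading-preserving bijection with the generators of $H_\kappa$. The differential $\mu^1$ then decomposes into a classical Morse piece (counting ordinary gradient flow lines, which degenerate long arcs into shorter ones and possibly release great-circle bubbles) and a secondary piece counting flow lines with a single switching at a strand crossing, weighted by $\hbar$ of degree $|\hbar| = 2-n = 0$. The task is to verify that the sum of these two contributions agrees with the dga differential of $H_\kappa$, possibly after a chain-level change of basis.

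For $\mu^2$ I would count Y-shaped Morse gradient trees with at most one switching: the zero-switching contribution is the Pontryagin-type concatenation of based loops, while the one-switching contribution produces the braid-skein deformation of this product. The higher operations $\mu^k$ for $k \geq 3$ should vanish on this model, either by a direct dimension count using the index formula above or by a formality argument exploiting the residual symmetry of the round setup, yielding a quasi-equivalence to a genuine dga. The principal obstacle I anticipate is the careful book-keeping of $\pi_2(S^2)$: great-circle bubbles that wrap $S^2$ contribute integer shifts to the winding grading and appear in both the classical and secondary flow line counts, and matching these with the specific relations of $H_\kappa$ is where the genuinely $S^2$-specific input enters — on higher-genus surfaces one has $\pi_2 = 0$ and this issue does not arise, which is precisely why the $S^2$ case requires a separate computation. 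A secondary difficulty is transversality for gradient trees in the presence of the large isometry group of $(S^2, g_{\mathrm{round}})$, which I would handle by allowing $V$ to break this symmetry in a controlled way and checking independence of the count from the perturbation.
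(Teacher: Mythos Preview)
Your overall setup is reasonable --- round metric, clustered basepoints, generators given by tuples of geodesic arcs, direct verification of relations via explicit MFLS and MFTS counts --- and indeed this is essentially how the paper handles $\mu^1$ and $\mu^2$ in Section~\ref{subsection: verification}. The serious gap is your treatment of the higher operations $\mu^d$ for $d\geq 3$.

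You propose that $\mu^d$ vanishes for $d\geq 3$ ``either by a direct dimension count using the index formula above or by a formality argument exploiting the residual symmetry of the round setup.'' Neither of these works. Since $n=2$, we have $|\hbar|=0$ and the index formula for an MFTS gives $\op{ind}=|\bm\gamma_0|-\sum_i|\bm\gamma_i|+(d-2)$, with no $\ell$-dependence; there is nothing in the dimension count alone that forces rigid trees with $d\geq 3$ inputs to be empty. Nor does the isometry group of the round sphere give formality: the Morse--Bott families of closed geodesics are precisely the source of trouble, and once you perturb to get a Morse functional the symmetry is gone. What actually controls $\mu^d$ is an \emph{energy} argument --- action is decreasing along pseudogradient flow --- but for the round metric the relationship between Morse index and action of geodesics from $q_i$ to $q_j'$ is not tight enough to yield a contradiction.

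The paper's resolution is to abandon the round metric for this step and pass to the Katok family $F_\lambda$ of Finsler metrics, for which one has approximate monotonicity (Lemma~\ref{lemma: approximate monotonicity}): index is nearly proportional to length for all geodesics up to a threshold $N$ that can be pushed to infinity as $\lambda\to 1$. This yields vanishing of $\mu^d$, $d\geq 3$, only on a \emph{truncation} $\tau_{\geq -M}$ for each fixed $\lambda$. To assemble these truncated dga's into a single dga quasi-equivalent to $H_\kappa$, the paper then runs a direct limit argument (Section~\ref{subsection: direct limit argument}) over a sequence $\lambda_i\to 1$, checking that the transition $A_\infty$-morphisms stabilize degree by degree. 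Both of these ingredients --- the Finsler deformation and the direct limit --- are absent from your proposal, and without them there is no mechanism to reduce the $A_\infty$-algebra to a dga.
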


\begin{definition} \label{defn: Hn}
    Let $H_\kappa$ be the unital dga over $\mathbb{Z}[\hbar]$, with generators $T_1, \dots, T_{\kappa-1}$ and $x_1$, and relations
    \begin{align}
      \label{first}  T_i^2=1+\hbar T_i, & \quad 1\le i \le \kappa-1, \\
      \label{second}  T_iT_{i+1}T_i=T_{i+1}T_iT_{i+1}, & \quad 1 \le i \le \kappa-2, \\
      \label{third}  T_iT_j=T_jT_i, & \quad 1 \le i,j \le \kappa-1, |i-j|>1, \\
        x_1T_j=T_jx_1, & \quad 2\le j \le \kappa-1, \label{eq Hn x1Tj} \\
       T_1^{-1}x_1T_1^{-1}x_1+x_1T_1^{-1}x_1T_1=0. & \label{eq Hn x1x2}
    \end{align}
    The grading on the generators is given by $|T_i|=0$ and $|x_1|=-1$.
    The differential on the generators is given by $d T_i=0$ and 
    \begin{align} \label{sixth}
        dx_1=T_1T_2\cdots T_{\kappa-2}T_{\kappa-1}^2T_{\kappa-2}\cdots T_2T_1-1,
    \end{align}
    and extends to $H_\kappa$ by the Leibniz rule.
\end{definition}
\begin{remark}
    The free loop space version of $CM_{-*}(\Omega(M,{\bm q}))$ is the {\em free multiloop $A_\infty$-algebra} which is discussed in \cite{KY}.
\end{remark}

Comparing Theorem~\ref{thm: multiloop algebra for S2 is Hn} and Corollary~\ref{cor: equivalent to braid skein algebra} suggests that the DGA $H_\kappa$ should admit a natural interpretation in terms of quantum topology. As it is of derived nature, it is necessary to develop a notion of a derived HOMFLYPT skein algebra of a surface. The natural guess here is that the approach of factorization homology developed in \cite{BBJ2018, GJS2023} should provide the necessary tool for such computations. Therefore we raise the following:

\begin{question}
What is the precise relation between the DGA $H_\kappa$ and the appropriate notion of a derived HOMFLYPT skein algebra of $S^2$?
\end{question}

\s\n
{\em Acknowledgments.}  KH thanks Peter Samuelson for helpful conversations. He is grateful to Hirofumi Sasahira and Kyushu University for their hospitality during his sabbatical in AY 2022--2023. RK thanks Kai Cieliebak, Sam Gunningham, Marco Mazzucchelli, Alexander Petrov and David Popović for stimulating conversations.  Finally, we are very grateful to Alberto Abbondandolo, who taught us the main ideas of the proof of Theorem~\ref{thm: regularity for pseudogradient} in Appendix~\ref{appendix: regularity for unstable submanifolds}.

\section{The based multiloop $A_\infty$-algebra} \label{section: Morse complex}

As in the introduction, $M$ is a smooth closed oriented $n$-manifold with $n\geq 2$ and ${\bm q}$ is an ordered disjoint $\kappa$-tuple of points on $M$.  
Let $g$ be a generic Riemannian metric on $M$ and $|\cdot|$ be the induced norm on $T^*M$.

\subsection{The Lagrangian action functional and pseudogradient vector fields} \label{subsection: Lagrangian action functional}

We follow the setup from \cite{abbondandolo2010floer}; also see \cite{AS2009}.

Consider the space of time-$1$ continuous paths:
\begin{equation}
    \Omega(M,q,q')=\{\gamma\in C^0([0,1],M)\,|\,\gamma(0)=q,\gamma(1)=q'\}.
\end{equation}
We denote $\Omega(M,q)=\Omega(M,q,q)$ and denote the subset of $\Omega(M,q,q')$ in the class $W^{1,2}$ by $\Omega^{1,2}(M,q,q')$. 

Throughout the text we consider Lagrangians $L \colon [0,1] \times TM \to \R$ satisfying the conditions given in \cite[Conditions (L1) and (L2)]{AS2009}. For our purposes it suffices to restrict our attention to the Lagrangians associated with a potential, i.e., let $V\colon[0,1]\times M\to\mathbb{R}$ be a function with small $W^{1,2}$-norm and let $L\colon [0,1]\times TM\to\mathbb{R}$ be a smooth Lagrangian function satisfying
\begin{enumerate}[(L1)]
    \item \label{L1-condition} There is a continuous function $\ell_1$ on $M$ such that for every $(t, x, v) \in [0,1] \times T M$ with $x\in M$ and $v\in T_xM$,
$$
\begin{aligned}
\|\nabla_{v v} L(t, x, v)\| & \leq \ell_1(x), \\
\|\nabla_{v x} L(t, x, v)\| & \leq \ell_1(x)(1+|v|), \\
\|\nabla_{x x} L(t, x, v)\| & \leq \ell_1(x)(1+|v|^2),
\end{aligned}
$$
where $|v|=g(v,v)^{1/2}$.
\item \label{L2-condition} There is a continuous positive function $\ell_2$ on $M$ such that $\nabla_{v v} L(t, x, v) \geq$ $\ell_2(x) I$, for every $(t, x, v) \in[0,1] \times T M$.
\end{enumerate}
To any smooth (potential) function $V \in C^\infty([0,1] \times M)$ we may associate Lagrangian
\begin{equation}
\label{eq-lagrangian}
    L_V(t,x,v)=\tfrac{1}{2}|v|^2-V(t,x),
\end{equation}
satisfying (L1) and (L2). The metric $g$ on $M$ induces a metric $\tilde{g}$ on $\Omega^{1,2}(M,q)$.

Given a Lagrangian $L \colon [0,1] \times TM \to \R$, we define the Lagrangian action functional
\begin{gather*}
    \mathcal{A}_L\colon\Omega^{1,2}(M,q,q')\to\mathbb{R},\\
    \mathcal{A}_L(\gamma)=\int_0^1 L(t,\gamma(t),\dot{\gamma}(t))\,dt.
\end{gather*}
We write $\mathcal{A}=\mathcal{A}_L$ when the Lagrangian $L$ is understood from the context.
When $L=L_V$ for some potential $V$ as above, the action functional $\mathcal{A}$ is twice Fr\'echet differentiable since for each $(t,x)$ the map $v\mapsto L(t,x,v)$ is a polynomial of degree at most $2$; see \cite[p.1583]{abbondandolo2010floer} and \cite{AS2009}.
\begin{remark} \label{rmk: V=0 case}
For a sufficiently generic Riemannian metric $g$ on $M$, 
\be
\item[($\dagger$)] all the critical points $\gamma$ of $\mathcal{A}_{L_V}$ with $V=0$ and $q=q'$ (i.e., $g$-geodesics of $\gamma:[0,1]\to M$ such that $g(0)=g(1)=q$) are nondegenerate and do not pass through $q$ except at their endpoints.
\ee
The nondegeneracy/Morse condition is required in the definition of the differential $d$ and the condition of not passing through $q$ except at their endpoints ensures that $\Omega^{1,2}(M,q)$ (together with the choice of $X\in \mathcal{G}$) behaves well when taking products and higher $A_\infty$-opera\-tions.  ($\dagger$) can be realized by ordering the geodesics $\gamma_1,\gamma_2,\dots$ by nondecreasing action and inductively perturbing $g$ on successively smaller neighborhoods $N(x_i)$ of $x_i\in \op{Im}\gamma_i$.  
\end{remark}

A vector field $X$ on a Hilbert manifold $\mathcal{M}$ is a  {\em (negative) pseudogradient} vector field of $f$ if it satisfies the following:
\be
\item[(PG1)] $f$ is a Lyapunov function for $X$ (this means a pseudogradient roughly points in the {\em negative} gradient direction of $f$);
\item[(PG2)] $X$ is a Morse vector field whose singular points (= critical points of $f$) have finite Morse index;
\item[(PG3)] the pair $(f,X)$ satisfies the \emph{Palais-Smale condition};  
\item[(PG4)] $X$ is forward-complete, i.e., the flow $\Phi(t,x)$ is defined for all $t>0$ and $x\in \mathcal{M}$.
\ee

In addition to the above properties we usually require:
\be
\item[(PG5)] $X$ satisfies Morse-Smale property.
\ee
The following result summarizes properties of $\mathcal{A}$ and, in particular, shows that there are plenty of (Morse-Smale) pseudogradient vector fields  (see \cite[Appendix A.1]{abbondandolo2010floer} and \cite{abbondandolo-majer2006}):

\begin{theorem} \label{thm: summary of morse in infinite dimensions} $\mbox{}$
For smooth Lagrangian $L \colon [0,1] \times TM \to \R$ satisfying (L1) and (L2) with all critical points of $\mathcal{A}_L$ nondegenerate the following hold:
\be
\item[(i)] The action functional $\mathcal{A}_L$ is bounded below. 

\item[(ii)] If all the critical points of $\mathcal{A}_L$ are nondegenerate, then there exists a \emph{smooth} vector field $X$ satisfying (PG1)--(PG4). Moreover, for $L=L_V$ the gradient $-\nabla_{\tilde g} \mathcal{A}$ is a smooth pseudogradient.

\item[(iii)] Any sufficiently small perturbation of $X$ which coincides with $X$ on a neighborhood of the critical points of $\mathcal{A}$
is still a pseudogradient (i.e., satisfies (PG1)--(PG4)).   
\ee
Let $\mathcal{K}$ be the set of small perturbations of $X$ as above which satisfy (iii) and hence are pseudogradients.
\be
\item[(iv)] A generic $\tilde{X}\in \mathcal{K}$ satisfies the {\em Morse-Smale condition} (PG5).
\ee
Let  $\, \mathcal{G}\subset \mathcal{K}$ be the dense subset of Morse-Smale vector fields. 
\end{theorem}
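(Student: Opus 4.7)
The plan is to recognize this as a compendium theorem and to verify each item by combining direct analysis of $\mathcal{A}_L$ with the infinite-dimensional Morse theory on Hilbert manifolds developed in \cite{abbondandolo-majer2006} and its implementation for the Lagrangian action functional carried out in \cite{AS2009, abbondandolo2010floer}. For (i), if $L=L_V$ then $\mathcal{A}_L(\gamma)\geq -\|V\|_\infty$ at once; under the general hypotheses, the fiberwise convexity from (L2) combined with the controlled growth of the derivatives from (L1) yields a global lower bound, as established in \cite[Section 3]{AS2009}.

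For (ii), I would take $X=-\nabla_{\tilde g}\mathcal{A}$ when $L=L_V$. Twice Fr\'echet differentiability of $\mathcal{A}$ on $\Omega^{1,2}(M,q)$ (noted on p.\ 1583 of \cite{abbondandolo2010floer}) makes $X$ a smooth vector field, so (PG1) is automatic, and (PG2) follows from the nondegeneracy hypothesis together with the Morse index theorem (each critical point has finite Morse index equal to that of the underlying geodesic). The main analytic content is the Palais-Smale condition (PG3), which under (L1)--(L2) is proved by an elliptic bootstrap: any sequence with bounded action and vanishing differential has a $W^{1,2}$-convergent subsequence; this is \cite[Proposition 3.1]{AS2009}. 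Forward-completeness (PG4) is then automatic since $\mathcal{A}$ is a Lyapunov function bounded below, so flow lines lie in sublevel sets, and Palais-Smale precludes escape to infinity in finite time.

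For (iii), any $\tilde X\in\mathcal{K}$ has the same critical set and the same linearization there as $X$, so (PG2) persists; (PG1) is an open condition away from the critical set where $X$ and $\tilde X$ agree in a neighborhood, and (PG4) follows from the same Lyapunov argument. (PG3) for $\tilde X$ reduces to (PG3) for $X$ because any Palais-Smale sequence for $\tilde X$ must accumulate near the critical set --- $X$ is quantitatively bounded below in norm on the complement of any neighborhood of the critical set, and the perturbation is small --- and this reduction is worked out in \cite[Appendix A]{abbondandolo2010floer}. Finally, (iv) is a Sard-Smale argument: one parametrizes perturbations supported away from the critical set by a separable Banach space, forms the universal moduli of unstable-stable intersections between fixed pairs of critical points, verifies universal transversality of the cut-out equation, and extracts a residual set of Morse-Smale parameters. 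The subtlety in infinite dimensions is that the unstable manifolds of critical points with infinite Morse coindex are themselves infinite-dimensional; the regularity needed to run Sard-Smale in this setting is precisely the content of Theorem~\ref{thm: regularity for pseudogradient} in Appendix~\ref{appendix: regularity for unstable submanifolds}.

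The main obstacle is (PG3) together with the transversality step for (iv). Both rely on the delicate $W^{1,2}$-estimates extracted from (L1) and (L2), and the Morse-Smale genericity additionally needs the regularity of infinite-dimensional unstable submanifolds --- the reason the authors single out Abbondandolo's argument and devote a separate appendix to it. The remainder of the proof is essentially bookkeeping on top of the cited references.
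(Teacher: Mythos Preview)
The paper does not give a proof of this theorem; it is stated as a summary result with the parenthetical reference ``(see \cite[Appendix A.1]{abbondandolo2010floer} and \cite{abbondandolo-majer2006})'' and nothing more. Your proposal is therefore in the same spirit as the paper's treatment: you correctly identify the theorem as a compendium of results from those two references and from \cite{AS2009}, and your breakdown of (i)--(iii) is accurate.

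However, your discussion of (iv) contains a real confusion. You write that ``the unstable manifolds of critical points with infinite Morse coindex are themselves infinite-dimensional'' and that the regularity needed to run Sard--Smale is ``precisely the content of Theorem~\ref{thm: regularity for pseudogradient}.'' Both claims are wrong. By (PG2) the Morse indices are finite, so the \emph{unstable} manifolds are finite-dimensional; it is the stable manifolds that are infinite-dimensional, and Sard--Smale handles this in the usual way once the parametrized section is Fredholm (this is exactly what \cite{abbondandolo-majer2006} does). Theorem~\ref{thm: regularity for pseudogradient} is about something entirely different: it shows that the \emph{paths} $\gamma\colon[0,1]\to M$ lying on a given unstable manifold are $C^\infty$ as maps into $M$, not merely $W^{1,2}$. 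This regularity is not needed for Morse--Smale genericity at all; it is needed later in the paper so that the switching maps of Section~\ref{section: switching map} (which require $W^{m,2}$-regularity for large $m$) can be applied to points on unstable manifolds. You should not invoke the appendix for (iv); the correct reference is simply \cite{abbondandolo-majer2006}.

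A minor point on (ii): twice Fr\'echet differentiability of $\mathcal{A}$ gives $C^1$-regularity of the gradient, not smoothness. For general $L$ the smooth pseudogradient in (ii) is \emph{constructed} (not taken to be the gradient) in \cite[Theorem 4.1]{AS2009}; only for $L=L_V$ does the gradient itself turn out to be smooth.
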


The generalization to $\kappa$-strand multipaths on $M$ is straightforward:
Let $\bm q=(q_1,\dots,q_\kappa)$ and $\bm q'=(q'_1,\dots,q'_\kappa)$ be (ordered, disjoint) $\kappa$-tuples of points on $M$ and ${\sigma}\in S_\kappa$ a permutation. 
We define
\begin{gather*}
    \Omega_{\sigma}(M,\bm q,\bm q')=\prod_{i=1}^\kappa\Omega(M,q_i,q'_{\sigma(i)}),\quad
    \Omega^{1,2}_\sigma(M,\bm q,\bm q')=\prod_{i=1}^\kappa\Omega^{1,2}(M,q_i,q'_{\sigma(i)}),\\
    \Omega^{1,2}(M,\bm q,\bm q')=\displaystyle \bigsqcup_{\sigma\in S_\kappa}\Omega^{1,2}_{\sigma}(M,\bm q,\bm q'),\\
	\Omega^{1,2}_\sigma(M,\bm q)=\Omega^{1,2}_\sigma(M,\bm q,\bm q),\quad  \Omega^{1,2}(M,\bm q)=\Omega^{1,2}(M,\bm q,\bm q).
\end{gather*}
Given the $\kappa$-strand multipath $\boldsymbol{\gamma}=(\gamma_1,\dots,\gamma_\kappa)\in\Omega^{1,2}(M,\bm q,\bm q')$, the Lagran\-gian action functional $\mathcal{A}_L\colon\Omega^{1,2}(M,\bm q,\bm q')\to\mathbb{R}$,
also denoted $\mathcal{A}_V$ or $\mathcal{A}_{L_V}$, is given by:
\begin{equation}
    \mathcal{A}_L(\boldsymbol{\gamma})=\sum_i \mathcal{A}_L(\gamma_i).
\end{equation}
If $\boldsymbol{\gamma}\in\Omega^{1,2}_\sigma(M,\bm q,\bm q')$, then $\sigma$ is called the {\em permutation type of $\boldsymbol{\gamma}$.}

\subsection{The switching map}
\label{section: switching map}

In this section, we introduce \emph{the switching map} which will play a crucial role in the definition of the $A_\infty$-operations of the based multiloop $A_\infty$-algebra. Let $\Omega^{m,2}(M, \bm q, \bm q')$ for some integer $m \ge 1$ be the subset of $\Omega(M, \bm q, \bm q')$ of elements of class $W^{m,2}$ and let $\Delta_M$ be the diagonal in $M\times M$. For $1\leq i<j\leq \kappa$, define the {\em evaluation map} 
\begin{gather*}
ev^{ij}_I: [0,1]\times \Omega(M, \bm q, \bm q') \to M\times M,\\
ev^{ij}_I(\theta, \bm \gamma)=(\gamma_i(\theta),\gamma_j(\theta)),
\end{gather*}
where the subscript $I$ stands for $[0,1]$.  We then set
$$\Delta^{ij}_I =(ev^{ij}_I)^{-1}(\Delta_M).$$ 

The restriction of each $ev^{ij}_I$ to $[0,1] \times \Omega^{m,2}(M, \bm q, \bm q')$ is clearly a $C^{m-1}$-submersion and therefore each $\Delta^{ij}_I$ restricted to $[0,1] \times \Omega^{m,2}(M, \bm q, \bm q')$ is a closed $C^{m-1}$-smooth Banach submanifold of $[0,1] \times \Omega^{m,2}(M,\bm q, \bm q')$ of codimension $n$. Note that the loss of regularity is due to taking derivatives in the $\theta$-direction. By abuse of notation, the restriction of $\Delta^{ij}_I$ to $[0,1] \times \Omega^{m,2}(M, \bm q, \bm q')$ will also be denoted $\Delta^{ij}_I$, and the ambient space will either be specified or be clear from the context. 

Let $\Delta_I$ be the union $\bigcup_{1 \le i<j \le \kappa} \Delta^{ij}_I$. As before, the subset $\Delta_I$ (as well as $\Omega^{m,2}(M, \bm q, \bm q')$) decomposes into $\kappa!$ connected components corresponding to the permutation types:
$$\Delta_I=\bigsqcup_{\sigma \in S_\kappa}\Delta_{I, \sigma}.$$

In the case $\kappa=2$, the naive definition of the \emph{switching map} from $\Delta_I$ to itself would be:
\begin{equation}
(\theta, \gamma_1, \gamma_2) \mapsto (\theta, \gamma_1|_{[0, \theta]} \#\gamma_2|_{[\theta, 1]},\gamma_2|_{[0, \theta]} \#\gamma_1|_{[\theta, 1]}),
\end{equation}
where $\#$ is concatenation (see Figure~\ref{fig:switching-map}). This naive switching map, when restricted to $[0,1] \times \Omega^{m,2}(M, \bm q, \bm q')$, unfortunately does not preserve the $C^{m-1}$-smoothness of both concatenated paths at $\theta$ and hence cannot have image in $\Omega^{m,2}(M, \bm q, \bm q')$. As a remedy for this, we will apply a certain reparametrization of $[0,1]$ which guarantees the $C^{m-1}$-smoothness at $\theta$, i.e., before the switching we replace $(\theta, \gamma_1, \gamma_2)$ by $(\theta, \gamma_1 \circ \nu, \gamma_2 \circ \nu)$, where $\nu \colon [0,1] \to [0,1]$ is a smooth bijection with all partial derivatives vanishing at $\theta$ and satisfying $\nu(t)=t$ on most of $[0,1]$. This is a common trick in string topology and appears for example in \cite{cohenjones2002, irie2018}.  

The actual implementation of the reparametrization will occupy the next two subsections. 

\subsubsection{Multidiagonals}

For strata consistency, we need to define switching maps on multidiagonals.

We first consider the simplest case which is the space
\begin{equation}
    \overline{\Delta}_{I_1} \cup \overline{\Delta}_{I_2} \subset [0,1]^2 \times \Omega(M, \bm q, \bm q'),
\end{equation}
where $\Delta_{I_1} \subset \op{Conf}_2([0,1]) \times \Omega(M, \bm q, \bm q')$ is the preimage of the diagonal $\Delta_M$ under the evaluations with respect to the first $[0,1]$-coordinate, $\Delta_{I_2}$ is the preimage under the evaluations with respect to the second $[0,1]$-coordinate, $\overline{\Delta}_{I_1}$ and $\overline{\Delta}_{I_2}$ are their closures, and $\op{Conf}_2([0,1])$ is the {\em ordered} configuration space of points in $[0,1]$. Note that $\Delta_{I_{i}}$ and $\overline{\Delta}_{I_i}$ are $C^{m-1}$-smooth Banach submanifolds as before when considered inside $[0,1]^2 \times \Omega^{m,2}(M, \bm q, \bm q')$.

We also define subsets $\Delta^{ij, kl}_{I_1, I_2}\subset \Delta^{ij}_{I_1} \cup \Delta^{kl}_{I_2}$ (here  $\{i, j\}= \{k, l\}$ is allowed), given by 
\begin{gather}
    \Delta^{ij, kl}_{I_1, I_2}=(ev^{ij, kl}_{I_1,I_2})^{-1}(\Delta_M \times \Delta_M),
\end{gather}
where
\begin{gather}
    \label{eq: ev-map-double} 
    ev^{ij, kl}_{I_1, I_2} \colon \op{Conf}_2([0,1]) \times \Omega(M, \bm q, \bm q') \to M^{4}, \\
    (\theta_1, \theta_2, \bm \gamma) \mapsto (\gamma_i(\theta_1), \gamma_j(\theta_1), \gamma_k(\theta_2), \gamma_l(\theta_2)). \nonumber
\end{gather}
We write
\begin{equation}
\overline{\Delta}_{I_1, I_2}= \bigcup_{i<j; k<l}\overline{\Delta}^{ij, kl}_{I_1, I_2} \subset [0,1]^2 \times \Omega(M, \bm q, \bm q').    
\end{equation}
The restrictions of the spaces above to $[0,1]^2 \times \Omega^{m,2}(M, \bm q, \bm q')$ are also denoted in the same way by abuse of notation.

\begin{lemma}\label{lemma: two-intersection-moments} $\mbox{}$
\begin{enumerate}
    \item The subset $\Delta^{ij, ij}_{I_1,I_2}$ is a not necessarily closed $C^{m-1}$-submanifold of $[0,1]^{ 2} \times \Omega^{m,2}(M, \bm q, \bm q')$ of codimension $2n$. The boundary of its closure $\overline{\Delta}^{ij,ij}_{I_1,I_2}$ is a $C^{m-2}$-submanifold of codimension $2n+1$ for $m \ge 3$. 
    \item For $\{i, j\} \neq \{k, l\}$ the subset $\Delta^{ij, kl}_{I_1, I_2}$ and its closure $\overline{\Delta}^{ij, kl}_{I_1, I_2}$ in $[0,1]^{ 2} \times \Omega^{m,2}(M, \bm q, \bm q')$ are $C^{m-1}$-submanifolds of $[0,1]^{ 2} \times \Omega^{m,2}(M, \bm q, \bm q')$.
\end{enumerate}
\end{lemma}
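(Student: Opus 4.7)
The plan is to realize each set as the transverse preimage of a submanifold in a finite-dimensional target under a suitable evaluation-type map, and then apply the implicit function theorem for Banach manifolds. The only subtlety occurs at the boundary locus $\{\theta_1=\theta_2\}\subset[0,1]^2$, where the naive evaluation map can fail to be submersive. I handle this by direct transversality checks in part (2) and by a divided-difference construction in part (1). The regularity drops from $C^m$ down to $C^{m-1}$ (or $C^{m-2}$) because the evaluation $(\theta,\gamma)\mapsto\gamma(\theta)$ on $W^{m,2}$-paths is only $C^{m-1}$ in $\theta$, and divided differences cost one further derivative.

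For part (2) and the open part of (1), the map $ev^{ij,kl}_{I_1,I_2}$ from \eqref{eq: ev-map-double}, restricted to $\op{Conf}_2([0,1])\times\Omega^{m,2}(M,\bm q,\bm q')$, is a $C^{m-1}$-submersion onto $M^4$: since $\theta_1\neq\theta_2$, the four evaluations can be varied independently by bump-supported perturbations of $\bm\gamma$. Hence $\Delta^{ij,kl}_{I_1,I_2}$ is a $C^{m-1}$-submanifold of codimension $2n$. For part (2), I then extend $ev^{ij,kl}_{I_1,I_2}$ to all of $[0,1]^2\times\Omega^{m,2}(M,\bm q,\bm q')$ and verify transversality with $\Delta_M\times\Delta_M$ at points where $\theta_1=\theta_2$ by a case analysis on $|\{i,j\}\cap\{k,l\}|\in\{0,1\}$: when the pairs are disjoint the extension remains a submersion; when they share exactly one index (say $j=k$), a direct computation shows that $\op{Im}(d\, ev^{ij,kl}_{I_1,I_2})+T(\Delta_M\times\Delta_M)=TM^4$ at any point of the preimage, the tangent vectors $(w_1,w_1,w_2,w_2)$ of $\Delta_M\times\Delta_M$ compensating for the forced coincidence of the second and third components of the image of $d\, ev^{ij,kl}_{I_1,I_2}$. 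This yields $\overline{\Delta}^{ij,kl}_{I_1,I_2}$ as a $C^{m-1}$-submanifold of codimension $2n$.

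For the boundary analysis of part (1) (case $i=k$, $j=l$), fix a boundary point, choose local coordinates on $M$ near $\gamma_i(\theta_0)=\gamma_j(\theta_0)$, and let $f\coloneqq\gamma_i-\gamma_j$ locally, viewed as an $\R^n$-valued path. Following the standard smoothing trick of string topology (cf.\ \cite{cohenjones2002, irie2018}), replace the pair of conditions $f(\theta_1)=f(\theta_2)=0$ by the single $\R^{2n}$-valued map
\[
G(\theta_1,\theta_2,\bm\gamma) \coloneqq \bigl(\, f(\theta_1),\ D(\theta_1,\theta_2,f)\,\bigr), \qquad D(a,b,f) \coloneqq \int_0^1 f'\bigl(a+s(b-a)\bigr)\, ds,
\]
whose zero locus is precisely $\overline{\Delta}^{ij,ij}_{I_1,I_2}$ near this point: $D=0$ is equivalent to $f(\theta_2)=f(\theta_1)$ when $\theta_1\neq\theta_2$, and to $f'(\theta_0)=0$ when $\theta_1=\theta_2=\theta_0$. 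A bump-function argument shows $dG$ is surjective at every zero of $G$, including at $\theta_1=\theta_2$, because $\delta f(\theta_0)$ and $\delta f'(\theta_0)$ can be prescribed independently for $\delta f\in W^{m,2}$. This realizes $\overline{\Delta}^{ij,ij}_{I_1,I_2}$ as a transversely cut-out $C^{m-2}$-zero set of codimension $2n$, matching the $C^{m-1}$-structure on the open part. Its intersection with the transverse codimension-$1$ diagonal $\{\theta_1=\theta_2\}$ gives the boundary stratum as a $C^{m-2}$-submanifold of codimension $2n+1$ (requiring $m\geq 3$ so that $G$ is at least $C^1$ in its arguments).

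The main obstacle will be the boundary analysis in part (1): constructing a globally defined, sufficiently regular map whose zero set realizes the closure and whose differential is surjective at limit points $\theta_1=\theta_2$, and tracking precisely how the Sobolev regularity of $W^{m,2}$-paths propagates through the divided-difference construction to yield the stated $C^{m-2}$-regularity and the codimension $2n+1$ count. The transversality checks in part (2) are essentially formal once this framework is in place.
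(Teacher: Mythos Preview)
Your argument is correct. Part (2) and the open part of (1) coincide with the paper's proof: transversality of the evaluation map to $\Delta_M\times\Delta_M$ via independent variation of strands, the paper compressing your case analysis at $\theta_1=\theta_2$ into the single remark that one strand (say $\gamma_l$) can be moved independently of the others.

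For the boundary stratum in (1) the paper takes a shorter route: it directly writes down the $C^{m-2}$-map
\[
\Delta_{[0,1]}\times\Omega^{m,2}(M,\bm q,\bm q')\longrightarrow TM\times TM,\qquad (\theta,\theta,\bm\gamma)\longmapsto\bigl(\gamma_i(\theta),\dot\gamma_i(\theta),\gamma_j(\theta),\dot\gamma_j(\theta)\bigr),
\]
and identifies the frontier with the preimage of $\Delta_{TM}$. Your divided-difference map $G$ reaches the same description (since $G|_{\theta_1=\theta_2}=(f(\theta),f'(\theta))$) but yields a little more: it exhibits the whole closure $\overline{\Delta}^{ij,ij}_{I_1,I_2}=G^{-1}(0)$ as a single transversely cut $C^{m-2}$-manifold rather than two separate strata. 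To complete your claim that this zero set is exactly the closure and meets $\{\theta_1=\theta_2\}$ transversely, note that $(1,-1,0)\in\ker dG$ at every diagonal zero (the $\delta\theta$-contributions to $dG$ are either killed by $f'(\theta)=0$ or symmetric in $\delta\theta_1,\delta\theta_2$), so the off-diagonal part of $G^{-1}(0)$ is dense. The paper's approach is more economical for the statement as written; yours is the natural one if one also wants a manifold-with-boundary structure on the closure itself.
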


\begin{proof}
(1) It is clear that $ev^{ij, ij}_{I_1,I_2}$ is transverse to $\Delta_M \times \Delta_M$ since the strands $\gamma_i$ and $\gamma_j$ can be moved independently at $\theta_1$ and $\theta_2$. Hence $\Delta^{ij, ij}_{I_1,I_2}$ is a codimension $2n$ submanifold of $\op{Conf}_2([0,1]) \times \Omega^{m,2}(M, \bm q, \bm q')$.  The boundary of $\Delta^{ij, ij}_{I_1, I_2} \subset [0,1]^{ 2} \times \Omega^{m,2}(M, \bm q, \bm q')$ is a $C^{m-2}$-submanifold of codimension $2n+1$, as it is equal to the  
preimage of $\Delta_{TM}$ under the submersion from $\Delta_{[0,1]} \times \Omega^{m,2}(M, \bm q, \bm q')$ to $TM$ given by
$$(\theta, \theta, \bm \gamma) \mapsto (\gamma_i(\theta), \dot{\gamma}_i(\theta), \gamma_j(\theta), \dot{\gamma}_j(\theta)).$$

(2) Similarly, $ev^{ij, kl}_{I_1,I_2}$ is transverse to $\Delta_M \times \Delta_M$ and the extension of $ev^{ij, kl}_{I_1,I_2}$ to $[0,1]^{ 2} \times \Omega^{m,2}(M, \bm q, \bm q')$ is also transverse to $\Delta_M \times \Delta_M$, since one of the strands, say $\gamma_l$, can be moved independently of $\gamma_i$ and $\gamma_j$ at the common value $\theta$.
\end{proof}

In the general case, given an integer $\ell \ge 2$, a nonempty subset 
$$A=\{i_1, \dots, i_r\} \subseteq [\ell] \coloneqq \{1, \dots, \ell\},$$ 
and an $r$-tuple of two-element subsets 
$$\bm \tau=(\tau_1=\{j^1_1, j^2_1\}, \dots, \tau_r=\{j^1_r, j^2_r\}),$$
of $[\kappa]$, we define the multi-evaluation map
\begin{equation}
    ev^{\bm \tau}_{I_A} = ev^{\tau_1, \dots, \tau_r}_{I_{i_1}, \dots, I_{i_r}}\colon \op{Conf}_\ell([0,1]) \times \Omega(M, \bm q, \bm q') \to M^{2r},
\end{equation}
obtained by composing the projection 
\begin{gather*}
    \op{Conf}_\ell([0,1])\times \Omega(M, \bm q, \bm q') \to \op{Conf}_r([0,1])\times \Omega(M, \bm q, \bm q'),\\
    (\theta_1,\dots,\theta_\ell,\bm\gamma)\mapsto (\theta_{i_1},\dots,\theta_{i_r},\bm\gamma),
\end{gather*}
and the evaluation map
\begin{gather*}
\op{Conf}_r([0,1])\times \Omega(M, \bm q, \bm q')\to M^{2r},\\
(\theta_1,\dots,\theta_r,\bm\gamma)\mapsto (\gamma_{j_1^1}(\theta_1), \gamma_{j_1^2}(\theta_1),\dots, \gamma_{j_r^1}(\theta_r), \gamma_{j_r^2}(\theta_r)),
\end{gather*}
by analogy with~\eqref{eq: ev-map-double}. Then the multidiagonal
\begin{equation}\label{eq: diagonal-with-many-intersections}
  \Delta_{I_A}^{\bm \tau}=\Delta_{I_{i_1}, \dots, I_{i_r}}^{\bm \tau}\coloneqq (ev^{\tau_1, \dots, \tau_r}_{I_{i_1}, \dots, I_{i_r}})^{-1}\Delta_M^{\times r},\footnote{We sometimes write e.g., $\times \ell$ to emphasize that is is a product of $\ell$ copies.}
\end{equation} 
viewed as a subset of $\op{Conf}_\ell([0,1]) \times \Omega(M, \bm q, \bm q')$, is a $C^{m-1}$-submanifold of codimension $\ell n$.

By taking unions of such submanifolds and their closures over various choices of $\bm \tau$ we define subsets $\Delta_{I_A}=\Delta_{I_{i_1}, \dots, I_{i_r}}, \overline{\Delta}_{I_A}=\overline{\Delta}_{I_{i_1}, \dots, I_{i_r}} \subset [0,1]^{\ell} \times \Omega(M, \bm q, \bm q')$. {\em Note that when we write $\Delta_{I_{i_1}, \dots, I_{i_r}}$, the set $[\ell]$ is implicit.}

\subsubsection{Construction of reparametrizations}

We start by considering the $2$-variable case.  In this case we pick a smooth family of smooth bijective functions $f_{\theta_1,\theta_2} \colon [0,1] \to [0,1]$ parametrized by $(\theta_1,\theta_2) \in \op{Conf}_2((0,1))$ 
such that for all $(\theta_1,\theta_2) \in \op{Conf}_2((0,1))$:
\begin{gather}
f_{\theta_1, \theta_2}(\theta_1)=\theta_1, \quad f_{\theta_1, \theta_2}(\theta_2)=\theta_2; \label{eq: family-property1}   \\
    f_{\theta_1,\theta_2}^{(k)}(\theta_1)=f_{\theta_1,\theta_2}^{(k)}(\theta_2)=0 \quad \text{ for all }k\ge 1;  \\
    f_{\theta_1, \theta_2} 
   \text{ continuously extends to the diagonal } \theta_1=\theta_2 . \label{eq: family-property3}
\end{gather}

Here we present an explicit construction of such a family:
Let $\varphi \colon [0,1] \to [0,1]$ be a smooth monotonically increasing function such that $\varphi(0)=0$, $\varphi(1)=1$, $\varphi(x)=x$ for $x \le 1-\varepsilon$ with $\varepsilon >0$ small, and all the derivatives of $\varphi$ vanish at $x=1$.  We also set $\varphi^\circ(x):=1-\varphi(1-x)$.  Additionally, let $\psi \colon [0,1] \to [0,1]$ be a smooth step function such that $\psi(0)=0$, $\psi(1)=1$, and all the derivatives of $\psi$ vanish at the endpoints $x=0,1$ (one can take the composition of $\varphi$ and $\varphi^\circ$, for instance). 
We then define $f_{\theta_1,\theta_2}$ on $\{\theta_1 \le \theta_2\}$ as follows:

\begin{equation}\label{eq: formula-for-reparametrization}
    f_{\theta_1,\theta_2}(x)= \left\{
    \begin{array}{ll}
       \varphi(\frac{x}{\theta_1})\cdot \theta_1,  &  \mbox{ for } x \le \theta_1, \\
        \psi(\frac{x-\theta_1}{\theta_2-\theta_1})\cdot (\theta_2-\theta_1)+\theta_1, & \mbox{ for } \theta_1 < x \le \theta_2, \\
        \varphi^\circ(\frac{x-\theta_2}{1-\theta_2})\cdot(1-\theta_2)+\theta_2, & \mbox{ for } \theta_2 < x \le 1.
    \end{array} \right.
\end{equation}
We then extend this function to all of $(0,1)^{2}$ by an analogous formula for $\theta_1>\theta_2$, switching the roles of $\theta_1$ and $\theta_2$.
This family is clearly smooth on $\op{Conf}_2((0,1))$ and extends continuously to all of $(0,1)^2$, and hence satisfies 
\eqref{eq: family-property1}-\eqref{eq: family-property3}. We also set $f_\theta:=f_{\theta,\theta}$, which is a smooth family of functions parametrized by the interval $(0,1)$.

We can generalize the above construction to the $r$-variable case, i.e., to a continuous family of smooth monotonically increasing functions $f_{\theta_1, \dots, \theta_r} \colon [0,1] \to [0,1]$ parametrized by $(0,1)^r$, whose restriction to $\op{Conf}_r((0,1))$ is smooth and satisfies following:
\begin{gather}
f_{\theta_1, \dots, \theta_r}(\theta_i)=\theta_i \quad \text{ for all $i$ such that  $1\le i \le r$;} \label{eq: family-property1 for r}   \\
    f_{\theta_1,\dots, \theta_r}^{(k)}(\theta_i)=0 \quad \text{ for all $k\geq 1$ and $i$ such that $1\leq i\leq r$}.
\end{gather}
Such a family can be constructed using a formula similar to \eqref{eq: formula-for-reparametrization}. 

We also choose a function $\varepsilon_{[r]}:[0,1]^r\to(0,\frac{1}{4}]$, $\bm\theta=(\theta_1, \dots, \theta_r) \mapsto \varepsilon_{[r]}(\bm\theta)$, such that $f_{\theta_1, \dots, \theta_r}(x)=x$ on $\varepsilon_{[r]}(\bm\theta)$-neighborhoods of $x=0$ and $x=1$. The existence of such a function is immediate from the definition of $f_{\theta_1, \dots, \theta_r}$. 

Given a subset $A=\{i_1, \dots, i_r\} \subset [\ell]$ as before, we construct a continuous family of smooth monotone functions $\nu_A=\nu_{i_1, \dots, i_r}:[0,1]\to [0,1]$ parametrized by $\overline{\Delta}_{i_1} \cup \dots \cup \overline{\Delta}_{i_r}$: First choose a cover $\{U_S\}_{S \subset A}$ of $\overline{\Delta}_{i_1} \cup \dots \cup \overline{\Delta}_{i_r}$ 
such that:
\be
\item[(C1)] $\overline{\Delta}_{I_S} \subset \bigcup_{S' \subseteq S} U_{S'}$  for all $S \subset A$;
\item[(C2)] $U_{S'} \cap \overline{\Delta}_{S}=\emptyset$  for all pairs $S,S' \subset A$ such that $S' \subsetneq S$.
\ee
Fix a partition of unity $\{\psi_S\}_{S \subset A}$ subordinate to $\{U_S\}_{S \subset A}$. Given $S=\{j_1, \dots, j_{r'}\} \subset A$ we define $$f_S:U_S\times[0,1]\to [0,1], \quad ((\theta_1,\dots,\theta_\ell,\bm\gamma),x)\mapsto f_{\theta_{j_1},\dots,\theta_{j_{r'}}}(x).$$ 
We then set
\begin{equation}
    \nu_A=\sum_{S \subset A}\psi_Sf_S.
\end{equation}

By (C1) and (C2), the restriction of $\nu_A$ to $\overline{\Delta}_S$ is equal to $f_S$ on the complement 
of the union $\bigcup_{S' \subsetneq S} U_{S'}$.  
The function $\nu_{[\ell]}$ can then be extended to all of $[0,1]^\ell \times \Omega(M, \bm q, \bm q')$ by adding to the cover $\{U_S\}_{S \subset [\ell]}$ an open set $U_{\emptyset}$ which does not intersect any of the $\overline{\Delta}_S$.

We also choose a smooth function 
$$\varepsilon_{[\ell]}: [0,1]^\ell \times \Omega(M, \bm q, \bm q')\to  (0, \tfrac{1}{4}]$$ 
such that $\nu_{[\ell]}(\theta_1, \dots, \theta_\ell, \bm \gamma)=\op{id}$  (whenever it is defined) in the $\varepsilon_{[\ell]}(\theta_1, \dots, \theta_\ell, \bm \gamma)$-neighborhoods of $0$ and $1$. Such a function can be constructed using functions $\varepsilon_{\{j_1, \dots, j_{r'}\}}$ and we leave the details to the reader.

\subsubsection{Switching}

\begin{definition}\label{defn: switching-map}
    For $\overline{\Delta}^{ij}_{I_k} \subset [0,1]^{\times \ell} \times \Omega(M, \bm q, \bm q')$ we define the switching map $sw_{I_k}^{ij} \colon \overline{\Delta}^{ij}_{I_k} \to \overline{\Delta}^{ij}_{I_k}$ by the formula
    \begin{gather}
        sw_{I_k}^{ij}(\theta_1, \dots, \theta_\ell, \bm \gamma)=(\theta_1, \dots, \theta_\ell,\gamma_1(\nu_{[\ell]}(\cdot)), \dots, \gamma_{i-1}(\nu_{[\ell]}(\cdot)), \gamma_i(\nu_{[\ell]}(\cdot))|_{[0, \theta_k]} \# \gamma_j(\nu_{[\ell]}(\cdot))|_{[\theta_k, 1]}, \\
         \gamma_{i+1}(\nu_{[\ell]}(\cdot)), \dots,\gamma_{j-1}(\nu_{[\ell]}(\cdot)), \gamma_j(\nu_{[\ell]}(\cdot))|_{[0, \theta_k]} \#\gamma_i(\nu_{[\ell]}(\cdot))|_{[\theta_k, 1]}, \gamma_{j+1}(\nu_{[\ell]}(\cdot)), \dots, \gamma_\kappa(\nu_{[\ell]}(\cdot))). \nonumber
    \end{gather}
\end{definition}

Here $\#$ stands for the standard concatenation. The maps $sw_{I_k}^{ij}$ are $C^{m-1}$-smooth when restricted to $\Delta_{I_k}\subset [0,1] \times \Omega^{m,2}(M, \bm q, \bm q')$. They are not $C^{m-1}$-smooth on all of $\overline{\Delta}_{I_k}$ for the same reason that $f_{\theta_1, \dots, \theta_\ell}$ is not smooth along diagonals $\theta_i=\theta_j$. Nevertheless, there are some additional loci inside $\overline{\Delta}_{I_k}$ where the map still has a certain level of regularity.

Explicitly, let $S_1 \sqcup \dots \sqcup S_r =[\ell]$ be a partition of $[\ell]$ and associate to it a subset $\Delta_{S_1, \dots,  S_r}([0,1]^\ell)$ of $[0,1]^\ell$ consisting of points $(\theta_1, \dots, \theta_\ell)$ such that $\theta_i=\theta_j$ if and only if $i$ and $j$ belong to the same element of the partition.
Then it is immediate from the definition of $\nu_{[\ell]}$ that each map $sw_{I_k}^{ij}$ is $C^{m-1}$ smooth when restricted to the intersection of $\overline{\Delta}_{I_k}$ with $\Delta_{S_1, \dots, S_r}([0,1]^\ell) \times \Omega^{m,2}(M, \bm q, \bm q')$ for any partition $S_1 \sqcup \dots \sqcup S_r$. The main case of interest for us is the one where there are exactly $\ell-1$ elements in the partition, i.e., exactly two coordinates are allowed to be equal.

\begin{figure}[ht]
    \centering
    \includegraphics[width=0.8\linewidth]{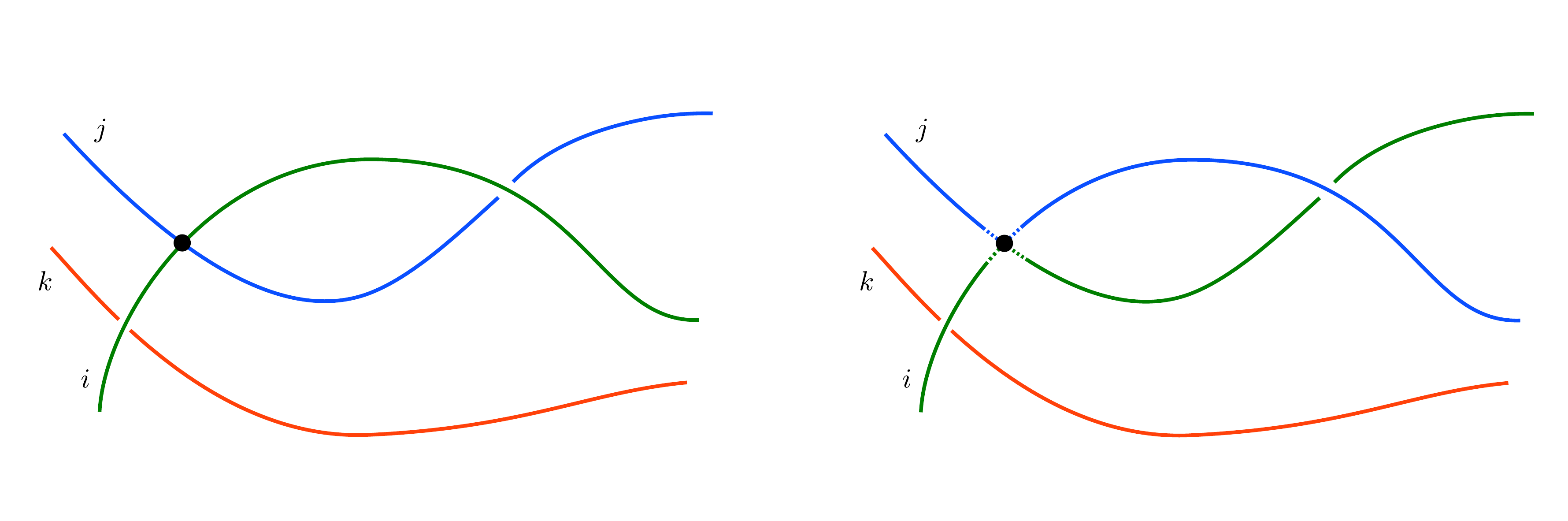}
    \caption{Here we schematically illustrate the effect of the switching map $sw^{ij}_{I_1}$ on the left tuple of paths, where we have locally drawn $3$ out of $\kappa$ paths. All paths except $\gamma_i$ and $\gamma_j$ are unaffected, and the portion of $\gamma_i$ after the intersection point at $\theta$ gets replaced with the reparametrized portion of $\gamma_j$ and vice versa. Dashed lines depict the portions of $\gamma_i$ and $\gamma_j$ affected by the smoothing reparametrization.}
    \label{fig:switching-map}
\end{figure}

\begin{remark}
    The naive switching map differs from the above switching map only in the absence of reparametrizations governed by $\nu$, so the images of both maps are identical. The following proposition justifies the introduction of the reparametrizations. 
    The reason for the complicated construction involving a cover of $\overline{\Delta}_{I_{\ell}}$ satisfying (C1) and (C2) will become clear in the proof of Theorem~\ref{thm: compactness-for-MFLS} and one should understand that on most of $\overline{\Delta}_{I_k}$ the reparametrization $\nu$ is controlled by the function $f_{\theta_k}$.
\end{remark}

\begin{proposition}\label{prop: switching-properties} $\mbox{}$  
For arbitrary indices $p_1, p_2 \in [\ell]$:
\be
\item the map $sw^{ij}_{I_{p_1}}$ is a $C^{m-1}$-immersion when restricted to $\Delta_{S_1, \dots, S_r} \times \Omega^{m,2}(M, \bm q, \bm q')$ for an arbitrary partition $S_1 \sqcup \dots \sqcup S_r=[\ell]$;
\item the following commutativity relations hold:
\begin{gather}
    sw^{ij}_{I_{p_2}}(sw^{ij}_{I_{p_1}}(\bm \theta, \bm \gamma))=sw^{ij}_{I_{p_1}}(sw^{ij}_{I_{p_2}}(\bm \theta, \bm \gamma) ), \text{ if }(\bm \theta, \bm \gamma) \in \Delta^{ij, ij}_{I_{p_1},I_{p_2}}, \label{eq : commutativity-switching}\\
    sw^{jk}_{I_{p_2}}(sw^{ij}_{I_{p_1}}(\bm \theta, \bm \gamma))=sw^{ij}_{I_{p_1}}(sw^{ik}_{I_{p_2}}(\bm \theta, \bm \gamma)), \text{ if }(\bm \theta, \bm \gamma) \in \Delta^{ij, ik}_{I_{p_1},I_{p_2}} \text{ and }\theta_{p_1}\le\theta_{p_2}, \\
    sw^{ik}_{I_{p_2}}(sw^{ij}_{I_{p_1}}(\bm \theta, \bm \gamma))=sw^{kj}_{I_{p_1}}(sw^{ik}_{I_{p_2}}(\bm \theta, \bm \gamma)), \text{ if }(\bm \theta, \bm \gamma) \in \Delta^{ij, ik}_{I_{p_1},I_{p_2}} \text{ and }\theta_{p_1} \ge \theta_{p_2},  \\
    sw_{I_{p_1}}^{ij}(sw^{kl}_{I_{p_2}}(\bm \theta, \bm \gamma))=sw_{I_{p_2}}^{ij}(sw_{I_{p_1}}^{kl}(\bm \theta, \bm \gamma)) \text{ if } (\bm \theta, \bm \gamma) \in \Delta^{ij, kl}_{I_{p_1},I_{p_2}},
\end{gather}
where $i, j, k, l$ are assumed to be distinct and $\bm \theta=(\theta_1, \dots, \theta_\ell)$.
\ee
\end{proposition}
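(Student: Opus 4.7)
The plan is to separate (1), which is a regularity/immersion statement about $sw^{ij}_{I_{p_1}}$, from (2), which is a set of four bookkeeping identities that follow directly from Definition~\ref{defn: switching-map}.

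For (1), I would first argue that the restriction of $\nu_{[\ell]}$ to $\Delta_{S_1,\dots,S_r}([0,1]^\ell)\times \Omega^{m,2}(M,\bm q,\bm q')$ is a $C^{m-1}$-smooth family of diffeomorphisms of $[0,1]$: by the conditions (C1) and (C2) on the cover $\{U_S\}_{S\subset [\ell]}$ used to patch together the individual $f_S$, in a neighborhood of any point of this stratum only those $f_S$ that are smooth in the $r$ distinct values of $\bm\theta$ enter the partition-of-unity sum $\nu_{[\ell]}=\sum_S \psi_S f_S$ with non-zero weight. Next, the concatenation at $\theta_{p_1}$ preserves $C^{m-1}$-regularity because $f_{\theta_1,\dots,\theta_r}^{(k)}(\theta_{p_1})=0$ for all $k\geq 1$: after precomposition with $\nu_{[\ell]}$, all derivatives of $\gamma_i$ and $\gamma_j$ up to order $m-1$ vanish at $\theta_{p_1}$, so the two concatenated pieces glue to order $m-1$. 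Hence the switching map sends the relevant stratum of $\Delta^{ij}_{I_{p_1}}$ into $\Omega^{m,2}(M,\bm q,\bm q')$ by a $C^{m-1}$-map. For the immersion property, I observe that $sw^{ij}_{I_{p_1}}$ is bijective onto its image on each stratum with an explicit smooth inverse: applying the same switching a second time returns a tuple equal to the original one up to the stratum-preserving $C^{m-1}$-diffeomorphism of $\Omega^{m,2}(M,\bm q,\bm q')$ given by precomposition with $\nu_{[\ell]}\circ\nu_{[\ell]}$. Injectivity of the differential then follows.

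For (2), the four identities reduce to elementary concatenation bookkeeping. For instance, on $\Delta^{ij,ij}_{I_{p_1},I_{p_2}}$ with $\theta_{p_1}\le\theta_{p_2}$, both sides of \eqref{eq : commutativity-switching} send $\gamma_i$ to the triple concatenation $\gamma_i|_{[0,\theta_{p_1}]}\#\gamma_j|_{[\theta_{p_1},\theta_{p_2}]}\#\gamma_i|_{[\theta_{p_2},1]}$ (after precomposition with $\nu_{[\ell]}$), and analogously for $\gamma_j$; the two reparametrizations applied agree because $\nu_{[\ell]}$ is symmetric in its $\theta$-coordinates by the construction of each $f_{\theta_1,\dots,\theta_r}$. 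For the two mixed $\Delta^{ij,ik}$ identities, the hypothesis on the ordering of $\theta_{p_1}$ and $\theta_{p_2}$ prescribes which original strand occupies the middle segment $[\theta_{p_1},\theta_{p_2}]$ after the first switching, and this exactly matches the labels on the right-hand side. The last identity, where $i,j,k,l$ are distinct, is immediate because the two switchings affect disjoint strands.

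The main obstacle is the smoothness claim in (1): one must verify that the partition-of-unity construction really yields $C^{m-1}$-smoothness on each stratum $\Delta_{S_1,\dots,S_r}([0,1]^\ell)\times \Omega^{m,2}(M,\bm q,\bm q')$ and that the concatenated paths genuinely lie in $\Omega^{m,2}$. This is exactly what motivated conditions (C1), (C2) on the cover and the flat-at-$\theta_i$ condition on each $f_{\theta_1,\dots,\theta_r}$; the loss of one derivative compared to $\Omega^{m,2}$ is unavoidable since we are evaluating $\gamma$ and its derivatives at the variable time $\theta$. Once these points are in place, both parts of the proposition are essentially formal consequences of Definition~\ref{defn: switching-map}.
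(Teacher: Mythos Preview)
Your proposal is correct and follows essentially the same route as the paper. The paper's own proof is extremely terse: for (1) it simply declares the claim ``straightforward from the previous discussion'' (i.e., the paragraph preceding the proposition on $C^{m-1}$-smoothness of $sw^{ij}_{I_k}$ on strata), and for (2) it records only the key observation that the reparametrization $\nu_{[\ell]}$ entering the definition of $sw^{ij}_{I_{p_1}}$ and $sw^{ij}_{I_{p_2}}$ is the \emph{same} function of $(\bm\theta,\bm\gamma)$, independent of which index $p_a$ is used, then leaves the remaining identities to the reader.

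Your write-up supplies the details the paper omits. In particular, your immersion argument for (1) via the observation that $sw^{ij}_{I_{p_1}}\circ sw^{ij}_{I_{p_1}}$ is precomposition by a diffeomorphism of $[0,1]$ is a clean way to see injectivity of the differential, and is not made explicit in the paper. One minor caveat: the two copies of $\nu_{[\ell]}$ in that composition are evaluated at different points of the parameter space (the second at the output of the first switching), so writing the composite literally as ``$\nu_{[\ell]}\circ\nu_{[\ell]}$'' is slightly imprecise; what matters, and what your argument uses, is only that the composite is a $C^{m-1}$-diffeomorphism of $[0,1]$ depending $C^{m-1}$-smoothly on the stratum parameters. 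The same caveat applies to the commutativity identities in (2), and the paper is equally informal on this point.
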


\begin{proof}
    (1) is straightforward from the previous discussion.  
    
    (2) The first relation~\eqref{eq : commutativity-switching} follows from observing that for any $(\bm \theta, \bm \gamma) \in \Delta^{ij, ij}_{I_{p_1},I_{p_2}}$ the reparametrizations $\nu_{\bm \theta, \bm \gamma}$ involved in the definition of both $sw_{I_{p_1}}^{ij}$ and $sw_{I_{p_2}}^{ij}$ are the same. The other relations are similar and left to the reader. 
\end{proof}

We additionally point out that the closure of $\Delta_{I_1, \dots, I_\ell}^{\bm \tau}$ inside $[0,1]^{\ell} \times \Omega^{m,2}(M, \bm q, \bm q')$ is a subset of codimension at least $\ell n$ stratified by finitely many $C^{m-\ell}$-submanifolds (most of them have higher regularity). 

We consider the case where all the $\tau_s$ agree with a fixed $\{i,j\}$ in more detail. The main observation here is that, when $k$ intersection points of $\gamma_i$ and $\gamma_j$ collide at one point, all first $k$ partial derivatives of these two paths agree at the given point. This condition cuts out a $C^{m-k}$-submanifold by considering a natural evaluation map which takes values in the $k$th jet bundle of $TM$. Each partition $(k_1,\dots,k_r)$ of $\ell$ corresponds to a stratum where $k_s$ intersection points of $\gamma_i$ and $\gamma_j$ collide for all $s=1,\dots,r$.  There are $B_\ell$ such strata in this case, where $B_\ell$ is the $\ell$-th Bell number. For a general $\bm \tau$, $B_\ell$ gives an upper bound for the number of strata.

\subsection{The based multiloop complex}
\label{subsection: based multiloop complex}

In this paper, we use \emph{cohomologically graded} Morse chain complexes.

Let $\mathcal{P}$ be set of the critical points of $\mathcal{A}=\mathcal{A}_{L}$, which are given by the Euler-Lagrange equation
\begin{equation}
    \frac{d}{dt}\partial_v L(t,\gamma(t),\dot{\gamma}(t))=\partial_x L(t,\gamma(t),\dot{\gamma}(t)).
\end{equation}
To each critical point $\gamma \in \mathcal{P}$ we associate a $\Z$-graded rank $1$ free abelian group $o_\gamma$, called \emph{orientation line}, whose $\Z$-grading is equal to that of $\gamma$ (with negative Morse grading).  Then for $\kappa=1$ the graded $\Z$-module $CM_{-*}(\Omega^{1,2}(M,q))$ is the direct sum  $\bigoplus_{\gamma \in \mathcal{P}}o_\gamma$. 
The Morse differential is given by counting index $1$ {\em perturbed negative gradient trajectories}
\begin{equation}
    \frac{\partial\Gamma}{\partial s}=X(\Gamma),
\end{equation}
where we are parametrizing paths of $\Omega^{1,2}(M,q)$ by $s$ and $X\in \mathcal{G}$.

Let $\widetilde{\mathcal{M}}(\gamma,\gamma';X)$ be the moduli space of perturbed negative gradient trajectories from $\gamma$ to $\gamma'$ with respect to $X\in \mathcal{G}$ and $\mathcal{M}(\gamma,\gamma';X)=\widetilde{\mathcal{M}}(\gamma,\gamma';X)/\mathbb{R}$ be the quotient by $\mathbb{R}$-translation. To each $\Gamma \in \mathcal{M}(\gamma,\gamma';X)$ there is an associated map between orientation lines
$$\partial_\Gamma \colon o_\gamma \to o_\gamma',$$
whenever the moduli space of such trajectories is $0$-dimensional.
Then we define
\begin{gather}
    d\colon CM_{-*}(\Omega^{1,2}(M,q))\to CM_{-*}(\Omega^{1,2}(M,q))\\
\nonumber    d[\gamma]=\sum_{\substack{\bm \gamma' \in \mathcal{P}, \Gamma \in\mathcal{M}^{\op{ind}=1}(\gamma,\gamma';X)}} \partial_\Gamma([\gamma]),\nonumber
\end{gather}
where the superscript $\op{ind}=1$ refers to the virtual dimension of $\widetilde{\mathcal{M}}(\gamma,\gamma';X)$ and $[\gamma]$ is some generator of $o_\gamma$. The vector field $X$ will be suppressed when it is understood or is unimportant.

For $\kappa\geq1$ and $\boldsymbol{\gamma}\in\Omega^{1,2}_\sigma(M,\bm q)$, there is a straightforward definition of a perturbed negative gradient trajectory on $\Omega_\sigma^{1,2}(M,\bm q)$ given by
\begin{equation}\label{eqn: gradient trajectory}
    \frac{\partial\bm\Gamma}{\partial s}={\bm X}(\bm\Gamma)\coloneqq\prod_{i=1}^\kappa {X^i}(\Gamma_i),
\end{equation}
where ${\bm X}=(X^1,\dots, X^\kappa)$ and $X^i\in \mathcal{G}$ for $\Omega^{1,2}(M,q_i, q_{\sigma(i)})$. By abuse of notation we write ${\bm X}=(X^1,\dots, X^\kappa)\in \mathcal{G}^\kappa$ with the understanding that each $X^i$ depends on $q_i$ and $\sigma$; if $\bm\gamma\in \Omega^{1,2}(M,{\bm q})$ where $\sigma$ is unspecified, then we assume that $\bm X$ is with respect to the appropriate $\sigma$.
The Morse cohomology whose differential is defined by counting gradient multi-trajectories between critical points as above gives nothing but $\kappa$ copies of the Morse (co)homology $HM_{-*}(\Omega^{1,2}(M,q))$ of the based loop space of $M$ tensored with the symmetric group $S_\kappa$.
In order to include a graded version of the HOMFLY skein relation, we need to modify the definition of the Morse differential so that, given a gradient trajectory $\bm\Gamma(s)$ starting at $\bs{\gamma}$ (i.e., satisfying Equation~\eqref{eqn: gradient trajectory}), whenever there exist $s,t$ such that $\bm\Gamma(s)(t)$ crosses the big diagonal of $\mathrm{Sym}^\kappa(M)$, additional trajectories that bifurcate from the original one are created. 

Specifically, denote the set of critical points of $\mathcal{A}$ on $\Omega_\sigma^{1,2}(M,\bm q)$ by $\mathcal{P}_\sigma$ and (abusing notation) the collection of all such multiloops by $$\mathcal{P}=\bigsqcup_{\sigma \in S_\kappa} \mathcal{P}_\sigma.$$ 
To $\bm \gamma=(\gamma_1, \ldots, \gamma_\kappa) \in \mathcal{P}$ we associate the orientation line
$$o_{\bm \gamma}=o_{\gamma_1} \otimes \dots \otimes o_{\gamma_\kappa}.$$

\begin{definition} \label{defn: based multiloop}
Let $CM_{0,-*}(\Omega^{1,2}(M,\bm q))$ be the $R=\Z$-module generated by the orientation lines associated with elements of $\mathcal{P}$. Then the {\em based multiloop complex $CM_{-*}(\Omega^{1,2}(M,\bm q))$ of $M$} is a cochain complex whose underlying module is $CM_{0,-*}(\Omega^{1,2}(M,\bm q))\llbracket \hbar\rrbracket$ when $n=2$ and $CM_{0,-*}(\Omega^{1,2}(M,\bm q)) \otimes_{R}R[\hbar]$ when $n>2$.  The differential is given by Equation~\eqref{eqn: differential of multiloop complex} below.
\end{definition}

In Appendix~\ref{subappendix: mfls} we introduce spaces of perturbations of vector fields $\mathfrak{X}_-,\mathfrak{X}_+, \mathfrak{X}_0$ for any positive integer $m$. Let us fix $m$ sufficiently large ($m\geq 3$ suffices).  The elements of these spaces are certain maps to the space of vector fields on $\Omega^{1,2}(M, \bm q, \bm q')$ from domains $(-\infty,0], [0, +\infty)$ and $[0, +\infty)^2$, respectively. 

Let $\ell\geq 0$ be an integer, $\bm \tau=(\tau_1, \dots, \tau_\ell)$ an $\ell$-tuple of pairs $\tau_1=\{i_1, j_1\}, \dots, \tau_\ell=\{i_\ell, j_\ell \}$ of elements of $[\kappa]$, and 
$$ \bm Y= (Y_0, Y_1, \dots, Y_{\ell-1}, Y_{\ell}) \in \mathfrak{X}_- \times (\mathfrak{X}_0)^{\ell-1} \times \mathfrak{X}_+$$ 
an $(\ell+1)$-tuple of perturbations. We will also need an ordering/permutation $\bm s=(s_1, \dots, s_\ell)$ of elements of $[\ell]$. (In the following definition, the first switch that occurs is labeled $s_1$, the second switch is labeled $s_2$, etc.) 

\begin{definition} \label{def-diff}
Given two critical multiloops $\bm \gamma\in \mathcal{P}_\sigma$, $\bm \gamma' \in \mathcal{P}_{\sigma'}$, a {\em multiloop flow line with switchings  (abbreviated MFLS) from $\bm\gamma$ to $\bm\gamma'$ with respect to data $\ell, \bm Y, \bm\tau$, $\bm s$},\footnote{More precisely, it's a ``multiloop flow line with possible switchings'', but that would be too cumbersome.}  
is a tuple
\begin{equation*}
        \bm \Gamma=\left((\theta_{s_1}, \Gamma_0), (\theta_{s_2}, \Gamma_1, l_1), \dots, (\theta_{s_\ell}, \Gamma_{\ell-1}, l_{\ell-1}), \Gamma_\ell \right), 
\end{equation*}
where $\bm\theta=(\theta_1, \dots, \theta_{\ell}) \in \op{Conf}_\ell([0,1])$, $l_1,\dots,l_{\ell-1}\in [0; +\infty)$ and the following hold:
\begin{enumerate}
        \item  The maps $$\left\{\begin{array}{l} \Gamma_0 \colon (-\infty, 0]\to \Omega^{1,2}(M,\bm q);\\
        \Gamma_i \colon [0,l_i]\to \Omega^{1,2}(M,\bm q) \quad \text{ for }\quad  i=1,\dots,\ell-1;\\
        \Gamma_\ell \colon [0, +\infty) \to \Omega^{1,2}(M,\bm q)
        \end{array}\right.$$
        are continuously differentiable.
        \item \label{item: gradient equation} $\Gamma_i'(s)={(\bm X+Y_i(l_i, s))}_{\Gamma_i(s)}$ for $\, i=0,\dots,\ell$ (by definition, for $i=0, \ell$, $Y_i(l_i, s)$ is simply $Y_i(s)$).
        \item $\Gamma_0(-\infty)=\boldsymbol{\gamma}$, $\Gamma_\ell(+\infty)=\boldsymbol{\gamma}'$.
        \item \label{item: switch-condition} For $k=1, \dots, \ell$, $(\bm \theta, \Gamma_{k-1}(l_{k-1})) \in \Delta^{\tau_k}_{I_{s_k}}$ (here we are taking $l_0=0$) and
        $$sw^{i_kj_k}_{I_{s_k}}(\bm \theta, \Gamma_k(l_k))=(\bm \theta, \Gamma_{k+1}(0)).$$
\end{enumerate}
\end{definition}

Let $\mathcal{M}_\ell(\bm \gamma,\bm \gamma';\bm Y, \bm \tau, \bm s)$ be the moduli space of all MFLS $\bm\gamma$ from $\bm\gamma$ to $\bm\gamma'$ with respect to $\ell, \bm Y, \bm\tau, \bm s$.

\begin{notation} \label{notation: switching} 
We often write $\bm \Gamma=(\Gamma_0,\dots, \Gamma_\ell)$, suppressing the $\theta_i$ and $l_i$ from the notation.  We also often omit the subscript $\ell$ from $\mathcal{M}_\ell(\bm \gamma,\bm \gamma';\bm Y, \bm \tau, \bm s)$ since $|\bm\tau|=\ell$.
\end{notation}
\begin{figure}[h]
    \centering
    \includegraphics[width=3.5cm]{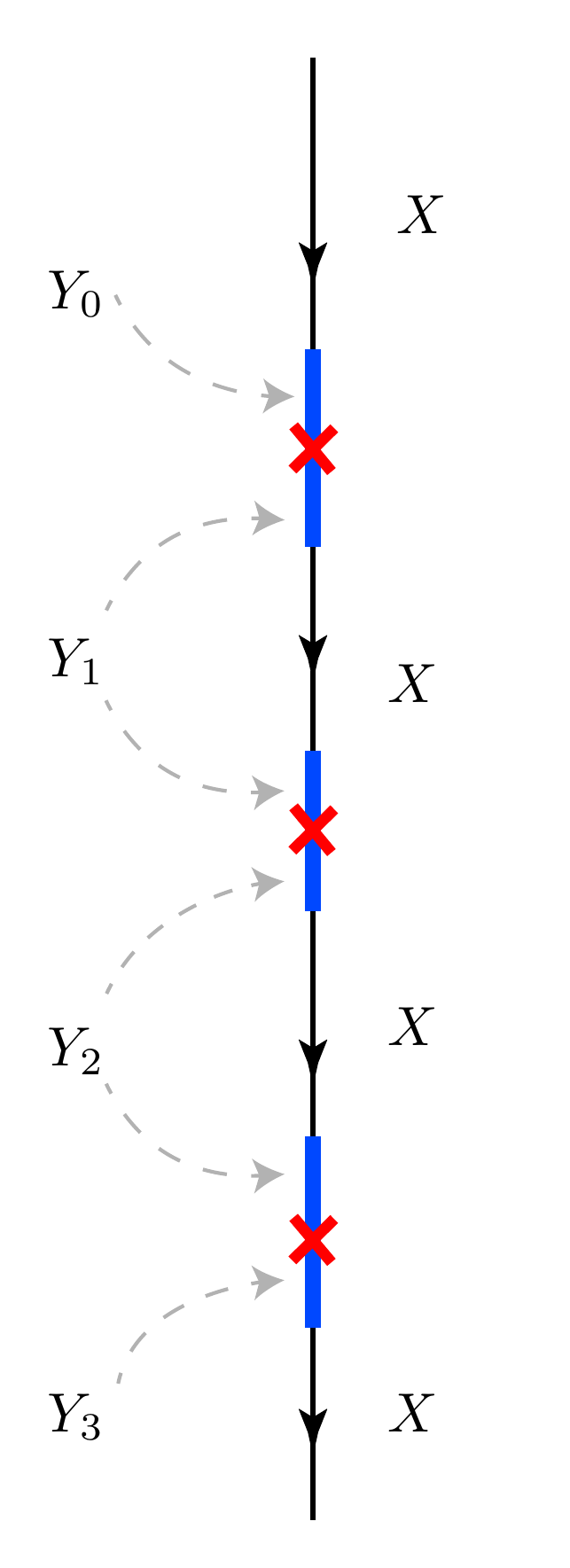}
    \caption{Here we depict the perturbation scheme for an MFLS with $3$ switches, marked by red crosses. Away from the location of switching markers the trajectories are the flow lines of the fixed pseudogradient vector field $X$. Near the switches they are flow lines of the perturbed vector fields $X+Y_i$. Note that $Y_i$ for $i=1, \dots, \ell-1$ is nonzero near the $i$th and $(i+1)$st switching markers as depicted.}
    \label{fig: MFLS-perturbation}
\end{figure}
We write $\op{ind}(\bm\gamma,\bm\gamma',\ell)$ for the virtual dimension of $\mathcal{M}(\boldsymbol{\gamma},\boldsymbol{\gamma}'; \bm Y, \bm \tau, \bm s)$ (or $\op{ind}(\bm\Gamma)$ if $\bm\Gamma\in \mathcal{M}(\boldsymbol{\gamma},\boldsymbol{\gamma}'; \bm Y, \bm \tau, \bm s)$), which is equal to $-(n-2)\ell+|\bm\gamma'|-|\bm\gamma|$ by Theorem~\ref{theorem: transversality-for-MFLS}. The grading of $\boldsymbol{\gamma}=(\gamma_1,\dots,\gamma_\kappa)\in \mathcal{P}$ is {\em cohomological}, i.e., $|\bm \gamma|=|\gamma_1|+\dots+|\gamma_\kappa|$, where $|\gamma_i|=-\op{ind}(\gamma_i)$ and $\op{ind}$ is the Morse index. The moduli spaces $\widetilde{\mathcal{M}}(\bm \gamma,\bm \gamma';\bm Y, \bm \tau, \bm s)$ are oriented as in \cite{KY}.

\begin{lemma}\label{lemma-d-regular}
For generic $\bm Y \in \mathfrak{X}_- \times (\mathfrak{X}_0)^{\ell-1} \times \mathfrak{X}_+$, given $\bm \gamma, \bm \gamma' \in \mathcal{P}$, $\ell$, $\bm\tau$ and $\bm s$:
\be
\item $\mathcal{M}(\bm \gamma,\bm \gamma'; \bm Y, \bm \tau, \bm s)$ is a smooth manifold of dimension
$$\op{dim}\mathcal{M}(\bm \gamma,\bm \gamma'; \bm Y, \bm \tau, \bm s)=\op{ind}(\bm \gamma,\bm \gamma',\ell)=-(n-2)\ell+|\bm\gamma'|-|\bm\gamma|;$$
in particular, if $\op{dim}\mathcal{M}(\bm \gamma,\bm \gamma'; \bm Y, \bm \tau, \bm s)$ is negative, then $\mathcal{M}(\bm \gamma,\bm \gamma'; \bm Y, \bm \tau, \bm s)=\varnothing$.
\item If $\op{dim}\mathcal{M}(\bm \gamma,\bm \gamma'; \bm Y, \bm \tau, \bm s)=0$, then $\mathcal{M}(\bm \gamma,\bm \gamma'; \bm Y, \bm \tau, \bm s)$ is finite.
\item There exists a \emph{consistent choice of perturbation data} $\{\bm Y^{\ell}_{\bm \tau,\bm s}\}_{\ell \ge 0, |\bm \tau|=\ell,\bm s}$, where 
$$\bm Y^{\ell}_{\tau, \bm s}=(Y^{\ell}_{\tau, \bm s,0}, \dots, Y^{\ell}_{\tau, \bm s,\ell}) \in \mathfrak{X}_- \times (\mathfrak{X}_0)^{\ell-1} \times \mathfrak{X}_+,$$ 
such that 
$$\bigcup_{\bm s \in S_\ell, \bm \tau \colon |\bm \tau| =\ell} \mathcal{M}^{\op{ind}=1}(\bm \gamma, \bm \gamma'; \bm Y^{\ell}_{\bm \tau, \bm s}, \bm \tau, \bm s)$$
admits a compactification with boundary strata of the form
\begin{equation}\label{eqn: boundary}
\mathcal{M}^{\op{ind}=0}(\bm\gamma,\bm\gamma''; \bm Y^{\ell'}_{\bm \tau', \bm s'}, \bm \tau', \bm s') \times \mathcal{M}^{\op{ind}=0}(\bm\gamma'',\bm\gamma'; \bm Y^{\ell-\ell'}_{\bm \tau'', \bm s''}, \bm \tau'', \bm s''),
\end{equation}
where $\bm \tau= (\bm \tau', \bm \tau'')$, $|\bm \tau'|=\ell' \ge 0$, $|\bm \tau''|=\ell-\ell' \ge 0$, and $\bm s'$ is the ordering of the elements of $[\ell']$ by considering the order on the first $\ell'$ elements of $\bm s$, while $\bm s''$ is a similarly defined ordering on $[\ell-\ell']$.
\item Fixing a choice of orientations on unstable manifolds of $\bm \gamma$ and $\bm \gamma'$ provides a natural orientation of $\mathcal{M}(\bm \gamma, \bm \gamma'; \bm Y, \bm \tau, \bm s)$,  which depends on the orientation of $M$.
\ee
\end{lemma}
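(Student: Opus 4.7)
My plan is: (i) set up a Banach bundle whose zeros give $\mathcal{M}(\bm\gamma,\bm\gamma';\bm Y,\bm\tau,\bm s)$ and apply Sard--Smale to establish parts (1)--(2); (ii) classify codimension-one degenerations to see that the only ones are breakings at intermediate critical multiloops, giving the boundary strata~\eqref{eqn: boundary}; (iii) construct the consistent family $\{\bm Y^\ell_{\bm\tau,\bm s}\}$ by induction on $\ell$; and (iv) orient $\mathcal{M}$ as an iterated fiber product of unstable/stable manifolds with the co-oriented switching loci.

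\textbf{Transversality.} Fix $(\bm\gamma,\bm\gamma',\ell,\bm\tau,\bm s)$ and let $\mathcal{B}=\mathcal{B}_{\bm\gamma,\bm\gamma',\bm\tau,\bm s}$ be the Banach manifold of tuples $(\bm\theta, l_1,\dots,l_{\ell-1}, \Gamma_0,\dots,\Gamma_\ell)$ of the appropriate Sobolev regularity, with the prescribed asymptotics at $\bm\gamma,\bm\gamma'$ and the switching incidences~(\ref{item: switch-condition}) built in. On the open stratum where the $\theta_i$'s are pairwise distinct, Lemma~\ref{lemma: two-intersection-moments} and Proposition~\ref{prop: switching-properties}(1) show that these incidences cut out a $C^{m-1}$ Banach submanifold. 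Over $\mathcal{B}\times\mathfrak{P}$, with $\mathfrak{P}=\mathfrak{X}_-\times(\mathfrak{X}_0)^{\ell-1}\times\mathfrak{X}_+$, I would form the Banach bundle with fiber $\bigoplus_{i=0}^\ell L^2(\Gamma_i^*T\Omega^{1,2}(M,\bm q))$ and section $\mathfrak{s}$ given by the perturbed gradient equations~(\ref{item: gradient equation}). An index count (spectral flow along each $\Gamma_i$ contributes $|\bm\gamma'|-|\bm\gamma|$ after Morse-index shifts, plus $+1$ for $\theta_{s_1}$, plus $+2$ for each pair $(\theta_{s_k},l_{k-1})$ with $k\geq 2$, minus $n$ per incidence) gives Fredholm index $-(n-2)\ell+|\bm\gamma'|-|\bm\gamma|$. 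Surjectivity of the universal linearization follows because an $L^2$-cokernel element along some $\Gamma_i$ has support in an open subinterval where $\bm X$ is nonzero (an MFLS is nonstationary between two switches by the Palais--Smale property and Morse-Smale property of $\bm X$), so a local variation of $Y_i$ kills it. Sard--Smale then yields (1), and (2) follows since a compact $0$-manifold is finite.

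\textbf{Compactness and codimension-one boundary.} The candidate codimension-one degenerations of a $1$-dimensional family of MFLS are: (a) breaking at an intermediate $\bm\gamma''$; (b) a parameter $\theta_{s_k}$ reaching $0$ or $1$; and (c) two consecutive switchings merging ($l_{k-1}\to 0$) or two $\theta$'s colliding ($\theta_{s_k}=\theta_{s_{k'}}$). Case (b) is forbidden because the reparametrizations $\nu$ of Section~\ref{section: switching map} equal the identity near $0$ and $1$, so the limit of a switching with $\theta\to 0,1$ extends to a non-switching Morse trajectory; combined with the condition $(\dagger)$ of Remark~\ref{rmk: V=0 case}, which prevents multiloops from meeting the diagonal at the basepoints, this is seen to have codimension $\geq n\geq 2$. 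Case (c) is excluded by Lemma~\ref{lemma: two-intersection-moments}(1), since the collision locus has codimension $2n+1$ in the ambient path space and only one new free parameter is gained by moving the collision to the interior, leaving codimension $\geq n\geq 2$ in the family. Thus only (a) contributes, and a standard pregluing/Newton iteration at the breaking neck produces a collar realizing the boundary description~\eqref{eqn: boundary}.

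\textbf{Consistent perturbations and orientations.} For (3), I would construct $\bm Y^\ell_{\bm\tau,\bm s}$ inductively on $\ell$ (base case $\ell=0$ is Theorem~\ref{thm: summary of morse in infinite dimensions}(iv)), in the spirit of the coherent perturbation schemes of \cite{mazuir2021I} and Seidel. The boundary datum of the $\op{ind}=1$ moduli space for $(\ell,\bm\tau,\bm s)$ involves factors $\mathcal{M}^{\op{ind}=0}(-;\bm Y^{\ell'}_{\bm\tau',\bm s'},-)$ with $\ell'\leq\ell$; the data $\bm Y^\ell_{\bm\tau,\bm s}$ itself can appear in the factor with $\ell'=\ell$, but the other factor then has $\ell''=0$ and uses only previously fixed data, so the induction is well-founded. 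A partition-of-unity/collar argument in $\mathfrak{P}$ extends the inductively prescribed boundary data to the full parameter space, and a further generic interior perturbation achieves regularity by the transversality step. \emph{The main obstacle is the combinatorial consistency} among all decompositions $(\bm\tau',\bm s'),(\bm\tau'',\bm s'')$ compatible with $(\bm\tau,\bm s)$ and the verification that the extensions remain inside the classes $\mathfrak{X}_-,\mathfrak{X}_0,\mathfrak{X}_+$ of the appendix; this is standard but delicate bookkeeping on a Stasheff-type parameter space. For (4), each switching locus $\Delta^{\tau_k}_{I_{s_k}}$ has normal bundle canonically isomorphic to $T_xM$ and hence is co-oriented by the orientation of $M$; viewing $\mathcal{M}(\bm\gamma,\bm\gamma';\bm Y,\bm\tau,\bm s)$ as an iterated fiber product of (co-)oriented pieces, the fiber-product orientation conventions of \cite{KY} give a canonical orientation induced by chosen orientations of the unstable manifolds of $\bm\gamma$ and $\bm\gamma'$.
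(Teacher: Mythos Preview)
Your transversality argument for (1)--(2) is essentially equivalent to the paper's (Theorem~\ref{theorem: transversality-for-MFLS}), which organizes the same Sard--Smale argument as an iterated fiber product rather than one large Banach bundle; the index count agrees. Likewise (4) matches the paper's reference to \cite{KY}.

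The genuine gap is in your compactness analysis for (3). You treat the degeneration $l_{k-1}\to 0$ (two consecutive switches merging in Morse-flow time) as if it were excluded by codimension, invoking Lemma~\ref{lemma: two-intersection-moments}(1). But that lemma concerns the locus where additionally $\theta_{s_{k-1}}=\theta_{s_k}$; when the two $\theta$-values remain distinct, the limit lies in $\Delta^{\tau_{k-1},\tau_k}_{I_{s_{k-1}},I_{s_k}}$, which has codimension $2n$, not $2n+1$. Setting $l_{k-1}=0$ costs exactly one dimension, so this \emph{is} a codimension-one boundary stratum of each individual $\mathcal{M}_\ell(\bm\gamma,\bm\gamma';\bm Y,\bm\tau,\bm s)$; it cannot be dismissed by regularity.

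The paper's mechanism (Theorem~\ref{thm: compactness-for-MFLS} and Corollary~\ref{cor: extend the moduli space}) is different and explains why the statement of (3) takes a union over all $\bm s$ and $\bm\tau$. A boundary point with $l_{k-1}=0$ for the data $(\bm\tau,\bm s)$ is simultaneously a boundary point for the data $(\bm\tau^{k-1,j},\bm s^{k-1})$ with the $(k-1)$st and $k$th switches transposed, by the commutativity relations of the switching map (Proposition~\ref{prop: switching-properties}(2)). Provided the perturbations are chosen to be \emph{symmetric} and \emph{commutative} in the sense of \eqref{eq-perturbation-symmetric}--\eqref{eq-perturbation-switching}, these two boundary pieces glue, and the point becomes interior in the union. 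Your inductive scheme for consistent perturbations must therefore build in these symmetry/commutativity constraints, not just the breaking consistency you describe; without them the $l\to 0$ walls remain and the boundary is not of the form~\eqref{eqn: boundary}.
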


\begin{proof}
    (1) follows from regularity (Theorem~\ref{theorem: transversality-for-MFLS}).
    
    (2) follows from (1) and compactness (Appendix~\ref{section: appendix-compactness}).

    (3) is a consequence of Appendix~\ref{section: appendix-compactness}, regularity, and gluing (which is not proven here). We note that the statement here is slightly inaccurate and we refer to Theorem~\ref{thm: compactness-for-MFLS} for a precise statement.

    (4) is analogous to \cite[Proposition 3.4]{KY} and will be omitted.
\end{proof}

\begin{remark}
    We may alternatively use $CM_{-*}(\Omega^{1,2}(M,\bm q,\bm q'))$ instead of $CM_{-*}(\Omega^{1,2}(M,\bm q))$,  where ${\bm q}'$ is a perturbation  of ${\bm q}$.  The proof of Lemma~\ref{lemma-d-regular} remains the same.
\end{remark}

\begin{figure}[h]
    \centering
    \includegraphics[width=12cm]{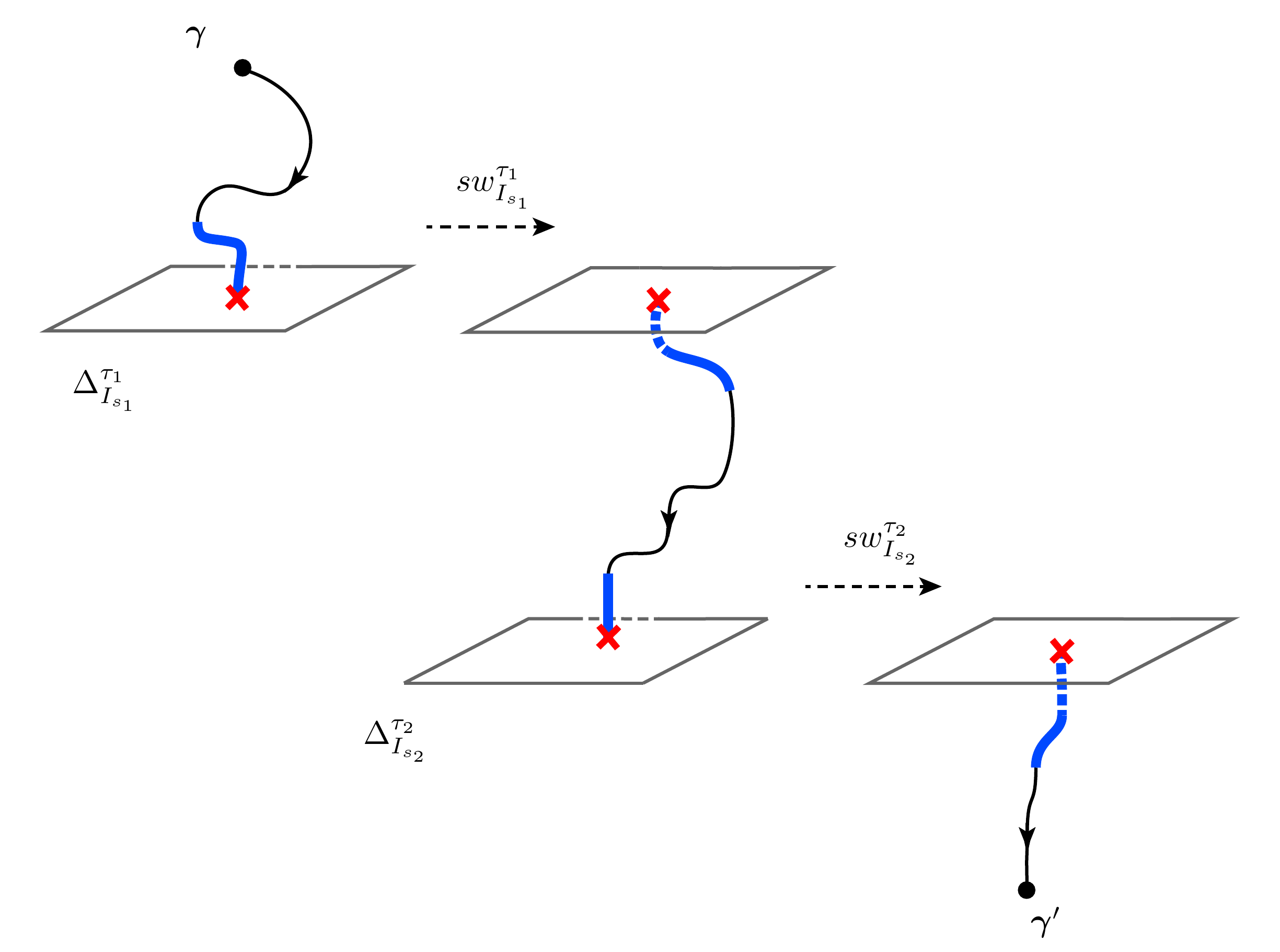}
    \caption{A schematic depiction of an MFLS with $2$ switchings. As before the red crosses denote switches and the thick blue segments denote the perturbed parts of the trajectories.}
    \label{fig: MFLS}
\end{figure}

By Lemma~\ref{lemma-d-regular}(4), to any MFLS $\bm \Gamma \in \mathcal{M}(\bm \gamma,\bm \gamma'; \bm Y, \bm \tau, \bm s)$ with $\op{ind}(\bm\Gamma)=0$ there is an associated map of orientation lines
$$\partial_{\bm \Gamma} \colon o_{\bm \gamma} \to o_{\bm \gamma'}[(n-2)\ell].$$
For a generator $[\bm \gamma'] \in o_{\bm \gamma}$ we will denote by $[\bm \gamma']\hbar^\ell$ the corresponding generator of $o_{\bm \gamma'}[(n-2)\ell]$, where $\hbar$ is our Planck constant with grading $|\hbar|=2-n$. We use the convention 
\begin{gather}
    o[-k] \cong  o\otimes \zeta^{k}, \, k >0; \\
    o[k] \cong \zeta^{-k} \otimes o, \, k>0.
\end{gather}
where $o$ is any orientation line and $\zeta$ is the trivial orientation line of grading $1$.

For a universal (i.e., consistent) choice of perturbation data $\{\bm Y^{\ell}_{\bm \tau,\bm s}\}_{\ell \ge 0, |\bm \tau|=\ell,\bm s}$ as in Lemma~\ref{lemma-d-regular}, the based multiloop differential is defined as follows:
\begin{align}
    \label{eqn: differential of multiloop complex}
    \mu^1_M([\bm \gamma])=\sum_{\substack{\bm \gamma',\,\ell\geq0, \bm \tau, \bm s \\ \bm \Gamma \in \mathcal{M}^{\op{ind}(\bm \gamma,\bm \gamma',\ell)=0}(\bm \gamma,\bm \gamma';\bm Y^\ell_{\bm \tau,\bm s}, \bm \tau, \bm s)}}   (-1)^{|\bm \gamma|}\frac{\partial_{\bm \Gamma}([\bm \gamma])}{\ell!},
\end{align}
where $[\bm \gamma]$ is a generator of $o_{\bm \gamma}$.

\begin{remark}
    Note that the count above is integer despite dividing by $\ell!$, because we can also make sure that the perturbation data is symmetric; see Appendix~\ref{section: appendix-compactness}.
\end{remark}

\begin{lemma}\label{lemma-d2}
   For a choice of universal perturbation data $\{\bm Y^{\ell}_{\bm \tau,\bm s}\}_{\ell \ge 0, |\bm \tau|=\ell,\bm s}$ as in Lemma~\ref{lemma-d-regular} the differential $\mu^1_M$ is well-defined and $\mu^1_M \circ \mu^1_M =0$.
\end{lemma}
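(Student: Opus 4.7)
The plan is to establish two claims separately: (i) that $\mu^1_M$ lands in the module specified in Definition~\ref{defn: based multiloop}, and (ii) that $\mu^1_M\circ\mu^1_M=0$.

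For (i), I would use the dimension formula from Lemma~\ref{lemma-d-regular}(1): $\op{ind}(\bm\gamma,\bm\gamma',\ell)=-(n-2)\ell+|\bm\gamma'|-|\bm\gamma|$. When $n>2$, the constraint $\op{ind}=0$ together with $|\bm\gamma'|\leq 0$ (cohomological gradings of critical points are non-positive) bounds $\ell\leq -|\bm\gamma|/(n-2)$, so only finitely many $\ell$ contribute and, by Lemma~\ref{lemma-d-regular}(2), $\mu^1_M([\bm\gamma])\in CM_{0,-*}\otimes_R R[\hbar]$. When $n=2$, each power of $\hbar$ has a finite coefficient by Lemma~\ref{lemma-d-regular}(2), producing a well-defined element of the $\hbar$-adic completion $CM_{0,-*}\llbracket\hbar\rrbracket$.

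For (ii), I would carry out the standard boundary-of-moduli-space argument. Fixing $\bm\gamma,\bm\gamma'\in\mathcal{P}$ and $\ell\geq 0$, apply Lemma~\ref{lemma-d-regular}(3) to the $1$-manifold
\[
\bigsqcup_{\bm\tau,\bm s\,:\,|\bm\tau|=\ell}\mathcal{M}^{\op{ind}=1}(\bm\gamma,\bm\gamma';\bm Y^\ell_{\bm\tau,\bm s},\bm\tau,\bm s).
\]
Its compactification is a compact oriented $1$-manifold whose codimension-$1$ boundary is given by the broken trajectories in \eqref{eqn: boundary}, so the signed count of boundary points vanishes. The next step is to match this vanishing with the coefficient of $[\bm\gamma']$ in $(\mu^1_M)^2([\bm\gamma])$. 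The key combinatorial identity is
\[
\frac{1}{\ell!}\cdot\binom{\ell}{\ell'}=\frac{1}{\ell'!\,\ell''!},\quad \ell'+\ell''=\ell,
\]
where $\binom{\ell}{\ell'}$ counts the orderings $\bm s$ of $[\ell]$ whose first $\ell'$ and last $\ell''$ entries induce prescribed orderings $\bm s'$ of $[\ell']$ and $\bm s''$ of $[\ell'']$; this is precisely the redundancy produced when grouping broken MFLS by their unordered data, and it matches the prefactor in~\eqref{eqn: differential of multiloop complex} with that in $\mu^1_M\circ\mu^1_M$. The sign matching reduces to verifying that the boundary orientation agrees with the Koszul factor $(-1)^{|\bm\gamma|}(-1)^{|\bm\gamma''|}$ arising in the composition, via the orientation conventions of Lemma~\ref{lemma-d-regular}(4), mirroring \cite[Proposition 3.4]{KY}.

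The main obstacle I anticipate is the compactness-and-gluing content behind Lemma~\ref{lemma-d-regular}(3): one must rule out additional codimension-$1$ boundary strata (e.g., switching parameters $\theta_i$ colliding with each other or escaping to $\{0,1\}$), and ensure that the consistent perturbation data $\{\bm Y^\ell_{\bm\tau,\bm s}\}$ genuinely interpolates across the broken-trajectory boundary so that the $1$-manifold is a true cobordism between products of $\op{ind}=0$ moduli spaces. With those inputs (deferred to Appendix~\ref{section: appendix-compactness}) granted, the rest of the argument is the combinatorial and sign bookkeeping described above.
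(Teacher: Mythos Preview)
Your proof is correct and follows essentially the same approach as the paper, which states only that the lemma is an immediate consequence of Lemma~\ref{lemma-d-regular}. Your proposal simply makes explicit what the paper leaves implicit: the index/grading bound for well-definedness, the boundary-of-$1$-manifold argument via part~(3), and the combinatorial matching $\tfrac{1}{\ell!}\binom{\ell}{\ell'}=\tfrac{1}{\ell'!\,\ell''!}$ reconciling the normalization in~\eqref{eqn: differential of multiloop complex} with the composition.
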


\begin{proof}
    This is an immediate consequence of Lemma \ref{lemma-d-regular}. 
\end{proof}

\begin{example} \label{ex: S2}
    Let $M=S^2$ be the unit $2$-sphere with the round metric induced from $\mathbb{R}^3$.
    Pick basepoints $\bm q=(q_1,q_2)$ where $\{q_1,q_2\}\subset S^2$ and $q_1\neq q_2$. 
    Let $x_1$ be the tuple of geodesics of total degree $-1$ which consists of an equator $\gamma_1$ of degree $-1$ based at $q_1$ and the constant loop $\gamma_2$ at $q_2$. Our convention is such that the orientation of the unstable manifold of a geodesic path of degree $-1$ coincides with the orientation of the trajectory from such a path to $1$ contained in the \emph{right} hemisphere with respect to the path. With respect to this convention $[x_1] \in CM_{-*}(S^2, \bm q)$ is the corresponding generator and by abuse of notation we also denote it by $x_1=[x_1]$.
    
    There are two gradient trajectories from $\gamma_1$ to the constant loop at $q_1$. One has no intersection with $q_2$ while the other one crosses the graph of the constant loop of $q_2$ transversely. Note that explicit calculation in the closed string case is given in \cite[Appendix B]{ACF2016} and the open string case is analogous.
    Therefore, there is an MFLS with $1$ switching which occurs when the trajectory above reaches $q_2$.
    This is illustrated in Figure \ref{fig-dx1}.
    Hence,
    \begin{equation*}
        \mu^1_M(x_1)=\hbar\,T_1+o(\hbar),
    \end{equation*}
    where $T_1=\{\omega_1,\omega_2\}$, $\omega_1$ is the shortest geodesic from $q_1$ to $q_2$, and $\omega_2$ is the shortest geodesic from $q_2$ to $q_1$; $o(\hbar)$ denotes the higher-order terms in $\hbar$. Here again we abuse notation and denote the generator $[T_1]$ simply by $T_1$. Note that the sign in front of the first term is implied by our orientation convention and the additional sign twist coming from~\eqref{eqn: differential of multiloop complex}.
\end{example}

\begin{figure}[ht]
    \centering
    \includegraphics[width=6cm]{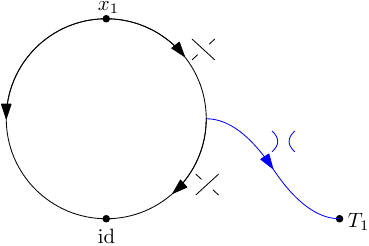}
    \caption{}
    \label{fig-dx1}
\end{figure}

\subsection{The $A_\infty$-structure} \label{subsection: the A infty structure}

In this subsection we describe the $A_\infty$-structure on the based multiloop complex $CM_{-*}(\Omega^{1,2}(M,\bm q))$. The $A_\infty$-operations
are given by a signed count of pseudogradient flow trees of the action functional with $\ell$ switchings. 
See Figure \ref{fig-a_infty} for an illustration, where the edges are flow lines with switchings and the vertices represent concatenations.

\begin{figure}[ht]
    \centering
    \includegraphics[width=9cm]{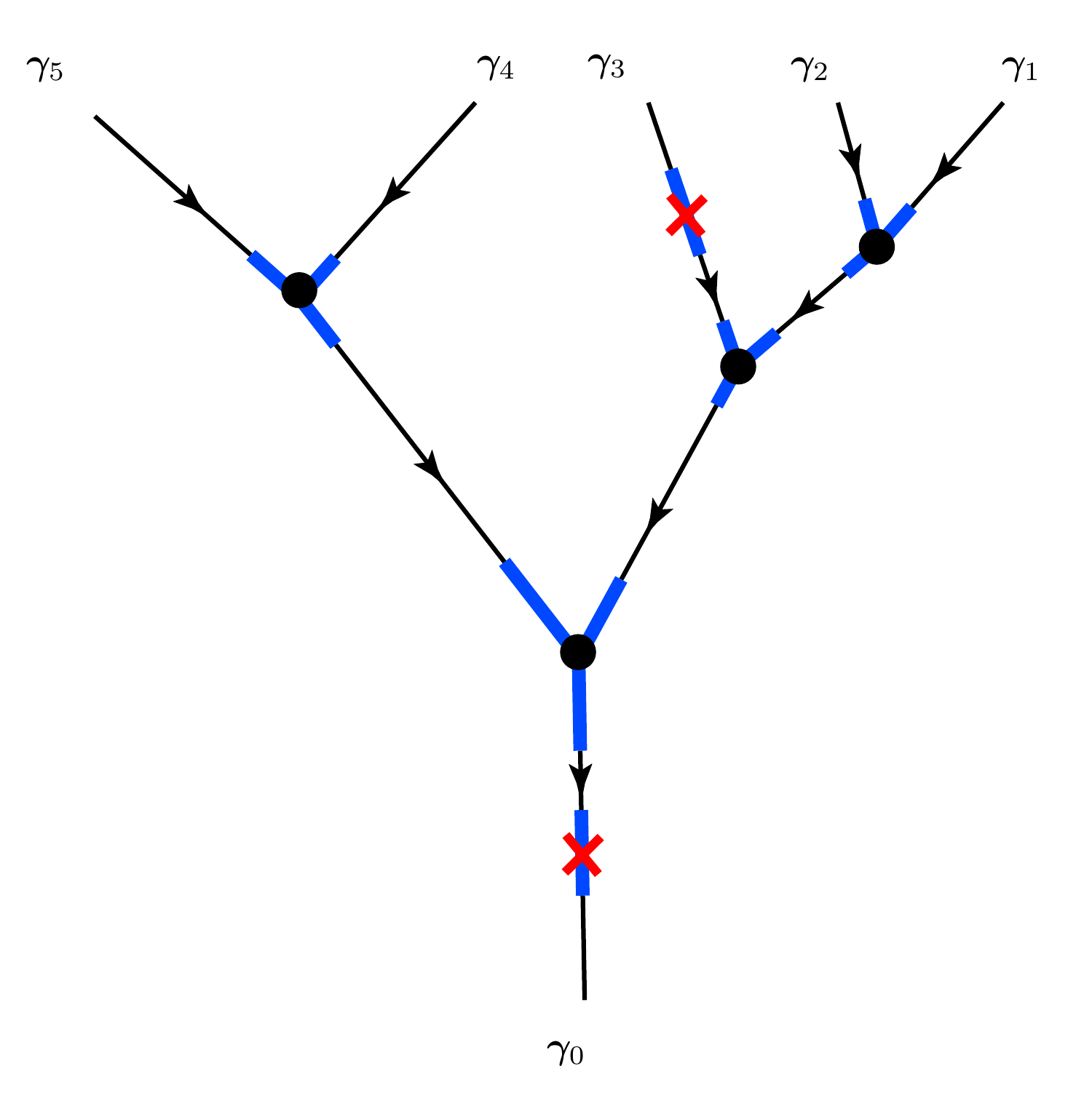}
    \caption{Schematic depiction of an MFTS with $2$ switchings contributing to $[\bm \gamma_0]\hbar^2$ in $\mu^5_M([\bm \gamma_5],[\bm \gamma_4],[\bm \gamma_3], [\bm \gamma_2], [\bm \gamma_1])$. As before we depict in blue the areas where additional perturbations are turned on.}
    \label{fig-a_infty}
\end{figure}

We now give a more precise definition of the $A_\infty$-operations.  We first make some preliminary definitions. 

Let $h_\varepsilon:[0,1]\to[0,1]$, $\varepsilon \in (0, \frac{1}{4}]$, be a family of smooth step functions such that:
\begin{gather}
    h_\varepsilon(0)=0, \quad h_\varepsilon(1)=1, \quad  h_\varepsilon(x)=x \text{~~ for ~~ $\varepsilon\leq x \leq 1-\varepsilon$},\\
    h'_\varepsilon(x)>0 \text{~~ for ~~$x\in (0,1)$}, \quad h_\varepsilon^{(k)}(0)= h_\varepsilon^{(k)}(1)=0 ~~ \forall k>0.
\end{gather}

The family $\{h_\varepsilon\}$ is used to smooth out points of concatenation in the following definition:

\begin{definition}[Weighted concatenation]
    Let $\bm a=(a_1,\dots, a_{d})$ be a vector of positive real numbers and $\bm q_0, \dots, \bm q_d$ be $\kappa$-tuples of points in $M$. Writing $a=a_1+\dots + a_{d}$, the {\em weighted concatenation map} is given by:
\begin{gather*}
    c^{\bm a}: [0,1]^{\ell} \times \Omega^{m,2}(M,\bm q_{d-1}, \bm q_d)\times \dots\times\Omega^{m,2}(M,\bm q_0, \bm q_1)\to[0,1]^{\ell} \times \Omega^{m,2}(M,\bm q_0, \bm q_d),\\
    (\bm \theta, \bm\gamma_d, \dots, \bm\gamma_1)\mapsto \left(\bm \theta,
        \left\{
            \begin{array}{ll}
                \text{$\bm\gamma_1(h_{\varepsilon}(\tfrac{a}{a_1} t))$, \quad $t\in[0,\tfrac{a_1}{a}]$,}\\
				\text{$\bm\gamma_2(h_{\varepsilon}(\tfrac{a}{a_2} t - \tfrac{a_1}{a_2}))$, \quad $t\in [\tfrac{a_1}{a}, \tfrac{a_1+a_2}{a}]$,}\\
				\qquad\vdots \\
                \text{$\bm \gamma_d(h_{\varepsilon}(\tfrac{a}{a_d}t - \tfrac{a_1+\dots+a_{d-1}}{a_d}))$,\quad $t\in[\tfrac{a_1+\dots+a_{d-1}}{a},\tfrac{a_1+\dots+a_d}{a}]$}
            \end{array}
        \right.\right),
\end{gather*}
where $\varepsilon>0$ is a small constant.  
\end{definition}

Observe that precomposing with reparametrizations of type $h_\varepsilon$ guarantees that the image is indeed in $\Omega^{m,2}(M, \bm q_0, \bm q_d)$.

\begin{lemma}\label{lemma: concatenation-commutativity}
Let $(\bm \theta, \bm \gamma_p) \in \overline{\Delta}_{I_k}^{ij}$, where $1 \le p \le d$, $i\not=j \in [\kappa]$, and $k \in [\ell]$. Then 
    \begin{gather}
        c^{\bm a}(\bm \theta,\bm \gamma_d, \dots, \bm \gamma_{p+1}, sw^{ij}_{I_k}(\bm \theta, \bm \gamma_p), \bm \gamma_{p-1}, \dots, \bm \gamma_1)= 
         sw^{\tilde{i}\tilde{j}}_{I_{k+\ell_d+\dots+\ell_{p+1}}}(c^{\bm a}(\bm \theta, \bm \gamma_d, \dots, \bm \gamma_1) ),
    \end{gather}
where $\tilde{i}$ and $\tilde{j}$ are obtained by applying $\sigma^{-1}(\bm \gamma_1)\circ \dots \circ \sigma^{-1}(\bm \gamma_{p-1})$ to $i$ and $j$, respectively, and $\sigma(\bm \gamma)$ is the permutation type of the multipath $\bm \gamma$.
\end{lemma}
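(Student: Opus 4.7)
The approach is to unfold both sides of the claimed equality and compare the resulting $\kappa$-tuples of reparametrized paths strand-by-strand. Both $sw^{ij}_{I_k}$ and the weighted concatenation $c^{\bm a}$ act trivially on the $\bm \theta$-coordinates, so it suffices to compare multipath components. Moreover, the switching map only modifies two of the $\kappa$ strands (indices $i,j$ on the left-hand side, indices $\tilde{i},\tilde{j}$ on the right-hand side), while $c^{\bm a}$ acts strand-wise, so the equality reduces to checking that these two strands match.

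First I would track the strand labels. By compatibility of basepoints under concatenation, the strand of $c^{\bm a}(\bm \theta, \bm \gamma_d,\dots,\bm \gamma_1)$ whose $\bm \gamma_p$-segment carries the original strand index $i$ has global label exactly $\tilde{i} = \sigma^{-1}(\bm \gamma_1)\circ\dots\circ\sigma^{-1}(\bm \gamma_{p-1})(i)$, and similarly for $j$ and $\tilde{j}$. Hence the pair $(i,j)$ of strands of $\bm \gamma_p$ that gets switched on the left-hand side is precisely the pair of strands of $c^{\bm a}(\ldots)$ labeled $(\tilde{i},\tilde{j})$ that gets switched on the right-hand side. Next I would track the switching time: the $\bm \gamma_p$-segment of the concatenated path occupies the subinterval $[\tfrac{a_1+\dots+a_{p-1}}{a},\tfrac{a_1+\dots+a_p}{a}]\subset [0,1]$, and the point $\theta_k$ inside $\bm\gamma_p$ corresponds under the affine rescaling on that subinterval to a unique switching point in the concatenated path; the index shift $k + \ell_d + \dots + \ell_{p+1}$ on the right-hand side is the corresponding bookkeeping on the parameter space $[0,1]^\ell$.

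Finally I would check compatibility of the two reparametrizations: $\nu_{[\ell]}$ (used by $sw^{ij}_{I_k}$) and $h_\varepsilon$ (used by $c^{\bm a}$). By construction $\nu_{[\ell]}$ is the identity on $\varepsilon_{[\ell]}$-neighborhoods of $0$ and $1$, so after affine rescaling its support lies in the open interior of each concatenation subinterval; meanwhile $h_\varepsilon$ is the identity in the open interior of each subinterval and nontrivial only near the seams. Thus the two reparametrizations have disjoint support within each subinterval of $[0,1]$ and commute trivially, so the cut-and-concatenate procedure produces the same parametrized path on both sides.

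The main obstacle lies in this last step: the reparametrization $\nu_{[\ell]}$ is defined globally by a partition of unity subordinate to a cover $\{U_S\}$ of $\overline{\Delta}_{I_{[\ell]}}$, and therefore a priori depends on the entire configuration $(\bm\theta,\bm\gamma)$ rather than just on the $\bm\gamma_p$-segment being switched. One must verify that switching $\bm\gamma_p$ and then concatenating does not select a different open set of the cover than concatenating first and then switching. This is where properties (C1) and (C2) of the cover enter decisively: since $(\bm\theta,\bm\gamma_p)\in\overline{\Delta}^{ij}_{I_k}$, the relevant stratum is determined intrinsically by $\bm\gamma_p$, and both compositions land in the open set $U_S$ corresponding to this stratum, where $\nu_{[\ell]}$ is governed by $f_{\theta_k}$ independently of the other concatenated pieces. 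Combined with the label and time bookkeeping above, this yields the asserted equality.
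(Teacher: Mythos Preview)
Your approach is essentially the paper's one-line proof (``This follows from the definitions of $h_\varepsilon$ and $\nu_\ell$'') spelled out in detail: the disjoint supports of $\nu_{[\ell]}$ (identity on $\varepsilon_{[\ell]}$-neighborhoods of $0,1$) and $h_\varepsilon$ (identity on $[\varepsilon,1-\varepsilon]$) is precisely the intended mechanism. One small correction: by Definition~\ref{defn: switching-map} the map $sw^{ij}_{I_k}$ precomposes \emph{every} strand with $\nu_{[\ell]}$, not only strands $i$ and $j$, so your reduction to ``checking that these two strands match'' is not literally correct; however this does not break the argument, since the same disjoint-support reasoning shows that the reparametrization of the unswapped strands also commutes with the piecewise $h_\varepsilon$ in $c^{\bm a}$.
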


\begin{proof}
    This follows from the definitions of $h_{\epsilon}$ and $\nu_\ell$. 
\end{proof}

\begin{definition}\label{defn: metric ribbon tree}
   A \emph{ribbon tree with $d$ inputs and $1$ output} is a directed planar tree $T=(V,E)$ such that:
\begin{enumerate}
    \item $V=V_{\op{int}}\sqcup V_{\op{ext}}$ is the set of vertices and $E=E_{\op{int}} \sqcup E_{\op{ext}}$ is the set of (directed) edges, where $V_{\op{int}}$ (resp.\ $V_{\op{ext}}$) is the set of interior (resp.\ exterior) vertices; $V_{\op{ext}}=\{v_0,v_1,\dots,v_d\}$ and $E_{\op{ext}}=\{e_0, e_1, \dots, e_d\}$; $v_1,\dots,v_d$ are the \emph{incoming vertices} which have $1$ outgoing edge each; $v_0$ is the \emph{outgoing vertex} which has $1$ incoming edge; $e_1, \dots, e_d$ are the outgoing edges from $v_1,\dots, v_d$; and $e_0$ is the incoming edge to $v_0$;
    \item every vertex $v \in V_{\op{int}}$ has exactly $1$ outgoing edge $e^v_0$; the incoming edges $e^v_1, \dots, e^v_{|v|-1}$ are cyclically ordered  (here $|v|$ denotes the valence of $v$) around the vertex in a manner consistent with the planar embedding of $T$ (counterclockwise around $v)$; and the exterior vertices $v_0,\dots,v_d$ are cyclically ordered in a similar manner.
\end{enumerate} 
\end{definition}

Given $e\in E$, let us denote its {\em source} and {\em target vertices} by $s(e), t(e)\in V$.

Let $\mathcal{T}_d$ denote the set of isomorphism classes of ribbon trees with $d$ inputs and $1$ output; we will abuse notation and not distinguish between a ribbon tree and its isomorphism class. For $e \in E_{\op{int}}(T)$ we denote by $T/e$ the element of $\mathcal{T}_d$ obtained from $T$ by contracting the edge $e$. 

We denote by $\mathfrak{T}_d$ the space of Stasheff (= metric) ribbon trees with $d$ inputs and $1$ output and refer the reader to \cite[Section 4]{abouzaid2009tropical} for a more in-depth discussion of the topology of these spaces.  Here a metric ribbon tree will have infinite length along all the exterior edges and finite length along each interior edge.

\begin{definition} \label{defn: switching data tau}
    A {\em switching data $\bm\tau$} on a tree $T\in \mathcal{T}_d$ assigns: 
    \be
    \item to each $e \in E(T)$ of an $\ell_e$-tuple $\bm \tau_e$ of pairs of distinct elements of $[\kappa]$; and 
    \item to each $v \in V_{\op{int}}(T)$ and an incoming edge $e^v_i$, a tuple $\bm \tau_{v,i}$ of pairs of distinct elements of $[\kappa]$, such that $\bm\tau_v= (\bm\tau_{v,1},\dots,\bm\tau_{v, |v|-1})$ has cardinality $\ell_v$.
    \ee
    The integers $\ell_e$ and $\ell_v$ are the {\em switching numbers} on $e$ and $v$.
\end{definition}

We denote by $\mathcal{T}_{d}^{\ell, \ell'}$ the set of $(T,\bm\tau)$, where $T\in \mathcal{T}_d$ and $\bm\tau$ is a switching data on $T$ with $\ell$ edge switches and $\ell'$ vertex switches, i.e., 
\begin{equation}
    \sum_{e \in E(T)}\ell_e=\ell, \sum_{v\in V_{\op{int}}(T)}\ell_v=\ell'.
\end{equation}
Given $(T,\bm\tau) \in \mathcal{T}_d^{\ell, \ell'}$ and $e \in E_{\op{int}}(T)$, we obtain $(T/e,\bm\tau') \in \mathcal{T}_d^{\ell-\ell_e, \ell'+\ell_e}$, where $T/e$ is the contracted ribbon tree and the switching data $\bm\tau'$ is defined as follows: Suppose $e$ is the edge $e^{t(e)}_i$, i.e., the $i$th incoming edge of $t(e)$.  Let $v$ be the vertex of $T/e$ obtained by contracting $e$.  Then 
$$\bm\tau'_v = (\bm\tau_{t(e),1},\dots,\bm\tau_{t(e), i-1},(\bm \tau_{s(e)}, \bm \tau_{e^{t(e)}_i},\bm\tau_{t(e), i}), \bm\tau_{t(e),i+1},\dots,\bm\tau_{t(e), |t(e)|-1}).$$

We are mainly interested in $\mathcal{T}_{d}^{\ell}\coloneqq \mathcal{T}_{d}^{\ell, 0}$. Let $\accentset{\circ}{\mathcal{T}}_d^\ell$ be the subset of  $\mathcal{T}_d^\ell$ consisting of $(T,\bm\tau)$ such that $|E_{\op{int}}(T)|=d-2$, i.e., $T$ is a \emph{binary ribbon tree}. 
To each $(T,\bm\tau) \in \mathcal{T}^\ell_d$ we assign
\begin{equation}\label{eq: tree-perturbations}
    \mathfrak{X}(T,\bm \tau)=\mathfrak{X}_-^{d} \times \mathfrak{X}_0^{|E_{\operatorname{int}}(T)|+\ell}\times {\mathfrak{X}_+}.
\end{equation}
More specifically, to $e\in \{e_1,\dots,e_d\} \subset  E_{\op{ext}}(T)$ with $\ell_e$ edge switches we associate $(Y^e_{0}, \dots, Y^e_{\ell_{e}}) \in \mathfrak{X}_- \times  \mathfrak{X}_0^{\ell_e}$; to $e \in E_{\op{int}}(T)$ we associate $(Y^e_{0}, \dots, Y^e_{\ell_e}) \in \mathfrak{X}_0^{\ell_e+1}$; and to $e=e_0\in E_{\op{ext}}(T)$ we associate $(Y^e_{0}, \dots, Y^e_{\ell_e}) \in \mathfrak{X}_0^{\ell_e} \times \mathfrak{X}_+ $. 

Given $(T,\bm\tau) \in \mathcal{T}^\ell_d$, a \emph{total ordering} $\bm s=(s_1, \dots, s_{\ell})$ assigns to each edge or vertex switch a unique number from $[\ell]$.  Let $S(T,\bm\tau)$ be the set of all total orderings on $(T,\bm\tau)$.   Given $\bm s\in S(T,\bm\tau)$, let $\bm s^e=(s^e_0, \dots, s^e_{\ell_e-1})$ be the restriction of $\bm s$ to the edge $e$.

Let $\bm q_0, \dots, \bm q_d$ be $\kappa$-tuples of points in $M$, let $\vv{\bm \gamma}=(\bm \gamma_1, \dots, \bm \gamma_d)$ such that $\bm \gamma_i \in \Omega^{m,2}(M, \bm q^{i-1}, \bm q^{i})$ for $i=1,\dots,d$, and let $\bm \gamma_0 \in \Omega^{m,2}(M, \bm q^0, \bm q^d)$; all the $\bm\gamma_i$ are a critical points of the corresponding action functionals. 

\begin{definition}\label{def: MFTS}
Given $(T,\bm\tau) \in \mathcal{T}^\ell_d$, $\vv{\bm \gamma}$ and $\bm\gamma_0$ as above, $\bm Y \in \mathfrak{X}(T,\bm\tau)$, and $\bm s\in S(T,\bm\tau)$, an element
$\bm \Gamma_T \in \mathcal{M}_T(\vv{\bm \gamma}, \bm \gamma_0; \bm Y,\bm \tau, \bm s)$ is a tuple
$$\bm \Gamma_T=\left(  \bm \Gamma^{e_1}, \dots, \bm \Gamma^{e_d}, (\bm \Gamma^e)_{e \in E_{\op{int}}(T)}, \bm \Gamma^{e_0} \right),$$
where 
    \begin{enumerate}
        \item $\bm \Gamma^{e_i}=((\theta_{s^{e_i}_0}^{e_i}, \Gamma_0^{e_i}),(\theta^{e_i}_{s^{e_i}_1}, \Gamma_1^{e_i}, l^{e_i}_1) \dots, (\theta_{s^{e_i}_{\ell_{e_i}-1}}^{e_i}, \Gamma^{e_i}_{\ell_{e_i}-1}, l^{e_i}_{\ell_{e_i}-1}), (\Gamma^{e_i}_{\ell_{e_i}}, l^{e_i}_{\ell_{e_i}}))$ (here all $\theta^*_*\in[0,1]$ and $l^*_*\in [0,\infty)$) with 
        \begin{gather*}
            \Gamma_0^{e_i} \colon (-\infty, 0] \to \Omega^{1,2}(M, \bm q^{i-1}, \bm q^{i}), \\
            \Gamma_j^{e_i} \colon [0, l^{e_i}_{j}] \to \Omega^{1,2}(M, \bm q^{i-1}, \bm q^{i}),
        \end{gather*}
        for each $i=1, \dots, d$ and $j=1, \dots, \ell_{e_i}$; we impose the switching conditions of Definition~\ref{def-diff}\eqref{item: switch-condition} dictated by $\bm \tau_{e_i}$;
        \item $\bm \Gamma^{e_0}=((\theta_{s^{e_0}_0}^{e_0}, \Gamma_0^{e_0}, l_0^{e_0}),(\theta^{e_0}_{s^{e_0}_1}, \Gamma_1^{e_0}, l^{e_0}_1) \dots, (\theta_{s^{e_0}_{\ell_{e_0}-1}}^{e_0}, \Gamma^{e_0}_{\ell_{e_0}-1}, l^{e_0}_{\ell_{e_0}-1}), \Gamma^{e_0}_{\ell_{e_0}})$, with 
        \begin{gather*}
            \Gamma_j^{e_0} \colon [0, l^{e_0}_{j}] \to \Omega^{1,2}(M, \bm q^{0}, \bm q^{d}),\\
            \Gamma_{\ell_{e_0}}^{e_0} \colon [0, +\infty) \to \Omega^{1,2}(M, \bm q^{0}, \bm q^{d}),
        \end{gather*}
        for  $j=0, \dots, \ell_{e_i}-1$; we impose the switching conditions dictated by $\bm \tau_{e_0}$;
        \item $\bm \Gamma^e=((\theta_{s^e_0}^{e}, \Gamma_0^{e}, l_0^{e}),(\theta^{e}_{s^e_1}, \Gamma_1^{e}, l^{e}_1) \dots, (\theta_{s^e_{\ell_{e}-1}}^{e}, \Gamma^{e}_{\ell_{e}-1}, l^{e}_{\ell_{e}-1}), (\Gamma^{e}_{\ell_{e}}, l^{e}_{\ell_{e}}))$ for $e \in E_{\op{int}}(T)$ with
        $$\Gamma_j^{e} \colon [0, l^{e}_{j}] \to \Omega^{1,2}(M, \bm q^{R(s(e))-1}, \bm q^{L(s(e))}),$$
        for each $j=0, \dots, \ell_e$, where $v_{R(v)},\dots, v_{L(v)}$ are the exterior vertices with a directed path to $v\in V(T)$ (and the subscripts are in increasing order); we impose the switching conditions dictated by $\bm \tau_e$; 
    \item $(\Gamma^e_j)'(s)=(\bm X+Y^{e}(l^e_j,s))_{\Gamma^e_j(s)}$ for each $e \in E(T)$ and $j=0, \dots, \ell_e$ (we use the convention of Definition~\ref{def-diff}\eqref{item: gradient equation});
    \item $\Gamma^{e_i}_0(-\infty)=\bm \gamma_i$ for $i=1, \dots, d$ and $\Gamma^{e_0}_{\ell_{e_0}}(+\infty)=\bm \gamma_0$;
    \item (Weights) We assign a weight $w_e$ to each $e\in E(T)$: set $w_{e_i}=1$ for $i=1, \dots, d$, and for each $v \in V_{\op{int}}(T)$ with incoming edges $e^v_1, \dots, e^v_{|v|-1}$ and outgoing edge $e^v_0$ we inductively define 
    $$w_{e_0^v}=\op{min}(l_{e_0^v},1)+(1-\op{min}(l_{e_0^v},1))(w_{e^v_1}+\dots+w_{e^v_{|v|-1}}),$$
    where $l_e$ is the total length of $e$, i.e., the sum of all the $l^e_i$;
    \item (Concatenation at interior vertices)  for each $v \in V_{\op{int}}(T)$ we have
    \begin{equation}\label{eqn: concatenation at interior vertices}
        (\bm \theta, \Gamma^{e_0^v}_0(0)) = c^{{\bm w}_v} \left(\bm \theta, \Gamma^{e^v_{|v|-1}}_{\ell_{e^v_{|v|-1}}}(l^{e^v_{|v|-1}}_{\ell_{e_{|v|-1}^v}}),\dots,\Gamma^{e^v_1}_{\ell_{e^v_1}}(l^{e^v_1}_{\ell_{e_1^v}}) \right),
    \end{equation}
    where $\bm w_v= (w_{e^v_1},\dots, w_{e^v_{|v|-1}})$ and $c^{{\bm w}_v}$ is the weighted concatenation map with respect to $\bm w_v$.
\end{enumerate}
\end{definition}

We refer to an element $\bm\Gamma_T\in \mathcal{M}_T(\vv{\bm \gamma}, \bm \gamma_0;\bm Y, \bm \tau, \bm s)$ as a {\em multiloop flow tree with switchings (abbreviated MFTS) from $\vv{\bm \gamma}$ to $\bm \gamma_0$.}

\begin{remark}\label{remark: how-to-define-vertex-deep-strata}
    Given $(T,\bm\tau) \in \mathcal{T}^{\ell, \ell'}_d$, we may also define $\bm \Gamma_T\in \mathcal{M}_T(\vv{\bm \gamma}, \bm \gamma_0; \bm Y, \bm \tau, \bm s)$ by applying multiple switchings to $\Gamma^{e_i^v}_{\ell_i^v}(l^{e^v_i}_{\ell_{e_i^v}})$ prescribed by $\bm \tau_{v,i}$, prior to applying the weighted concatenation in $(7)$.
\end{remark}

\begin{lemma} \label{lemma-ainfty-regular} $\mbox{}$
\be
    \item[(a)] For generic $\bm Y \in \mathfrak{X}(T,\bm\tau)$, the space $\mathcal{M}_T(\vv{\bm \gamma}, \bm \gamma_0;\bm Y, \bm \tau, \bm s)$ is a $C^{m-1}$-manifold of dimension
    \begin{equation*}
       {\op{ind}(\vv{\bm \gamma},\bm \gamma_0,T,\bm s)}= -(n-2)\ell+|\bm \gamma_0|-|\bm \gamma_1|-\dots-|\bm \gamma_d|+|E_{\op{int}}(T)|,
    \end{equation*}
    and admits a compactification.

    \item[(b)] There is a canonical isomorphism
    \begin{equation}\label{eq: MFTS-orientation-iso}
        |\mathcal{M}_T(\vv{\bm \gamma}, \bm \gamma_0;\bm Y, \bm \tau, \bm s)| \otimes o_{\bm \gamma_d} \otimes \dots \otimes o_{\bm \gamma_1} \cong |\mathfrak{T}_d| \otimes o_{\bm \gamma_0}[(n-2)\ell].
    \end{equation}
    Hence to each element $\bm \Gamma_T \in \mathcal{M}_T(\vv{\bm \gamma}, \bm \gamma_0;\bm Y, \bm \tau, \bm s)$ with $\op{ind}(\bm\Gamma_T)=0$ there is an induced natural map
    \begin{equation}
        \mu^d_{\bm \Gamma_T} \colon o_{\bm \gamma_d} \otimes \dots \otimes o_{\bm \gamma_1} \to o_{\bm \gamma_0}[(n-2)\ell].
    \end{equation}
\ee
\end{lemma}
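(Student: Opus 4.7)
The plan is to handle (a) and (b) by viewing $\mathcal{M}_T(\vv{\bm\gamma}, \bm\gamma_0; \bm Y, \bm\tau, \bm s)$ as an iterated fiber product of edge-level MFLS moduli spaces, glued at each interior vertex $v$ by the weighted concatenation constraint~\eqref{eqn: concatenation at interior vertices}, and then applying edge-by-edge the machinery already established for Lemma~\ref{lemma-d-regular}. For each $e \in E(T)$ one has an MFLS-type space with the appropriate boundary behavior dictated by $\bm\tau_e$ and the restriction of $\bm s$ to edge-switches: a semi-infinite flow line asymptotic to $\bm\gamma_i$ at $-\infty$ if $e=e_i$, a finite-length flow line if $e\in E_{\op{int}}(T)$ (carrying its length $l_e$ as a free parameter), and a semi-infinite flow line asymptotic to $\bm\gamma_0$ at $+\infty$ if $e=e_0$. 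The perturbation space~\eqref{eq: tree-perturbations} is designed precisely so that $\bm Y^e$ is chosen independently on each edge, which is the essential input for transversality.

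For the regularity and dimension count, I first invoke Theorem~\ref{theorem: transversality-for-MFLS} on each edge to obtain a smooth edge-level moduli space of the expected dimension. Transversality of the weighted concatenation constraint at each interior vertex $v$ is then established by the standard jet-extension argument familiar from Morse $A_\infty$-constructions: independence of perturbations along the outgoing edge $e^v_0$ allows the basepoint $\Gamma^{e_0^v}_0(0)$ to be moved freely in the loop space, making the product of the edge evaluations transverse to the image of $c^{\bm w_v}$. Counting Fredholm contributions, each incoming external edge $e_i$ contributes $-|\bm\gamma_i|-(n-2)\ell_{e_i}$ (the unstable manifold dimension of $\bm\gamma_i$, with $-(n-2)$ per switching as in Lemma~\ref{lemma-d-regular}(1)), each interior edge contributes $1-(n-2)\ell_e$ (the $+1$ being $l_e$), and the outgoing edge $e_0$ contributes $|\bm\gamma_0|-(n-2)\ell_{e_0}$; summing gives $|\bm\gamma_0|-\sum_i|\bm\gamma_i|-(n-2)\ell+|E_{\op{int}}(T)|$. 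Compactness then follows by applying the MFLS compactness theorem of Appendix~\ref{section: appendix-compactness} on each edge and observing that the $l_e\to 0$ and $l_e\to+\infty$ limits produce exactly the expected boundary strata, namely edge contractions and intermediate trajectory breakings.

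For (b) the orientation isomorphism is assembled by concatenating the edge-wise isomorphisms $|\mathcal{M}^e|\otimes o_{\text{source}}\cong o_{\text{target}}[(n-2)\ell_e]$, which come from Lemma~\ref{lemma-d-regular}(4) applied per edge, with the length parameters $l_e$ of interior edges assembling into the orientation of the underlying cell of $\mathfrak{T}_d$ of dimension $|E_{\op{int}}(T)|$; the total degree shift $[(n-2)\ell]$ collects the $\hbar^{\ell_e}$ contributions from all switchings, yielding~\eqref{eq: MFTS-orientation-iso}. The principal technical obstacle is handling the deeper strata of the switching diagonals---where two switchings on the same edge coalesce, or a switching approaches an interior vertex, or a switching interacts with the concatenation reparametrization $\nu_{[\ell]}$. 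These are $C^{m-1}$-submanifolds of higher codimension by Lemma~\ref{lemma: two-intersection-moments} and Proposition~\ref{prop: switching-properties}(1), so a straightforward dimension count rules them out of the top stratum; however, verifying that they appear only as codimension-$\geq 1$ boundary in the compactification, and that the consistent choice of perturbation data extends across the stratification (compatibly with the commutativity relations in Proposition~\ref{prop: switching-properties}(2)), is the delicate step that requires the full strength of Appendix~\ref{section: appendix-compactness}, paralleling but considerably refining the MFLS argument in Lemma~\ref{lemma-d-regular}.
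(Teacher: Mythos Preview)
Your treatment of (a) is essentially the paper's: the paper simply cites Theorem~\ref{theorem: transversality for MFTS} and Theorem~\ref{thm: MFTS-compactness}, whose proofs in the appendix build $\mathcal{M}_T$ exactly as the iterated fiber product you describe, and your dimension count matches.

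For (b), however, your argument and the paper's diverge in organization, and your citation has a gap. The paper does \emph{not} assemble orientations edge-by-edge. Instead it presents $\mathcal{M}_T(\vv{\bm\gamma},\bm\gamma_0;\bm Y,\bm\tau,\bm s)$ in one stroke as a fiber product of
\[
\mathfrak{T}_d^\ell(T,\bm\tau,\bm s)\times W^u(\bm\gamma_d)\times\dots\times W^u(\bm\gamma_1)
\quad\text{and}\quad
W^s(\bm\gamma_0)\times\prod_{s,\tau_s}\Delta^{\tau_s}_{I_s},
\]
where $\mathfrak{T}_d^\ell$ is the space of \emph{thickened} metric ribbon trees with $\ell$ marked points. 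This yields $|\mathcal{M}_T|\otimes o_{\bm\gamma_d}\otimes\dots\otimes o_{\bm\gamma_1}\cong|\mathfrak{T}_d^\ell|\otimes\zeta^{-n\ell}\otimes o_{\bm\gamma_0}$ directly from the coorientations of the $\Delta^{\tau_s}_{I_s}$, and then one uses $|\mathfrak{T}_d^\ell|\cong|\mathfrak{T}_d|\otimes\zeta^{2\ell}$ (the $\zeta^{2\ell}$ coming from orienting each marked point by the page) to obtain~\eqref{eq: MFTS-orientation-iso}.

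Your approach of concatenating ``edge-wise isomorphisms $|\mathcal{M}^e|\otimes o_{\text{source}}\cong o_{\text{target}}[(n-2)\ell_e]$ from Lemma~\ref{lemma-d-regular}(4)'' does not work as stated: interior edges and the free ends of exterior edges carry no critical-point orientation lines $o_{\text{source}}$ or $o_{\text{target}}$, so Lemma~\ref{lemma-d-regular}(4) does not apply to them. To make an edge-by-edge scheme work you would have to track orientations of evaluation maps at vertices and show they cancel correctly under weighted concatenation, which is substantially more bookkeeping. The paper's global fiber-product presentation sidesteps this by absorbing all $\theta$- and $l$-coordinates into $|\mathfrak{T}_d^\ell|$ at once.
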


\begin{proof}
(a) follows from Theorems~\ref{theorem: transversality for MFTS} and ~\ref{thm: MFTS-compactness}.

(b) Here we only sketch the argument. Let $\mathfrak{T}_d^\ell$ be the space of \emph{thickened} metric ribbon trees with $\ell$ ordered marked points on them and an assignment of a $2$-element subset of $[\kappa]$ to each marked point; see \cite{FO97zero-loop} for more details on the thickening of a metric ribbon tree, and also Figure~\ref{fig: tree-thickening} for an example of such tree. We denote by $\mathfrak{T}_d^\ell(T,\bm \tau, \bm s) \subset \mathfrak{T}_d^\ell$ the subspace of marked thickened metric ribbon trees whose underlying combinatorial type is that of a tree $(T, \bm\tau, \bm s)$ with $(T, \bm\tau) \in \mathcal{T}^\ell_d$ and $\bm s \in S(T, \bm \tau)$, i.e., the order of marked points agrees with $\bm s$ and the marked points are enhanced with additional data of $2$-element subsets of $[\kappa]$ dictated by $\bm \tau$. 

The manifold $\mathcal{M}_T(\vv{\bm \gamma}, \bm \gamma_0;\bm Y, \bm \tau, \bm s)$ can be presented as the fiber product of 
$$\mathfrak{T}_d^\ell(T,\bm \tau, \bm s) \times W^u(\bm \gamma_d) \times \dots \times W^u(\bm \gamma_1) \quad \mbox{ and } \quad W^s(\bm \gamma_0) \times \displaystyle\prod_{s \in \bm s, \tau_s \in \bm \tau} \Delta^{\tau_s}_{I_s}$$
by the maps
$$\Psi \colon W^s(\bm \gamma_0) \times \displaystyle\prod_{s \in \bm s, \tau_s \in \bm \tau} \Delta^{\tau_s}_{I_s} \hookrightarrow \Omega^{1,2}(M, \bm q)^{\times (\ell+1)},$$
$$\Phi \colon \mathfrak{T}_d^\ell(T,\bm\tau,\bm s) \times W^u(\bm \gamma_d) \times \dots \times W^u(\bm \gamma_1) \to  \Omega^{1,2}(M, \bm q)^{\times (\ell+1)},$$
where $\Psi$ is the natural embedding; $\Phi$ is a complicated map involving various evaluation maps, switching maps and flows along vector fields from $\bm Y$ dictated by a point of $\mathfrak{T}_d^\ell(T, \bm \tau, \bm s)$; and we are assuming that all the $\bm q^i=\bm q$. We note that the data of the $\theta^e_i$- and $l^e_j$-coordinates allows us to recover an element of $\mathfrak{T}_d^\ell(T, \bm \tau, \bm s)$. We refer the reader to \cite[Section 7]{abouzaid2011plumbings}, where a similar map is explained in the context of gradient Morse flow trees inside a finite-dimensional manifold.
Such a presentation evidently provides a natural isomorphism
\begin{equation}\label{eq: tree-orientation}
    |\mathcal{M}_T(\vv{\bm \gamma}, \bm \gamma_0;\bm Y, \bm \tau, \bm s)| \otimes o_{\bm \gamma_d} \otimes \dots \otimes o_{\bm \gamma_1} \cong |\mathfrak{T}_d^\ell| \otimes \zeta^{-n\ell} \otimes o_{\gamma_0},
\end{equation}
where use coorientations of $\Delta_{I_s}^{\tau_s}$ induced by orientation of $M$ as in \cite[Proposition 3.4]{KY}. We clearly have an isomorphism $|\mathfrak{T}^\ell_d| \cong |\mathfrak{T}_d| \otimes \zeta^{2\ell}$, induced by choosing local orientations at marked points via orientation of the page. Hence we obtain~\eqref{eq: MFTS-orientation-iso}.
\end{proof}

\begin{figure}[h]
    \centering
    \includegraphics[width=10cm]{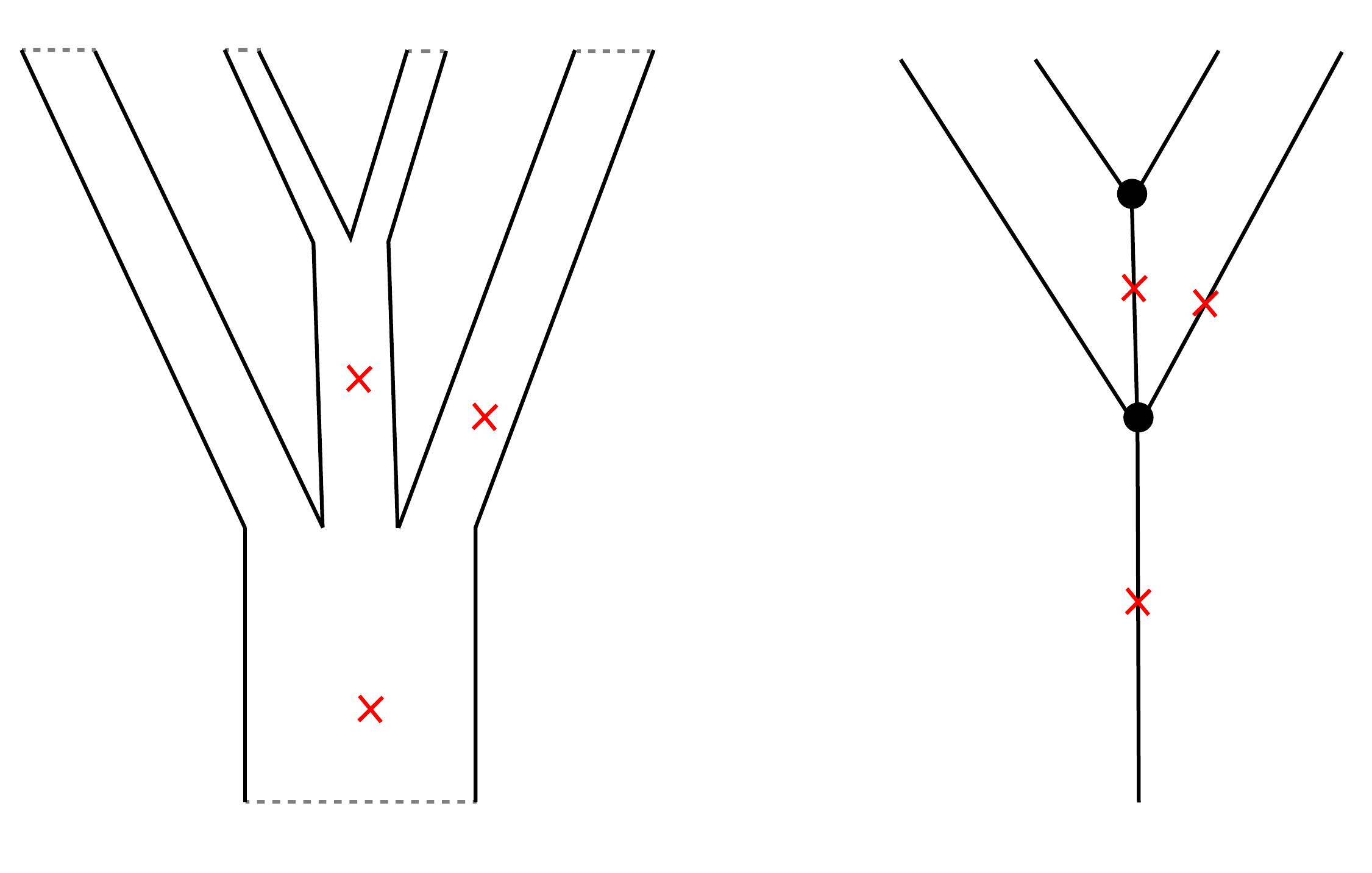}
    \caption{Here we depict an example of a thickening of a tree $(T, \bm \tau) \in \mathcal{T}^3_4$.}
    \label{fig: tree-thickening}
\end{figure}

Fixing a $\kappa$-tuple of points $\bm q$ in $M$ and a choice of universal perturbation data $\bm Y=\{\bm Y_{T}\}$, we now define the $A_\infty$-operations on $CM_{-*}(\Omega^{1,2}(M, \bm q))$. For $\vv{\bm \gamma}=(\bm \gamma_1, \dots, \bm \gamma_d)$ a tuple of critical points and generators $[\bm \gamma_1] \in o_{\bm \gamma_1}, \ldots, [\bm \gamma_d]\in o_{\bm \gamma_d}$ we set:
\begin{gather}\label{eq: A-infty-maps}
    \mu^d_M\colon CM_{-*}(\Omega^{1,2}(M,\bm q))\otimes\dots\otimes CM_{-*}(\Omega^{1,2}(M,\bm q))\to CM_{-*}(\Omega^{1,2}(M,\bm q)),\\
    [\bm {\gamma}_d] \otimes \dots \otimes [\bm \gamma_1]\mapsto\sum_{\substack{\ell\geq0, \, T\in \accentset{\circ}{\mathcal{T}}_d^\ell, \bm s \in S(T), \, \bm \gamma_0 \in \mathcal{P},  \\ \op{ind}(\vv{\bm \gamma}, \bm \gamma_0; T,\bm s)=0, \, \bm \Gamma_T \in \mathcal{M}_T(\vv{\bm \gamma}, \bm \gamma_0;\bm Y, \bm \tau, \bm s)}}\frac{(-1)^{\dagger_d}}{\ell!}\mu^d_{\bm \Gamma_T}([\bm {\gamma}_d] \otimes \dots \otimes [\bm \gamma_1]),\nonumber
\end{gather}
where 
\begin{equation}\label{eq-trees-sign}
    \dagger_d=\sum_{j=1}^dj\cdot|\gamma_{j}|.
\end{equation}
Note that for the tuple of inputs $([\bm \gamma_d]\hbar^{i_d}, \ldots, [\bm \gamma_1]\hbar^{i_1})$, the sign $\dagger_d$ is computed using the corresponding gradings of these generators, i.e.,
$$\dagger_d=\sum_{j=1}^d j \cdot (|\bm \gamma_j|+(2-n)i_j).$$
The operation $\mu^d_M$ is then extended to tuples of generators $([\bm \gamma_d]\hbar^{j_d}, \ldots, [\bm \gamma_1]\hbar^{j_1})$ for all $j_1, \ldots, j_d \in \Z$, in the same way as for the wrapped HDHF Fukaya category and we refer the reader to the discussion following~\eqref{eqn: second partial u}.

Note that $\mu^1_M$ coincides with the differential (\ref{eqn: differential of multiloop complex}).

\begin{theorem}
    There exists a universal data $\bm Y$ such that the collection $\{\mu^d_M,\,d\geq1\}$ defines the structure of an $A_\infty$-algebra on $CM_{-*}(\Omega^{1,2}(M,\bm q))$, i.e., the following equations are satisfied
    \begin{equation}\label{eq: A-infty relation}
        \sum_{k,d'} (-1)^{\text{\ding{64}}_k} \, \mu_M^{d-k+1} (\bm \gamma_d, \ldots, \bm \gamma_{k+d'+1}, \mu_M^{d'}(\bm \gamma_{k+d'}, \ldots, \bm \gamma_{k+1}), \bm \gamma_k, \ldots, \bm \gamma_1) = 0,
    \end{equation}
    where 
    \begin{equation}\label{eq: A-infty relation-sign}
    \text{\ding{64}}_k=|\bm \gamma_1|+\dots+|\bm \gamma_k|-k.    
    \end{equation}
\end{theorem}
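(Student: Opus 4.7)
The plan is to derive the $A_\infty$-relation~\eqref{eq: A-infty relation} by a codimension-one boundary analysis of the one-dimensional moduli
$$\mathcal{M}^1(\vv{\bm\gamma}, \bm\gamma_0) \coloneqq \bigsqcup_{\substack{\ell \ge 0,\,(T,\bm\tau) \in \accentset{\circ}{\mathcal{T}}^\ell_d,\,\bm s \in S(T,\bm\tau) \\ \op{ind}(\vv{\bm\gamma}, \bm\gamma_0; T,\bm s)=1}} \mathcal{M}_T(\vv{\bm\gamma}, \bm\gamma_0; \bm Y, \bm\tau, \bm s)$$
for each fixed input tuple $\vv{\bm\gamma}$ and output $\bm\gamma_0$. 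By Lemma~\ref{lemma-ainfty-regular}(a) and the compactification in Theorem~\ref{thm: MFTS-compactness}, for generic universal data $\bm Y$ this is a compact $C^{m-1}$ $1$-manifold with boundary, and the signed count of its endpoints vanishes. The $A_\infty$-relation will follow by identifying each boundary component with a term in~\eqref{eq: A-infty relation} and showing that the remaining ``exceptional'' codimension-one strata cancel in pairs.

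The boundary strata come in three types. \textbf{(B1)} \emph{Splitting strata}: an internal edge length of $T$ tends to $\infty$, or a flow parameter $l^e_j$ adjacent to an end of an edge tends to $\infty$, so that a gradient flow line breaks at an intermediate critical multiloop $\bm\gamma''$. Such strata are naturally identified with fibered products
$$\mathcal{M}^{\op{ind}=0}_{T'}(\bm\gamma_d, \dots, \bm\gamma_{k+d'+1}, \bm\gamma''; \bm Y', \bm\tau', \bm s') \times \mathcal{M}^{\op{ind}=0}_{T''}(\bm\gamma_{k+d'}, \dots, \bm\gamma_{k+1}, \bm\gamma_0; \bm Y'', \bm\tau'', \bm s''),$$
where $(\bm\tau', \bm s')$ and $(\bm\tau'', \bm s'')$ are the natural restrictions; these contribute precisely the terms $\mu^{d-k+1}_M(\bm\gamma_d, \dots, \mu^{d'}_M(\bm\gamma_{k+d'}, \dots, \bm\gamma_{k+1}), \dots, \bm\gamma_1)$, the binomial count of ways of distributing $\bm s$ into $(\bm s', \bm s'')$ combining with the $1/\ell!$ symmetrization in~\eqref{eq: A-infty-maps} to produce the correct $1/\ell'!\cdot 1/(\ell-\ell')!$ weights on the two factors. \textbf{(B2)} \emph{Edge-collapse strata}: an internal edge $e$ of the binary tree $T$ shrinks to length $0$, producing a non-binary tree $T/e$ on whose collapsed vertex the switchings of $e$ become vertex switchings in the sense of Remark~\ref{remark: how-to-define-vertex-deep-strata}. \textbf{(B3)} \emph{Switch-collision strata}: an $l^e_j \to 0$ so two consecutive switches on the same edge merge, or two switching parameters $\theta^e_j$ coincide.

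The key cancellation step is to show that the strata of types (B2) and (B3) pair off. Each (B2) face obtained by collapsing an internal edge $e$ of $T$ is a common codimension-one face of exactly two binary moduli $\mathcal{M}_T$ and $\mathcal{M}_{T^*}$, where $T^*$ differs from $T$ by the \emph{tree rotation} at $e$; the induced orientations on the two faces are opposite, by the standard CW-orientation of the Stasheff associahedron combined with the fiber-product orientation of Lemma~\ref{lemma-ainfty-regular}(b), so their contributions cancel. Likewise each (B3) face appears as a common face of two moduli indexed by orderings $\bm s, \bm s^*$ differing by the transposition of the two colliding switches; the commutativity relations of Proposition~\ref{prop: switching-properties} (applied on the appropriate multidiagonal $\Delta^{\tau_k, \tau_{k+1}}_{I_{p_1}, I_{p_2}}$) ensure that the limiting configurations literally coincide, while the Koszul sign from transposing two adjacent switches in $\bm s$ flips the orientation. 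Thus only the (B1) contributions survive in the boundary sum.

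Finally, the signs: the conventions~\eqref{eq-trees-sign} and the prefactor $(-1)^{\dagger_d}$ in~\eqref{eq: A-infty-maps} are engineered so that the boundary orientation on (B1) strata, computed via the fiber-product formula of Lemma~\ref{lemma-ainfty-regular}(b) and the standard orientation on $\partial \mathfrak{T}_d$, produces exactly $(-1)^{\text{\ding{64}}_k}$ as in~\eqref{eq: A-infty relation-sign}; this is an $\hbar$-graded version of the standard Fukaya--Oh sign computation, following \cite[Section 4]{mazuir2021I} and \cite[Section 3]{KY}, with the $\hbar$-shifts tracked through the $|\hbar|=2-n$ grading. I expect the main obstacle to be the orientation bookkeeping for the (B2) cancellations: the reparametrized switching maps $\op{sw}^{ij}_{I_k}$ depend on the cover $\{U_S\}$ and partition of unity $\{\psi_S\}$ used to construct $\nu_{[\ell]}$, so one must carefully verify that as the collapsing edge $e$ shrinks, the reparametrizations employed on $T$ and on its rotation $T^*$ converge to the \emph{same} limiting switched multipath and not merely to homotopic ones. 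This can be arranged by choosing the cover $\{U_S\}$ compatibly across tree rotations and exploiting the higher regularity of $\op{sw}^{ij}_{I_k}$ on the relevant deeper strata (as noted after Definition~\ref{defn: switching-map}).
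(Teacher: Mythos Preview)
Your approach is essentially the same as the paper's: analyze the codimension-one boundary of the one-dimensional MFTS moduli and show that all strata except the breaking strata cancel in pairs. However, your classification of the boundary is incomplete and partly incorrect, and this matters.

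First, your (B2) description is wrong: the collapse of an internal edge $e$ is a codimension-one phenomenon only when $\ell_e=0$, i.e., when $e$ carries \emph{no} switches; if $\ell_e>0$ then shrinking all $\ell_e+1$ segments of $e$ to zero is codimension $\ell_e+1$. So there are no ``switchings of $e$ becoming vertex switchings'' in the (B2) strata, and your worry about reparametrizations converging under tree rotation is misplaced---the rotation pairs off trees whose collapsed edge has no switch on it. Second, you are missing a genuine codimension-one boundary type: the case where a single segment $l^{e^v_i}_{\ell_{e^v_i}}$ or $l^{e^v_0}_0$ adjacent to an interior vertex $v$ shrinks to zero, so a switch slides into the vertex. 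This is the paper's stratum~\eqref{eq: MFTS-boundary-switch-to-vertex}, and its cancellation mechanism is different from both your (B2) and (B3): the switch passes \emph{through} the vertex from an incoming edge to the outgoing edge (or vice versa), so the limiting configuration is shared by two moduli differing in which edge carries the switch. The key ingredient here is Lemma~\ref{lemma: concatenation-commutativity} (switching commutes with weighted concatenation), together with the compatibility conditions \eqref{eq: MFTS-data-vertex-compatibility-incoming-edge}--\eqref{eq: MFTS-data-vertex-compatibility-outgoing-edge} on the universal data. Finally, in your (B3) cancellation you should note that the pairing is not just between orderings $\bm s$ and $\bm s^*$ but also between the switching data $\bm\tau$ and the modified $\bm\tau^{i,j}$ dictated by the commutativity rules of Proposition~\ref{prop: switching-properties}; this is what Corollary~\ref{cor: extend the moduli space} makes precise.
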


\begin{proof}
    The universal data $\bm Y$ is given by Theorem~\ref{thm: MFTS-compactness}. The terms in the sum~\eqref{eq: A-infty-maps} are integers as the data $\bm Y$ that we choose is {\em symmetric}, i.e., does not depend on the choice of ordering $\bm s\in S(T,\bm\tau)$; see~\eqref{eq: MFTS0data-symmetric}. 
    
    Verifying the $A_\infty$-relations boils down to the analysis of $\bdry \mathcal{M}_T(\vv{\bm \gamma}, \bm \gamma_0;\bm Y, \bm \tau, \bm s)$ for $\op{ind}(\vv{\bm \gamma}, \bm \gamma_0, T)=1$, as given by Theorem~\ref{thm: MFTS-compactness}. The contributions corresponding to~\eqref{eq: MFTS-boundary-internal-segment} with $i>0$ cancel out as in Lemma~\ref{lemma-d-regular} and Corollary~\ref{cor: extend the moduli space}. The contributions corresponding to \eqref{eq: MFTS-boundary-switch-to-vertex} consist of interior points and hence cancel out, as in the proof of Theorem~\ref{thm: MFTS-compactness}. The local behavior is illustrated in Figure~\ref{fig-trivalent}.

    Next we analyze the contributions from edge contractions~\eqref{eq: MFTS-boundary-edge-contraction}. Such a boundary point $\bm\Gamma_T$ is an element of $\mathcal{M}_{T'}(\vv{\bm \gamma}, \bm \gamma_0;\bm Y, \bm \tau', \bm s)$ with $(T'=T/e,\bm\tau')$. Since $T$ is binary, there is a unique binary tree $\tilde{T}\not=T$ and a unique edge $\tilde{e} \in \tilde{T}$ such that $\tilde{T}/\tilde{e}=T'$. Hence $\bm \Gamma_T \in \partial \mathcal{M}_{\tilde{T}}(\vv{\bm \gamma}, \bm \gamma_0;\bm Y, \tilde{\bm \tau}, \bm s)$ for some $\tilde{\bm\tau}$ and the contributions from \eqref{eq: MFTS-boundary-edge-contraction} also cancel out. See \cite[Lemma 5.19, Corollary 5.20]{Mescher2018} for more details in the finite-dimensional case.

    The remaining boundary points~\eqref{eq: MFTS-boundary-breaking} correspond precisely to the compositions $\mu_M^{d-d'}\circ \mu_M^{d'}$, implying the theorem. The signs induced by the boundary orientation and by the product orientation agree up to an overall sign dictated by~\eqref{eq: A-infty relation}; this is verified in the same way as in Section~\ref{subsection: review of HDHF} for the Floer-theoretic moduli spaces. The main observation here is that the space $\mathfrak{T}_d$ is diffeomorphic to the space of $(d+1)$-punctured disks $\mathcal{R}^d$ (see the discussion below) \cite{fukaya1997zero}, and the orientation conventions for $\mathfrak{T}_d$ (see \cite[Section 8]{abouzaid2011plumbings}) coincide with those for $\mathcal{R}^d$.
\end{proof}

\begin{figure}[ht]
    \centering
    \includegraphics[width=3cm]{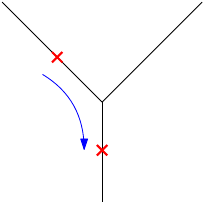}
    \caption{The crosses denote switchings.}
    \label{fig-trivalent}
\end{figure}

\section{The isomorphism with HDHF} \label{section: isomorphism with HDHF}

In this section we define an $A_\infty$-morphism from the wrapped HDHF $A_\infty$-algebra of $\kappa$ cotangent fibers of $T^*M$ to the based multiloop complex
\begin{equation*}
 \mathcal{F}= ( \mathcal{F}^d\colon CW^*(\sqcup_{i=1}^\kappa T_{q_i}^*M)\to CM^{*+1-d}(\Omega^{1,2}(M,\bm q)))_{d\geq 1},
\end{equation*}
by counting rigid elements in a mixed moduli space which combines holomorphic curves and Morse gradient trees and
show that $\mathcal{F}$ is an $A_\infty$-equivalence.

\subsection{Review of HDHF} \label{subsection: review of HDHF}

We briefly sketch the construction of $CW^*(\sqcup_{i=1}^\kappa T_{q_i}^*M)$, referring the reader to \cite[Sections 2.1 and 2.2]{honda2022higher} for more details.

Let $X=T^*M$ be the cotangent bundle of $M$ with canonical Liouville form $\lambda=p\,dq$.  As before, let $g$ be a Riemannian metric on $M$ and $|\cdot|$ be the induced norm on $T^*M$. Choose the Hamiltonian 
\begin{gather}
\label{eq-H} H\colon T^*M\to\mathbb{R},\quad  (q,p)\mapsto \tfrac{1}{2}|p|^2,
\end{gather}
where $q\in M$ and $p\in T^*_q M$. The Hamiltonian vector field $X_{H}$ with respect to the canonical symplectic form $\omega=dq\wedge dp$ is then given by $i_{X_{H}}\omega=dH$. Let $\phi^t\coloneqq \phi^t_{H}$ be the time-$t$ flow of $X_{H}$.

The objects of the wrapped HDHF Fukaya category that we consider are {\em ordered} $\kappa$-tuples of disjoint exact Lagrangians in $(X,\omega=d\lambda)$ that are cylindrical at infinity. In particular we consider $\sqcup_{i=1}^\kappa T_{q_i}^* M$, where $\bm q=(q_1,\dots,q_\kappa)$ is an ordered $\kappa$-tuple of distinct points on a small disk $U\subset{M}$.  
By choosing $g$, we can guarantee that all Hamiltonian chords of $\phi^1$ between the cotangent fibers $\{T_{q_1}^*{M},\dots,T_{q_\kappa}^*{M}\}$ are nondegenerate. We assume that $c_1(T^*M)=0$ and all cotangent fibers are relatively spin. Then there exists a well-defined $\Z$-grading on the Hamiltonian chords $y$ connecting $T_{q_{i}}^*M$ and $T_{q_{j}}^*M$ for any such pair and to each $y$ one may associate a $\Z$-graded rank $1$ free abelian group $o_y$ called \emph{orientation line}, whose $\Z$-grading is equal to that of the chord $y$.  As a graded $\Z$-module the chain complex $CW^*(T^*_{q_i}M, T^*_{q_j}M)$ is a direct sum of such orientation lines over all Hamiltonian chords $y$ from $T^*_{q_i}M$ to $T^*_{q_j}M$. See \cite{seidel2008fukaya, abouzaid2010geometric, ganatra2013thesis} for more details.

We set
$$CW^*_0(\sqcup_{i=1}^\kappa T_{q_i}^*M)=\bigoplus_{\sigma \in S_\kappa} \otimes_{i=1}^\kappa CW^*(T_{q_i}^*M, T_{q_{\sigma(i)}}^*M)$$
To each tuple of chords $\bm y=(y_1, \dots, y_\kappa)$ we associate an orientation line 
$$o_{\bm y}=o_{y_1}\otimes\dots\otimes o_{y_\kappa}$$
with $\Z$-grading given by
$$|\bm y|=|y_1|+\dots+|y_\kappa|.$$
Such orientation lines are the direct summands of $CW^*_0(\sqcup_{i=1}^\kappa T_{q_i}^*M)$.

We then define the (module underlying the) {\em wrapped HDHF chain complex $CW^*(\sqcup_{i=1}^\kappa T_{q_i}^*M)$} to be $CW^*_0(\sqcup_{i=1}^\kappa T_{q_i}^*M)\otimes_R R\llbracket \hbar\rrbracket$ for $n=2$ and $CW^*_0(\sqcup_{i=1}^\kappa T_{q_i}^*M)\otimes_R R[\hbar]$ for $n>2$.

Let $\mathcal{R}^d$ be the moduli space of unit disks $S$ with $d+1$ boundary punctures, where $d$ of them are marked as incoming and $1$ as outgoing, modulo automorphisms. The punctures of $S$ are $p_0, \dots, p_d$, arranged in a counterclockwise manner, where $p_0$ is the unique outgoing puncture. Let $\partial_i S$ be the boundary arc from $p_i$ to $p_{i+1}$. We choose representatives $S$ of equivalence classes of $\mathcal{R}^d$ in a smooth manner (e.g., by setting $p_0=-i$ and $p_1=i$) and abuse notation by writing $S\in \mathcal{R}^d$.
We call $S$ the ``$A_\infty$-base direction''.  
We endow each $S$ with a collection of strip-like ends $[0,+\infty)_{s_i}\times[0,1]_{t_i}$ about $p_i$ for $i=1,\dots,d$, and $(-\infty,0]_{s_0}\times[0,1]_{t_0}$ about $p_0$, and we make such a choice in a universal and consistent manner as explained in \cite[Section (9g)]{seidel2008fukaya}.

We denote by $\mathcal{R}^d_{\kappa, \chi}$ the moduli space of simple (= with simple branch points) $\kappa$-fold branched covers $\pi: \dot F \to S$, where $\dot F= F \setminus \cup_{i=0}^d {\bf p_i}$ is a compact Riemann surface with boundary and boundary punctures and each $\bf p_i$ is the $\kappa$-tuple of points in $\partial F$ mapping to the $i$th boundary puncture of $S$ under the extension of $\pi$ to $F\to S$. Here $S$ ranges over all of $\mathcal{R}^d$, $\chi(\dot F)=\chi$, and all the branch points are assumed to lie in $\operatorname{int}(S)$. Note that for each such $(\dot F, \pi) \in \mathcal{R}^d_{\kappa, \chi}$ there are exactly $\kappa-\chi$ branch points.

We make a universal choice of conformally consistent Floer data $(J^{d, \kappa, \chi}, H^{d, \kappa, \chi})$ which assigns to each $(\dot F, \pi) \in \mathcal{R}^d_{\kappa, \chi}$ a family of almost complex structures $J^{d, \kappa, \chi}_{\dot F, \pi} \colon \dot F \to \mathcal{J}(T^*M)$ and a family of Hamiltonian functions $H^{d, \kappa, \chi}_{\dot F, \pi} \colon \dot F \to \mathcal{H}(T^*M)$. Here $\mathcal{J}(T^*M)$ is the set of compatible almost complex structures of contact type and $\mathcal{H}(T^*M)$ is the set of quadratic-at-infinity Hamiltonian functions. We refer to \cite{honda2022higher, KY, seidel2008fukaya} for more details on constructing such consistent data. 

Referring to \cite{abouzaid2010geometric} for more details, we choose a function $\rho_{S} \colon \partial \overline{S} \to [1, +\infty)$ and a $1$-form $\alpha_{S}$ on each $S \in \mathcal{R}^d$ such that $\alpha_{S}|_{\bdry D}=0$, $d\alpha_{S}\leq 0$, and for the choice as above it is required that
\begin{gather}
    X_{H^{d, \kappa, \chi}_{\dot F, \pi}} \otimes \pi^*(\alpha_{S})=X_{\frac{H}{\rho_{S}(p_i)} \circ \psi^{\rho_{S}(p_i)}} \otimes dt,\\
    J^{d, \kappa, \chi}_{\dot F, \pi} =  (\psi^{\rho_{S}(p_i)})^*J^{d, \kappa, \chi}_{\dot F, \pi}=J_t
\end{gather}
on each strip-like end, where $J_t$, $t\in[0,1]$, is a fixed path in $\mathcal{J}(T^*M)$.

Given $\vv{\bm y}=(\bm y_d, \dots, \bm y_1)$ and $\bm y_0$, where each $\bm y_i \in CW^*(\sqcup_{j=1}^\kappa T^*_{q_j}M)$, we denote by $\mathcal{M}_{\kappa, \chi}(\vv{\bm y};\bm y_0)$ the moduli space of maps
\begin{equation}
    u= (\pi, v) \colon (\dot F,j)\to(S\times T^*M,J_{S}),
\end{equation}
where $\pi: (\dot F,j) \to S$ represents a point in $\mathcal{R}^d_{\kappa, \chi}$ and $\pi_{T^*M}\circ u=v$ satisfies:
\begin{align}
    \label{floer-condition}
    \left\{
        \begin{array}{ll}
            \text{$(dv-X_{H^{d, \kappa, \chi}_{\dot F, \pi}} \otimes \pi^*(\alpha_{S}))^{0,1}=0$;}\\
            \text{$v(z) \in \sqcup_i T^*_{q_i}M$ for each $z \in \partial F$;}\\
            \text{$v$ limits to $\psi^{\rho_{S}(p_i)}(\bm y_i)$ as $s_i\to+\infty$ for $i=1,\dots,d$;}\\
            \text{$v$ limits to $\psi^{\rho_{S}(p_0)}(\bm y_0)$ as $s_0\to-\infty$.}
        \end{array}
    \right.
\end{align}

The Fredholm index of a curve $u \in \mathcal{M}_{\kappa, \chi}(\vv{\bm y}; \bm y_0)$ is given by the formula
\begin{equation}
    \operatorname{ind}(u)=(n-2)(\chi-\kappa)+|\bm y_0|-|\bm y_1|-\dots-|\bm y_d|+d-2.
\end{equation}

 There is a natural isomorphism 
\begin{equation}\label{eq-orientation-lines-mu-d}
|\mathcal{M}_{\kappa, \chi}(\vv{\bm y};\bm y_0)| \otimes o_{\vv{{\bm y}}} \cong \otimes | \mathcal{R}^d_{\kappa, \chi}| \otimes o_{\bm y_0} \cong  \zeta^{\otimes n(\chi-\kappa)} \otimes |\mathcal{R}^d| \otimes \zeta^{2(\kappa-\chi)} \otimes o_{\bm y_0} \cong |\mathcal{R}^d| \otimes o_{\bm y_0}[(n-2)(\kappa-\chi)]
\end{equation}
as shown, for instance, in \cite[Lemma B.1]{ganatra2013thesis}. If $u \in \mathcal{M}_{\kappa, \chi}(\vv{\bm y};\bm y_0)$ and $\operatorname{dim}\mathcal{M}_{\kappa, \chi}(\vv{\bm y};\bm y_0)=0$, then \eqref{eq-orientation-lines-mu-d} gives a natural map
\begin{equation} \label{eqn: partial u}
    \partial_u \colon o_{\vv{\bm y}} \to o_{\bm y_0}[(n-2)(\kappa-\chi)].
\end{equation}
Note that above we used the isomorphism $| \mathcal{R}^d_{\kappa, \chi}| \cong |\mathcal{R}^d| \otimes \zeta^{2(\kappa-\chi)}$, which relies on the following. Let $\mathcal{R}^{d, \kappa-\chi}$ be the moduli space of disks $S$ with $\kappa-\chi$ interior marked points, for example studied in \cite[Section 4]{siegel2021} to define \emph{bulk deformations}. There is a covering map $\mathcal{R}^d_{\kappa, \chi} \to \mathcal{R}^{d, \kappa-\chi}$ and a fibration $\mathcal{R}^{d, \kappa-\chi} \to \mathcal{R}^d$. We then choose the standard orientation conventions for $\mathcal{R}^d$ as in \cite{seidel2008fukaya} and orient the fibers using the orientation on $S$ at each marked point (note that the marked points are ordered). This orientation on $\mathcal{R}^{d, \kappa-\chi}$ clearly induces an orientation $\mathcal{R}^d_{\kappa, \chi}$.

Now we explain how to extend $\partial_u$ to a map 
\begin{equation} \label{eqn: second partial u}
    \partial_u \colon o_{\bm y_d}[(n-2)j_d] \otimes \dots \otimes o_{\bm y_1}[(n-2)j_1] \to o_{\bm y_0}[(n-2)(j_1+\dots+j_d+(\kappa-\chi))].
\end{equation}
For simplicity suppose only one $j_i \neq 0$.  Then \eqref{eq-orientation-lines-mu-d} induces the isomorphism 
\begin{equation}
    \zeta^{(2-n)j_i} \otimes |\mathcal{M}_{\kappa, \chi}(\vv{\bm y};\bm y_0)| \otimes o_{\bm y_d} \otimes \dots \otimes o_{\bm y_1} \cong \zeta^{(2-n)j_i} \otimes |\mathcal{R}^d_{\kappa, \chi}| \otimes o_{\bm y_0}[(n-2)(\kappa-\chi)],
\end{equation}
where $\zeta$ is $\Z$ in degree $1$. This is equivalent to 
\begin{gather}\label{eq-mu-m-sign-shift}
|\mathcal{M}_{\kappa, \chi}(\vv{\bm y};\bm y_0)| \otimes o_{\bm y_d} \otimes \dots \otimes \zeta^{(2-n)j_i} \otimes o_{\bm y_i} \otimes \dots \otimes o_{\bm y_1} \cong |\mathcal{R}^d_{\kappa, \chi}| \otimes  \zeta^{(2-n)j_i}\otimes o_{\bm y_0}[(n-2)(\kappa-\chi)],
\end{gather}
with Koszul sign difference $(-1)^{(2-n)j_i(|\bm y_d|+\dots+|\bm y_{i+1}|+d)}.$
For $u \in \mathcal{M}_{\kappa, \chi}(\vv{\bm y};\bm y_0)$ and $\operatorname{dim}\mathcal{M}_{\kappa, \chi}(\vv{\bm y};\bm y_0)=0$ we then get $\partial_u$ in~\eqref{eqn: second partial u}.
The $\mu^d$-composition map is then defined as
\begin{equation}
\label{eq-m_2}
    \mu^d_F([\bm y_d]\hbar^{j_d},\dots,[\bm y_1]\hbar^{j_1})=\sum_{\substack{\bm y_0,\chi\leq\kappa \\ u \in \mathcal{M}_{\kappa, \chi}^{\operatorname{ind}=0}(\vv{\bm y}; \bm y_0)}}(-1)^{\dagger_d}\partial_u([\bm y_d]\hbar^{j_d}, \dots, [\bm y_1]\hbar^{j_1})\cdot\hbar^{\kappa-\chi},
\end{equation}
where $[\bm y_d]\hbar^{j_d} \in o_{\bm y_d}[(n-2)j_d], \dots, [\bm y_1]\hbar^{j_1} \in o_{\bm y_1}[(n-2)j_1]$ and
\begin{equation}\label{eq: dagger-Floer}
    \dagger_d=|\bm y_1|+2|\bm y_2|+\dots+d|\bm y_d|.
\end{equation}

The proof of the $A_\infty$-relations (see~\eqref{eq: A-infty relation}) up to signs is outlined in \cite{CHT, honda2022higher} and here we only provide a discussion regarding signs. As pointed out in \cite{CHT}, $\bdry \overline{\mathcal{M}}^{\operatorname{ind}=1}_{\kappa, \chi}(\vv{\bm y}; \bm y_0)$ is covered by
\begin{equation}\label{eq-mu-m-boundary}
    \bigsqcup_{\substack{\tilde{\bm y}; \, \chi_1, \chi_2 \le \kappa \\ \chi_1+\chi_2=\kappa+\chi}} \mathcal{M}^{\operatorname{ind}=0}_{\kappa, \chi_1}(\bm y_{p+d_1}, \dots, \bm y_{p+1}; \tilde{\bm y}) \times \mathcal{M}^{\operatorname{ind}=0}_{\kappa, \chi_2}( \bm y_d, \dots, \bm y_{p+d_1+1}, \tilde{\bm y}, \bm y_p, \dots, \bm y_1; \bm y_0).
\end{equation}
It remains to analyze the difference between the boundary orientation and the chosen product orientation as in \cite[(12f), (12g)]{seidel2008fukaya}. We claim that the analysis is essentially the same and follows from a commutative diagram analogous to the one presented in \cite[(12f)]{seidel2008fukaya}. The key observation here is that 
$$\operatorname{dim} \mathcal{R}^d= \operatorname{dim } \mathcal{R}^d_{\kappa, \chi} \, \operatorname{mod} \, 2,$$
and the fact that the Koszul sign in \eqref{eq-mu-m-sign-shift} naturally incorporates into such diagram.

\begin{remark}
    We note that it is expected that the HDHF as described above arises from the appropriate Floer theory on the orbifold $\op{Sym}^{\kappa}(T^*M)$. We do not pursue this approach here and refer the reader to \cite{MSS2025orbifold} for the development of this perspective.
\end{remark}

\subsection{Half-disk Hurwitz spaces}

We denote by $\mathcal{H}^d$ the moduli space of unit disks with $d+2$ boundary punctures of which $d$ are incoming and appear consecutively along the boundary and $2$ are outgoing, modulo automorphisms. We refer to \cite[Section 4.4]{abouzaid2012wrapped} for a detailed treatment of these spaces. We denote by $p_{-1}, p_0, \dots, p_d$ the punctures of $T \in \mathcal{H}^d$ appearing in counterclockwise order, where $p_{-1},p_0$ are the outgoing punctures. The portion of the boundary between $p_{i}$ and $p_{i+1}$ (we assume $p_{d+1}=p_{-1}$) will be denoted $\bdry_i T$ as before and the portion between $p_{-1}$ and $p_0$, denoted $\partial_{\op{out}}T$, will be called the \emph{outgoing segment}.  Since $\bdry_{\op{out}}T$ is a preferred segment, we will often refer to $T\in \mathcal{H}^d$ as a ``half-disk''. 

Now observe that $\mathcal{H}^d \cong \mathcal{R}^{d+1}$. This identification provides us with a smooth choice of representatives of equivalence classes of $\mathcal{H}^d$ and a universal choice of strip-like ends as in Section~\ref{subsection: review of HDHF}. We also notice that $\mathcal{H}^1$ consists of just one point $T$, corresponding to the twice-punctured half-strip $[0, +\infty) \times [0,1]$. As before we have the natural projections 
$$\pi_{T^*M}: T\times {T^*M}\to {T^*M}\quad \mbox{and} \quad \pi_{T}:  T\times {T^*M}\to T.$$  

We further introduce the moduli space $\mathcal{H}^d_{\kappa, \chi}$ of $\kappa$-fold simple branched covers $\pi \colon \dot{F} \to T$, with $T \in \mathcal{H}^d$, $\chi(\dot{F})=\chi$, and $\dot{F}=F \setminus \cup_{i=-1}^d \bm p_i$, where each $\bm p_i$ is a $\kappa$-tuple of boundary punctures of $F$, given by the $\kappa$ preimages of the puncture $p_i$ of $T$. All the branched points are assumed to lie in  $\op{int}(T)$.

Again, we make a universal and conformally consistent choice of Floer data $(J^{d, \kappa, \chi}, H^{d, \kappa, \chi}, \rho^d, \alpha^d)$ for all tuples $(d, \kappa, \chi)$, which assigns to each $(\dot{F}, \pi) \in \mathcal{H}^d_{\kappa, \chi}$ a family of almost complex structures $J^{d, \kappa, \chi}_{\dot F, \pi} \colon \dot F \to \mathcal{J}(T^*M)$ and a family of Hamiltonian functions $H^{d, \kappa, \chi}_{\dot F, \pi} \colon \dot F \to \mathcal{H}(T^*M)$. Additionally, there is a choice of a function $\rho^d_{T} \colon \partial \overline{T} \to [1, +\infty)$ and a $1$-form $\alpha^d_{T}$ on each $T \in \mathcal{H}^d$ such that $\alpha^d_{T}|_{\bdry D}=0$, $d\alpha_{T}\leq 0$, and for the choice as above we require
\begin{gather}
    X_{H^{d, \kappa, \chi}_{\dot F, \pi}} \otimes \pi^*(\alpha_{T})=X_{\frac{H}{\rho_{T}(p_i)} \circ \psi^{\rho_{T}(p_i)}} \otimes dt \text{ for } 1 \le i \le d,\\
    J^{d, \kappa, \chi}_{\dot F, \pi} =  (\psi^{\rho_{T}(p_i)})^*J^{d, \kappa, \chi}_{\dot F, \pi}=J_t \text{ for } -1\le i \le d,
\end{gather}
on each strip-like end as above, where $J_t$, $t\in[0,1]$, is a fixed path in $\mathcal{J}(T^*M)$. In addition, we require the restrictions of $H^{d, \kappa, \chi}_{\dot{F}, \pi}$ to the preimages of $\partial_{\op{out}}T$ to vanish. 
We also assume that $\rho^d_{T} \equiv 1$ near $p_{-1}$ and $p_0$.
We refer to \cite[Section 4.5]{abouzaid2012wrapped} for additional details.

Given a tuple $\vv{\bm y}=(\bm y_d, \dots, \bm y_1)$ with each $\bm y_i \in CW^*(\sqcup_{j=1}^\kappa T^*_{q_j}M)$, let $\mathcal{H}_{\kappa, \chi}(\vv{\bm y})$ be the moduli space of holomorphic maps
\begin{gather}
    u=(\pi, v) \colon (\dot{F}, j) \to (T \times T^*M, J_T) \nonumber\\
    (dv -X_{H^{d, \kappa, \chi}_{\dot{F}, \pi}}\otimes \pi^*(\alpha_T))^{0,1}=0 
\end{gather}
such that $((\dot F,j),\pi)\in \mathcal{H}^d_{\kappa, \chi}$ and the following boundary conditions hold:
\begin{align}
    \label{boundary-half-strip-condition}
    \left\{
        \begin{array}{ll}
            \text{$v(z) \in \sqcup_j T^*_{q_j}M$ for each $z \in \pi^{-1}(\partial_i T)$ for $i=0,1,\dots, d$;} \\
            \text{$v(z) \in M$ for each $z \in \pi^{-1}(\partial_{\op{out}} T)$;} \\
            \text{$v$ limits to $\psi^{\rho_{T}(p_i)}(\bm y_i)$ as $s_i\to+\infty$ for $i=0, 1,\dots,d$;}\\
            \text{$v$ limits to $\bm q$ as $s_i\to-\infty$ for $i=-1$.}
        \end{array}
    \right.
\end{align}

The choice of Floer data as above guarantees that each $\mathcal{H}_{\kappa, \chi}(\vv{\bm y})$ is a smooth manifold of dimension
\begin{equation}\label{eq: dimension-for-covers-of-half-disks}
     (n-2)(\kappa-\chi)-|\bm y_1|-\dots-|\bm y_d|+d-1.
\end{equation}
This index computation is analogous to the one from \cite[Lemma 4.3]{KY}.

As in~\eqref{eq-orientation-lines-mu-d} we have a natural isomorphism of orientation line bundles
\begin{equation}\label{eq: orientation-half-disks}
    |\mathcal{H}_{\kappa, \chi}(\vv{\bm y})| \cong |\mathcal{H}^{d}_{\kappa, \chi}| \otimes o_{\bm y_1}^{-1} \otimes \dots \otimes o_{\bm y_d}^{-1}.
\end{equation}

\subsection{The linear map $\mathcal{F}^1$} \label{subsection: the linear map}

In this subsection we define the linear component $\mathcal{F}^1$ of the morphism $\mathcal{F}$ between the two $A_\infty$-algebras of interest by counting elements of \emph{mixed moduli spaces}. Given $\bm y\in CW^*(\sqcup_i T_{q_i}^*M)$, we define 
\begin{align}
\label{eq-F1}
    \mathcal{F}^1(\bm y)=\sum_{{\substack{\bm \gamma,\,r\geq0 \\ (u, \bm \Gamma) \in \mathcal{P}_r(\bm y, \bm \Gamma)}}}(-1)^{|\bm y|}\mathcal{F}^1_{(u,\bm \Gamma)}([\bm y]),
\end{align}
where $\boldsymbol{\gamma}\in CM_{-*}(\Omega^{1,2}(M,\bm q))$, 
the elements in $\mathcal{P}_r(\bm y,\boldsymbol{\gamma})$ are schematically given by Figure \ref{fig-F1}, and 
\begin{equation} \label{eqn: dim for mathcal P}
    {\op{ind}({\bf y}, \bm\gamma, r)\coloneqq  -(n-2)r+|\bm\gamma| - |{\bf y}|}
\end{equation}
refers to the virtual dimension of $\mathcal{P}_r(\bm y,\boldsymbol{\gamma})$.
\begin{figure}[ht]
    \centering
    \includegraphics[width=3cm]{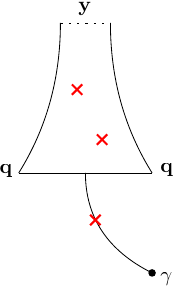}
    \caption{A schematic description of an element in $\mathcal{P}(\bm y,\boldsymbol{\gamma})$. The upper half denotes the base direction of an HDHF holomorphic branched cover of a half-disk in $T^*M$, where the bottom edge lies on the zero section $M$, viewed as an element $\boldsymbol{\gamma}'\in\Omega^{1,2}(M,\bm q)$; the lower arc is an MFLS from $\boldsymbol{\gamma}'$ to $\boldsymbol{\gamma}$; the sum of the number of branch points on the holomorphic curve (or more precisely the branching number) and the number of switching points of the MFLS is $r$}.
    \label{fig-F1}
\end{figure}

\subsubsection{Mixed moduli spaces $\mathcal{P}_r(\bm y,\bm \gamma)$}

Each element of $\mathcal{P}_r(\bm y,\bm \gamma)$ is a tuple $((\dot{F}, \pi),u),\bm \Gamma, \bm s)$ (we also write $(u,\bm \Gamma)$ if $\bm s$ is understood or unimportant), where:

\s\n
(A)  The tuple $\bm s$ is a total ordering of $[r]$. The curve $((\dot{F}, \pi), u)$ is an element of the space $\mathcal{H}_{\kappa, \chi}(\bm y)$ for some $\kappa, \chi \in \Z$ such that $b=\kappa-\chi \ge 0$. In particular, $\pi \colon \dot{F} \to T$ is a branched cover of the half-strip $T=[0,+\infty)\times [0,1] \setminus \{p_{-1}=(0,1), p_0=(0,0)\}$, where $p_1=+\infty$.  We require that the restriction of $\bm s$ to the first $b$ elements agrees with the ordering of the branch points of $u$. Then $(\theta_{s_1}, \dots, \theta_{s_{b}})$ are the $[0,1]$-coordinates of the corresponding branch points of $(\dot{F}, \pi) \in \mathcal{H}^1_{\kappa, \chi}$.

\s\n
(B) Before defining $\bm \Gamma$ we introduce the evaluation map
\begin{gather}
    \mathcal{E}:\mathcal{H}_{\kappa, \chi}(\bm y)\to \Omega^{1,2}(M,{\bm q}),\\
    (u,\bm \Gamma)\mapsto \mathcal{E}(u), \nonumber
\end{gather} 
as follows: If $u$ does not have any branch points along $\bdry_{\op{out}} T$, then we let $\mathcal{E}(u)$ be the restriction of $u$ to $\bm \gamma'=\pi^{-1}(\partial_{\op{out}} T)$, precomposed with $\nu_{[r]}^{\circ b}$ evaluated at $(\theta_{s_1}, \dots, \theta_{s_r}, \bm \gamma')$, where the notation $f^{\circ k}$ means the composition of $f$ with itself $k$ times.
If $u$ has branch points along $\bdry_{\op{out}} T$, then let 
$$\bm \gamma' = \lim_{\delta\to 0^+} v|_{\pi^{-1}(\{\delta\} \times [0,1])},$$ 
and we set 
\begin{equation}\label{eq: evaluation map}
\mathcal{E}(u)=(\gamma'_1\circ\nu_{[r]}^{\circ b}, \dots, \gamma'_\kappa \circ \nu_{[r]}^{\circ b}).
\end{equation}

Observe that $\mathcal{E}(u) \in \Omega^{m,2}(M, \bm q)$ for arbitrary $m \ge 1$.

\s\n
(C) $\bm \Gamma$ is an MFLS which ``starts'' at $\mathcal{E}(u)$ (below we set $\ell=r-b$)
\begin{equation*}
    \bm \Gamma=\left((\theta_{s_{b+1}}, \Gamma_0, l_0), (\theta_{s_{b+2}}, \Gamma_1, l_1), \dots, (\theta_{s_r}, \Gamma_{\ell-1}, l_{\ell-1}), \Gamma_\ell \right), 
\end{equation*}
where $\Gamma_0 \colon [0, l_0] \to \Omega^{1,2}(M, \bm q)$ is a continuously differentiable map satisfying $\Gamma_0(0)=\mathcal{E}(u)$; $\Gamma_\ell(+\infty)=\bm \gamma$; and the remaining conditions are similar to the ones in Definition~\ref{def-diff} and we omit the details.

\s
Note that $\mathcal{P}_r(\bm y, \bm \gamma)$ depends on the choice of Floer data $(J^{1, \kappa, \chi}, H^{1, \kappa, \chi})$ and perturbation data $\bm Y^\ell$. It admits a natural decomposition $\mathcal{P}_r(\bm y, \bm \gamma)=\bigsqcup_{b+\ell=r}\mathcal{P}_{b, \ell}(\bm y, \bm \gamma)$, where $b$ is the number of branch points and $\ell$ is the number of switches.

\begin{lemma}
    \label{lemma-f1-regular}
    There exists a universal Floer data $(J^{1, \kappa, \chi}, H^{1, \kappa, \chi})$ and a universal perturbation data $\bm Y^\ell$ for all tuples $(\kappa, \chi, \ell)$ such that all spaces $\mathcal{P}_r(\bm y, \bm \gamma)$ are smooth manifolds of dimension $\op{ind}({\bf y}, \bm\gamma, \ell)$.
    Moreover, if $\op{ind}({\bf y}, \bm\gamma, \ell)\leq 1$, then $\mathcal{P}_r(\bm y,\bm \gamma)$ admits a compactification.
\end{lemma}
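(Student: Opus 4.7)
The argument has two parts: regularity and compactness. In both, the space $\mathcal{P}_r(\bm y, \bm \gamma)$ is analyzed stratum-by-stratum, where the strata are indexed by $(b, \ell)$ with $b = \kappa-\chi$ and $r = b+\ell$; on each stratum the moduli space is realized as a fiber product
\[
\mathcal{P}_{b,\ell}(\bm y, \bm \gamma) \;\cong\; \mathcal{H}_{\kappa,\chi}(\bm y) \,\times_{\mathcal{E},\,\op{ev}_0}\, \mathcal{M}^{\op{free}}_\ell(\bm \gamma; \bm Y^\ell, \bm \tau, \bm s),
\]
where $\mathcal{M}^{\op{free}}_\ell(\bm \gamma; \ldots)$ denotes the space of MFLS with free starting boundary at $s=0$ and asymptote $\bm\gamma$ at $+\infty$, and the matching is between $\mathcal{E}$ from \eqref{eq: evaluation map} and $\op{ev}_0(\bm \Gamma) = \Gamma_0(0)$.

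First I would handle regularity on each side. For the holomorphic side, a generic universal Floer datum $(J^{1, \kappa, \chi}, H^{1, \kappa, \chi})$ makes each $\mathcal{H}_{\kappa,\chi}(\bm y)$ a smooth manifold of dimension \eqref{eq: dimension-for-covers-of-half-disks}, and further renders $\mathcal{E}$ a $C^{m-1}$-submersion onto its image in $\Omega^{m,2}(M,\bm q)$; the subtlety is that $\mathcal{E}$ is built from the reparametrization $\nu_{[r]}^{\circ b}$, which is only $C^{m-1}$-smooth away from collisions between branch points of $u$ on $\bdry_{\op{out}} T$ and switching times on the Morse side, so one first stratifies by these combinatorics and then applies the standard somewhere-injectivity plus Hamiltonian-flexibility package, as in \cite{honda2022higher, KY}. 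For the Morse side, a generic universal perturbation datum $\bm Y^\ell$ makes each $\mathcal{M}^{\op{free}}_\ell(\bm \gamma; \bm Y^\ell, \bm \tau, \bm s)$ a smooth Banach manifold by the argument of Theorem~\ref{theorem: transversality-for-MFLS} applied to the free-starting-boundary version of the flow equation. A joint transversality argument, perturbing both families simultaneously, then yields transversality of the fiber product, so $\mathcal{P}_r(\bm y, \bm \gamma)$ is a $C^{m-1}$-manifold; adding the Fredholm indices of the two factors and subtracting the matching codimension recovers the stated $\op{ind}(\bm y, \bm \gamma, r) = -(n-2)r + |\bm \gamma| - |\bm y|$ from \eqref{eqn: dim for mathcal P}.

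For compactness in the range $\op{ind} \le 1$, I would enumerate the possible ends and verify each has codimension at least one: (a) Floer/strip breaking at the incoming asymptote $\bm y$; (b) Morse breaking of the outgoing MFLS toward $\bm \gamma$, handled as in Appendix~\ref{section: appendix-compactness}; (c) collisions of two switching times or of two branch points, and the interior events where a switching time passes a branch point along the interface; (d) a branch point of $u$ approaching $\bdry_{\op{out}} T$, which in the limit becomes a switching event at $s=0$ on the Morse side; (e) sphere and disk bubbling, excluded by exactness of $T^*M$ and its cotangent fibers, modulo ghost bubbles which are treated via the Kuranishi replacements of Remark~\ref{rmk: coefficient rings}. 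The main obstacle is (d): one must show that the stratum of holomorphic curves whose branch point lies on $\bdry_{\op{out}} T$ glues continuously to the MFLS strata in which a switching occurs at $s=0$. This is precisely what the reparametrization $\nu_{[r]}^{\circ b}$ in \eqref{eq: evaluation map} is designed for: it arranges the asymptotic data of the holomorphic side so that it matches, with the correct $C^{m-1}$-regularity and switching conventions, the initial data of the Morse side. Consequently the universal Floer and perturbation data must be chosen compatibly across all $(\kappa, \chi, \ell)$-strata, in the spirit of the consistency conditions in Lemma~\ref{lemma-d-regular}(3) and the gluing arguments of Appendix~\ref{section: appendix-compactness}.
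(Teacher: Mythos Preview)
Your overall architecture---decompose as $\mathcal{P}_r=\bigsqcup_{b+\ell=r}\mathcal{P}_{b,\ell}$, realize each piece as a fiber product of the holomorphic half-disk moduli with an MFLS moduli, and treat regularity and compactness separately---matches the paper. Two points are worth sharpening.

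First, the terminology ``$C^{m-1}$-submersion onto its image'' for $\mathcal{E}$ is misleading: $\mathcal{H}_{\kappa,\chi}(\bm y)$ is finite-dimensional while $\Omega^{1,2}(M,\bm q)$ is infinite-dimensional, so what you need (and what the paper invokes, citing \cite[Section 5]{abbondandolo2010floer}) is that $\mathcal{E}$ is an \emph{immersion}. This is not a semantic quibble: the paper's mechanism for transversality of the fiber product is precisely Lemma~\ref{lemma: finite-trajectories-two-immersions}, which takes as input a finite-dimensional immersed submanifold $N_1$ (here the image of $\mathcal{E}$) and shows that a generic \emph{Morse-side} perturbation $\bm Y^\ell$ alone suffices to cut out the fiber product transversally. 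Your ``joint transversality argument, perturbing both families simultaneously'' would also work, but it is unnecessary and obscures the point that the Floer data is fixed once and for all (chosen generically to make $\mathcal{H}_{\kappa,\chi}(\bm y)$ smooth and $\mathcal{E}$ an immersion), after which all remaining transversality is achieved on the Morse side exactly as in Theorem~\ref{theorem: transversality-for-MFLS}.

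Second, your compactness discussion is more detailed than the paper's own proof, which simply asserts that universality of the data gives a compactification and defers the boundary analysis to the proof of Theorem~\ref{thm: chain map}. In particular your item (d)---a branch point sliding onto $\partial_{\op{out}}T$ and becoming a switch---is exactly the phenomenon treated in Steps~1--4 of that later proof via Theorem~\ref{thm: Fukaya}, and is not needed for the present lemma. So your account is correct but front-loads material the paper postpones.
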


\begin{proof}
To show that $\mathcal{P}_{b, \ell}(\bm y, \bm \gamma)$ is a smooth manifold we use the fact that there is a choice of universal Floer data for which the maps
$$\mathcal{E} \colon \mathcal{H}_{\kappa, \chi}(\bm y) \to \Omega^{1,2}(M, \bm q)$$
are immersions. We refer the reader to \cite[Section 5]{abbondandolo2010floer} and \cite[Section 3.4]{McDuff-Salamon-2012} for additional details. We can then show that for a generic choice of $\bm Y^\ell$ the moduli space $\mathcal{P}_{b, \ell}(\bm y, \bm \gamma)$ is smooth by applying Lemma~\ref{lemma: finite-trajectories-two-immersions} as in the proof of Theorem~\ref{theorem: transversality-for-MFLS}. Here we omit the precise definition of the universality of the perturbation data $\bm Y^\ell$ and simply refer to the analogous discussion in Appendix~\ref{section: appendix-compactness}.
The universality of the Floer data ensures the existence of a compactification and we delay a more careful discussion of the boundary strata until the next section.
\end{proof}

Given an element $(\bm u, \bm \Gamma) \in \mathcal{P}_r(\bm y, \bm \gamma)$ we have a canonical isomorphism
     \begin{equation}
        |\mathcal{P}_r(\bm y, \bm \gamma)| \cong o_{\bm \gamma} \otimes |\mathcal{H}_{\kappa, \chi}(\bm y)|
    \end{equation}

Whenever the dimension of $\mathcal{P}_r(\bm y, \bm \gamma)$ the above together with~\eqref{eq: orientation-half-disks} provides a natural map
\begin{equation}
    \mathcal{F}^1_{(\bm u, \bm \Gamma)} \colon o_{\bm y} \to o_{\bm \gamma}[(n-2)r],
\end{equation}
that we use to define the chain map~\eqref{eq-F1}.

\subsubsection{Ghost bubbles and Kuranishi replacements}  \label{subsubsection: ghost bubbles}

As discussed in Section 3.7 and Lemma 3.9.3 of \cite{CHT}, a technical difficulty that we face is the formation of ghost bubbles.  More precisely, a sequence of curves $(u_i,\bm \Gamma_{(i)})$, $i=1,2,\dots$, in $\mathcal{P}_r(\bm y, \bm \gamma)$, after passing to a subsequence, can limit to 
$$(u_\infty= u_{\infty,1}\cup u_{\infty,2}, \bm \Gamma_{(\infty)}),$$ 
where:
\begin{itemize}
\item[(i)] the {\em main part} $u_{\infty,1}$ is the union of components that are not ghost bubbles; each component of the $u_{\infty,1}$ is somewhere injective; and 
\item[(ii)] $u_{\infty,2}$ is a union of {\em ghost bubbles}, i.e., locally constant maps where the domain is a possibly disconnected compact Riemann surface (possibly with boundary) and $u_{\infty,2}$ maps to points on $\op{Im}(u_{\infty,1})$.
\end{itemize}  
Let $F$ be a connected compact oriented surface with boundary and let $\mathcal{M}^{F}$ be the moduli space of holomorphic maps $u:(F,j)\to T\times T^*M$ that are constant and such that $u$ maps $\partial F$ to $\bdry_{\op{out}} T$ times the zero section $M$.  Each component of $u_{\infty,2}$ belongs to some $\mathcal{M}^F$.

When $n=2$, we can eliminate ghost bubbles using Ekholm-Shende~\cite[Theorem 1.1]{ES} (which in turn builds on work of Doan-Walpuski~\cite{DW}) as in \cite[Lemma 3.9.3]{CHT}.  Note that the Ekholm-Shende result also applies to the case of boundary nodes along Lagrangians.

On the other hand, when $n\geq 3$, we excise the portion $\mathcal{B}$ of $\mathcal{P}_r(\bm y,\bm \gamma)$ that is close to breaking into $u_\infty= u_{\infty,1}\cup u_{\infty,2}$ with $u_{\infty,2}\not=\varnothing$ and replace it with a Kuranishi model $\mathcal{B}'$ of a neighborhood of all possible $u_{\infty,1}\cup u_{\infty,2}$ satisfying (i) and (ii).  
This gives us the {\em Kuranishi replacements} 
$$\mathcal{P}^\sharp_r(\bm y,\bm \gamma),\quad \mathcal{M}^{\sharp}_{\kappa, \chi}(\bm y,\bm y'),\quad \mbox{and} \quad \mathcal{M}^{F,\sharp}$$ 
of $\mathcal{P}_r(\bm y,\bm \gamma)$, $\mathcal{M}_{\kappa,\chi}(\bm y,\bm y')$, and $\mathcal{M}^{F}$ for all $\chi$, ${\bf y}$, ${\bf y}'$, $\bm\gamma$, and $F$ such that:
\be
\item[(KR1)] $\mathcal{P}^\sharp_r(\bm y,\bm \gamma)$ and $\mathcal{M}^{\sharp}_{\kappa, \chi}(\bm y,\bm y')$ are transversely cut out branched manifolds; 
\item[(KR2)] $\mathcal{M}^{F,\sharp}$ is a transversely cut out branched manifold which consists of perturbed holomorphic maps from connected compact Riemann surfaces $(F,j)$ which are homologous to a constant map; and
\item[(KR3)] all the strata of $\bdry \mathcal{P}^\sharp_r(\bm y,\bm \gamma)$ are finite fiber products of moduli spaces of the form $\mathcal{P}^\sharp_{r'}(\bm y',\bm \gamma')$, $\mathcal{M}^{\sharp}_{\kappa, \chi'}(\bm y',\bm y'')$, and $\mathcal{M}^{F,\sharp}$.
\ee

The Kuranishi replacement can be constructed using the work of Fukaya-Oh-Ohta-Ono \cite{FOOO,FOOO2,FOOO3} or its variant due to McDuff-Wehrheim \cite{MW,McD}. (Also see virtual fundamental cycle techniques of Hofer-Wysocki-Zehnder \cite{HWZ2} and Pardon~\cite{Par}.)

\s
{\em In what follows we assume that for $n\geq 3$ we pass to the relevant Kuranishi replacement as necessary and omit $\sharp$ from the notation. Kuranishi replacements will also be made without mention for analogous moduli spaces.  Note that the use of Kuranishi replacements forces us to use $\Q$-coefficients; see Remark~\ref{rmk: coefficient rings}.}

\subsubsection{Chain map}

\begin{theorem} \label{thm: chain map}
    $\mathcal{F}^1$ is a chain map.
\end{theorem}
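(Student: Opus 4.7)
The plan is to establish the chain map identity
\begin{equation*}
\mu^1_M \circ \mathcal{F}^1 + \mathcal{F}^1 \circ \mu^1_F = 0
\end{equation*}
by analyzing the compactification of the $1$-dimensional mixed moduli spaces
$\bigsqcup_{r\ge 0,\, \bm\gamma}\mathcal{P}_r(\bm y,\bm\gamma)$ with
$\op{ind}(\bm y,\bm\gamma,r)=1$,
and identifying the codimension-one boundary strata with the two compositions above (up to the overall sign dictated by Koszul conventions). Lemma~\ref{lemma-f1-regular} already provides us with smoothness and a compactification (after passing to Kuranishi replacements when $n\geq 3$), so the content of the proof is the combinatorial identification of boundary strata plus the verification that all other potential degenerations either contribute to one of the two desired compositions or cancel in pairs.

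First I would enumerate the ``honest'' codimension-one degenerations. There are two geometric ways a one-parameter family in $\mathcal{P}_r(\bm y,\bm\gamma)$ can break. (i) The Floer-theoretic half-disk $u$ breaks off a (possibly branched) holomorphic strip at the incoming puncture: the limit is a pair
$(u', \bm\Gamma) \in \mathcal{P}_{r-b''}^{\op{ind}=0}(\bm y',\bm\gamma) $ together with a Floer strip $v \in \mathcal{M}_{\kappa,\chi''}^{\op{ind}=0}(\bm y;\bm y')$ contributing $b''=\kappa-\chi''$ switches' worth of $\hbar$-weight, so that upon summing one reads off $\mathcal{F}^1(\mu^1_F(\bm y))$, using the gluing of the $\hbar$-weights $\hbar^{b'+b''+\ell}$. (ii) The MFLS $\bm\Gamma$ breaks at an intermediate critical multiloop $\bm\gamma'$: the limit is a pair of an element of $\mathcal{P}_{r'}^{\op{ind}=0}(\bm y,\bm\gamma')$ and an MFLS in $\mathcal{M}^{\op{ind}=0}(\bm\gamma',\bm\gamma;\bm Y^{\ell-r'},\bm\tau'',\bm s'')$, whose total contribution is $\mu^1_M(\mathcal{F}^1(\bm y))$. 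Combining these with the appropriate distribution of switchings and the symmetrization factor $1/\ell!$ (as in the definition~\eqref{eqn: differential of multiloop complex}) produces the two terms of the chain map equation.

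Next I would check that the remaining codimension-one phenomena are either absent or cancel pairwise. There are three such loci: a switch moment $\theta_{s_i}$ converging to $\theta_{s_{i+1}}$ along the MFLS portion, a switch colliding with the evaluation time of a branch point of $u$ through the evaluation map $\mathcal{E}$, and a branch point of $u$ reaching the outgoing boundary $\partial_{\op{out}}T$. The first two cases consist of interior points of $\overline{\mathcal{P}_r(\bm y,\bm\gamma)}$ because the reparametrizations $\nu_{[r]}$ (via conditions (C1)-(C2) from Section~\ref{section: switching map}) and Proposition~\ref{prop: switching-properties} ensure that the switching operations commute past each other, so the apparent boundary is continuously traversed and matches to the same configuration from the opposite side, as in the $\mu^1_M\circ\mu^1_M=0$ argument of Lemma~\ref{lemma-d-regular}. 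The third case is controlled by the definition of $\mathcal{E}$ in~\eqref{eq: evaluation map}, which continuously extends the evaluation map across branch points on $\partial_{\op{out}} T$ using the same reparametrization; this is one of the main reasons for introducing the $\nu_{[r]}^{\circ b}$ precomposition. Finally, the potential degeneration into ghost bubbles over points of $\op{Im}(u_{\infty,1})\cap (\partial_{\op{out}}T\times M)$, together with interior sphere bubbling and disk bubbling in the base, is excluded for $n=2$ by the Ekholm-Shende result cited in Section~\ref{subsubsection: ghost bubbles}, and is incorporated for $n\ge 3$ into the branched manifold structure of the Kuranishi replacement $\mathcal{P}_r^\sharp(\bm y,\bm\gamma)$ via (KR1)-(KR3).

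The main obstacle is sign bookkeeping, since we must show that the product and boundary orientations on the two strata above differ by exactly the sign $(-1)^{|\bm y|}$ (cf.~\eqref{eq-F1}) relative to the Floer sign convention~\eqref{eq: dagger-Floer} and the Morse sign convention~\eqref{eq-trees-sign}. I would do this by writing $\mathcal{P}_r(\bm y,\bm\gamma)$ as a fiber product analogous to the presentation in the proof of Lemma~\ref{lemma-ainfty-regular}, combining the orientation isomorphism~\eqref{eq: orientation-half-disks} for $\mathcal{H}_{\kappa,\chi}(\bm y)$ with the Morse orientation isomorphism~\eqref{eq: MFTS-orientation-iso} for the MFLS part, then comparing boundary versus product orientations exactly as in \cite[(12f)-(12g)]{seidel2008fukaya}. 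The Koszul shift arising from the $\hbar$-grading is absorbed via~\eqref{eq-mu-m-sign-shift} in the same way as for the wrapped HDHF $\mu^d_F$. Once the signs are pinned down, the two contributions organize into $\mu^1_M\circ\mathcal{F}^1 + \mathcal{F}^1\circ\mu^1_F = 0$, completing the proof.
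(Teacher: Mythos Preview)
Your overall structure is right: strip-breaking at the incoming end gives $\mathcal{F}^1\circ\mu^1_F$, MFLS-breaking gives $\mu^1_M\circ\mathcal{F}^1$, and the collision of adjacent switch times cancels as in Lemma~\ref{lemma-d-regular}. But your treatment of the third degeneration --- a branch point of $u$ reaching $\partial_{\op{out}}T$ --- is where the real content lies, and your argument there does not go through.

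You claim this is ``controlled by the definition of $\mathcal{E}$, which continuously extends the evaluation map across branch points on $\partial_{\op{out}}T$.'' Continuity of $\mathcal{E}$ is necessary but far from sufficient. When a branch point limits to the outgoing boundary, the curve develops a \emph{boundary node} along the Lagrangian $\partial_{\op{out}}T\times M$. This nodal configuration $u_*$ lives simultaneously in $\partial\overline{\mathcal{H}}_{\kappa,\chi}(\bm y)$ and in $\partial\overline{\mathcal{H}}_{\kappa,\chi-1}(\bm y)$: resolving the node one way removes a branch point, resolving it the other way keeps it. The crucial phenomenon is that on the $\chi$-side the extra freedom goes into an additional \emph{switching} on the MFLS (since $\mathcal{E}(u_*)$ lands in a deeper diagonal stratum $\Delta^{ij}_{I_k}$), so the boundary stratum glues $\mathcal{P}_{b,\ell}(\bm y,\bm\gamma)$ to $\mathcal{P}_{b+1,\ell-1}(\bm y,\bm\gamma)$ and becomes an interior point of $\overline{\mathcal{P}}_r$ with $r=b+\ell$ unchanged. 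This is exactly the mechanism that preserves the $\hbar$-exponent.

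Establishing this gluing is the heart of the paper's argument and requires three ingredients you did not supply: (i) a dimension count showing that each boundary node drops dimension by $(n-1)-(n-2)=1$, so in $\op{ind}=1$ only a \emph{single} node can form and no nontrivial ghost bubbles attach; (ii) a genuine gluing theorem (the paper's Theorem~\ref{thm: Fukaya}, after Hirschi--Hugtenberg and Ekholm--Shende) describing a neighborhood of $u_*$ in $\overline{\mathcal{H}}_{\kappa,\chi}(\bm y)$ as $D^a\times D^{n-1}$ with the nodal locus $D^a\times\{0\}$ also sitting in $\partial\overline{\mathcal{H}}_{\kappa,\chi-1}(\bm y)$; and (iii) the identification $sw(\mathcal{E}(u_*))\equiv\mathcal{E}(u_*)$ together with MFLS gluing to match the two sides through the switching map. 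The reparametrization $\nu_{[r]}^{\circ b}$ is part of why $\mathcal{E}(u_*)$ computed from the $\chi$-side agrees with $sw(\mathcal{E}(u_*))$ from the $(\chi-1)$-side, but it does not by itself give you a manifold-with-boundary structure near the node.
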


\begin{proof}[Sketch of proof.]
We analyze the codimension $1$ boundary of 
$$\mathcal{P}_r\coloneqq \mathcal{P}_r^{\mathrm{ind}=1}(\bm y,\bm\gamma).$$
A new codimension $1$ phenomenon is the nodal degeneration along the zero section $M$, that is, some branch point or switch point may approach the bottom boundary of the $A_\infty$-base disk. We give a sketch of the proof that this degeneration admits a continuation inside $\overline{\mathcal{P}}_r$.

\s\n
{\em Step 1 (Elimination of some nodal degenerations)  }
Consider $(u_*,\bm \Gamma)\in \overline{\mathcal{P}}_r$, where $u_* \in \overline{\mathcal{H}}_{\kappa, \chi}(\bm y)$ is a nodal curve with $k$ boundary nodes along $\pi^{-1}(\bdry_{\op{out}} T)$ in addition to $b=\kappa-\chi$ branch points (here the nodes are counted with multiplicity). 
The projection of each node to $T$ is assigned a $[0,1]$-coordinate; we then assign a $k$-tuple $\bm \theta_k$ to the $k$ boundary nodes and choose some ordering on it. Then we may evaluate $\mathcal{E}(u_*)$ as in~\eqref{eq: evaluation map}, after complementing with an additional $(r-b-k)$-tuple of $\theta$-coordinates. Observe that $\mathcal{E}(u_*)$ belongs to a deeper stratum where pairs of strands intersect at each of the $\theta$-coordinates in $\bm \theta_k$. We then denote by $sw(\mathcal{E}(u_*))$ the multiloop obtained by applying $k$ corresponding switching maps in the order previously chosen.

Figure~\ref{fig-chain-map} gives a schematic description of a neighborhood of $\mathcal{E}(u_*)\in \Omega^{1,2}(M,{\bm q})$ when $k=1$; in the figure $\mathcal{E}(u_*)$ is identified with $sw(\mathcal{E}(u_*))$. 
\begin{figure}[ht]
    \centering
    \includegraphics[width=5cm]{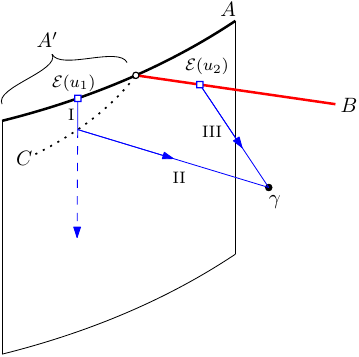}
    \caption{The neighborhood of $\mathcal{E}(u_*)$ in $\Omega^{1,2}(M,{\bm q})$, where $u_*$ is a nodal curve and $\mathcal{E}(u_*)$ is identified with $sw(\mathcal{E}(u_*))$. We are considering the $n=2$ case.  The $1$-dimensional manifolds $A$ and $B$ are given by thick black and red lines, $A'$ is one-half of $A$, and the intersection of $A$ and $B$ represents $sw(\mathcal{E}(u_*))\equiv \mathcal{E}(u_*)$. 
    The blue arcs denote MFLS: the pair (I,II) denotes the MFLS from $\mathcal{E}(u_1)$ to $\boldsymbol{\gamma}$ whilst the blue arc III denotes the MFLS from $\mathcal{E}(u_2)$ to $\boldsymbol{\gamma}$. The dotted arc $C$ indicates where switchings along gradient trajectories take place. The tuples $(u_1,(\mathrm{I,II}))$ and $(u_2,\mathrm{III})$ are viewed as elements of $\mathcal{P}^{\mathrm{ind}=1}(\bm y,\bm \gamma)$.
    Here $\mathcal{E}(u_2)$ approaches $\mathcal{E}(u_*)$ and then continues to $\mathcal{E}(u_1)$.}
    \label{fig-chain-map}
\end{figure}  

Let
\be
\item[(i)] $B$ be the set of $\mathcal{E}(u_2)$, $u_2\in \overline{\mathcal{H}}_{\kappa, \chi-k}(\bm y)$ (i.e., $u_2$ has $b+k$ branch points), such that there is an MFLS $\bm \Gamma_2$ from $\mathcal{E}(u_2)$ to $\bm\gamma$ with $\ell=r-b-k$ switches as in (C) in the definition of $\mathcal{P}_r$;
\item[(ii)] $A$ be the set of $\mathcal{E}(u_1)$, $u_1\in \overline{\mathcal{H}}_{\kappa, \chi}(\bm y)$; and
\item[(iii)] $A'\subset A$ be the subset consisting of $\mathcal{E}(u_1)$, $u_1\in \overline{\mathcal{H}}_{\kappa, \chi}(\bm y)$, for which there is an MFLS $\bm \Gamma_1$ from $sw(\mathcal{E}(u_1))$ to $\bm\gamma$ with $\ell-k$ additional switchings.

\ee   
We have $u_*\in \overline{\mathcal{H}}_{\kappa, \chi}(\bm y)$, $\mathcal{E}(u_*)\in A$, and $sw(\mathcal{E}(u_*))\in \bdry B$.   
If $\dim \mathcal{H}_{\kappa, \chi-k}(\bm y)= a+1$, then
\begin{equation}\dim \mathcal{H}_{\kappa, \chi}(\bm y)= a+1+k(n-2),\end{equation}
by the Fredholm index formula; each boundary node contributes $n-2$ to the index. 

On the other hand, the appearance of a boundary node along the Lagrangian $\bdry_2 T_1\times M$ is a codimension $(n+1)-2\cdot 1=n-1$ condition, since the Lagrangian is $(n+1)$-dimensional and the boundary of domain is mapped to a real $1$-dimensional curve. The incidence condition for $k$ boundary nodes is $k(n-1)$-dimensional.  In total, there is a dimension drop of 
$$k(n-1) -k (n-2)=k.$$ 
This in turn forces $k=1$ since $\mathcal{P}_r$ has $\op{ind}=1$ by assumption.  

Hence $u_*$ has a single boundary node and either (i) has no almost constant bubbles or (ii) has an almost constant bubble with one boundary component which is attached to the main part along the single boundary node.  Note that every boundary component of an almost constant bubble must have a boundary node by examining the projection $\pi$ to $T$.  Finally, the almost constant bubble in (ii) is not a disk and hence has negative Euler characteristic and negative Fredholm index; it therefore does not exist by (KR1).  We are now in Case (i).

\s\n
{\em Step 2 (Description of $\overline{\mathcal{H}}_{\kappa, \chi}(\bm y)$ and $\overline{\mathcal{H}}_{\kappa,\chi-1}(\bm y)$ near $u_*$)}  
 The following theorem is a gluing result which rephrases the theorem of Hirschi and Hugtenberg \cite{HH25openDM} in a slightly different setting. We note that their result generalizes the gluing result from Ekholm and Shende \cite{ekholm2021skeins}, which in turn is a special case of ideas of Fukaya (unpublished) relating higher-genus curve counting in Lagrangian Floer theory and string topology. Let $U(u_*)$ be a neighborhood of $u_*$ in $\overline{\mathcal{H}}_{\kappa, \chi}(\bm y)$ and let 
$$Z=\{u_1\in \overline{\mathcal{H}}_{\kappa, \chi}(\bm y)\mid   \mbox{$u_1$ has a node along $\bdry_{\op{out}} T$}\}.$$

\begin{theorem}\label{thm: Fukaya}
There exists a neighborhood of $Z\cap U(u_*)$ in $\overline{\mathcal{H}}_{\kappa, \chi}(\bm y)$ of the form $D^{a}\times D^{n-1}$, such that $Z\cap U(u_*)=D^a\times\{0\}$.  Moreover, $Z\cap U(u_*)$ can be viewed as an open subset of $\bdry \overline{\mathcal{H}}_{\kappa, \chi-1}(\bm y)$. 
\end{theorem}  

\begin{proof}[Sketch of proof of Theorem~\ref{thm: Fukaya}] 
We convert the desired perturbations of a nodal curve into a smooth one into a commonly used gluing result (see for example \cite{FO3lagrangian} which treats holomorphic curves before/after Lagrangian surgery). 

Let us consider the following local model near a nodal curve: 
\be
\item the ambient symplectic/almost complex manifold is $\C^2$ with the standard symplectic form, 
\item the Lagrangian boundary condition is the standard $\R^2\subset \C^2$, and 
\item the punctures of the finite energy holomorphic curves under consideration limit to chords (in a Morse-Bott family) from $S^1$ to itself, where $S^1\subset S^3$ is the Legendrian boundary at infinity of $\R^1$.
\ee  
We glue two half-planes $\H=\{z\mid   \op{Im}(z)\leq 0\}\subset \C$ along a node using the model 
$$C_c\coloneqq \{(z_1,z_2)\in \C^2\mid  z_1\in \H, z_2^2-z_1^2=c\}_{c\in \R_{\geq 0}}.$$ 
Since $C_c$ maps to $\H$ under the projection to the $z_1$-coordinate, we may view $C_c$ as a subset of $\H\times \C$, where $\H$ is viewed as the $A_\infty$-base direction.

More generally, for $\R^{n+1}\subset \C^{n+1}_{z_1,\dots, z_{n+1}}$ with $n\geq 1$, the pairs of half-planes are of the form:
\begin{gather*}
P_1(z_3,\dots,z_{n+1})\coloneqq \{(z_1,\dots, z_{n+1})\in \C^{n+1}\mid   z_2=z_1, z_1\in \H,  z_3,\dots, z_{n+1}\in \R\}, \\
P_2\coloneqq \{(z_1,\dots, z_{n+1})\in \C^{n+1} \mid   z_2=-z_1, z_1\in \H, z_3=\dots=z_{n+1}=0\},
\end{gather*}
and the gluing of $P_1(z_3,\dots,z_{n+1})$ and $P_2$ into $C_c$ occurs when $z_3=\dots= z_{n+1}=0$ (when $n=1$ this just means there is no extra condition). The half-planes are Morse-Bott regular since they can be doubled into standard complex lines in $\C^{n+1}$ and a curve is regular if and only if its double is; the same also holds for $C_c$.  Again we may view $P_1(z_3,\dots,z_{n+1})$, $P_2$, and $C_c$ as subsets of $\H\times \C^n$.

Next, given a nodal curve $u_*$, we ``zoom in'' near the intersection point $x\in T\times T^*M$ and view it as a $2$-level SFT building, where the top level maps to $(T\times T^*M) \setminus \{x\}$ and the bottom level maps to the local model just treated, with $\op{dim} M=n$. The curves on the bottom level corresponding to $\overline{\mathcal{H}}_{\kappa, \chi}(\bm y)$ are pairs $(P_1(z_3,\dots,z_{n+1}), P_2)$ (parametrized by $D^{n-1}$) and those corresponding to $\overline{\mathcal{H}}_{\kappa, \chi-1}(\bm y)$ are the $C_c$ with $z_3=\dots=z_{n+1}=0$ (parametrized by $c\in \R_{\geq 0}$).

Finally, it remains to verify the necessary gluing result for finite energy holomorphic curves with Lagrangian boundary in this case. Such gluing analysis is performed in \cite[Appendix A]{HH25openDM} and we refer the reader to the proof of \cite[Theorem A.4]{HH25openDM} for further details. The theorem then follows.
\end{proof}

\n
{\em Step 3 (Descriptions of $A$ and $B$)}  Now suppose that having an MFLS that goes to $\bm\gamma$ is a codimension $a$ condition for some $a\geq 0$. Recalling the definitions of $A$ and $B$ from earlier in the proof,  $\dim B=1$.  Theorem~\ref{thm: Fukaya}, and the fact that the evaluation map $\mathcal{E}$ is an immersion give: 

\begin{claim} \label{claim: description of A and B}
There exists an identification $A\cap \mathcal{E}(U(u_*))\simeq D^a\times  D^{n-1}$ with respect to which $\mathcal{E}(u_*)=(0,0)$, $\mathcal{E}(Z\cap U(u_*))= D^a\times\{0\}$, and $sw(\mathcal{E}(u_*))$ is the unique point on $\bdry B$ on a neighborhood of $sw(\mathcal{E}(u_*))$.
\end{claim}

The fact that $\mathcal{E}(u_*)$ for $u_*$, regarded as an element of $\overline{\mathcal{H}}_{\kappa, \chi-1}(\bm y)$, coincides with $sw(\mathcal{E}(u_*))$ for $u_*$, regarded as an element of $\overline{\mathcal{H}}_{\kappa, \chi}(\bm y)$, follows from the fact that in the first case we apply $\nu_{[r]}$ a total of $(b+1)$ times and in the second case only $b$ times when computing the evaluation map; see~\eqref{eq: evaluation map}.

\s\n
{\em Step 4 (Description of $A'$)}  A standard gluing result for MFLS gives the following:

\begin{claim} \label{claim: description of A'}
Under the identification from Claim~\ref{claim: description of A and B}, $A'\cap  \mathcal{E}(U(u_*))= \{0\}\times r$, where $r$ is a radial ray in $D^{n-1}$ emanating from $0$.
\end{claim}

Finally note that the number of switchings along (I, II) is $1$ larger than the number along III, while $\chi(u_1)=\chi(u_2)+1$. 
Therefore the continuation between $(u_1,(\mathrm{I, II}))$ and $(u_2,\mathrm{III})$ does not change the exponent of $\hbar$ in Equation~(\ref{eq-F1}).
    
The remaining codimension $1$ degenerations correspond to breakings of $u$ or $\bm \Gamma$ and are treated elsewhere in the paper. This yields $d\circ\mathcal{F}^1=\mathcal{F}^1\circ d$.
\end{proof}

\subsubsection{Quasi-isomorphism}

\begin{proposition}
\label{prop-F1}
    $\mathcal{F}^1$ is a quasi-isomorphism, after base changing to $R\llbracket \hbar\rrbracket$.
\end{proposition}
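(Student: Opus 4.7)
The plan is to exploit the $\hbar$-adic filtration on both sides together with the classical Abbondandolo-Schwarz/Abouzaid isomorphism. First, observe that inspection of~\eqref{eq-F1} shows $\mathcal{F}^1$ is filtered with respect to the $\hbar$-adic filtrations: each contribution from $\mathcal{P}_r(\bm y, \bm \gamma)$ carries a factor of $\hbar^r$ with $r \ge 0$. Since both completed complexes are complete and Hausdorff with respect to these filtrations (by construction of the $\hbar$-adic completion in Remark~\ref{rmk: deformation parameter}), the associated spectral sequences converge strongly, and it suffices to prove that $\mathcal{F}^1$ induces a quasi-isomorphism on the associated graded, i.e.\ on the $\hbar = 0$ piece.

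Second, I would identify the $r = 0$ part of $\mathcal{F}^1$ explicitly. When $r = 0$ one has $b = \kappa - \chi = 0$ and $\ell = 0$, so the branched cover $u$ is unramified of Euler characteristic $\kappa$, hence a disjoint union of $\kappa$ half-strips mapping to the respective disjoint cotangent fibers $T^*_{q_i}M$; likewise $\bm \Gamma$ is a product of ordinary Morse negative gradient trajectories with no switchings. The moduli space $\mathcal{P}_0(\bm y, \bm \gamma)$ then decomposes, according to the permutation type $\sigma \in S_\kappa$ of $\bm \gamma$, as a product
\[
\mathcal{P}_0(\bm y, \bm \gamma) \;\cong\; \prod_{i=1}^{\kappa} \mathcal{P}_0^{(i)}(y_i, \gamma_i),
\]
where each factor is the single-strand mixed moduli space of a holomorphic half-strip with boundary on $T^*_{q_i}M \cup M$ capped by a Morse flow line in $\Omega(M, q_i, q_{\sigma(i)})$. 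Thus the $r=0$ piece of $\mathcal{F}^1$ is a tensor product (over each $\sigma \in S_\kappa$) of single-strand maps of exactly the type studied in~\cite{abbondandolo2010floer, abouzaid2012wrapped}.

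Third, I would invoke the Abbondandolo-Schwarz~\cite{abbondandolo2010floer} and Abouzaid~\cite{abouzaid2012wrapped} theorem, which asserts that the single-strand chain map from $CW^*(T^*_{q_i}M, T^*_{q_{\sigma(i)}}M)$ to $CM_{-*}(\Omega(M, q_i, q_{\sigma(i)}))$ is a quasi-isomorphism. Applying K\"unneth to the above product decomposition and summing over $\sigma \in S_\kappa$ yields that $\op{gr}\mathcal{F}^1$ is a quasi-isomorphism; the standard comparison theorem for complete filtered complexes then upgrades this to a quasi-isomorphism of $\hbar$-adically completed complexes.

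The main obstacle is verifying the decoupling in the second step rigorously: one must confirm that the universal Floer and perturbation data chosen in Lemma~\ref{lemma-f1-regular} restrict on the $r = 0$ stratum to valid single-strand data on each factor, and that the orientation isomorphisms~\eqref{eq-orientation-lines-mu-d} and~\eqref{eq: orientation-half-disks} reduce consistently to the single-strand sign conventions used in~\cite{abouzaid2012wrapped}. Given the disjointness of the cotangent fibers and the tautological product structure of the pseudogradient flow $\bm X = (X^1, \dots, X^\kappa)$ on $\prod_i \Omega(M, q_i, q_{\sigma(i)})$ in the absence of switchings, both points should be essentially automatic once the Koszul sign bookkeeping of Section~\ref{subsection: review of HDHF} is carried out.
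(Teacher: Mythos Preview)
Your proposal is correct and follows essentially the same approach as the paper: reduce to the $\hbar = 0$ quotient, identify the resulting map with the multi-loop (i.e., tensor product over $\sigma \in S_\kappa$) version of the Abbondandolo--Schwarz/Abouzaid quasi-isomorphism, and then invoke the standard deformation/filtered-complex argument to upgrade to the $\hbar$-adic completion. The paper's proof is terser, deferring both the product decomposition at $\hbar=0$ and the filtered quasi-isomorphism step to references, whereas you have spelled these out explicitly.
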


\begin{proof}
    Observe that $\mathcal{F}^1$ induces a morphism between quotients $\hbar=0$:
    \begin{equation}
        \mathcal{F}^1|_{\hbar=0}\colon HF^*(\sqcup_i T_{q_i}^*M)|_{\hbar=0}\to HM^*(\Omega^{1,2}(M,\bm q))|_{\hbar=0},
    \end{equation}
    which is clearly a quasi-isomorphism, as it essentially coincides with the multi-loop version of the quasi-isomorphism introduced in \cite[Section 4]{abouzaid2012wrapped}; see \cite[Lemma 6.5]{honda2022higher} and \cite[Lemma 4.18]{KY} for more details.

    Next, since $\mathcal{F}^1$ is a deformation of $\mathcal{F}^1|_{\hbar=0}$ and the latter is a quasi-isomorphism, it is a standard fact of homological algebra that $\mathcal{F}^1$ is also a quasi-isomorphism; see e.g., \cite[Lemma 4.19]{KY} for a detailed proof.
\end{proof}

\begin{remark}
    We refer the reader to \cite{Hersovich2017spectral, Keller2006dgc, vfKreeke2023deformations, WvdB2015curvature} for a more in-depth discussion of the theory of deformations of $A_\infty$-algebras and $A_\infty$-categories. 
\end{remark}

\subsection{The higher-order maps $\mathcal{F}^d$} \label{subsection: higher-order maps}

Given $\vv{\bm y}=(\bm y_d,\dots,\bm y_1)\in CW^*(\sqcup_i T_{q_i}^*M)$, $d\geq 1$, we define
\begin{align}\label{eq: functor-equation}
    \mathcal{F}^d([\bm y_d],\dots,[\bm y_1])=\sum_{\substack{\bm \gamma,\,r\geq0 \\ (\bm u, \bm \Gamma) \in \mathcal{P}_r(\vv{y}, \bm \Gamma)}}(-1)^{\dagger(\bm u)+\dagger(\vv{\bm y})+\text{\ding{70}}(\bm u)}\mathcal{F}^d_{(\bm u, \bm \Gamma)}([\bm y_d], \ldots, [\bm y_1]), 
\end{align}
where $\boldsymbol{\gamma}\in CM_{-*}(\Omega^{1,2}(M,\bm q))$ and the terms on the right-hand side are explained below. We use the notation
$$\op{ind}(\vv{\bm y}, \bm \gamma, r) \coloneqq-(n-2)r+|\bm \gamma|-|\bm y_1|-\dots-|\bm y_d|+(d-1)$$
to denote the virtual dimension of the \emph{singular mixed moduli space} $\mathcal{P}_r(\vv{\bm y},\bm \gamma)$.
An element of $\mathcal{P}_r(\vv{\bm y},\bm \gamma)$, roughly speaking, counts holomorphic curves followed by Morse gradient trees, and the case of $m=6$ is schematically given in Figure \ref{fig-F}. In \cite{abouzaid2011plumbings} elements of such moduli spaces are called \emph{mushrooms}.
\begin{figure}[ht]
    \centering
    \includegraphics[width=6cm]{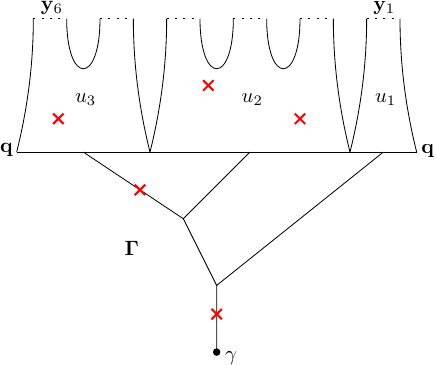}
    \caption{A schematic description of an element of $\mathcal{P}_5(\bm y_6,\dots,\bm y_1,\bm \gamma)$, represented by the tuple $(u_1, u_2, u_3, \bm \Gamma)$, where $u_1,u_2,u_3$ are holomorphic curves with a total of $3$ branch points and $\bm \Gamma$ denotes a Morse flow shrub with $2$ switchings.}
    \label{fig-F}
\end{figure}
An element of $\mathcal{P}_r(\vv{\bm y},\bm \gamma)$ is a tuple 
\begin{equation}
    (\bm u=(u_1, \dots, u_w), \bm \Gamma, \bm s)
\end{equation}
defined as follows:

\s\n
(A) $\bm s$ is a total ordering of the branch points and the switching markers that appear below.

\s\n
(B) Given a partition $d_1+\dots+d_w=d$, we partition $\vv{\bm y}=(\vv*{\bm y}{w}, \dots, \vv*{\bm y}{1})$ into $w$ vectors. Then $\bm u=(u_1, \ldots, u_w)$ is a $w$-tuple of holomorphic maps with
$$u_1 \in \mathcal{H}_{\kappa, \chi_1}(\vv*{\bm y}{1}), \dots, u_w \in \mathcal{H}_{\kappa, \chi_w}(\vv*{\bm y}{w}).$$

Let $b=(\kappa-\chi_1)+\dots+(\kappa-\chi_w)$ be the total number of the branch points of these curves. We require the restriction of $\bm s$ to the set of branch points of each $u_i$ to coincide with the one existing on it.

Letting $\op{ind}(u_i)$ be the virtual dimension of $\mathcal{H}_{\kappa, \chi_i}(\vv*{\bm y}{i})$, we define 
\begin{gather}
    \label{eq: dagger-curves-functor}
    \dagger(\bm u)=\sum_{i=1}^{w}i\cdot\op{ind}(u_i),\\
    \label{eq: dagger-inputs-functor}
    \dagger(\vv{\bm y})=\dagger(\vv*{\bm y}{1})+\ldots+\dagger(\vv*{\bm y}{w}),\\
    \label{eq: sign-correction-functor}
    \text{\ding{70}}(\bm u)= \sum_{i=1}^wd_i \operatorname{ind}(u_i).
\end{gather} 
Here \eqref{eq: dagger-inputs-functor} is analogous to ~\eqref{eq: dagger-Floer}.

Summing up the terms from~\eqref{eq: dagger-curves-functor}, \eqref{eq: dagger-inputs-functor} and \eqref{eq: sign-correction-functor} yields the sign twist in~\eqref{eq: functor-equation}.

\s\n
(C) The tuple $\bm \Gamma$ is a Morse flow \emph{shrub} with $\ell=r-b$ switches. Explicitly, there is a tree $(T, \bm \tau) \in \mathcal{T}^\ell_w$ and 
$$\bm \Gamma=(\bm \Gamma^{e_1}, \dots, \bm \Gamma^{e_w}, (\bm \Gamma^e)_{e \in E_{\op{int}}(T)}, \bm \Gamma^{e_0} )$$ 
is defined as in Definition~\ref{def: MFTS}, with only (1) altered and we explain this alteration below:

We have
    \begin{gather*}
    \bm \Gamma^{e_i}=((\theta_{s^{e_i}_0}^{e_i}, \Gamma_0^{e_i}, l_0^{e_i}),(\theta^{e_i}_{s^{e_i}_1}, \Gamma_1^{e_i}, l^{e_i}_1) \dots, (\theta_{s^{e_i}_{\ell_{e_i}-1}}^{e_i}, \Gamma^{e_i}_{\ell_{e_i}-1}, l^{e_i}_{\ell_{e_i}-1}), (\Gamma^{e_i}_{\ell_{e_i}}, l^{e_i}_{\ell_{e_i}})),\\ 
    \Gamma_j^{e_i} \colon [0, l^{e_i}_{j}] \to \Omega^{1,2}(M, \bm q^{i-1}, \bm q^{i}),
    \end{gather*}
for each $i=1, \dots, w$ and $j=0, \dots, \ell_{e_i}$, and we impose the switching conditions of Definition~\ref{def-diff}\eqref{item: switch-condition} dictated by $\bm \tau_{e_i}$. 
        
For each $i=1, \dots, d$, we have $\Gamma_0^{e_i}(0)=\mathcal{E}(u_i)$.

Finally, the distance $d(v_i, v)$ from each incoming vertex $v_i$ of the tree to the incoming vertex $v$ of the outgoing edge $e_0$ is independent of $i$. Here $d(v_i,v)$ is obtained by adding up all the lengths $l^e_j$ that appear along the shortest path from $v_i$ to $v$. See \cite[Definition 3.1]{abouzaid2011plumbings} for more details on the description of the moduli space of shrubs $S^{\ell}_w$ (with $\ell$ marked points).

\s
Given an element $(\bm u, \bm \Gamma, \bm s) \in \mathcal{P}_r(\vv{\bm y}, \bm \gamma)$ we have a canonical isomorphism
     \begin{equation}
        |\mathcal{P}_r(\vv{\bm y}, \bm \gamma)| \cong |S^{\ell}_w| \otimes o_{\bm \gamma} \otimes |\mathcal{H}_{\kappa, \chi_1}(\vv*{\bm y}{1})| \otimes \dots \otimes |\mathcal{H}_{\kappa, \chi_w}(\vv*{\bm y}{w})|
    \end{equation}

Combining it with isomorphisms of the form~\eqref{eq: orientation-half-disks} we obtain a natural map
\begin{equation}
    \mathcal{F}^d_{(\bm u, \bm \Gamma)} \colon o_{\bm y_1} \otimes \dots \otimes o_{\bm y_d} \to o_{\bm \gamma}[(n-2)r].
\end{equation}

\begin{lemma}
    There is a choice of universal Floer data $(J^{d, \kappa, \chi}, H^{d, \kappa, \chi})$ and universal tree perturbation data $\bm Y$ such that $\mathcal{P}_r(\vv{\bm y},\bm \gamma)$ is a smooth manifold of dimension $\op{ind}(\vv{\bm y},\bm\gamma,r)$. Moreover, when $\op{ind}(\vv{\bm y},\bm\gamma,r) \le 1$, it admits a compactification.
\end{lemma}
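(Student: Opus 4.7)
The plan is to essentially combine the strategies from Lemma~\ref{lemma-f1-regular} (which handles the $d=1$ case) and Lemma~\ref{lemma-ainfty-regular} (which handles pure Morse flow trees with switches), since an element $(\bm u, \bm \Gamma, \bm s) \in \mathcal{P}_r(\vv{\bm y},\bm \gamma)$ couples a collection of HDHF half-disks $u_1,\dots,u_w$ with a Morse flow shrub $\bm \Gamma$ carrying switches. Concretely, I would present $\mathcal{P}_r(\vv{\bm y},\bm \gamma)$ as a fiber product of the product
$$S^\ell_w(T,\bm\tau,\bm s) \times \mathcal{H}_{\kappa, \chi_1}(\vv*{\bm y}{1}) \times \dots \times \mathcal{H}_{\kappa, \chi_w}(\vv*{\bm y}{w}) \times W^u(\bm\gamma_0)^? \times \prod_{s,\tau_s} \Delta^{\tau_s}_{I_s}$$
with the ambient product of copies of $\Omega^{1,2}(M,\bm q)$, the two maps being built out of the evaluation map $\mathcal{E}$ on each half-disk, the switching maps, flows by $\bm X + Y^e$ along tree edges, and the weighted concatenations at vertices. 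This is completely parallel to the fiber-product presentation sketched in the proof of Lemma~\ref{lemma-ainfty-regular}, with the incoming unstable manifolds $W^u(\bm\gamma_i)$ replaced by the images $\mathcal{E}(\mathcal{H}_{\kappa,\chi_i}(\vv*{\bm y}{i}))$.

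For transversality I would argue in two stages. First, by the universal and conformally consistent choice of Floer data $(J^{d, \kappa, \chi}, H^{d, \kappa, \chi})$, each individual space $\mathcal{H}_{\kappa,\chi_i}(\vv*{\bm y}{i})$ is a smooth manifold of the expected dimension~\eqref{eq: dimension-for-covers-of-half-disks}, and moreover the evaluation maps $\mathcal{E} \colon \mathcal{H}_{\kappa,\chi_i}(\vv*{\bm y}{i}) \to \Omega^{1,2}(M,\bm q)$ can be arranged to be immersions by the Abbondandolo--Schwarz argument invoked in the proof of Lemma~\ref{lemma-f1-regular}. Second, holding the Floer data fixed, I would perturb the tree perturbation data $\bm Y \in \mathfrak{X}(T,\bm\tau)$; by Lemma~\ref{lemma: finite-trajectories-two-immersions} (applied exactly as in the proof of Theorem~\ref{theorem: transversality for MFTS}) the fiber product is transverse for generic $\bm Y$, so $\mathcal{P}_r(\vv{\bm y},\bm\gamma)$ is a smooth manifold. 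Counting contributions to the dimension gives
$$\sum_{i=1}^w \op{ind}(u_i) + \bigl(\op{dim} S^\ell_w - \ell n\bigr) - |\bm\gamma^s| \;=\; -(n-2)r + |\bm\gamma| - \sum_{i=1}^d |\bm y_i| + (d-1),$$
which matches $\op{ind}(\vv{\bm y},\bm\gamma,r)$; universality of $\bm Y$ across the poset of $(T,\bm\tau,\bm s)$ is handled by the inductive construction from Appendix~\ref{section: appendix-compactness}.

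For the compactification, when $\op{ind}(\vv{\bm y},\bm\gamma,r) \le 1$ the codimension~$\le 1$ boundary phenomena decompose into three disjoint types that have all already been analyzed elsewhere in the paper: (i) breakings of one of the Floer curves $u_i$ at an incoming strip-like end or at the outgoing end (Gromov compactness for $\mathcal{H}_{\kappa,\chi}$, after the Kuranishi replacements of Section~\ref{subsubsection: ghost bubbles} when $n\ge 3$ to exclude ghost bubble contributions); (ii) Morse-type degenerations of $\bm \Gamma$, namely trajectory breakings, edge contractions, and switch-collision phenomena, which are controlled by Theorem~\ref{thm: MFTS-compactness}; and (iii) the mixed nodal degeneration where a branch point or switch runs into $\bdry_{\op{out}} T$, which is handled exactly by Steps 1--4 in the proof of Theorem~\ref{thm: chain map}, using Theorem~\ref{thm: Fukaya} to glue the nodal locus of $\overline{\mathcal{H}}_{\kappa,\chi-1}$ to a boundary hypersurface of $\overline{\mathcal{H}}_{\kappa,\chi}$ and to match the counts of switches.

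The main obstacle is the mixed nodal degeneration (iii), which requires invoking Theorem~\ref{thm: Fukaya} and verifying that for each of the $w$ half-disks the gluing picture near a degenerate $u_{i,*}$ matches up with the switching prescribed by the shrub $\bm\Gamma$ at that leaf; the dimension-counting argument of Step~1 in the proof of Theorem~\ref{thm: chain map} still forces at most one boundary node on the entire configuration and rules out closed almost-constant bubbles by (KR1), so the rest of that argument goes through verbatim on a neighborhood of the incoming vertex in the shrub. With (i)--(iii) accounted for, $\mathcal{P}_r(\vv{\bm y},\bm\gamma)$ admits the desired compactification.
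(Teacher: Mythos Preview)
Your proposal is correct and follows exactly the approach the paper indicates: the paper's own proof simply states that it is a combination of Lemmas~\ref{lemma-ainfty-regular} and~\ref{lemma-f1-regular} and is omitted. Your fiber-product presentation, two-stage transversality argument, and enumeration of boundary phenomena (Floer breakings, Morse/shrub degenerations, and mixed nodal degenerations handled via Theorem~\ref{thm: chain map}) supply precisely the details the paper leaves implicit; the detailed boundary analysis you sketch in (i)--(iii) also anticipates the content of Proposition~\ref{prop-a-infty-map}.
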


\begin{proof}
    The proof is essentially a combination of the proofs of Lemmas \ref{lemma-ainfty-regular} and \ref{lemma-f1-regular} and is omitted.
\end{proof}

\begin{proposition}
\label{prop-a-infty-map}
    $\mathcal{F}$ is an $A_\infty$-morphism, i.e., it satisfies
\begin{gather}\label{eq: A-infty-functor}
      \sum_{w} \sum_{s_1, \dots, s_w} \mu^w_{M}(\mathcal{F}^{s_w}(\bm y_d, \dots, \bm y_{d-s_w+1}), \dots, \mathcal{F}^{s_1}(\bm y_{s_1}, \dots, \bm y_1))= \\
     \sum_{k, d'}(-1)^{\text{\ding{64}}_n} \mathcal{F}^{d-d'+1}(\bm y_d, \dots, \bm y_{k+d'+1}, \mu^{d'}_F(\bm y_{k+d'}, \dots, \bm y_{k+1}), \bm y_k, \dots, \bm y_1). \nonumber
\end{gather}
\end{proposition}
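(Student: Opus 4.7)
The plan is to show that, for $\op{ind}(\vv{\bm y}, \bm \gamma, r) = 1$, the signed count of the codimension-$1$ boundary of the compactification of $\mathcal{P}_r(\vv{\bm y}, \bm \gamma)$ vanishes, with two families of boundary strata reproducing the two sides of~\eqref{eq: A-infty-functor} and the remaining strata cancelling in pairs. Regularity of the interior was established in the preceding lemma, and compactness follows by combining SFT-type compactness for the holomorphic components (with ghost bubbles treated as in Section~\ref{subsubsection: ghost bubbles}) with compactness of the shrub moduli from~\cite{abouzaid2011plumbings} adapted along the lines of Theorem~\ref{thm: MFTS-compactness}.

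The two non-cancelling families correspond directly to the two sides of~\eqref{eq: A-infty-functor}. On one hand, one of the input curves $u_i \in \mathcal{H}_{\kappa, \chi_i}(\vv*{\bm y}{i})$ may break off a disk component carrying $d'$ consecutive inputs of $u_i$; since the universal Floer data on $\mathcal{H}^d$ is chosen conformally consistent with the data on $\mathcal{R}^{d'}$ used to define $\mu^{d'}_F$, such a stratum is naturally a fiber product of an $\mathcal{M}_{\kappa, \chi'}$-moduli with a $\mathcal{P}_{r'}$-moduli, and summing over them produces the right-hand side of~\eqref{eq: A-infty-functor}. On the other hand, the shrub $\bm\Gamma$ may degenerate when a vertex $v'$ lying above the output vertex $v$ becomes a new ``center'', and an internal edge between them has its length blow up. By the standard description of $\bdry S^{\ell}_w$ in~\cite[Section 3]{abouzaid2011plumbings}, such a boundary stratum is a product of an MFTS moduli $\mathcal{M}_T$ of type in $\mathcal{T}_w$ with $w$ smaller shrub-type moduli $\mathcal{P}_{r_i}(\vv*{\bm y}{i}, \bm\gamma_i)$; by Lemma~\ref{lemma: concatenation-commutativity} and the compatibility of the universal perturbation data, these contributions aggregate to the left-hand side of~\eqref{eq: A-infty-functor}.

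All other codimension-$1$ phenomena cancel. Edge contractions in the underlying tree of the shrub pair up as in Lemma~\ref{lemma-ainfty-regular} and the usual $A_\infty$-cancellation for binary trees. Collisions of switching markers with vertices of $T$ or with branch points of the $u_i$ are interior points of a deeper-stratum moduli and cancel as in the proof of Theorem~\ref{thm: chain map}. Boundary-nodal degenerations of the $u_i$ along the zero section $M$ are handled by the gluing result of Theorem~\ref{thm: Fukaya}; the trade of one branch point for one switching marker preserves the total $\hbar$-exponent, so the matching continuation inside $\mathcal{P}_r$ is genuinely interior. Ghost bubbles are eliminated by \cite[Theorem 1.1]{ES} when $n=2$ and by Kuranishi replacement when $n \ge 3$.

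The main obstacle is sign bookkeeping. One must verify that the Koszul twists from the $\hbar$-degree shifts~\eqref{eq-mu-m-sign-shift}, the shrub orientation isomorphism~\eqref{eq: tree-orientation} combined with the orientation of $\mathcal{H}^d_{\kappa, \chi}$ analogous to that of $\mathcal{R}^d_{\kappa, \chi}$, the sign corrections $\dagger(\bm u)$, $\dagger(\vv{\bm y})$, $\text{\ding{70}}(\bm u)$ of~\eqref{eq: dagger-curves-functor}--\eqref{eq: sign-correction-functor}, and the standard shrub boundary-orientation conventions of \cite[Section 8]{abouzaid2011plumbings}, all conspire to produce precisely the Koszul sign $\text{\ding{64}}_n$ of~\eqref{eq: A-infty-functor}. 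As in the proof of the $A_\infty$-relations for $\mu^\bullet_M$ and as in \cite[(12f)--(12g)]{seidel2008fukaya}, this follows from a commutative-diagram comparison of the boundary and product orientations; the sign twists~\eqref{eq: dagger-curves-functor}--\eqref{eq: sign-correction-functor} were engineered precisely for this purpose.
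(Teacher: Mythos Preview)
Your outline is largely correct and matches the paper's approach closely in cases (1), (2), and (4). There is, however, one codimension-$1$ boundary stratum you have not accounted for, and your description of the shrub edge-contraction cancellation is incorrect as stated.

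The missing stratum is the degeneration of the base half-disk $T\in\mathcal{H}^{d_i}$ of one of the curves $u_i$ by pinching along an arc from $\partial_{\op{out}}T$ to some $\partial_jT$. This is a genuine codimension-$1$ phenomenon in the Deligne--Mumford boundary of $\mathcal{H}^{d_i}$ and produces a pair of half-disks sharing a node on the zero section $M$. It is \emph{not} governed by Theorem~\ref{thm: Fukaya}, which concerns a branch point of the cover approaching $\partial_{\op{out}}T$; here the base itself is breaking, with no change in the number of branch points. In the paper this is case~(3) (Figure~\ref{fig-ainfty-relation-2}), and the relevant gluing is handled as in \cite[Section~6.4]{abbondandolo2010floer}.

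Correspondingly, your claim that ``edge contractions in the underlying tree of the shrub pair up as in Lemma~\ref{lemma-ainfty-regular}'' does not hold: the shrub trees here are not binary and have half-disks (not critical points) as caps, so the $T/e=\tilde T/\tilde e$ pairing mechanism does not apply. Instead, the contraction of two incoming edges with a common outgoing vertex (i.e., two caps colliding) in the shrub cancels precisely against the half-disk pinching above --- these are the ``right-to-middle'' and ``left-to-middle'' degenerations sharing the common limit in Figure~\ref{fig-ainfty-relation-2}. Once you replace your self-pairing claim with this cross-cancellation, the remainder of your argument (and sign discussion) agrees with the paper's.
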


\begin{proof}
We consider the moduli space $\mathcal{P}_r(\vv{\bm y},\bm \gamma)$ of dimension $1$ and study its boundary degenerations.

\s\n
(1) There is a portion of the boundary that has the form
$$\mathcal{P}_{r_w}(\vv*{\bm y}{w},\bm \gamma'_w) \times \dots \times \mathcal{P}_{r_1}(\vv*{\bm y}{1},\bm \gamma'_1) \times \mathcal{M}_{T}(\bm \gamma'_w, \dots, \bm \gamma'_1, \bm \gamma; \bm \tau'),$$
where $\bm \tau'$ has $\ell'$ elements and $r=r_1+\dots+r_w+\ell'$. This corresponds to a breaking on the shrub side, i.e., to the limit when the distance from the interior vertices to the unique interior vertex adjacent to $e_0$ goes to $+\infty$.
This corresponds to the left-hand side of Equation~\eqref{eq: A-infty-functor}. The example of $\mu^2(\mathcal{F}^5(\bm y_6,\dots,\bm y_2),\mathcal{F}^1(\bm y_1))$ is illustrated on the right-hand side of Figure \ref{fig-ainfty-relation}. We also refer the reader to \cite[Lemma 4.8]{abouzaid2011plumbings}.

\s\n
(2)
Another possibility corresponds to breaking of a target disk of some $u_i \in \mathcal{H}_{\kappa, \chi_i}(\vv*{\bm y}{i})$ for $i$ such that $1 \le i \le w$. Such strata have the form
$$\mathcal{M}_{\kappa, \chi'}(\bm y_{k+d'}, \dots, \bm y_{k+1}; \bm y') \times \mathcal{P}_{r'}(\bm y_d, \dots, \bm y_{k+d'+1}, \bm y', \bm y_k, \dots, \bm y_1, \bm \gamma)$$
and the elements of such strata are counted by the right-hand side of Equation~\eqref{eq: A-infty-functor}. The example of $\mathcal{F}^5(\bm y_6,\dots,\mu^2(\bm y_4,\bm y_3),\dots,\bm y_1)$ is illustrated on the left-hand side of Figure \ref{fig-ainfty-relation}.
\begin{figure}[ht]
    \centering
    \includegraphics[width=12cm]{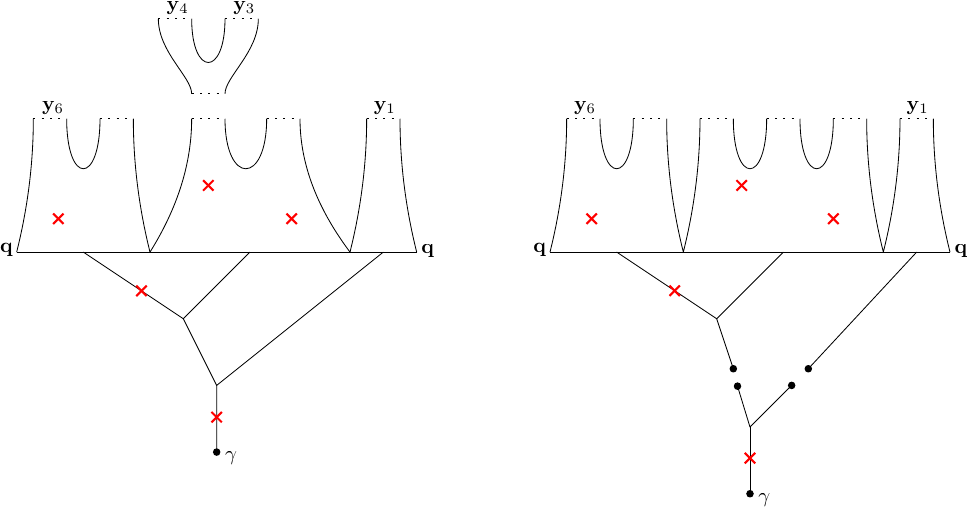}
    \caption{Some boundary strata of $\mathcal{P}_r(\vec{\bm y}, \bm \gamma)$.}
    \label{fig-ainfty-relation}
\end{figure}

\s\n
(3) Going from the left to the middle of Figure \ref{fig-ainfty-relation-2} corresponds to a boundary degeneration which is already seen on the level of $\mathcal{H}^d$, corresponding to the pinching along an arc connecting $\partial_{\op{out}}T$ to some $\partial_iT$.  Going from the right to the middle corresponds to the contraction of two incoming edges with a common outgoing vertex. Both of these gluing results are described in detail in a similar situation for $\kappa=1$ in \cite[Section 6.4]{abbondandolo2010floer} and we refer the reader to it for a detailed analysis.
\begin{figure}[ht]
    \centering
    \includegraphics[width=10cm]{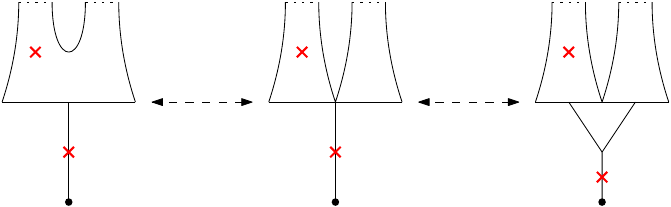}
    \caption{Breaking along the zero section.}
    \label{fig-ainfty-relation-2}
\end{figure}

\s\n
(4) The last possible scenario is a branch point on the $u_i$ side (for some $1 \le i \le w$) becoming a switching point on the shrub side (and vice versa). This is treated in Theorem~\ref{thm: chain map}.
    
The remainder of the proof consists of a very tedious sign verification that we omit here. We note that correctness of the sign $\dagger(\bm u)$ can be checked solely by analyzing breakings of type (2) and in an analogous situation it is carried out in \cite[Section 8.3]{abouzaid2011plumbings} (note that symbolically it is the same computation). The verification of the sign $\dagger(\vv{\bm y})+\text{\ding{70}}(u)$ amounts to the analysis of breakings of type (1) and is discussed in \cite[Section 8.4]{abouzaid2011plumbings}, although in loc.\ cit.\ the sign provided is just $\dagger(\vv{\bm y})$ and should be replaced with by $\dagger(\vv{\bm y})+\text{\ding{70}}(u)$.
\end{proof}

\begin{theorem}\label{thm: quasi-equivalence-Floer-to-Morse}
    $\mathcal{F}$ induces a quasi-equivalence of $\hbar$-adic completions.
\end{theorem}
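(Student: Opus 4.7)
The plan is straightforward: combine Proposition~\ref{prop-F1} and Proposition~\ref{prop-a-infty-map}. Recall that an $A_\infty$-morphism is by definition a quasi-equivalence precisely when its linear component induces an isomorphism on cohomology (see e.g., \cite[Section (1h)]{seidel2008fukaya}). Proposition~\ref{prop-a-infty-map} establishes that $\mathcal{F}=(\mathcal{F}^d)_{d\geq 1}$ is an $A_\infty$-morphism from $CW^*(\sqcup_i T^*_{q_i}M)$ to $CM_{-*}(\Omega^{1,2}(M,\bm q))$, while Proposition~\ref{prop-F1} establishes that its linear component $\mathcal{F}^1$ is a quasi-isomorphism after base change to $R\llbracket\hbar\rrbracket$. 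The theorem is the assembly of these two facts in the completed setting.

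The one substantive verification is that $\mathcal{F}$ extends to a bona fide $A_\infty$-morphism of the $\hbar$-adic completions of Remark~\ref{rmk: deformation parameter} defined in \eqref{eqn: tensoring1}--\eqref{eqn: tensoring2}. For this I would observe that, for each fixed tuple of inputs $\vv{\bm y}$ and output $\bm\gamma$ of prescribed degrees, the index formula $\op{ind}(\vv{\bm y},\bm\gamma,r)=-(n-2)r+|\bm\gamma|-\sum_i|\bm y_i|+(d-1)$ together with the rigidity condition forces $r=b+\ell$ to take only finitely many values, so each $\mathcal{F}^d(\vv{\bm y})$ contributes only finitely many terms at each fixed power of $\hbar$. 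Consequently each $\mathcal{F}^d$ is $\hbar$-adically continuous and extends uniquely to a multilinear map between the completions, and the $A_\infty$-morphism relations of Proposition~\ref{prop-a-infty-map} pass to the completions as equalities of formal power series in $\hbar$.

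With the extension in place, Proposition~\ref{prop-F1} gives directly that the linear part of the extended morphism is a quasi-isomorphism of completed complexes, which is by definition an $A_\infty$-quasi-equivalence. Equivalently, one may invoke the $A_\infty$-Whitehead theorem to construct a quasi-inverse $\mathcal{G}$ by choosing $\mathcal{G}^1$ to be a chain-level homotopy inverse of $\mathcal{F}^1$, and then inductively building $\mathcal{G}^d$ for $d\geq 2$ to cancel the obstructions to the $A_\infty$-morphism relations; each such obstruction is a cycle that becomes a boundary under $\mathcal{G}^1$, and the construction converges $\hbar$-adically by completeness of the target. The only technical point worth flagging---the putative main obstacle---is ensuring compatibility of the grading convention $|\hbar|=2-n$ and of the Koszul sign twist \eqref{eq-mu-m-sign-shift} with $\hbar$-adic continuity; this is formal given the definitions in Remark~\ref{rmk: deformation parameter} and the sign analysis already carried out in Section~\ref{subsection: review of HDHF} and in the proof of Proposition~\ref{prop-a-infty-map}.
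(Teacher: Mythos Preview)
Your proposal is correct and takes essentially the same approach as the paper: the paper's proof is a single sentence citing Propositions~\ref{prop-F1} and~\ref{prop-a-infty-map}, and you have simply unpacked what that citation means (quasi-isomorphism of the linear part plus $A_\infty$-morphism equals quasi-equivalence). Your additional paragraph on $\hbar$-adic continuity of $\mathcal{F}^d$ is reasonable elaboration beyond what the paper writes, though note that for $n>2$ the index constraint actually pins down $r$ uniquely (not just finitely), while for $n=2$ the complexes are already defined over $R\llbracket\hbar\rrbracket$ so no separate extension step is required.
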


\begin{proof}
    This follows from Propositions \ref{prop-F1} and \ref{prop-a-infty-map}. 
\end{proof}

\section{The based multiloop $A_\infty$-algebra of $S^2$} \label{section: algebraic model}

The goal of this section is to prove basic properties of the dga $H_\kappa$ from Definition~\ref{defn: Hn} in Section~\ref{subsection: basic properties of Hkappa} and to prove Theorem~\ref{thm: multiloop algebra for S2 is Hn} in Sections~\ref{subsection: verification}--\ref{subsection: direct limit argument}. 

We point out that given an $A_\infty$-algebra $(A, \{\mu^d\}_{d \ge 1})$ with vanishing $\mu^d$ for $d \ge 3$ there is a naturally associated dga $(A, d, \cdot)$ with differential and multiplication given via $$d(a)=\mu^1(a), \quad a*b=(-1)^{|a|}\mu^2(b,a).$$ In this section we call a tuple $(A, \mu^1, \mu^2)$ arising from an $A_\infty$-algebra $(A, \{\mu^d\}_{d \ge 1})$ as above a dga by abuse of notation, while understanding that to obtain an actual dga one needs to pass through this assignment. In particular, the dga $H_\kappa$ is obtained via such an assignment from $A_\infty$-algebra to which as we claim the $A_\infty$-algebra $CM_{-*}(S^2, \bm q)$ is quasi-equivalent. 

\subsection{Basic properties of $H_\kappa$} \label{subsection: basic properties of Hkappa}

The subalgebra $H_\kappa^{\operatorname{fin}}$ of $H_\kappa$ generated by the $T_i$ is isomorphic to the finite Hecke algebra associated to the symmetric group $S_\kappa$. 
In particular, $T_i^{-1}=T_i-\hbar 1$. The generator $x_1$ gives a homotopy between 
$$T_1T_2\cdots T_{\kappa-2}T_{\kappa-1}^2T_{\kappa-2}\cdots T_2T_1 \quad \mbox{and}\quad  1.$$  

\begin{remark}
    There is a generalization $H_\kappa(c)$ with a central parameter $c$, where the only difference is that the differential becomes 
\begin{equation} 
dx_1=T_1T_2\cdots T_{\kappa-2}T_{\kappa-1}^2T_{\kappa-2}\cdots T_2T_1-c.
\end{equation}
\end{remark}

\begin{lemma}
    The dga $H_\kappa$ is well-defined. 
\end{lemma}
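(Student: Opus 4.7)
The plan is to verify three things in sequence. First, the underlying associative unital $\Z[\hbar]$-algebra is well-defined tautologically, as a quotient of the free graded algebra on $T_1,\dots,T_{\kappa-1},x_1$ by the two-sided ideal generated by \eqref{first}--\eqref{eq Hn x1x2}, so there is nothing to check. Second, the prescription $dT_i=0$ together with \eqref{sixth} extends uniquely to a graded derivation on the free algebra by the graded Leibniz rule, and we must show this derivation descends to $H_\kappa$, i.e., that $d$ sends each defining relation into the ideal of relations. Third, we must check $d^2=0$.

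The third step is immediate: $d^2T_i=0$ trivially, and $d^2x_1=d(\Pi-1)=0$, where $\Pi:=T_1T_2\cdots T_{\kappa-2}T_{\kappa-1}^2T_{\kappa-2}\cdots T_2T_1$ is a polynomial in the $T_i$ and Leibniz gives $d\Pi=0$. For the second step, relations \eqref{first}--\eqref{third} are annihilated by $d$ automatically because $dT_i=0$. Relation \eqref{eq Hn x1Tj} reduces via Leibniz to the identity $\Pi T_j=T_j\Pi$ for $2\le j\le \kappa-1$, which expresses the familiar fact that the ``full twist of strand $1$'' is central in the subalgebra generated by $T_2,\dots,T_{\kappa-1}$. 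I would prove this by induction using only the braid relation \eqref{second} and the quadratic relation \eqref{first} (as one checks directly in the base case $\kappa=3$: $T_1T_2^2T_1\cdot T_2=T_2\cdot T_1T_2^2T_1$ via two applications of \eqref{second}).

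The main task is to verify that $d$ annihilates relation \eqref{eq Hn x1x2}. Write $\Pi=T_1\Pi_2T_1$ with $\Pi_2:=T_2\cdots T_{\kappa-1}^2\cdots T_2$, so that $x_1\Pi_2=\Pi_2 x_1$ by \eqref{eq Hn x1Tj}. Expanding $d(T_1^{-1}x_1T_1^{-1}x_1+x_1T_1^{-1}x_1T_1)$ via the graded Leibniz rule (using $|x_1|=-1$) and substituting $dx_1=\Pi-1$, the result splits into a ``$\Pi$-part'' and a ``constant-part''. Using $T_1^{-1}\Pi=\Pi_2T_1$, $\Pi T_1^{-1}=T_1\Pi_2$, and $x_1\Pi_2=\Pi_2 x_1$, the $\Pi$-part collapses to $\Pi_2x_1+(T_1-T_1^{-1})\Pi_2x_1T_1-\Pi_2x_1T_1^2$, which vanishes after substituting $T_1-T_1^{-1}=\hbar$ (a consequence of \eqref{first}) and then $T_1^2=1+\hbar T_1$. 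The constant-part simplifies to $-T_1^{-2}x_1+T_1^{-1}x_1T_1^{-1}-T_1^{-1}x_1T_1+x_1$, which vanishes via $T_1^{-2}=1-\hbar T_1^{-1}$ and $T_1^{-1}-T_1=-\hbar$. The main obstacle, which is mild, is bookkeeping the graded signs and arranging the cancellations via the Hecke identity $T_1-T_1^{-1}=\hbar$; no deeper algebraic input is needed.
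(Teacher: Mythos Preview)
Your proposal is correct and follows essentially the same route as the paper: trivially dispose of the relations among the $T_i$, reduce \eqref{eq Hn x1Tj} to the commutation $\Pi T_j=T_j\Pi$ for $j\ge 2$ (which the paper verifies by an explicit chain of braid moves rather than by induction, and without needing the quadratic relation), and for \eqref{eq Hn x1x2} expand $d$ using $dx_1=T_1\Pi_2T_1-1$, exploit $x_1\Pi_2=\Pi_2x_1$, and cancel via $T_1-T_1^{-1}=\hbar$. The only cosmetic difference is that you separate the expansion into a ``$\Pi$-part'' and a ``constant-part'', whereas the paper regroups the eight resulting terms into four pairs each of which collapses to an $\hbar$-term; the algebra is the same.
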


\begin{proof}
    It suffices to verify that the differential $d$ preserves the defining relations. 
    This is obvious for the first three relations \eqref{first}, \eqref{second}, \eqref{third} since $dT_i=0$. 
    
    For the fourth relation \eqref{eq Hn x1Tj}, we have
    \begin{align*}
        d(x_1T_j)=&T_1T_2\cdots T_{\kappa-2}T_{\kappa-1}^2T_{\kappa-2}\cdots T_2T_1T_j-T_j \\
        =&T_1\cdots T_{\kappa-2}T_{\kappa-1}^2T_{\kappa-2}\cdots T_jT_{j-1}T_jT_{j-2} \cdots T_1- T_j \\
        =&T_1\cdots T_{\kappa-2}T_{\kappa-1}^2T_{\kappa-2}\cdots T_{j-1}T_{j}T_{j-1}T_{j-2} \cdots T_1- T_j \\
        =&T_1\cdots T_{j-2}T_{j-1}T_{j}T_{j-1} \cdots T_{\kappa-2}T_{\kappa-1}^2T_{\kappa-2}\cdots T_1- T_j \\
        =&T_1\cdots T_{j-2}T_{j}T_{j-1}T_{j} \cdots T_{\kappa-2}T_{\kappa-1}^2T_{\kappa-2}\cdots T_1- T_j \\
        =&T_jT_1\cdots\cdots T_{\kappa-2}T_{\kappa-1}^2T_{\kappa-2}\cdots T_1- T_j = d(T_jx_1).
    \end{align*}
    For the last relation \eqref{eq Hn x1x2}, let $T'=T_2\cdots T_{\kappa-2}T_{\kappa-1}^2T_{\kappa-2}\cdots T_2$ which commutes with $x_1$. We have
    \begin{align*}
        &d(T_1^{-1}x_1T_1^{-1}x_1+x_1T_1^{-1}x_1T_1)\\
        =&T_1^{-1}(T_1T'T_1-1)T_1^{-1}x_1-T_1^{-1}x_1T_1^{-1}(T_1T'T_1-1)\\ &+(T_1T'T_1-1)T_1^{-1}x_1T_1-x_1T_1^{-1}(T_1T'T_1-1)T_1 \\
        =&T'x_1-T_1^{-2}x_1-T_1^{-1}x_1T'T_1+T_1^{-1}x_1T_1^{-1}+T_1T'x_1T_1-T_1^{-1}x_1T_1\\
		 & \qquad \qquad \qquad -x_1T'T_1^2+x_1 \\
        =&(x_1-T_1^{-2}x_1)+(T_1^{-1}x_1T_1^{-1}-T_1^{-1}x_1T_1)+(T'x_1-x_1T'T_1^2)\\
 &  \qquad \qquad \qquad +(-T_1^{-1}x_1T'T_1+T_1T'x_1T_1) \\
        =&\hbar T_1^{-1}x_1+(-T_1^{-1}x_1 \hbar)+(-x_1T'\hbar T_1)+\hbar x_1T'T_1 = 0,
    \end{align*}
    where the last equality uses the quadratic relation $T_1^2-1=\hbar T_1, T_1-T_1^{-1}=\hbar$.
\end{proof}

There is an alternate formulation of $H_\kappa$ which is obtained by inductively introducing generators $x_2, \dots, x_\kappa$ as follows:
\begin{equation} \label{eq xk+1}
    x_{k+1}=T_k^{-1}x_kT_k^{-1}, \quad 1\le k \le \kappa-1.
\end{equation}
We have
\begin{gather} 
    T_kx_k=T_k^{-1}x_k+\hbar x_k=x_{k+1}T_{k}+\hbar x_k, \label{eq Hn Tkxk1} \\
    T_kx_{k+1}=x_kT_k^{-1}=x_kT_k-\hbar x_k, \label{eq Hn Tkxk2}\\
    T_kx_j=x_jT_k,\quad j\neq k,k+1. \label{eq Hn Tkxk3}
\end{gather}
The anticommutator relation (\ref{eq Hn x1x2}) becomes $x_2x_1+x_1x_2T_1^2=0$ and a direct computation shows that
\begin{align} \label{eq Hn xjxk}
    x_kx_j+x_jx_k(T_{j-1}\cdots T_1)(T_{k-1}\cdots T_2)T_1^2(T_2^{-1}\cdots T_{k-1}^{-1})(T_1^{-1}\cdots T_{j-1}^{-1})=0,
\end{align}
holds for $1 \le j < k \le \kappa$.

We now state and prove some properties of $H_\kappa$.  The first concerns the specialization of $H_\kappa$ at $\hbar=0$. 

\begin{lemma} \label{lemma: Hecke hbar is zero}
$H_\kappa|_{\hbar=0} \cong \mathbb{Z}[x_1,\dots,x_\kappa] \rtimes S_\kappa$,  
where $\mathbb{Z}[x_1,\dots,x_\kappa]$ denotes the graded polynomial ring, i.e., $x_kx_j+x_jx_k=0, j\neq k,$ and $|x_i|=-1$.
\end{lemma}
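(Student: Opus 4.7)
The plan is to show that setting $\hbar=0$ in the presentation of $H_\kappa$ reduces to a presentation of the semidirect product $\Z[x_1,\dots,x_\kappa]\rtimes S_\kappa$, and then to exhibit the isomorphism via mutually inverse algebra homomorphisms.

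First I would record what the defining relations become at $\hbar=0$. Relation~\eqref{first} becomes $T_i^2=1$, which together with \eqref{second} and \eqref{third} is the standard presentation of $\Z[S_\kappa]$. Collapsing $T_\ell^2=1$ from the middle outward shows $T_1T_2\cdots T_{\kappa-2}T_{\kappa-1}^2T_{\kappa-2}\cdots T_1=1$, so $dx_1=0$ and the dga structure degenerates to an algebra. Introducing $x_2,\ldots,x_\kappa$ by \eqref{eq xk+1}, the specializations of \eqref{eq Hn Tkxk1}--\eqref{eq Hn Tkxk3} read $T_kx_k=x_{k+1}T_k$, $T_kx_{k+1}=x_kT_k$, and $T_kx_j=x_jT_k$ for $j\neq k,k+1$, which is precisely the transposition action of $S_\kappa$ on the indices of the $x_j$'s. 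Finally, in the derived relation \eqref{eq Hn xjxk} the middle factor $(T_{j-1}\cdots T_1)(T_{k-1}\cdots T_2)T_1^2(T_2^{-1}\cdots T_{k-1}^{-1})(T_1^{-1}\cdots T_{j-1}^{-1})$ telescopes to $1$ at $\hbar=0$ using $T_1^2=1$, so the relation becomes $x_kx_j+x_jx_k=0$ for all $j<k$.

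Next I would define two algebra homomorphisms. Let $\phi\colon \Z[x_1,\dots,x_\kappa]\rtimes S_\kappa\to H_\kappa|_{\hbar=0}$ send $s_i\mapsto T_i$ and $x_i\mapsto x_i$ (with $x_i$ for $i\geq 2$ defined in $H_\kappa$ via \eqref{eq xk+1}). The observations above verify all defining relations of the semidirect product, so $\phi$ is well-defined, and it is surjective since its image contains the generators $T_i$ and $x_1$. For the inverse, define $\psi\colon H_\kappa|_{\hbar=0}\to\Z[x_1,\dots,x_\kappa]\rtimes S_\kappa$ on generators by $T_i\mapsto s_i$, $x_1\mapsto x_1$. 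Verifying the defining relations \eqref{first}--\eqref{eq Hn x1x2} under $\psi$ is routine: \eqref{first}--\eqref{third} match the $S_\kappa$ relations, \eqref{eq Hn x1Tj} holds since $s_j$ fixes $x_1$ for $j\geq 2$, and the specialization of \eqref{eq Hn x1x2} becomes $s_1x_1s_1x_1+x_1s_1x_1s_1=x_2x_1+x_1x_2=0$ by anticommutation in $\Z[x_1,\dots,x_\kappa]$.

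Finally I would check $\phi\circ\psi=\mathrm{id}$ and $\psi\circ\phi=\mathrm{id}$. On the generators $T_i,x_1$ of $H_\kappa|_{\hbar=0}$ this is immediate. On $x_k$ in the semidirect product, one uses that $x_k=(s_{k-1}\cdots s_1)\cdot x_1\cdot(s_{k-1}\cdots s_1)^{-1}$, and iterating $x_{k+1}=T_kx_kT_k^{-1}$ in $H_\kappa|_{\hbar=0}$ shows $\phi$ sends this expression to the corresponding $x_k$. The main obstacle in the argument is purely bookkeeping: carefully verifying that the derived relation \eqref{eq Hn xjxk} specializes to plain anticommutation at $\hbar=0$, which amounts to the telescoping argument above.
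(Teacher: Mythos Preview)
Your proposal is correct and follows essentially the same approach as the paper: both identify that at $\hbar=0$ the finite Hecke subalgebra becomes $\mathbb{Z}[S_\kappa]$, the differential vanishes, and the anticommutator relation \eqref{eq Hn xjxk} collapses to $x_kx_j+x_jx_k=0$. The paper simply records these three observations and stops, while you go further and explicitly construct mutually inverse homomorphisms; this extra bookkeeping is unnecessary for the paper's purposes but is certainly valid.
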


\begin{proof}
This follows from noting that (i) the subalgebra $H_\kappa^{\operatorname{fin}}|_{\hbar=0}$ is the group ring $\mathbb{Z}[S_\kappa]$; (ii) the differential becomes trivial since $dx_1=0$; and (iii) the anticommutator relation (\ref{eq Hn xjxk}) reduces to $x_kx_j+x_jx_k=0$ for $1\le j<k\le \kappa$.
\end{proof}

The second concerns the zeroth cohomology $H^0(H_\kappa)$. 

\begin{lemma}
There is an isomorphism of rings:
\begin{equation}H^0(H_\kappa) \cong \mathbb{Z}[\operatorname{Br}_\kappa(S^2)] / \{T_i^2=1+\hbar T_i\}.\end{equation}
Hence $H^0(H_\kappa)$ is isomorphic to the braid skein algebra of $S^2$. 
\end{lemma}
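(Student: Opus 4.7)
The plan is to exploit the cohomological grading on $H_\kappa$ to reduce the computation of $H^0(H_\kappa)$ to an explicit quotient of the finite Hecke algebra, and then to recognize that quotient as the braid skein algebra $\Z[\op{Br}_\kappa(S^2)]/\{T_i^2=1+\hbar T_i\}$ via the standard presentation of $\op{Br}_\kappa(S^2)$.

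First I would identify the degree-$0$ part. Since $|T_i|=|\hbar|=0$ and $|x_1|=-1$---and hence $|x_k|=-1$ for all $k\geq 1$ by the recursion \eqref{eq xk+1}---no element of $H_\kappa$ has strictly positive degree, and every degree-$0$ element lies in the $\Z[\hbar]$-subalgebra generated by the $T_i$. This gives a natural surjection
\[\phi\colon H_\kappa^{\op{fin}}\twoheadrightarrow H_\kappa^0,\]
where $H_\kappa^{\op{fin}}$ denotes the finite Hecke algebra defined by \eqref{first}--\eqref{third}. A PBW-type argument---filter $H_\kappa$ by the total number of $x_1$'s appearing and compare the associated graded algebra to the $\hbar=0$ computation of Lemma \ref{lemma: Hecke hbar is zero}, where the $T$- and $x$-parts multiply independently---shows that $\phi$ is in fact an isomorphism.

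Next I would compute $d(H_\kappa^{-1})$. Every degree-$(-1)$ element of $H_\kappa$ is a sum of words containing exactly one $x_j$; iterating \eqref{eq xk+1} writes $x_k=T_{k-1}^{-1}\cdots T_1^{-1}x_1T_1^{-1}\cdots T_{k-1}^{-1}$, so each such word has the form $a\,x_1\,b$ with $a,b\in H_\kappa^{\op{fin}}$. Since $|a|=|b|=0$ and $d$ vanishes on $H_\kappa^{\op{fin}}$, the Leibniz rule together with \eqref{sixth} gives
\[d(a\,x_1\,b)=a(dx_1)b=a\bigl(T_1T_2\cdots T_{\kappa-2}T_{\kappa-1}^2T_{\kappa-2}\cdots T_2T_1-1\bigr)b,\]
so $d(H_\kappa^{-1})=\phi(\mathcal{I})$, where $\mathcal{I}\subset H_\kappa^{\op{fin}}$ is the two-sided ideal generated by the ``sphere element'' $T_1T_2\cdots T_{\kappa-2}T_{\kappa-1}^2T_{\kappa-2}\cdots T_2T_1-1$. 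Combined with $H_\kappa^1=0$ and the first step, this yields $H^0(H_\kappa)\cong H_\kappa^{\op{fin}}/\mathcal{I}$.

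Finally I would match $H_\kappa^{\op{fin}}/\mathcal{I}$ with the braid skein algebra. The classical presentation of $\op{Br}_\kappa(S^2)$ has generators $\sigma_1,\ldots,\sigma_{\kappa-1}$ subject to the braid and commuting relations together with the sphere relation $\sigma_1\sigma_2\cdots\sigma_{\kappa-2}\sigma_{\kappa-1}^2\sigma_{\kappa-2}\cdots\sigma_2\sigma_1=1$. Imposing the Hecke quadratic relations $T_i^2=1+\hbar T_i$ on $\Z[\op{Br}_\kappa(S^2)]$ therefore produces exactly $H_\kappa^{\op{fin}}/\mathcal{I}$, giving the desired ring isomorphism. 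The main obstacle is the PBW-type statement that $\phi$ is an isomorphism; if preferred, one can bypass it by constructing mutually inverse maps between $H^0(H_\kappa)$ and $\Z[\op{Br}_\kappa(S^2)]/\{T_i^2=1+\hbar T_i\}$ sending $[T_i]\leftrightarrow\sigma_i$, well-definedness of the map from the braid side being immediate from the fact that the sphere element is represented by $dx_1$ and hence vanishes in $H^0$.
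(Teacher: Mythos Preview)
Your proposal is correct and follows essentially the same approach as the paper: identify $H_\kappa^0$ with $H_\kappa^{\op{fin}}$, compute the image of $d$ as the two-sided ideal generated by the sphere element, and then invoke the standard presentation of $\op{Br}_\kappa(S^2)$. The paper's proof is just a terser version of exactly this. One minor simplification: you flag the injectivity of $\phi$ as the main obstacle and invoke a PBW-type argument, but since all the defining relations of $H_\kappa$ are homogeneous for the grading, the degree-$0$ component is automatically the quotient of the degree-$0$ part of the free algebra by the degree-$0$ relations, so $H_\kappa^0\cong H_\kappa^{\op{fin}}$ is immediate without PBW (alternatively, the paper's Lemma~\ref{lemma: PBW} gives this directly).
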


\begin{proof}
It is easy to see that $H^0(H_\kappa)$ is a quotient of the finite Hecke algebra $H_\kappa^{\operatorname{fin}}$ by the relation 
\begin{equation}T_1T_2\cdots T_{\kappa-2}T_{\kappa-1}^2T_{\kappa-2}\cdots T_2T_1-1=0.\end{equation}
By replacing $T_i$ with the half twist along the $i$th and $(i+1)$st points on the sphere $S^2$, the corresponding relation is indeed that of the defining relation of the braid group $\operatorname{Br}_\kappa(S^2)$ of $S^2$.
\end{proof}

Finally we present a linear basis of $H_\kappa$ of PBW type.

\begin{lemma} \label{lemma: PBW}
    The algebra $H_\kappa$ has a free basis $x^{\alpha}T_{w}$ over $\mathbb{Z}[\hbar]$, where:
\begin{itemize}
\item $x^{\alpha}=x_1^{\alpha_1}\cdots x_\kappa^{\alpha_\kappa}$, for $\alpha=(\alpha_1,\dots, \alpha_\kappa), \alpha_i \in \mathbb{Z}_{\geq 0}$, and
\item $T_w, w \in S_\kappa$ is a free $\mathbb{Z}$-basis of $H_\kappa^{\operatorname{fin}}$.
\end{itemize}  
\end{lemma}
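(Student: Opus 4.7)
The proof has two steps: spanning and linear independence.

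\textbf{Spanning.} I would use the defining relations as rewriting rules. The Hecke relations \eqref{first}--\eqref{third} together with the classical PBW theorem for the finite Hecke algebra $H_\kappa^{\mathrm{fin}}$ reduce any product of the $T_i$ alone to a $\Z[\hbar]$-combination of $\{T_w\}_{w \in S_\kappa}$. Relations \eqref{eq Hn Tkxk1}--\eqref{eq Hn Tkxk3} allow one to commute each $T_i$ past each $x_j$, producing only terms of the form $x_{j'}T_{i'}$. Relation \eqref{eq Hn xjxk} lets one rewrite $x_k x_j$ for $j<k$ as $x_j x_k$ times an element of $H_\kappa^{\mathrm{fin}}$, which can then be pushed to the right using the previous step. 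A straightforward termination argument (using lexicographic order on $(|\alpha|, \ell(w))$, where $\ell$ is the Coxeter length) shows that every element of $H_\kappa$ can be put in the form $\sum c_{\alpha, w}(\hbar)\, x^\alpha T_w$.

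\textbf{Linear independence.} The cleanest approach is to reduce modulo $\hbar$. Consider the short exact sequence $0 \to \hbar H_\kappa \to H_\kappa \to H_\kappa|_{\hbar=0} \to 0$. By Lemma~\ref{lemma: Hecke hbar is zero}, $H_\kappa|_{\hbar=0} \cong \Z[x_1, \ldots, x_\kappa] \rtimes S_\kappa$ is a free $\Z$-module with basis given by the images of $\{x^\alpha T_w\}$. Given any $\Z[\hbar]$-linear relation $\sum c_{\alpha, w}(\hbar)\, x^\alpha T_w = 0$ in $H_\kappa$, reducing mod $\hbar$ forces $c_{\alpha, w}(0) = 0$ for all $(\alpha, w)$. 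Writing $c_{\alpha, w}(\hbar) = \hbar\, c'_{\alpha, w}(\hbar)$ and inductively eliminating one power of $\hbar$ at a time completes the argument, provided $\hbar$ acts injectively on $H_\kappa$.

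\textbf{Main obstacle.} The delicate technical point is establishing that $\hbar$ is a non-zero divisor on $H_\kappa$. My approach would be to construct a faithful representation of $H_\kappa$ on a free $\Z[\hbar]$-module $V$ of rank $|\{x^\alpha T_w\}|$. The natural candidate is the polynomial representation $V = \Z[\hbar][\xi_1, \ldots, \xi_\kappa] \otimes_{\Z[\hbar]} H_\kappa^{\mathrm{fin}}$, with $x_i$ acting by left multiplication by $\xi_i$ (with appropriate signs from the $\Z$-grading) and the $T_j$ acting via Demazure--Lusztig-type operators on the polynomial factor combined with right multiplication on the Hecke factor. Checking that the defining relations \eqref{first}--\eqref{eq Hn x1x2} are preserved by this action is a finite but tedious verification; the anticommutator \eqref{eq Hn x1x2} is the subtlest case, as it becomes a nontrivial $\hbar$-dependent identity. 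Once faithfulness is established, the evaluation map $h \mapsto h \cdot (1 \otimes 1)$ sends $x^\alpha T_w$ to a distinct $\Z[\hbar]$-basis element of $V$, which simultaneously proves independence and that $\hbar$ acts injectively. An alternative route is Bergman's diamond lemma applied to the rewriting system of the spanning step; since the $\hbar=0$ specialization is governed by the skew group ring, the overlapping ambiguities all resolve in the associated graded, reducing the diamond-lemma verification to a finite check of $\hbar$-corrections.
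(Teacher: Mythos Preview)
Your spanning argument is identical to the paper's: use \eqref{eq Hn Tkxk1}--\eqref{eq Hn Tkxk3} to push each $T_i$ to the right of the $x_j$, and \eqref{eq Hn xjxk} to reorder the $x_j$'s. For linear independence, both you and the paper begin by reducing modulo $\hbar$ and invoking Lemma~\ref{lemma: Hecke hbar is zero}; the paper then simply asserts that this observation suffices, whereas you correctly point out that peeling off successive powers of $\hbar$ requires $\hbar$ to act without torsion on $H_\kappa$, which is not immediate from the presentation. The paper does not address this point explicitly. Your two proposed remedies---constructing a faithful polynomial-type representation on a free $\mathbb{Z}[\hbar]$-module, or verifying resolvability of overlap ambiguities via Bergman's diamond lemma---are the standard ways to fill this gap, and either would make the argument rigorous. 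In short, your outline is more careful than the paper's one-sentence justification; you have not missed any shortcut that the paper is using.
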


\begin{proof}
    The relations \eqref{eq Hn Tkxk1}, \eqref{eq Hn Tkxk2}, and \eqref{eq Hn Tkxk3} imply that one can move $T_i$ from the left of $x_j$ to the right. 
    By the relation (\ref{eq Hn xjxk}) one can move $x_j$ in front of $x_k$ for all $j<k$ in a product of $x_i$'s. 
    Hence the collection $x^{\alpha}T_{w}$ spans $H_\kappa$. 
    
    The linear independence follows from observing that the collection $x^{\alpha}T_{w}|_{\hbar=0}$ is a free $\mathbb{Z}$-basis of $H_\kappa|_{\hbar=0}$.
\end{proof}

\subsection{Verification of \eqref{first}--\eqref{sixth}} \label{subsection: verification}

We begin the proof of Theorem~\ref{thm: multiloop algebra for S2 is Hn} by verifying \eqref{first}--\eqref{sixth} of Definition~\ref{defn: Hn} for the HDHF $A_\infty$-algebra for the standard round metric on $S^2$ and certain positions of endpoints. 

{\em We will use the fact that $\mu^3_M$ vanishes for certain tuples of total degree greater than $-3$.}  
In Section~\ref{subsection: the Katok examples} we show the vanishing of $\mu_M^d$, $d\geq 3$, for tuples of total degree greater than $-N\ll 0$, for a suitable Finsler modification of the metric, and a similar argument shows the vanishing of $\mu^3_M$ for the round metric on $S^2$ in the cases we need.  Then in Sections~\ref{subsection: completion} and \ref{subsection: direct limit argument} we construct a quasi-isomorphism of the HDHF $A_\infty$-algebra with the dga $H_\kappa$ by a direct limit argument.

Let $d$ be the standard round metric on $S^2$. For all the computations in Section~\ref{subsection: verification} we will use the standard energy functional
$$\mathcal{A}(\gamma)=\int_0^1\frac{1}{2}|\dot{\gamma}(t)|^2 dt.$$
The standard action functional with the round metric behaves poorly with products. Instead of perturbing $d$, we can get products to behave generically by taking disjoint copies ${\bm q}, {\bm q}', {\bm q}''$ of the basepoints while requiring that no $3$ points lie on a geodesic. In what follows, the points ${\bm q}, {\bm q}', {\bm q}''$ clearly violate this, but it is to be understood that we further make a small perturbation of ${\bm q}, {\bm q}', {\bm q}''$ so that no $3$ points are collinear.

\s\n
{\em Description of the basepoints.} Fix $\varepsilon>0$ small. Given points $a\not= b\in S^2$ that are not antipodal, let $g(a,b)$ be the shortest geodesic connecting $a$ and $b$. Pick basepoints ${\bm q}=(q_1,\dots,q_\kappa)\subset S^2$, arranged in order on a short geodesic $g$, and their pushoffs ${\bm q}'=(q_1',\dots, q_\kappa')$, ${\bm q}''=(q_1'',\dots, q_\kappa'')\subset S^2$, such that the following hold:
\begin{enumerate}[(P1)]
    \item \label{item: positions-p1} There exists $\tilde q\in S^2$ near $q_1$ such that $\tilde q, q_i, q_i', q_i''$ lie on a geodesic $g_i$ in that order for $i=1,\dots, \kappa$, moreover $g_1$ is orthogonal to $g$; in the actual perturbation we push each $q_i''$ to the right of the corresponding geodesic $g_i$. 
    \item Let $s_i$ be the point of intersection of $g(q_i, q_{i+1}')$ and $g(q_{i+1}, q_i')$. Then $d(q_i, s_i)>d(q_{i+1}, s_i)$ and $d(q_i, q_{i+1}')< d(q_{i+1}, q_{i}')$ for $i=1, \dots, \kappa-1$. An analogous condition holds for points of intersection of $g(q_{i}', q_{i+1}'')$ and $g(q_{i+1}', q_i'')$.
    \item \label{item: positions-p3} For $i=1, \dots , \kappa$ we have $d(q_i, q_i'), d(q_i, q_i'')< \varepsilon$, and for $i=1,\dots,\kappa-1$ we have $d(q_i, q_i')>d(q_{i+1}, q_{i+1}')$, $d(q_i, q_i'')>d(q_{i+1}, q_{i+1}'')$, 
    $\tfrac{d(\tilde{q}, q_{i+1})}{2\pi-d(q_{i+1}, q_{i+1}')} > \tfrac{d(\tilde{q}, q_i)}{2\pi-d(q_i, q_i')}$,
    $\tfrac{d(\tilde{q}, q_{i+1})}{2\pi-d(q_{i+1}, q_{i+1}'')} > \tfrac{d(\tilde{q}, q_i)}{2\pi-d(q_i, q_i'')}$. (Here $2\pi$ is the length of the equator and $2\pi-d(q_i, q_i')$ is the length of the second shortest geodesic from $q_i$ to $q_i'$.) 
\end{enumerate}
See Figure~\ref{fig: points}
\begin{figure}[ht]
	\begin{overpic}[scale=0.7]{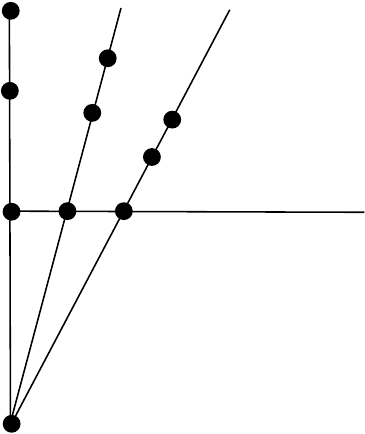}
	\put(-4,3) {\tiny $\tilde q$}
    \put(5,54.5){\tiny $q_1$}\put(5,82.3){\tiny $q_1'$} \put(5,100){\tiny $q_1''$}
    \put(19,54){\tiny $q_2$} \put(24.7,72){\tiny $q_2'$} \put(28,87){\tiny $q_2''$}
    \put(32.5,53.5){\tiny $q_3$} \put(38.5,63){\tiny $q_3'$} \put(43,72){\tiny $q_3''$}
    \end{overpic}
	\caption{The points of ${\bm q}$, ${\bm q}'$, and ${\bm q}''$.}
	\label{fig: points}
\end{figure}

We point out that we actually compute 
$$\mu_M^1 \colon CM_{-*}(\Omega(S^2, {\bm q}, {\bm q'})) \to CM_{-*}(\Omega(S^2, {\bm q}, {\bm q'})), $$
$$ \mu_M^2 \colon CM_{-*}(\Omega(S^2, {\bm q}, {\bm q'})) \otimes CM_{-*}(\Omega(S^2, {\bm q'}, {\bm q''}) )\to CM_{-*}(\Omega(S^2, {\bm q}, {\bm q''})).$$
The result is quasi-equivalent to the computations in the $A_\infty$-algebra $CM_{-*}(\Omega(S^2, {\bm q}))$ itself with the $A_\infty$-morphism induced by isotoping tuples of points ${\bm q'}$ and ${\bm q''}$ to ${\bm q}$. The elements of $CM_{-*}(\Omega(S^2, {\bm q}, {\bm q'}))$, $CM_{-*}(\Omega(S^2, {\bm q'}, {\bm q''}))$, and $CM_{-*}(\Omega(S^2, {\bm q'}, {\bm q''}))$ are formally identified via the obvious bijections. 

\s\n 
{\em Description of generators of $CM_{-*}(\Omega(S^2, \bm q))$ as a $\Z[\hbar]$-module.} We are viewing $CM_{-*}(\Omega(S^2, \bm q))$ as being generated by $\kappa$-tuples of geodesics from ${\bm q}$ to ${\bm q}'$.  

Let $[T_i]=T_i$ (by abuse of notation) be the generator of degree $0$ consisting of shortest geodesics $g(q_j, q_j')$ for $j\not=i, i+1$ and shortest geodesics $g(q_i, q_{i+1}')$ and $g(q_{i+1}, q_{i}')$ (note that since both of these geodesics are of index $0$ the orientation line $o_{T_i}$ is naturally trivialized). We have:
\be
\item[(P4)] When viewed as a braid in $S^2\times[0,1]$, $T_i$ is a positive half twist; see Figure~\ref{fig: braidT1}.
\ee 
This is due to the fact that all the geodesics are constant speed time-$1$ geodesics, and hence $g(q_i, q_{i+1}')$ passes through $s_i$ after $g(q_{i+1}, q_i')$ does by (P2);
\begin{figure}[ht]
	\begin{overpic}[scale=0.7]{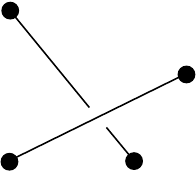}
    \put(0,13){\tiny $q_1$}\put(0,89){\tiny $q_1'$} 
    \put(69,13){\tiny $q_2$} \put(90,57){\tiny $q_2'$} 
    \end{overpic}
	\caption{$T_1$ viewed as a braid in $S^2\times [0,1]$.  The $[0,1]$-direction points out of the page.}
	\label{fig: braidT1}
\end{figure}

The degree $0$ summand $(CM_{-*}(\Omega(S^2, {\bm q}, {\bm q'})))_0$ of $CM_{-*}(\Omega(S^2, {\bm q}, {\bm q'}))$ is spanned by $\kappa$-tuples $T_\sigma$ of shortest geodesics from ${\bm q}$ to ${\bm q}'$ corresponding to $\sigma\in S_\kappa$.  The crossings of $T_\sigma$ are all positive by the same reason as above and hence the $T_\sigma$ form a standard basis for the finite Hecke algebra on $\kappa$ strands, consisting of shortest words in $T_i$ but not their inverses $T_i^{-1}$.  Once \eqref{first}--\eqref{third} are verified and we show that there are no other relations, it follows that $(CM_{-*}(\Omega(S^2, {\bm q}, {\bm q'})))_0$ is the finite Hecke algebra.  

Let $[x_i]=x_i$ be the generator of degree $-1$ consisting of the geodesic from $q_i$ to $q_i'$ which is contained in the equator through $q_i$ to $q_i'$ and almost makes a complete loop, together with $g(q_j,q_j')$ for $j\not=i$. Here we use the same orientation convention as in Example~\ref{ex: S2} and we denote positively oriented generator by the same letter as the path itself in what follows.

According to our sign conventions, for any $[\bm y]=[y_1] \otimes \dots \otimes [y_{\kappa}] \in CM_{-*}(\Omega(S^2, {\bm q}, {\bm q'}))$ and $[\bm y']=[y_1'] \otimes \dots \otimes [y_{\kappa}'] \in CM_{-*}(\Omega(S^2, {\bm q'}, {\bm q''}))$, the sign in the computation of $\mu_M^2([\bm y'], [\bm y])$ is equal to $(-1)^{\dagger}$, where
\begin{equation}\label{product-sign-convention}
\dagger=|\bm y|.
\end{equation}

\s\n 
{\em Verification of \eqref{first}--\eqref{third}.}  We verify \eqref{first}; the proofs of \eqref{second} and \eqref{third} are analogous but easier. We use the notation $\bm\gamma_0\sharp\bm\gamma_1$ 
to denote the concatenation of multipaths $\bm\gamma_0$ and $\bm\gamma_1$. Notice that since all the involved generators in~\eqref{first}--\eqref{third} are of degree $0$ the product associated with $\mu_M^2$ as described in the beginning of this section has the same signs.
First observe that since $T_i^2$ has degree $0$, the MFLS starting from $T_i\sharp T_i$ without switches eventually arrives at a collection of shortest geodesics corresponding to $1$. The case of an MFLS with a single switch would then contribute $\hbar T_i$. 

In general, let $\bm\gamma$ be a non-geodesic multipath for which there is a rigid (=isolated) MFLS from  $T_i\sharp T_i$ to $\bm\gamma$ satisfying the conditions of Definition~\ref{def-diff}, except that $\Gamma_\ell: [0,l_{\ell}]\to \Omega^{1,2}(S^2,\bm q,\bm q'')$ is a finite-length trajectory and $\Gamma_\ell(l_\ell)=\bm\gamma$.  Also let $\op{CR}(\bm\gamma)$ (short for {\em complete resolution} of $\bm\gamma$) be given by the analog of the right-hand side of Equation~\eqref{eqn: differential of multiloop complex}, where we are now counting (with $\hbar$-terms) rigid MFLS from $\bm\gamma$ satisfying the conditions of Definition~\ref{def-diff}, except that $\Gamma_0:[l_0,0]\to \Omega^{1,2}(S^2,\bm q,\bm q'')$ is a finite-length trajectory and $\Gamma_{0}(l_0)=\bm\gamma$. Observe that $\op{CR}(\bm\gamma)$ can be written uniquely as a $\Z\llbracket\hbar\rrbracket$-linear combination of $T_\sigma$, $\sigma\in S_\kappa$. 

Now suppose $\bm\gamma$ occurs before a crossing and $\bm\gamma'$ (resp.\ $\bm\gamma''$) is the multipath obtained by flowing without any crossings until one is reached, passing through the crossing (resp.\ resolving at the crossing), and flowing without any further crossings. Note that $\bm\gamma$, $\bm\gamma'$, and $\bm\gamma''$ are not necessarily generators of $CM_{-*}(\Omega(S^2, \bm q))$.  Let $[\cdot]_H$ denote the equivalence class in $\Z[\hbar,\hbar^{-1}]\otimes _{\Z} \Z B_\kappa$ modulo isotopy and the HOMFLY skein relations.  Then:
\be
\item[(*)] $[\bm\gamma]_H =[\bm\gamma']_H\pm \hbar [\bm\gamma'']_H$ and $\op{CR}(\bm\gamma)=\op{CR}(\bm\gamma') \pm \hbar\op{CR}(\bm\gamma'')$. 
\ee
Starting ``at the end'' with $\bm\gamma$, $\bm\gamma'$, and $\bm\gamma''$ such that $\bm\gamma'$ and $\bm\gamma''$ are geodesic, we have $\op{CR}(\bm\gamma')=\bm\gamma'$ and $\op{CR}(\bm\gamma'')=\bm\gamma''$ and hence $[\op{CR}(\bm\gamma)]_H=[\bm\gamma]_H$ by (*). Working backwards, we inductively establish that $[\op{CR}(\bm\gamma)]_H=[\bm\gamma]_H$ for any $\bm\gamma$. Finally, specializing to the case where there is an MFLS from $T_i\sharp T_i$ to $\bm\gamma$ with no crossings or switches, $T_i^2= [\op{CR}(\bm\gamma)]_H=[\bm\gamma]_H$, which implies \eqref{first}. 
\s\n
{\em Verification of \eqref{sixth}.}  As in Example~\ref{ex: S2}, in the calculation of $\mu^1_M(x_1)$ there are two MFLS from $x_1$ to $1$: one where the corresponding $1$-parameter family of paths from $q_1$ to $q_1'$ does not intersect any $q_i$ for $i>1$ (i.e., it is to the right of the geodesic of degree $-1$ from $q_1$ to $q_1'$) and another one that flows through a multipath $\bm\gamma$ consisting of a small loop  from $q_1$ to $q_1'$ (after isotoping $q_1'$ to $q_1$) which encircles all the $q_i$, $i>1$, together with short geodesics from $q_i$ to $q_i'$ for $i>1$; see Figure~\ref{fig: unstable-right-left}. The multipath $\bm\gamma$ is homotopic as a braid to $$T_1T_2\dots T_{\kappa-2}T^2_{\kappa-1}T_{\kappa-2}\dots T_2 T_1$$ 
and hence in view of (*) we obtain
$$\mu^1_M(x_1)=T_1T_2\dots T_{\kappa-2}T^2_{\kappa-1}T_{\kappa-2}\dots T_2 T_1-1.$$
This implies~\eqref{sixth}. 
\begin{figure}[ht]
    \centering
    \includegraphics[width=5cm]{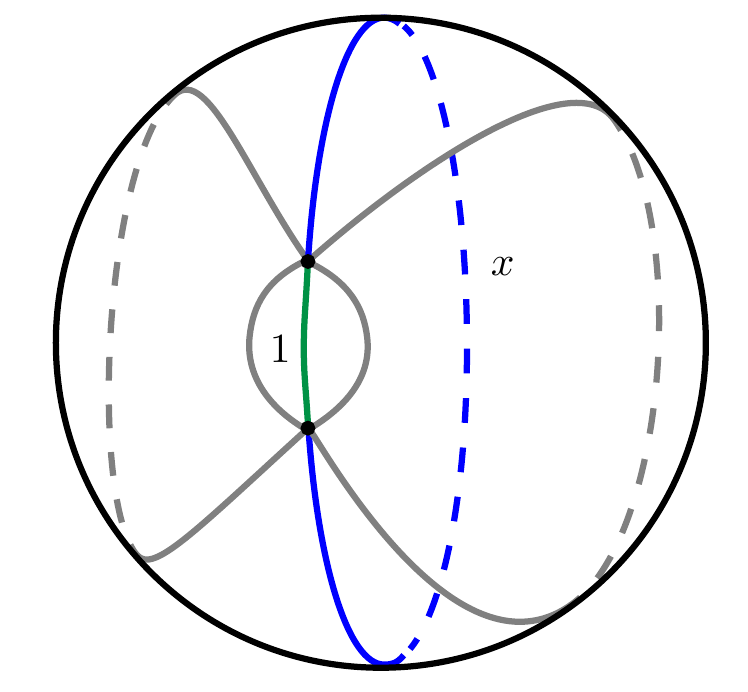}
    \caption{The generator $1$ is in green and the generator $x$ is in blue. The curves in the unstable manifold of $x$ are in gray. Observe that the gray curves to the right of $x$ are at the same time to the left of $1$ and are on the left side of this figure.}
    \label{fig: unstable-right-left}
\end{figure}

The relation $dT_i=0$ follows immediately by noting that $|T_i|=0$ and has the highest degree. 

\s\n
{\em Verification of \eqref{eq Hn x1Tj}.}  This follows from an explicit proof of the fact that $\mu^2_M(x, 1)=-\mu^2_M(1,x)=x$ in the usual $CM_{-*}(\Omega(S^2,q))$ with $\kappa=1$, where $x$ is a degree $-1$ generator and the sign change comes from~\eqref{product-sign-convention}: Referring to Figure~\ref{fig: unstable-right-left}, the compactification $\overline{W}_u(x)$ of the unstable manifold of $x$ is homeomorphic to $S^1$ and $\overline{W}_u(x)\setminus \{1,x\}$ consists of the ``left'' and ``right'' portions $W^L(x)$ and $W^R(x)$, depending on whether a given curve is contained in the left or right hemisphere with respect to the geodesic path $x$. Then the flow from $\gamma_L\in W^L(x)$ approaches $1$ from the right, while the flow from $\gamma_R\in W^R$ approaches it from the left. Now consider the flow starting from the concatenation $\gamma_L\sharp 1$. If $\gamma_L$ is sufficiently to the left, then the flow approaches $1$ from the right. Likewise, if $\gamma_R$ is sufficiently to the right, then the flow from $\gamma_R\sharp 1$  approaches $1$ from the left. Therefore, by continuity the algebraic count of trajectories from $\tilde{\gamma}\sharp 1$, $\tilde\gamma\in \overline{W}_u(x)\setminus \{1,x\}$, to $x$ is $1$ (for example, if there was no such flow this would contradict the continuity of the side from which the flow is approaching).

The flow tree starting from $x_1=x \otimes 1 \otimes \dots \otimes 1$ and $T_j=1 \otimes \dots \otimes 1 \otimes T^{\kappa=2}_j  \otimes 1 \otimes \dots \otimes 1$ decomposes into the flow tree for concatenating $1$ and $x$, viewed as generators for $\kappa=1$, and the flow tree corresponding to the concatenation of $1 \otimes 1$ and $T^{\kappa=2}_j$, viewed as generators for $\kappa=2$. The same can be said about flow tree with order of the above generators reversed, hence $\mu^2_M(x_1, T_j)=-\mu^2_M(T_j, x_1)$ implying~\eqref{eq Hn x1Tj} for the associated multiplication.

\s\n
{\em Verification of \eqref{eq xk+1}.} We show that $\mu^2_M(T_1, \mu^2_M(x_2,T_1))= -x_1$; the situation for larger $k$ is analogous. 
In the composition $\mu^2_M(x_2,T_1)$ there is a single concatenation of a geodesic of type $x$ with a short geodesic; in view of the multiplication in the usual $CM_{-*}(\Omega(S^2,q))$ with $\kappa=1$, the flow tree from the pair consisting of the type $x$ geodesic and the short geodesic limits to a geodesic of type $x$ from $q_2$ to $q_1'$.  There are no crossings in this case and hence no $\hbar$ term.  The same consideration also holds for $\mu_M^2(T_1, \mu^2_M(x_2,T_1))$, and hence $\mu^2_M(T_1, \mu^2_M(x_2,T_1))=- x_1$. This, given the associativity, implies~\eqref{eq xk+1} for the associated multiplication.  

\s\n
{\em Verification of \eqref{eq Hn x1x2}.} We claim that  
\begin{gather} \label{eqn: Hn x1x2 rephrased}
    x_2x_1+x_1x_2 T_1^2=x_2x_1 + x_1x_2 + \hbar x_1x_2 T_1=0,  \text{ or equivalently}  \\
    \mu^2_M(x_1, x_2)+\mu^2_M(T_1^2, \mu^2_M(x_2, x_1))=\mu_M^2(x_1,x_2)+\mu^2_M(1+\hbar T_1, \mu_M^2(x_2,x_1))=0. \nonumber
\end{gather}
Let $\bm\gamma$ be the multipath consisting of geodesics of type $x$ from $q_1$ to $q_1''$ and $q_2$ to $q_2''$, denoted $x_{11}$ and $x_{22}$, together with short geodesics from $q_i$ to $q_i''$ for $i\geq 2$. We fix $o_{\bm \gamma} \cong o_{x_{11}} \otimes o_{x_{22}}$ and the generator $[\bm \gamma]=\bm \gamma$ is identified with $[x_{11}] \otimes [x_{22}]$. 

Let $r_1=\tilde q$ and $r_2$ be the first and second intersection points of $x_{11}$ and $x_{22}$ viewed in the projection to $S^2$.
By the positioning of the points ${\bm q}$ and ${\bm q}''$ given by (P3) one show that (verification left to the reader):
\be
\item[(P5)] If we view $\bm\gamma$ as a braid in $S^2\times[0,1]$, then $x_{11}$ lies below $x_{22}$ at both $r_1$ and $r_2$. 
\ee 
See Figure~\ref{fig: x1x2}, top right.
\begin{figure}[ht]
	\begin{overpic}[scale=0.35]{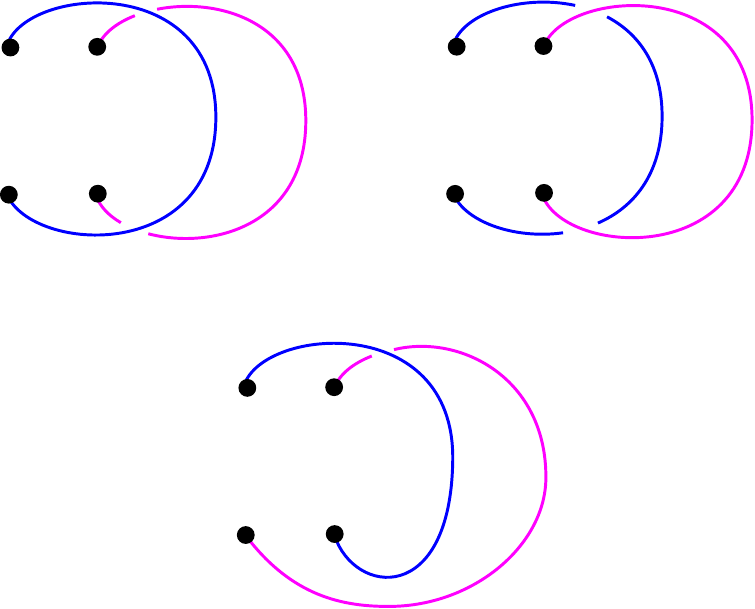}
	\put(-27,65){$x_2\sharp x_1$} \put(105,65){{$x_1\sharp x_2$} and $\bm\gamma$}  \put(22,18){$\bm\gamma'$}
    \put(-1,58){\tiny $q_1$}\put(-1,69){\tiny $q_1''$} 
    \put(11,58){\tiny $q_2$} \put(11,69){\tiny $q_2''$} 
    \end{overpic}
	\caption{A schematic representation of the concatenations of $x_1$ and $x_2$ (top left) and $x_2$ and $x_1$ (top right), and the multipath $\bm\gamma'$ (bottom) in the projection of $S^2\times[0,1]$ to $S^2$. Here we show $S^2-\{pt\}=\R^2$. The blue and purple are geodesics of type $x$. The $[0,1]$-direction points out of the page.} 
	\label{fig: x1x2}
\end{figure}
Similarly, let $\bm\gamma'$ be the multipath consisting of geodesics of type $x$ from $q_1$ to $q_2''$ and $q_2$ to $q_1''$, denoted $x_{12}$ and $x_{21}$, together with short geodesics from $q_i$ to $q_i''$ for $i\geq 2$. By the positioning of the points ${\bm q}$ and ${\bm q}''$ given by (P2), $x_{12}$ lies below $x_{21}$.

We have $\mu^2_M(x_2,x_1)=-\bm\gamma$, since the braid types of $\bm \gamma$ and $x_1 \sharp x_2$ are the same and hence no switchings occur on the MFTS from the pair $(x_1,x_2)$ to $\bm\gamma$. Observe that when one evaluates $\mu^2_M(x_2, x_1)$ the concatenation may happen after applying a negative gradient from $x_{11}$ and $x_{22}$ for a certain amount of time, but this does not change the result of our computation. The sign comes from~\eqref{product-sign-convention}.

The proof of $\mu_M^2(x_1,x_2)=\bm \gamma+\hbar \bm\gamma'$ is involved and will occupy the next several pages. Assuming this, we proceed as follows:
We claim that $\mu^2_M(T_1, \bm\gamma)= \bm\gamma'$ and $\mu^2_M(T_1, \bm\gamma')=\bm \gamma+ \hbar \bm\gamma'$. The first claim is straightforward and the second is computed as follows, assuming associativity:
$$\mu^2_M(T_1, \bm\gamma')=\mu^2_M(T_1, \mu^2_M(T_1, \bm \gamma))=\mu^2_M(1+\hbar T_1, \bm\gamma)=\bm\gamma+\hbar \bm\gamma'.$$
Combining all of the above, we obtain:
$$\mu^2_M(x_1,x_2) + \mu^2_M(1+\hbar T_1,\mu^2_M(x_2,x_1))=(\bm\gamma+ \hbar \bm \gamma')+\mu_M^2(1+\hbar T_1,-\bm\gamma) =(\bm \gamma+\hbar \bm \gamma')-\bm \gamma-\hbar \bm \gamma'=0.$$

\s\n
{\em Verification of $\mu_M^2(x_1,x_2)=\bm \gamma+\hbar \bm\gamma'$.}  Clearly there is an MFTS from $(x_1,x_2)$ without switchings to $\bm\gamma$. The sign is positive now because in the canonical isomorphism~\eqref{eq: tree-orientation} for this MFTS the order of $o_{x_1}$ and $o_{x_2}$ is switched as opposed to the one associated with the only MFTS contributing to $\mu^2_M(x_2, x_1)$. 

We now show that there is an MFTS with $1$ switching from $(x_1,x_2)$ to $\bm\gamma'$. Let $\tilde{\gamma}_2$ be a curve in the unstable manifold of $x_{22}$ (connecting $q_2$ and $q_2'$) such that there is a flow line from $\tilde{\gamma}_2\sharp1$ to $x_{22}$ (here we are abusing notation to denote the analogous curve connecting $q_2$ and $q_2''$).  We consider the portion of the unstable manifold of $x_{22}$ which is to the left of $\tilde{\gamma}_2$ and parametrize it by $[0,1]_{z_2}$, where $z_2=0$ corresponds to $\tilde{\gamma}_2$ and $z_2=1$ corresponds to the index $0$ generator $1$ connecting $q_2$ and $q_2''$. Similarly, we parametrize by $[0,1]_{z_1}$ the portion of the unstable manifold of $x_{11}$ (connecting $q_1'$ and $q_1''$) which is to the right of the curve $\tilde{\gamma}_1$, such that there is a flow line from $1\sharp\tilde{\gamma}_1$, with $z_1=0$ corresponding to $\tilde{\gamma}_1$. We will denote by $\gamma_1(z_1)$ and $\gamma_2(z_2)$ the paths corresponding to the values $z_1$ and $z_2$ with respect to these parametrizations.

\begin{figure}[ht]
    \centering
    \includegraphics[width=5.5cm]{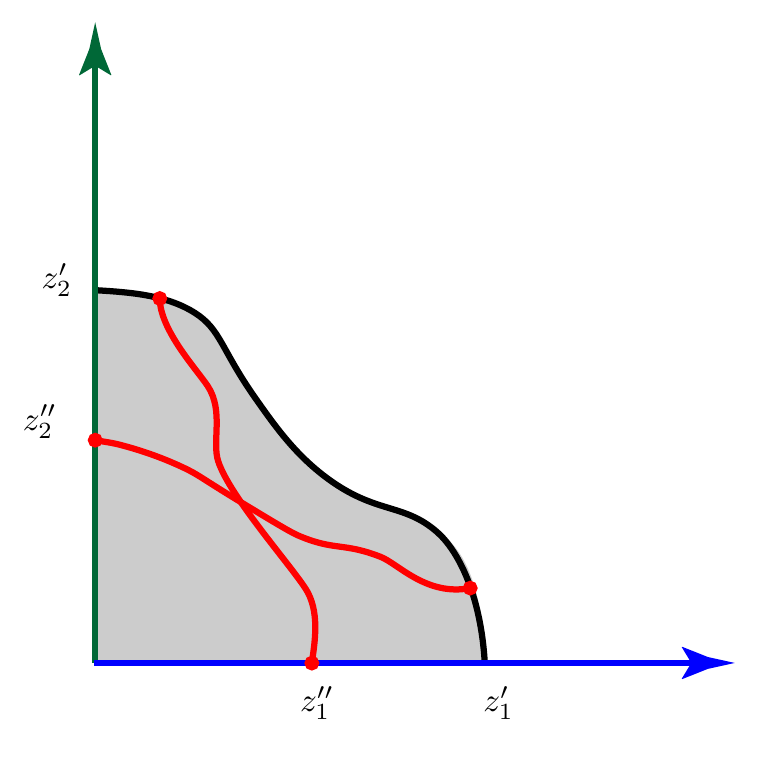}
    \caption{The bottom (blue) side is the $z_1$-coordinate axis and the vertical (green) side is the $z_2$-coordinate axis. The shaded gray region is $U_{\tilde{q}}$ and the red curves depict the sets $S_1$ and $S_2$.}
    \label{U_q}
\end{figure}

Let $\tilde{q}$ denote the intersection point of the images of $\tilde{\gamma}_1$ and $\tilde{\gamma}_2$ that is closer to $q_1$ and $q_2$ (see Figure~\ref{lr-gradient}, where $\tilde \gamma_i'$ is replaced by $\tilde \gamma_i$ and $q_i''$ is replaced by $q_i'$). 

\begin{claim} \label{claim: z1 and z2}
For sufficiently small $z_1$ and $z_2$, there exists a moment $s=s_0$ on the Morse flow line $\bm\Gamma: [0,\infty)\to \Omega(S^2,(q_1,q_2),(q_1'',q_2''))$ starting at $(1, \gamma_2(z_2))\sharp(\gamma_1(z_1), 1)$, when a crossing of the two strands occurs.  Moreover this crossing is {\em derived from} the initial intersection point $\tilde{q}$.
\end{claim}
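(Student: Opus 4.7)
The plan is an intermediate-value argument on a passage-time function, carried out first at the unperturbed limit $z_1=z_2=0$ and then transferred to small $z_1,z_2>0$ by continuous dependence of the pseudogradient flow on initial data.

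At $z_1=z_2=0$ the flow $\bm\Gamma(s;0,0)$ decouples into independent strand flows: strand $1$ runs from $1\sharp\tilde\gamma_1$ to $x_{11}$ and strand $2$ runs from $\tilde\gamma_2\sharp 1$ to $x_{22}$ as $s\to+\infty$. Both endpoint pairs $(\tilde\gamma_1,\tilde\gamma_2)$ and $(x_{11},x_{22})$ have images meeting transversely at $\tilde q$. After a small generic perturbation of the round metric preserving (P1)--(P5), one can continuously track a transverse crossing point $p(s)\in S^2$ of the two strand images in a fixed small neighborhood $U$ of $\tilde q$ for every $s\in[0,+\infty]$, with $p(0)=p(+\infty)=\tilde q$. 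Define the passage times $\tau_i(s)\in[0,1]$ by $\Gamma_i(s;0,0)(\tau_i(s))=p(s)$; these are continuous in $s$.

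At $s=0$ the weighted concatenation forces strand $1$ to be constant on $[0,\tfrac12]$ and strand $2$ to be constant on $[\tfrac12,1]$, while $\tilde q$ lies in the interior of each of $\tilde\gamma_1,\tilde\gamma_2$; hence $\tau_1(0)>\tfrac12>\tau_2(0)$. At $s=+\infty$ the constant-speed parametrizations of $x_{11},x_{22}$ give
\[
\tau_1(+\infty)=\frac{d(q_1,\tilde q)}{2\pi-d(q_1,q_1'')},\qquad \tau_2(+\infty)=\frac{d(q_2,\tilde q)}{2\pi-d(q_2,q_2'')},
\]
and the second displayed inequality of (P3) yields $\tau_1(+\infty)<\tau_2(+\infty)$. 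The function $\tau_1-\tau_2$ therefore changes sign on $[0,+\infty]$, and the intermediate value theorem produces $s_0\in(0,+\infty)$ with $\tau_1(s_0)=\tau_2(s_0)$; at this moment both strands visit $p(s_0)\in U$ simultaneously, which is the desired crossing derived from $\tilde q$.

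For $z_1,z_2>0$ small, continuous dependence of the negative pseudogradient flow on initial data on the Hilbert manifold $\Omega^{1,2}(S^2,\bm q,\bm q'')$ gives uniform convergence of $\Gamma_i(s;z_1,z_2)$ to $\Gamma_i(s;0,0)$ on compact $s$-intervals containing $s_0(0,0)$. Consequently the tracked crossing $p(s;z_1,z_2)$ and the passage times $\tau_i(s;z_1,z_2)$ remain well-defined near $s_0(0,0)$, the sign change of $\tau_1-\tau_2$ persists, and a crossing at some $s_0(z_1,z_2)\to s_0(0,0)$ in $U$ is obtained, hence derived from $\tilde q$. The main technical obstacle is globally tracking $p(s)$ in the limit problem, since the transverse intersection near $\tilde q$ could a priori become tangential or leave $U$ for some intermediate $s$; this is exactly what the generic metric perturbation rules out, and it can alternatively be handled via invariance of the local mod-$2$ intersection number, using that the transverse count in $U$ equals $1$ at both $s=0$ and $s=+\infty$.
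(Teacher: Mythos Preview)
Your argument is correct and is, at bottom, the same as the paper's, just unpacked analytically. The paper argues in one sentence via braid isotopy: the flow line runs from the concatenation (whose braid diagram is the top left of Figure~\ref{fig: x1x2}) to $\bm\gamma$ (top right), and since these represent different braid words the strands must become synchronous at the crossing near $\tilde q$ somewhere along the flow; continuity then handles nearby $(z_1,z_2)$. Your passage-time function $\tau_1-\tau_2$ is exactly the quantity whose sign records the over/under relation at that crossing, so your intermediate-value argument is the analytic content of ``the braid type changes.'' Your explicit computation $\tau_1(+\infty)<\tau_2(+\infty)$ via (P3) is precisely what the paper packages as condition (P5), and your treatment of the tracking issue (persistence of the local transverse intersection $p(s)$) makes explicit a point the paper leaves to the picture. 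One small inaccuracy: at $s=0$ strand $1$ is not literally constant on $[0,\tfrac12]$ but traverses the short geodesic $q_1\to q_1'$, which by (P1) moves \emph{away} from $\tilde q$; this still gives $\tau_1(0)>\tfrac12$, so your conclusion stands.
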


By {\em derived from} $\tilde{q}$ we mean that there is a continuous family of intersection points of $\op{Im}\bm\Gamma(s)$, $[0,s_0]$, which connects $\tilde{q}$ and such a crossing. 

The key point is that $\tilde\gamma_1,\tilde\gamma_2$ intersect {\em asynchronously} at $\tilde q\in \op{Im}(\tilde \gamma_1)\cap \op{Im}(\tilde\gamma_2)$, i.e., $t_1\not=t_2$ where $\tilde\gamma_1(t_1)=\tilde\gamma_2(t_1)=a$, whereas the claim guarantees the existence of some $s=s_0$ where the intersection of the two paths of $\bm\Gamma(s_0)$ is {\em synchronous}.

\begin{proof} 
Observe that the flow line $\bm\Gamma$ ends at $\bm \gamma$. Along $\bm\Gamma$ the braid representative goes from the one on the top left to the one on the top right of Figure~\ref{fig: x1x2}, implying a crossing. 
This crossing is derived from $\tilde q$ and hence the claim for $(z_1,z_2)=(0,0) \in [0,1]_{z_1} \times [0,1]_{x_2}$ holds. By continuity there exists a crossing derived from $\tilde{q}$ for each $(z_1,z_2)$ in a small neighborhood of $(0,0) \in [0,1] \times [0,1]$.
\end{proof} 

Let $U_{\tilde{q}}$ be the connected subset consisting of all $(z_1,z_2)\in [0,1]\times [0,1]$ satisfying Claim~\ref{claim: z1 and z2}. For each $(z_1, z_2) \in U_{\tilde{q}}$ there exists an MFTS with $1$ switching derived from $\tilde{q}$. Let $\bm\Gamma(z_1,z_2)$ be the flow line starting at $(z_1,z_2)$ and let $s=s_0$ be the moment given by Claim~\ref{claim: z1 and z2} when the crossing derived from $\tilde{q}$ occurs.  We write $\bm\Gamma(z_1,z_2)(s_0)=(\tilde{\gamma}'_1(z_1,z_2), \tilde{\gamma}'_2(z_1,z_2))$.  Let $(\tilde{\gamma}''_1(z_1,z_2), \tilde{\gamma}''_2(z_1,z_2))$ be the result of applying the switching map; then $\tilde{\gamma}''_1(z_1,z_2)$ (resp.\ $\tilde{\gamma}''_2(z_1,z_2) $) is approximately the concatenation $a_1(z_1,z_2)\sharp b_2(z_1,z_2)$ (resp.\ $a_2(z_1,z_2)\sharp b_1(z_1,z_2)$), where we write $\tilde\gamma'_i(z_1,z_2)= a_i(z_1,z_2)\sharp b_i(z_1,z_2)$.
\begin{figure}[ht]
    \centering
    \includegraphics[width=6cm]{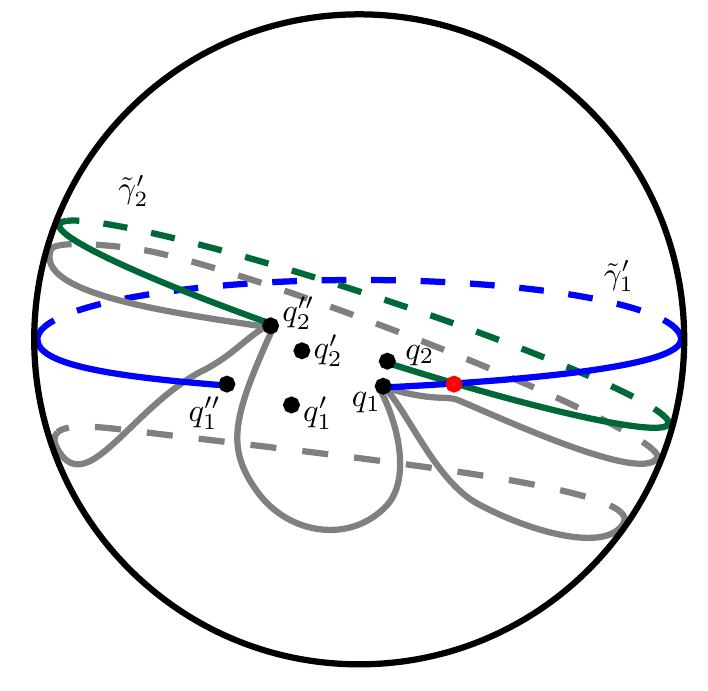}
    \caption{The curves $\tilde{\gamma}'_1$ (in blue) and $\tilde{\gamma}'_2$ (in green) intersect at the red dot, which represents the crossing derived from $\tilde q$. The curves in gray show various stages of the flow line starting at $\tilde{\gamma}''_1$.}
    \label{lr-gradient}
\end{figure}

We also claim that there exists a unique value $z_1'$ such that for $z_1>z_1'$ there is no such MFTS with $1$ switching starting at $(z_1, 0) \in [0,1] \times [0,1]$. Similarly there is a unique $z_2'$ with analogous property for the second coordinate. Moreover, on each ray starting from $(0,0)$ there is a unique point satisfying analogous property and $U_{\tilde{q}}$ has the triangular shape as in Figure~\ref{U_q}. The sides of this triangle along the $z_1$- and $z_2$-axes are denoted $I_1$ and $I_2$ and the hypotenuse is denoted $I_3$.

Following the proof strategy of $\mu^2_M(x,1)=x$ (see the paragraph on the verification of \eqref{eq Hn x1Tj} above), we now analyze how the various MFTS approach the generator $(1,1)$, represented by geodesics $g(q_1, q_2'')$ and $g(q_2, q_1'')$. For each of these generators there are two options: either a flow line approaches it from the right, or from the left. Hence there are $4$ possible types of flow lines from $(x_1,x_2)$ which we denote by $rr, rl, lr, ll$, where the first (resp.\ second) letter corresponds to the geodesic $g(q_1, q_2'')$ (resp.\ $g(q_2, q_1'')$). 

\begin{claim} \label{claim: lr}
    The pair $((1, \tilde{\gamma}_2), (\tilde{\gamma}_1, 1))$ corresponding to $(z_1,z_2)=(0,0)$ is of type $lr$.
\end{claim}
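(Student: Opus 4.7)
The plan is to unwind what the pair $(\tilde{\gamma}''_1,\tilde{\gamma}''_2)$ actually looks like at $(z_1,z_2)=(0,0)$, identify its braid/isotopy type relative to the limit configuration $(g(q_1,q_2''),g(q_2,q_1''))$, and then read off the sides of approach from the explicit picture of Figure~\ref{lr-gradient} and the positioning conditions (P\ref{item: positions-p1})--(P\ref{item: positions-p3}).

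First I would write down the pair $(\tilde{\gamma}''_1,\tilde{\gamma}''_2)$ explicitly. At $(z_1,z_2)=(0,0)$ the input is the concatenation $(1,\tilde\gamma_2)\sharp(\tilde\gamma_1,1)$, and Claim~\ref{claim: z1 and z2} says the resulting flow reaches a synchronous crossing derived from $\tilde q$ at some $s=s_0$. Since $\tilde q$ is \emph{the} intersection of $\op{Im}(\tilde\gamma_1)$ and $\op{Im}(\tilde\gamma_2)$ closest to $q_1,q_2$, for $s_0$ close to $0$ the flowed paths $\tilde\gamma'_1,\tilde\gamma'_2$ are $C^0$-close to $1\sharp\tilde\gamma_1$ and $\tilde\gamma_2\sharp 1$, and their synchronous crossing is close to $\tilde q$. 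Writing $\tilde\gamma'_i=a_i\sharp b_i$, the switched paths are therefore
\[
\tilde\gamma''_1 \,\approx\, a_1\sharp b_2,\qquad \tilde\gamma''_2 \,\approx\, a_2\sharp b_1,
\]
where $a_1$ is a short arc from $q_1$ to $\tilde q$, $b_2$ is the ``long'' portion of $\tilde\gamma_2$ from $\tilde q$ around the equator to $q_2''$, and symmetrically for $a_2,b_1$.

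Next I would compare $(\tilde\gamma''_1,\tilde\gamma''_2)$ to the limiting generator $(g(q_1,q_2''),g(q_2,q_1''))$ viewed as a braid in $S^2\times[0,1]$. Since $a_1$ is a short detour near $q_1$ and $b_2$ is the long loop of $\tilde\gamma_2$, the composite $\tilde\gamma''_1$ wraps once around the sphere relative to the shortest geodesic $g(q_1,q_2'')$; by (P\ref{item: positions-p1})--(P\ref{item: positions-p3}) this wrap lies on a definite side of $g(q_1,q_2'')$, and the same for $\tilde\gamma''_2$ relative to $g(q_2,q_1'')$. The geometric picture in Figure~\ref{lr-gradient} (with $\tilde\gamma_i'$ replaced by $\tilde\gamma_i$) makes the sides visible: the image of $\tilde\gamma''_1$ sits on the \emph{left} hemisphere of $g(q_1,q_2'')$ (the hemisphere containing $b_2$'s long arc), and the image of $\tilde\gamma''_2$ sits on the \emph{right} hemisphere of $g(q_2,q_1'')$. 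I would make this precise by comparing the signed intersection of each curve with the geodesic dual to its limit, using the orientation of $S^2$ fixed in Example~\ref{ex: S2}.

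Finally, I would verify that this side assignment is preserved under the remaining Morse flow on $\Omega^{1,2}(S^2,\bm q,\bm q'')$ from $(\tilde\gamma''_1,\tilde\gamma''_2)$ to $(g(q_1,q_2''),g(q_2,q_1''))$. This is a continuity argument: as long as no further crossing/switch occurs after $s_0$ (which is ensured by the fact that $(z_1,z_2)$ lies in the interior of $U_{\tilde q}$ and the total number of switchings is already one, saturating the index count), the flow stays within a single hemisphere for each strand, so the ``side of approach'' is determined by the side at time $s_0^+$. Combining this with the hemispheric analysis above yields the $lr$ classification.

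The main obstacle I anticipate is a careful bookkeeping of which hemisphere is ``left'' vs ``right'' relative to each oriented geodesic $g(q_i,q_j'')$: the answer depends on the orientation convention fixed in Example~\ref{ex: S2} together with the asymmetry imposed by (P\ref{item: positions-p1}) (which pushes $\bm q''$ to the right of the geodesics $g_i$). A clean way to do this is to pick, once and for all, a framing of the configuration so that ``right of $x_{ii}$'' and ``right of $g(q_i,q_j'')$'' are induced by the same orientation of $S^2$; then the $lr$ answer is read off mechanically from the fact that, at $\tilde q$, $\tilde\gamma_1$ is traversed \emph{before} $\tilde\gamma_2$ by the constant-speed parametrization, as guaranteed by condition (P\ref{item: positions-p3}).
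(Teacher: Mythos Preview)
Your proposal has a genuine gap: the side of approach (type $l$ or $r$) is \emph{not} a topological or braid invariant of the switched curve, so it cannot be read off from the hemispheric picture or a signed intersection number alone. The clearest way to see this is to look ahead to Claim~\ref{claim: ll and rr}: as $(z_1,z_2)$ moves along $I_1$ from $(0,0)$ to $(z_1',0)$, the isotopy class of the switched pair $(\tilde\gamma''_1,\tilde\gamma''_2)$ does not change (no additional crossings are created or destroyed, the corner just opens up), yet the type changes from $lr$ to $ll$. So any argument that only uses the static shape of $\tilde\gamma''_1$ and a continuity principle of the form ``the flow stays within a single hemisphere'' must fail. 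Concretely, your assertion that ``the image of $\tilde\gamma''_1$ sits on the left hemisphere of $g(q_1,q_2'')$'' is not correct: $\tilde\gamma''_1=a_1\sharp b_2$ is a long piecewise-geodesic arc winding nearly once around the sphere, so it visits both hemispheres, and under the curve-shortening flow its image sweeps across large regions of $S^2$.

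What the paper actually does is a local dynamical analysis at the corner. At $(z_1,z_2)=(0,0)$ the switched curve $\tilde\gamma''_1$ consists of two nearly-geodesic arcs meeting at $\tilde q$ at an angle slightly less than $\pi$. The gradient of the energy functional, viewed as a vector field along the curve, has small norm along the geodesic pieces but is large at the corner and points into the \emph{interior} of that angle (this is the classical first-variation picture, cf.\ Milnor's \emph{Morse Theory}). That initial push determines unambiguously which way the curve begins to move, and following it through Figure~\ref{fig: ll_gradient} shows the flow approaches $g(q_1,q_2'')$ from the left; the symmetric analysis for $\tilde\gamma''_2$ gives the right. This corner-smoothing mechanism is exactly what changes as $z_1$ increases (the angle opens and the balance of accelerations along $a_1$ versus $b_2$ shifts), which is why the type can flip even though the topology does not. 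Your argument is missing this ingredient.
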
 

\begin{proof}
    Let $(\tilde{\gamma}'_1, \tilde{\gamma}'_2)=(\tilde{\gamma}'_1(0,0), \tilde{\gamma}'_2(0,0))$; see Figure~\ref{lr-gradient}. Note that $\tilde{\gamma}'_1$ and $\tilde{\gamma}'_2$ are very close to the generators $x_1$ and $x_2$.  Applying the switching map, $\tilde{\gamma}_1''=\tilde{\gamma}''_1(0,0)$ and $\tilde\gamma_2''=\tilde{\gamma}''_2(0,0)$ are approximately concatenations $a_1(0,0)\sharp b_2(0,0)$, $a_2(0,0)\sharp b_1(0,0)$ which are very close to two geodesics that intersect at an angle slightly less than $\pi$. The gradient vector at $\tilde{\gamma}_1'$, regarded as a vector field along $\tilde{\gamma}_1'$, has a relatively small norm away from this angle, whereas near the angle its norm is significantly greater than $0$ and the vector points into the interior of this angle; see \cite[Chapter 12]{milnor2016morse} for a more precise description. Hence $\tilde{\gamma}_1''$ is pushed away to the right along the flow line as shown in Figure~\ref{lr-gradient} and the flow line approaches $g(q_1, q_2'')$ from the left. Similarly, the flow line starting at $\tilde{\gamma}_2''$ approaches $g(q_2, q_1'')$ from the right.
\end{proof}

\begin{claim} \label{claim: ll and rr}
    The points on $I_1$ near $(z_1',0)$ are of type $ll$ and the points on $I_2$ near $(0, z_2')$ are of type $rr$.
\end{claim}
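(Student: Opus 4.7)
The plan is to mimic the angle-based Milnor-type analysis used for Claim~\ref{claim: lr} at the interior point $(0,0)$, but now at the boundary of $U_{\tilde{q}}$, where the crossing derived from $\tilde q$ is about to disappear.

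First I would characterize the boundary point $(z_1',0)$. For $(z_1,z_2)\in U_{\tilde{q}}$ close to $I_3$ (or more generally to the boundary of $U_{\tilde{q}}$ away from the coordinate axes) the crossing derived from $\tilde q$ at time $s_0$ on the flow line $\bm\Gamma(z_1,z_2)$ persists. At $(z_1',0)$ this crossing must degenerate, and the only degeneration compatible with the continuity of $\bm\Gamma$ and with the fact that $I_1\subset\partial U_{\tilde q}$ while $(z_1,0)\in U_{\tilde q}$ for $z_1<z_1'$ is a tangential collision of $\tilde\gamma_1'(z_1',0)$ and $\tilde\gamma_2'(z_1',0)$ at the point $p(z_1')$ derived from $\tilde q$ (a move of the crossing into an endpoint is excluded by property (P\ref{item: positions-p1})--(P\ref{item: positions-p3}) and the assumption that $z_2=0$ pins the second strand). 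The endpoint case is absent because the choice of basepoints keeps the putative crossing bounded away from the endpoints of the strands along $I_1$.

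Next I would analyze the post-switch geometry at this tangential limit. Because the tangency occurs from the same side (both strands pass through $p(z_1')$ with parallel velocities and lie locally on the same side of the common tangent line, as opposed to an opposite-side crossing), applying the switching map $sw^{12}_{I_k}$ produces two curves $\tilde\gamma_1''$ and $\tilde\gamma_2''$ which are $C^0$-close (and share a corner) but make angles on the \emph{same} local side of the common tangent. Under the weighted concatenation and the Milnor argument recalled in the proof of Claim~\ref{claim: lr}, the gradient vector field restricted to such a concatenation has a dominant contribution that pushes both resulting curves off to the same side with respect to the limiting geodesics $g(q_1,q_2'')$ and $g(q_2,q_1'')$. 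A careful tracking of which side receives the push, using the explicit positions of ${\bm q}, {\bm q}''$ from (P\ref{item: positions-p1})--(P\ref{item: positions-p3}), identifies this as the left side in both cases. Hence for $(z_1,0)$ with $z_1$ sufficiently close to $z_1'$ the flow starting from $(\tilde\gamma_1''(z_1,0),\tilde\gamma_2''(z_1,0))$ approaches both $g(q_1,q_2'')$ and $g(q_2,q_1'')$ from the left, giving type $ll$.

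The argument for $(0,z_2')$ on $I_2$ is mirror-symmetric: the roles of the two strands are interchanged (since now it is the second strand that is deformed toward its endpoint-side limit), the tangential degeneration occurs with the parallel velocities pointing the other way, and the Milnor push sends both resulting curves off to the right of the respective limiting geodesics. This gives type $rr$.

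The hard part will be rigorously excluding the non-tangential degenerations at the boundary of $U_{\tilde{q}}$ and showing that the tangency has the same-side (rather than opposite-side) local geometry, since the Milnor push direction, and hence the conclusion of the claim, is entirely determined by this local sign. This will require a careful local model of the switching map near a tangential intersection together with the positional assumptions (P\ref{item: positions-p1})--(P\ref{item: positions-p3}); everything else is routine continuity of the finite-dimensional Morse flow restricted to appropriate finite-dimensional slices of $\Omega^{1,2}(S^2,\bm q,\bm q'')$ on which the evaluation map $\mathcal{E}$ of Section~\ref{subsection: the linear map} is an immersion.
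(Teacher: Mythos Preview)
Your approach has a genuine gap in the mechanism you invoke. You correctly identify that $(z_1',0)$ corresponds to a tangential intersection of $\tilde\gamma_1'$ and $\tilde\gamma_2'$ (this is indeed how the paper later characterizes the hypotenuse $I_3$), but you then try to apply the angle-based Milnor push from Claim~\ref{claim: lr} at this tangency. That does not work: at a tangential intersection the switched curves $\tilde\gamma_1''=a_1\sharp b_2$ and $\tilde\gamma_2''=a_2\sharp b_1$ are $C^1$ at the switching point, so there is no corner and hence no angle-push at all. For $(z_1,0)$ slightly inside $U_{\tilde q}$ the crossing is transversal and there \emph{is} a small angle, but as $z_1\to z_1'$ the angle-push becomes arbitrarily weak and cannot be the dominant term---this is precisely the opposite of the situation in Claim~\ref{claim: lr}, where the angle is close to $\pi$ and the corner contribution dominates everything else.

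The paper's argument rests on a different, global mechanism: for $z_1$ near $z_1'$ the curve $\tilde\gamma_1'$ has flowed far from the equator, so the portion $b_1$ appearing in $\tilde\gamma_2''=a_2\sharp b_1$ carries a pseudogradient of large norm (away from the endpoints) that overwhelms whatever residual angle effect remains and by itself determines the side from which the flow approaches $g(q_2,q_1'')$. Equivalently, one arranges that $\tilde\gamma_2''$ lies entirely in one hemisphere bounded by the equator through $q_2$ and $q_1''$, which forces the approach from the left; the same works for $\tilde\gamma_1''$. Your ``same-side tangency'' observation is geometrically related to this hemisphere containment, but to make the proof go through you must replace the corner-push reasoning with one of these two global arguments.
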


\begin{proof}
    We prove the first assertion; the second is similar. Let $(\tilde{\gamma}'_1,\tilde{\gamma}'_2)=(\tilde{\gamma}'_1(z_1,0),\tilde{\gamma}'_2(z_1,0))$. The curve $\tilde{\gamma}_2''=\tilde{\gamma}_2''(z_1,0)$ is approximately the concatenation $a_2(z_1,0)\sharp b_1(z_1,0)$. Since the curve $\tilde{\gamma}'_1$ is ``sufficiently'' far away from the equator, the pseudogradient vector field has ``sufficiently'' large norm away from the ends, so even though $\tilde{\gamma}_2''$ has acceleration near the crossing pointing into the angle, it is not enough to pull the curve in that direction. One may also see this by arranging that the curve $\tilde{\gamma}_2''$ is contained entirely in a hemisphere cut out by the equator passing through $q_2$ and $q_1''$. Figure~\ref{fig: ll_gradient} gives the flow line for this case. 
\end{proof}

\begin{figure}[h]
    \centering
    \includegraphics[width=6cm]{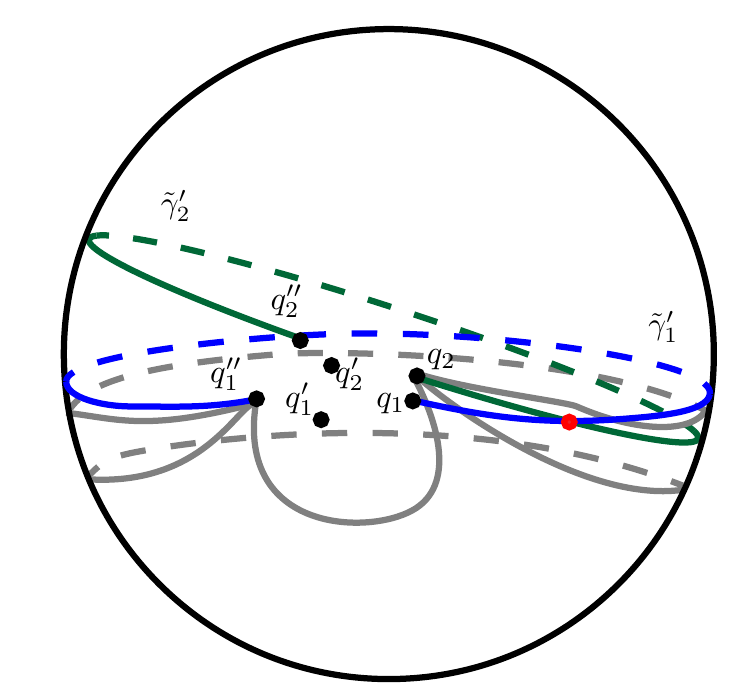}
    \caption{We use the same conventions as in Figure~\ref{lr-gradient}. The curves in gray show the various stages of the flow line starting at $\tilde{\gamma}_2''$ obtained from $(\tilde\gamma_1',\tilde{\gamma}_2')$ via the switching map.}
    \label{fig: ll_gradient}
\end{figure}

By Claims~\ref{claim: lr} and \ref{claim: ll and rr}, along $I_1$ (resp.\ $I_2$) there is a unique point $(z_1'', 0)$ (resp.\ $(0, z_2'')$) such that corresponding MFTS flows to the generator $x_2'=(1, x)$ (resp.\ $x_1'=(x, 1)$) of index $(-1)$ and permutation type $(12)$. Let $S_1$ (resp.\ $S_2$) be the subset of $U_{\tilde{q}}$ consisting of points $(z_1, z_2)$ such that corresponding MFTS approaches either $x_2'$ (resp.\ $x_1'$) or $\bm \gamma'$. Then $(z_1'',0) \in S_1$ and $(0,z_2'') \in S_2$. 

Now we analyze the third side $I_3$, which is given by the graph of a continuous monotone function. The defining condition for a point $(z_1, z_2)$ to belong to $I_3$ is that at the switching moment the corresponding curves $\tilde{\gamma}'_1$ and $\tilde{\gamma}'_2$ are tangent at the crossing derived from $\tilde{q}$.  Figure~\ref{fig: tangency} shows the boundary points $(z_1',0)$ and $(0, z_2')$ of $I_3$, which are of types $ll$ and $rr$ by Claim~\ref{claim: ll and rr}. 

\begin{claim}
    Along $I_3$ there is a unique point $P_1\in S_1$ and a unique point $P_2\in S_2$ such that $z_1(P_1)<z_1(P_2)$.
\end{claim}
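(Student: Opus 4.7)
The plan is to parametrize $I_3$ by $t\in[0,1]$ so that $t=0\leftrightarrow(z_1',0)$ and $t=1\leftrightarrow(0,z_2')$. For each $t$, the MFTS $\bm\Gamma(t)$ with one switching derived from $\tilde q$ ends, for generic $t$, in the unstable manifold of the index-$0$ generator $\bm 1=(g(q_1,q_2''),g(q_2,q_1''))$, approaching it with a well-defined type $(f_1(t),f_2(t))\in\{l,r\}^2$ recording the side of approach of each of the two strands. By extending Claim~\ref{claim: ll and rr} to the boundary of $U_{\tilde q}$ by continuity, $(f_1(0),f_2(0))=(l,l)$ and $(f_1(1),f_2(1))=(r,r)$.

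By continuity in $t$, the functions $f_i$ can flip only at values of $t$ where $\bm\Gamma(t)$ limits to an index-$(-1)$ generator, namely $x_1',x_2'$, or $\bm\gamma'$. Since $\bm\gamma'$ has a different permutation type than $\bm 1$, limiting to it would force both $f_1$ and $f_2$ to flip simultaneously, a codimension-$2$ condition which, after a generic perturbation of the auxiliary data, does not occur on the one-parameter family $I_3$. A limit to $x_1'=(x,1)$ flips only $f_1$ and gives a point of $S_2$; a limit to $x_2'=(1,x)$ flips only $f_2$ and gives a point of $S_1$. Since each $f_i$ must flip from $l$ to $r$, we obtain at least one point of $S_1\cap I_3$ and at least one of $S_2\cap I_3$. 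I would establish uniqueness of $P_1$ and $P_2$ by a monotonicity argument: along $I_3$ the pair $(z_1(t),z_2(t))$ varies monotonically, which forces the tangency condition at the crossing derived from $\tilde q$ (the defining condition for $I_3$) to vary monotonically, and hence each $f_i$ to flip exactly once.

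The main obstacle is the ordering $z_1(P_1)<z_1(P_2)$, which is equivalent to the type sequence along $I_3$ being $ll\to rl\to rr$ rather than $ll\to lr\to rr$ as $t$ increases. This requires a careful local geometric analysis of the pseudogradient immediately after the switching at a near-tangency. Writing the switched strands as $a_1\sharp b_2$ and $a_2\sharp b_1$ as in the lead-up to Claim~\ref{claim: lr}, each makes a small opening angle at the switching point, and the direction the pseudogradient pushes each such concatenation is determined by the opening angle together with the positions of the endpoints $q_i''$. Using the asymmetries in the placement of basepoints prescribed by (P1)--(P3) --- in particular that $g_1$ is orthogonal to $g$ and that the perturbation pushes $q_i''$ to the right of $g_i$ --- I would show that the first strand (ending at $q_2''$) reaches its transition side earlier along $I_3$ than the second strand (ending at $q_1''$), yielding $z_1(P_1)<z_1(P_2)$.
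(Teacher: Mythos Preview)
Your framework—parametrizing $I_3$ and tracking the approach type $(f_1,f_2)\in\{l,r\}^2$—matches the paper's. One slip: $\bm\gamma'$ has the \emph{same} permutation type as $\bm 1$ (both send $q_1\to q_2''$ and $q_2\to q_1''$); the correct reason that limiting to $\bm\gamma'$ is codimension two is that it has Morse index $-2$, so both $f_1$ and $f_2$ must flip there simultaneously. Your conclusion survives, but not for the stated reason.

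The real gap is in the ordering, which is the heart of the claim. The paper does not try to decide directly which flip occurs first; instead it exhibits a point on $I_3$ of type $rl$, and the ordering $z_1(P_1)<z_1(P_2)$ then follows because the endpoints are $ll$ and $rr$. The geometric input is that for every $(z_1,z_2)\in I_3$ the tangency point derived from $\tilde q$ lies close to $q_1,q_2$ rather than to $q_1'',q_2''$. Hence in the switched strand $\tilde\gamma_1''\approx a_1\sharp b_2$ the arc $a_1$ is short and $b_2$ is long; the dominant pseudogradient contribution along $b_2$ pushes the curve to the right of $g(q_1,q_2'')$. The same asymmetry pushes $\tilde\gamma_2''\approx a_2\sharp b_1$ to the left of $g(q_2,q_1'')$, producing an $rl$ segment on $I_3$. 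Your appeal to the basepoint conditions (P1)--(P3) is not wrong in spirit—those conditions are what place $\tilde q$ near $q_1$—but the argument requires this specific observation about where the tangency sits and how the long post-switch arcs dominate the flow, which your sketch does not supply.
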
 

\begin{proof}
    For each $(z_1,z_2)\in I_3$, the point of tangency of $\tilde{\gamma}'_1(z_1,z_2)$ and $\tilde{\gamma}'_2(z_1,z_2)$ is closer to $q_1$ and $q_2$ than to $q_1''$ and $q_2''$. For this reason, there exists $(z_1,z_2)\in I_3$ where the average acceleration (i.e., the average norm of the pseudogradient vector) of $\tilde{\gamma}_1''(z_1,z_2)$ along $a_1(z_1,z_2)$ is smaller than that of $\tilde{\gamma}_1''(z_1,z_2)$ along $b_2(z_1,z_2)$, but the flow approaches $g(q_1, q_2'')$ from the right. Here one should think of the acceleration vector along $a_1(z_1,z_2)$ as pointing downwards and $b_2(z_1,z_2)$ as pointing upwards in Figure~\ref{fig: tangency}. Similarly, the flow from $\tilde{\gamma}_2''(z_1,z_2)$ approaches $g(q_2, q_1'')$ from the left. Hence a segment of $I_3$ consists of points of $rl$ type, which implies the claim.
\end{proof}

Finally we claim that each horizontal line below $z_2=z_2(P_1)$ intersects $S_1$ once and each vertical line to the left of $z_1=z_1(P_1)$ intersects $S_2$ once inside $U_{\tilde{q}}$, as shown in Figure~\ref{U_q}. This can be shown by an argument similar to our analysis of MFTS corresponding to $(z_1',0)$ and we omit it here. Hence $S_1$ and $S_2$ are as shown in Figure~\ref{U_q} and intersect at a unique point. This point corresponds to an MFTS with $1$ switch converging to $\bm \gamma'$. By a similar analysis it is the only MFTS with $1$ switch starting at $(x_2, x_1)$ and hence $\mu^2_M(x_1,x_2)=\bm\gamma+\hbar \bm \gamma'$.

\begin{figure}[ht]
    \centering
    \includegraphics[width=12cm]{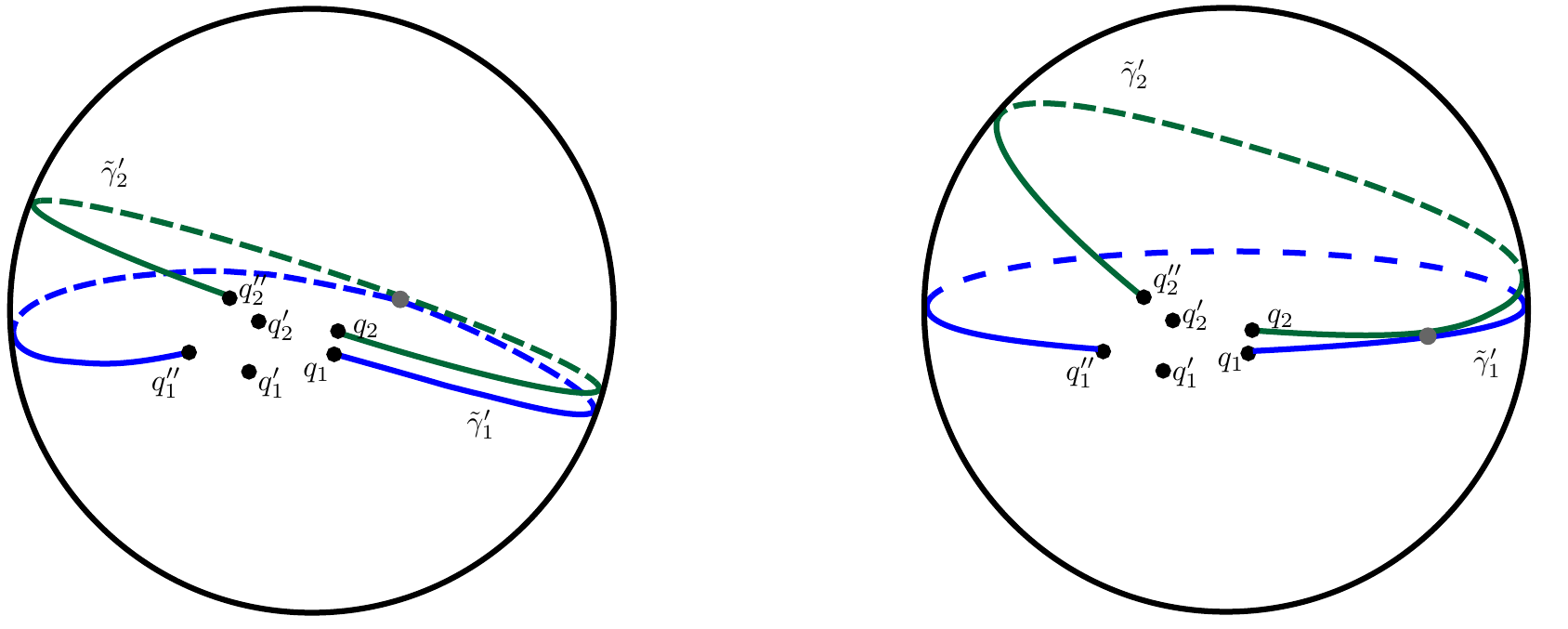}
    \caption{The pair $(\tilde{\gamma}_1',\tilde{\gamma}_2')$ at switching moments on the MFTS corresponding to $(z_1', 0)$ (on the left) and to $(0, z_2')$ (on the right). The gray dots are points of tangency.}
    \label{fig: tangency}
\end{figure}

\subsection{The Katok examples and the vanishing of the higher $A_\infty$-operations} \label{subsection: the Katok examples}

Continuing with the proof of Theorem~\ref{thm: multiloop algebra for S2 is Hn}, in order to show the vanishing of the higher $A_\infty$-operations, we now switch from the standard metric on $S^2$ to a certain family $F_\lambda$, $\lambda\in[0,1)$, of Finsler metrics on $S^2$, often called the ``Katok examples'' (see \cite{Katok-examples}). The explicit formula for $F_\lambda$, $\lambda \in [0,1)$, is given by
\begin{equation}
    F_{\lambda}=\frac{\sqrt{\left(1-\lambda^2 \sin ^2 r\right) d r^2+\sin ^2r d \phi^2}-\lambda \sin ^2 r d \phi}{1-\lambda^2 \sin ^2 r},
\end{equation}
in the standard polar geodesic coordinates on $S^2$ with $(r, \phi) \in(0, \pi) \times[0,2 \pi]$. Let $p=(\tfrac{\pi}{2}, 0)$ and $q_\varepsilon=(\tfrac{\pi}{2}, 2\pi \varepsilon)$, with $\varepsilon>0$ small, be points on the equator $r=\tfrac{\pi}{2}$. 

The following {\em approximate monotonicity} property holds, which in turn will enable us to show that $\mu^d=0$ for $d\geq 3$.

\begin{lemma}[Approximate monotonicity]\label{lemma: approximate monotonicity}
    Given a sequence $N_i\to \infty$, there exist sequences $\lambda_i\to 1$ with $\lambda_i$ irrational, and $\varepsilon_i\to 0$ such that:
    \be
    \item for all $F_{\lambda_i}$-geodesics $\gamma$ from $p$ to $q_{\varepsilon_i}$ of Morse index $\leq N_i$, the Morse index $\op{ind}(\gamma)$ is approximately proportional to the $F_{\lambda_i}$-length $\mathcal{L}_{F_\lambda}(\gamma)$, i.e., there exists $C_i$ close to $\pi$ and $\varepsilon'>0$ small such that $|C_i\cdot \op{ind}(\gamma)- \mathcal{L}_{F_\lambda}(\gamma)|< \varepsilon'$; and
    \item all other $F_{\lambda_i}$-geodesics have $F_{\lambda_i}$-length $> C_i N_i$.
    \ee
\end{lemma}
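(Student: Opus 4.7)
The plan is to exploit two well-known features of the Katok family $F_\lambda$. First, a theorem of Katok~\cite{Katok-examples} shows that for irrational $\lambda$, the only prime closed $F_\lambda$-geodesics on $S^2$ are the equator $E^+$ and its reverse $E^-$, with lengths
\begin{equation*}
\mathcal{L}_{F_\lambda}(E^\pm) = \frac{2\pi}{1 \pm \lambda},
\end{equation*}
and all other closed geodesics are their iterates $(E^\pm)^k$. Since $\mathcal{L}_{F_\lambda}(E^-) \to \infty$ as $\lambda \to 1$, iterates of $E^-$ become arbitrarily long, and for $\lambda$ close to $1$ only iterates of $E^+$ contribute closed geodesics of bounded length, with $\mathcal{L}_{F_\lambda}((E^+)^k) = 2\pi k/(1+\lambda) \to \pi k$. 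Second, Bott's iteration formula applied to the non-degenerate closed geodesic $E^+$ (non-degenerate because $\lambda$ is irrational), combined with the explicit $S^1$-symmetry that makes the linearized Poincar\'e return map along $E^+$ a planar rotation whose angle depends analytically on $\lambda$, yields linear growth $\op{ind}((E^+)^k) = k + O(1)$ uniformly over $k \leq N$, once $\lambda$ is sufficiently close to $1$ and avoids the finitely many resonant values where the index jump per iteration degenerates.

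Next, I would pass from closed geodesics based at $p$ to geodesics from $p$ to the nearby $q_\varepsilon$, both lying on the equator. For each fixed length bound $L$ and $\lambda$ fixed as above, once $\varepsilon$ is sufficiently small every $F_\lambda$-geodesic from $p$ to $q_\varepsilon$ of length at most $L$ is $C^1$-close to an arc of some iterate $(E^+)^k$, by a standard implicit-function-theorem argument using the non-degeneracy of $E^+$. By continuity of length and Morse index under small endpoint perturbations (both endpoints avoiding conjugate loci), the length and index of such a geodesic differ from those of $(E^+)^k$ by errors vanishing with $\varepsilon$. Given the sequence $N_i \to \infty$, we then inductively choose $\lambda_i \to 1$ irrational so that no iterate of $E^-$ has length at most $C_i N_i$ and the linear index approximation holds for all $k \leq N_i + 1$, and then choose $\varepsilon_i \to 0$ fast enough that the endpoint-perturbation error is below a prescribed $\varepsilon'$. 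Conclusions (1) and (2) follow, with $C_i \to \pi$.

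The main obstacle is the precise Morse-index computation for $(E^+)^k$, where the specific structure of the Katok examples enters. Although Bott's formula supplies the abstract framework, one must verify that the linearized Poincar\'e map of $E^+$ is a planar rotation whose angle governs exactly one index jump per iteration (rather than zero or two), including the signs and normalizations, so that the resulting slope yields $C_i \to \pi$. This is ultimately a computation with the explicit coordinates on the unit tangent bundle of $(S^2, F_\lambda)$ and the $S^1$-action used to define the Katok construction; a secondary point is to check that the implicit-function-theorem comparison between $(E^+)^k$ and nearby geodesics from $p$ to $q_\varepsilon$ is compatible with the endpoint condition, which is immediate because $p$ and $q_\varepsilon$ are transverse to the equator's tangent direction.
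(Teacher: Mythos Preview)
Your strategy via closed geodesics and Bott iteration is a reasonable first instinct, but it diverges from the paper's argument and has two genuine gaps.

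First, the implicit-function-theorem step does not deliver what you need. Katok's classification of prime \emph{closed} $F_\lambda$-geodesics says nothing a priori about geodesic \emph{arcs} from $p$ to $q_\varepsilon$: as $\varepsilon\to 0$ such arcs subconverge to geodesic \emph{loops} at $p$ (paths with $\gamma(0)=\gamma(1)=p$ but possibly $\dot\gamma(0)\neq\dot\gamma(1)$), and the classification of geodesic loops is not implied by that of closed geodesics. Your IFT produces arcs near each $(E^+)^k$, but does not rule out arcs limiting to a non-equatorial loop. The paper avoids this entirely by using the explicit Zermelo navigation description (via Robles and Ziller): every $F_\lambda$-geodesic from $p$ that is not along the equator meets the equator only in the discrete conjugate locus $\op{Conj}_p$, so choosing $\varepsilon$ with $q_\varepsilon\notin\op{Conj}_p$ forces \emph{every} arc from $p$ to $q_\varepsilon$ to lie on the equator, with no perturbation argument.

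Second, Bott's formula computes the Morse index of $(E^+)^k$ in the \emph{free loop space} (periodic boundary conditions), whereas the lemma concerns the index in the path space $\Omega(S^2,p,q_\varepsilon)$ with fixed endpoints, which by the Morse index theorem is the conjugate-point count along the arc. These are different integers, and your continuity remark does not bridge them. The paper instead reads off both quantities directly: the arc $\gamma_-^m$ has $\phi$-angle $2\pi(m+\varepsilon)$, $F_\lambda$-length $2\pi(m+\varepsilon)/(1+\lambda)$, and conjugate points spaced $\pi(1+\lambda)$ apart in $\phi$-angle, giving $\op{ind}(\gamma_-^m)=\lfloor 2(m+\varepsilon)/(1+\lambda)\rfloor$; the ratio length$/$index is then $\pi$ up to the floor. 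Taking $\lambda>1-\tfrac{2}{N+1}$ forces $\op{ind}(\gamma_+^1)>N$, so the low-index arcs are exactly $\gamma_-^0,\dots,\gamma_-^N$, yielding (1) and (2) with $C_i\to\pi$.
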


\begin{proof}
Let $\gamma_-$ and $\gamma_+$ be the two simple $F_\lambda$-geodesics from $p$ to $q_{\varepsilon}$ along the equator and $\gamma_{\pm}^m$ be the $F_\lambda$-geodesic from $p$ to $q_\varepsilon$ in the direction of $\gamma_{\pm}$ which passes $m$ times through the antipodal point of $p$, such that $\gamma_{-}=\gamma_{-}^0$ and $\gamma_{+}=\gamma_{+}^1$.
The $F_\lambda$-lengths of the geodesics are given by 
\begin{equation}\label{Finsler-action}
    \mathcal{L}_{F_{\lambda}}(\gamma_-^m)=\frac{2\pi(m+\varepsilon)}{1+\lambda}, \quad \mathcal{L}_{F_{\lambda}}(\gamma_+^m)=\frac{2\pi(m-\varepsilon)}{1-\lambda}.
\end{equation}

The Morse index of $\gamma_\pm^m$ is given by the number of points that are conjugate to $p$ along $\gamma_\pm^m$. The $\phi$-angle distance of $\gamma_-^m$ is equal to $2\pi(m+\varepsilon)$ and by \cite[p.145, l.-11]{Ziller-Katok-examples} the $\phi$-angle distance between two consecutive conjugate points along $\gamma_-^m$ is equal to $\pi (1+\lambda)$. Therefore,
\begin{equation}\label{eq: ind-small_m}
    \op{ind}(\gamma_-^m)=\left \lfloor\tfrac{2(m+\varepsilon)}{1+\lambda} \right\rfloor.
\end{equation}
Similarly we have:
\begin{equation}\label{eq: ind-big_m}
    \op{ind}(\gamma_+^m)=\left \lfloor\tfrac{2(m-\varepsilon)}{1-\lambda} \right \rfloor.
\end{equation}

Now \cite[Theorem 2 and Proposition 6]{robles2007} with $W=\bdry_\phi$ and $\sigma=0$ (using the notation there), supplies a correspondence between geodesics and conjugate points on geodesics starting at $p$ with respect to the round metric and those with respect to $F_\lambda$ (also see \cite[p.1642,l.-3]{robles2007} for an explicit description of the geodesics). Using this correspondence one can show that the points where geodesics starting at $p$ intersect the equator $r=\tfrac{\pi}{2}$ are exactly the conjugate points along $\gamma_\pm^m$, which is equal to:
\begin{equation}
    \op{Conj}_p=\left\{(\tfrac{\pi}{2}, \varphi^k_\pm) \in S^2 \mid   \varphi^k_\pm = \pi(1\pm \lambda) k \op{  mod } 2\pi, \, k=1,2,3,\dots\right\}
\end{equation}
Given an irrational $\lambda$ we may pick $\varepsilon\ll \lambda$ such that $q_\varepsilon$ is not a conjugate point of $p$ and hence all geodesic paths from $p$ to $q_\varepsilon$ are contained in the equator.

For an irrational $\lambda$ it is straightforward to show that the sequence of indices of $\gamma_{\pm}^m$ coincides with the sequence of all nonnegative integers after rearranging (with no repetitions). Given $N\gg 0$, if we take $\lambda > 1-\frac{2}{N+1}$, then $\op{ind}(\gamma_+^1) =\lfloor(N+1)(1-\varepsilon) \rfloor>N$ for $\varepsilon>0$ small by Equation~\eqref{eq: ind-big_m}. Hence the $F_\lambda$-geodesics of index $\leq N$ are $\gamma_-^m$, $m=1,\dots, N$. (1) and (2) then follow from Equations~\eqref{Finsler-action} and \eqref{eq: ind-small_m}.
\end{proof}

\n
{\em Katok examples and Morse theory.}
At this point we recall that $L=F_\lambda^2$ is not a smooth Lagrangian function on $TM$ and in fact is only $C^1$ along the zero section.  Hence we cannot directly apply Theorem~\ref{thm: summary of morse in infinite dimensions}. There are various remedies in the literature allowing one to develop Morse theory in the Finsler setting. We adopt the approach of \cite{lu2015} and \cite{gongxue2020} and pick a smooth approximation of $L$, i.e., by \cite[Lemma 5.3]{gongxue2020} there exists a family of modifications $L_\eta$ of $L_0=L$ parametrized by $\eta \in [0,1]$, such that each $L_\eta$ is a smooth nonnegative Lagrangian satisfying (L1), (L2) and the following on $TM$:

(a) $L_0(x, v)-\eta \leq L_\eta(x, v) \leq L_0(x, v)$ for all $(x, v) \in T M$,

(b) $L_\eta(x, v)=L_0(x, v)$ for $L_0(x, v)\geq \eta$.

\begin{claim}
For $\eta$ sufficiently small, all the critical points of $\mathcal{A}_{L_\eta}$ coincide with those of $\mathcal{A}_L$.
\end{claim}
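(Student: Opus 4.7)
The plan is to prove the equality $\operatorname{Crit}(\mathcal{A}_{L_\eta}) = \operatorname{Crit}(\mathcal{A}_L)$ in $\Omega^{1,2}(M, p, q_\varepsilon)$ by establishing both inclusions for $\eta$ sufficiently small and, if necessary, generic. The inclusion $\operatorname{Crit}(\mathcal{A}_L) \subseteq \operatorname{Crit}(\mathcal{A}_{L_\eta})$ is immediate from the properties listed above: any critical point $\gamma$ of $\mathcal{A}_L$ is an $F_\lambda$-geodesic of constant speed $F_\lambda(\gamma, \dot\gamma) = \mathcal{L}_{F_\lambda}(\gamma) \geq d_{F_\lambda}(p, q_\varepsilon)$, so $L_0(\gamma, \dot\gamma) \geq d_{F_\lambda}(p, q_\varepsilon)^2$ pointwise. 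Once $\eta < d_{F_\lambda}(p, q_\varepsilon)^2$, property~(b) guarantees $L_\eta \equiv L_0$ on a $TM$-neighborhood of the compact image of $(\gamma, \dot\gamma)$, so variations of $\gamma$ supported in this neighborhood pair identically with the two action functionals and $\gamma$ is also critical for $\mathcal{A}_{L_\eta}$.

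For the reverse inclusion, the key tool is conservation of the Hamiltonian energy. Let $\tilde\gamma \in \operatorname{Crit}(\mathcal{A}_{L_\eta})$; since $L_\eta$ is time-independent,
\begin{equation*}
E_\eta(x, v) \coloneqq \langle v, \partial_v L_\eta(x, v)\rangle - L_\eta(x, v)
\end{equation*}
is constant along $(\tilde\gamma, \dot{\tilde\gamma})$; call this constant $c$. Decompose $[0,1] = O \sqcup I$ with $O \coloneqq \{t : L_0(\tilde\gamma, \dot{\tilde\gamma}) \geq \eta\}$ (closed) and $I$ its open complement. On each connected component of $O$, property~(b) gives $L_\eta = L_0 = F_\lambda^2$, and the positive $2$-homogeneity of $F_\lambda^2$ in $v$ forces $E_\eta = L_\eta$ via Euler's identity; consequently $F_\lambda^2(\tilde\gamma, \dot{\tilde\gamma}) \equiv c$ on each such component. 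If $O$ and $I$ were both nonempty, continuity of $F_\lambda^2(\tilde\gamma, \dot{\tilde\gamma})$ across $\partial O \subset \{L_0 = \eta\}$ would force $c = \eta$; the set of squared speeds of $\mathcal{A}_L$-critical points is however a discrete subset of $\mathbb{R}_{>0}$ (bounded below by a fixed $s_0 > 0$, by the explicit formulas for the Katok geodesics $\gamma_\pm^m$), so for $\eta$ small and avoiding this discrete set the case $c = \eta$ is excluded and $[0,1]$ reduces either to $O$ or to $I$.

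The case $I = [0,1]$ would give $F_\lambda(\tilde\gamma, \dot{\tilde\gamma}) < \sqrt{\eta}$ throughout, hence $\mathcal{L}_{F_\lambda}(\tilde\gamma) < \sqrt{\eta} < d_{F_\lambda}(p, q_\varepsilon)$, contradicting the Finsler distance lower bound; therefore $O = [0,1]$, $L_\eta \equiv L_0$ along $\tilde\gamma$, and $\tilde\gamma$ solves the $L_0$-Euler--Lagrange equation, i.e., lies in $\operatorname{Crit}(\mathcal{A}_L)$. The main obstacle is the rigorous treatment of the borderline case $c = \eta$: the genericity argument in $\eta$ given above suffices for this paper's purposes, but a conceptually cleaner proof should show directly that matching the $L_\eta$-geodesic flow on $I$ with the $L_0$-geodesic flow on $O$ at a transition point $t_0$ imposes conditions on $\dot{\tilde\gamma}(t_0)$ that are incompatible with strict convexity of $L_\eta$ in $v$ combined with the positive $2$-homogeneity of $L_0$.
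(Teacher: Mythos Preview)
Your approach via conservation of the Hamiltonian energy $E_\eta$ is the right idea and makes explicit what the paper's very terse proof leaves implicit (the paper only introduces an auxiliary $\eta'(\eta)\to 0$ and asserts without further argument that $L_0>\eta'$ at one point forces $\tilde\gamma$ to be a Finsler geodesic ``by (b)''). However, your handling of the mixed case $O,I\neq\varnothing$, $c=\eta$, has a genuine gap.

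You argue that $c=\eta$ is excluded for $\eta$ avoiding the discrete set of squared speeds of $\mathcal{A}_L$-critical points. But that discrete set consists of squared speeds of \emph{full} Finsler geodesics from $p$ to $q_\varepsilon$; on $O$, which in the mixed case is a proper subset of $[0,1]$, the curve $\tilde\gamma$ is only a \emph{segment} of an $F_\lambda$-geodesic, and the squared speed of such a segment is completely unconstrained. So choosing $\eta$ generically does nothing to rule out $c=\eta$.

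The fix is much simpler than either your genericity detour or the convexity argument you sketch at the end. In the mixed case you have already shown $E_\eta\equiv c=\eta$, and on $O$ you have $E_\eta=L_0$ by homogeneity; hence $L_0\equiv\eta$ on $O$. Combined with $L_0<\eta$ on $I$, this gives $L_0(\tilde\gamma,\dot{\tilde\gamma})\le\eta$ on all of $[0,1]$, so $F_\lambda(\tilde\gamma,\dot{\tilde\gamma})\le\sqrt{\eta}$ everywhere and $\mathcal{L}_{F_\lambda}(\tilde\gamma)\le\sqrt{\eta}<d_{F_\lambda}(p,q_\varepsilon)$ for $\eta$ small enough --- exactly the length contradiction you already use for the case $I=[0,1]$. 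Thus the mixed case is eliminated for all sufficiently small $\eta$, with no genericity needed, and your proof becomes complete. This is essentially the content of the paper's dichotomy with $\eta'$, made precise.
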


\begin{proof}
Let $\gamma$ be a critical point of $\mathcal{A}_{L_\eta}$ with $\eta$ small.
There exists $\eta'=\eta'(\eta)$ such that $\lim_{\eta\to 0} \eta'(\eta)=0$ and if $L(\gamma(t_0), \dot{\gamma}(t_0))> \eta'$ for some $t_0 \in [0,1]$, then the curve $\gamma$ is a Finsler geodesic by (b). On the other hand, if $L(\gamma(t), \dot{\gamma}(t))\leq \eta'$ for all $t \in [0,1]$, then $\mathcal{A}_L(\gamma) \le \eta' < (\frac{2\pi\varepsilon}{1+\lambda})^2$, where $(\frac{2\pi\varepsilon}{1+\lambda})^2$ is the smallest value of $\mathcal{A}_L$ amongst all the paths from $p$ to $q$. 
\end{proof}

Hence, for $\eta$ sufficiently small, $\mathcal{A}_{L_\eta}$ has only Finsler geodesics of $L$ as its critical points and the action $\mathcal{A}_{\sqrt{L_\eta}}$ of each of these geodesics is given by~\eqref{Finsler-action}, while the Morse indices are given by~\eqref{eq: ind-small_m} and~\eqref{eq: ind-big_m}. Additionally, Theorem~\ref{thm: summary of morse in infinite dimensions} is applicable for $L_\eta$ and, therefore, we may use the results from Section~\ref{section: Morse complex} to compute the $A_\infty$-algebra of interest. 

Returning to the calculation of $CM_{-*}(\Omega(S^2, \bm q,\bm q'))$, we put the points $\bm q,\bm q', \bm q''$ etc.\ in position to satisfy~\ref{item: positions-p1}--\ref{item: positions-p3} and the following:
\be
\item $\bm q, \bm q',\bm q''$ etc.\ are contained in a small ball with respect to the standard round metric;
\item the $r$-coordinates of $q_1,q_2,\dots$ are in decreasing order; and
\item the $\phi$-coordinates of $q_1,q_1',q_1'',\dots$ are in decreasing order.
\ee
We claim that computations that we previously performed in the case of round metric yield the same results for the Lagrangian $L_\eta$. The main reason for this is that for generators in degrees $0$ and $1$ the Finsler (pseudo-)gradient trajectories behave similarly to the round metric case. 

Let $A_{\kappa, F}$ denote the HDHF $A_\infty$-algebra of $\kappa$ cotangent fibers and let $A_{\kappa, \varepsilon, \eta}$ denote the Morse multipath $A_\infty$-algebra corresponding to $L_\eta$ as above. We will use the notation $\tau_{\ge -M}(A)$ to denote the $(-M)$-truncated subcomplex obtained from $A$ by setting all elements of degree less than $-M$  equal to $0$. The quasi-equivalence between $A_{\kappa, F}$ and $A_{\kappa, \varepsilon, \eta}$ given by Theorem~\ref{thm: quasi-equivalence-Floer-to-Morse} induces a quasi-isomorphism between the truncations $\tau_{\ge -M}(A_{\kappa, F})$ and $\tau_{\ge -M}(A_{\kappa, \varepsilon, \eta})$.

\begin{claim}\label{claim: truncated-dga}
For $M=\big \lfloor\frac{2(1-\varepsilon)}{1-\lambda} \big \rfloor$ and $d \ge 3$ we have:
$$\mu_M^d(\bm \gamma_d, \ldots, \bm \gamma_1)=0 \text{, whenever } |\gamma_1|+\ldots+|\gamma_d|+(2-d) >- M.$$
In particular, the truncated complex $\tau_{\ge -\lfloor\frac{M}{3} \rfloor+1}(A_{\kappa, \varepsilon, \eta})$ has an induced structure of a dga.
\end{claim}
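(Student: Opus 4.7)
The plan is to combine length/action estimates along MFTS with the approximate monotonicity of Lemma~\ref{lemma: approximate monotonicity}. Since the hypothesis $|\gamma_1|+\cdots+|\gamma_d|+(2-d)>-M$ is equivalent to $\op{ind}(\bm\gamma_0)<M$, and the rigidity equation from Lemma~\ref{lemma-ainfty-regular} with $n=2$ reads $\op{ind}(\bm\gamma_0)=\sum_i \op{ind}(\bm\gamma_i)+(d-2)$, every input index is also $<M$. Thus every generator appearing in any MFTS contributing to $\mu^d_M(\bm\gamma_d,\ldots,\bm\gamma_1)$ lies in the short-branch regime of Lemma~\ref{lemma: approximate monotonicity}, where $\mathcal{L}_{F_{\lambda_i}}(\gamma)=C_i\op{ind}(\gamma)+O(\varepsilon')$ with $C_i$ close to $\pi$.

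The core estimate is that $\mathcal{L}_{F_{\lambda_i}}(\bm\gamma_0)\le \sum_i\mathcal{L}_{F_{\lambda_i}}(\bm\gamma_i)+O(d\varepsilon')$. Three facts enter: the length of a weighted concatenation is additive, $\mathcal{L}(c^{\bm w}(\gamma_1,\gamma_2))=\mathcal{L}(\gamma_1)+\mathcal{L}(\gamma_2)$; switchings preserve total length; and action decreases monotonically along flow edges. At each binary interior vertex, the identity
\[
\mathcal{A}(c^{\bm w}(\gamma_1,\gamma_2))=(w_1+w_2)\!\left(\tfrac{\mathcal{A}(\gamma_1)}{w_1}+\tfrac{\mathcal{A}(\gamma_2)}{w_2}\right)
\]
attains its minimum $(\sqrt{\mathcal{A}(\gamma_1)}+\sqrt{\mathcal{A}(\gamma_2)})^2=\tfrac12(\mathcal{L}(\gamma_1)+\mathcal{L}(\gamma_2))^2$ precisely when $w_i\propto\sqrt{\mathcal{A}(\gamma_i)}$; this Cauchy--Schwarz optimum corresponds to reparametrizing the concatenation with constant Finsler speed. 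Iterating up the tree with these optimal weights, combining with $\mathcal{A}(\bm\gamma_0)=\tfrac12\mathcal{L}(\bm\gamma_0)^2$ at the critical endpoint, yields the asserted length bound. Approximate monotonicity then gives
\[
C_i\op{ind}(\bm\gamma_0)\le C_i\sum_i\op{ind}(\bm\gamma_i)+O(d\varepsilon'),
\]
and subtracting the rigidity equation produces $(d-2)\le O(d\varepsilon'/C_i)$. Choosing $\lambda_i\to 1$ (equivalently $\varepsilon_i\to 0$) so that $\varepsilon'$ is small enough, this is a contradiction for any $d\ge 3$, forcing $\mu^d_M(\bm\gamma_d,\ldots,\bm\gamma_1)=0$.

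The main obstacle is reconciling the clean Cauchy--Schwarz argument with the actual weighted concatenation prescribed by the MFTS definition in Section~\ref{subsection: the A infty structure}, where $w_{e_0^v}=\min(l_{e_0^v},1)+(1-\min(l_{e_0^v},1))(w_1+\cdots+w_{|v|-1})$ is dictated by edge lengths and is not optimal in general. The expected resolution is to exploit the $A_\infty$-invariance of $CM_{-*}(\Omega(M,\bm q))$ under changes of auxiliary data (Corollary~\ref{cor: invariance}) in order to deform the weight convention toward the Cauchy--Schwarz optimum without altering the $A_\infty$-equivalence class; alternatively, one tracks the excess $(\sqrt{\mathcal{A}(\gamma_1)}-\sqrt{\mathcal{A}(\gamma_2)})^2$ at each vertex, which for short-branch inputs of indices $k_1,k_2<M$ is bounded by $\tfrac{\pi^2}{2}(k_1-k_2)^2$, and absorbs the accumulated excess into the $O(\varepsilon')$ error budget by choosing $\lambda_i$ correspondingly closer to $1$. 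The multi-strand case ($\kappa>1$) passes through because switchings preserve the total length and total action, so the strand-wise estimates aggregate to the desired bound.

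Once the vanishing of $\mu^d_M$ is established, the induced dga structure on $\tau_{\ge -\lfloor M/3\rfloor+1}(A_{\kappa,\varepsilon,\eta})$ is immediate. If all inputs lie in the truncation and the $\mu^d$-output $\bm\gamma_0$ also lies in the truncation, then $|\bm\gamma_0|\ge -\lfloor M/3\rfloor+1>-M$, so the hypothesis of the vanishing claim is satisfied and $\mu^d=0$. Contributions whose outputs fall below $-\lfloor M/3\rfloor+1$ are invisible to the truncation, so the higher operations $\mu^d_M$ for $d\ge 3$ act as zero on the truncated complex, leaving only $\mu^1$ and $\mu^2$, i.e., a dga structure.
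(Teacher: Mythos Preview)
Your overall strategy matches the paper's: combine approximate monotonicity (Lemma~\ref{lemma: approximate monotonicity}) with the index formula for rigid MFTS to force $(d-2)\le O(\varepsilon')$, a contradiction for $d\ge 3$. The divergence is in \emph{which} functional you track along the flow tree.

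The paper works directly with the \emph{length} $\mathcal{A}_{\sqrt{L_\eta}}$, not the energy $\mathcal{A}_{L_\eta}$. Its key input is the assertion that $\mathcal{A}_{\sqrt{L_\eta}}$ itself decreases along pseudogradient trajectories of $\mathcal{A}_{L_\eta}$. Once that is granted, length is exactly additive under weighted concatenation \emph{for any weights} and is preserved by switchings, so $\mathcal{A}_{\sqrt{L_\eta}}(\bm\gamma_0)\le \sum_i \mathcal{A}_{\sqrt{L_\eta}}(\bm\gamma_i)$ follows immediately (up to the small $\bm q,\bm q',\bm q''$-distance errors), and the weight convention in Definition~\ref{def: MFTS}(6) is irrelevant. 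Comparing \eqref{eq:action-gamma0} and \eqref{eq:action-gamma_sum} then gives the contradiction in one line.

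Your route instead tracks the energy (whose decrease along flows is clear) and tries to recover length via Cauchy--Schwarz; this is where your ``main obstacle'' appears, and neither of your proposed resolutions closes the gap. Appealing to Corollary~\ref{cor: invariance} to deform the weight convention is not legitimate: that invariance statement concerns the metric, Lagrangian, and pseudogradient, not the concatenation weights, which are part of the \emph{definition} of the $A_\infty$-operations. And the excess $(\sqrt{\mathcal{A}(\gamma_1)}-\sqrt{\mathcal{A}(\gamma_2)})^2\approx \tfrac{\pi^2}{2}(k_1-k_2)^2$ is of order $1$ (indeed up to order $M^2$), not $O(\varepsilon')$; it does not shrink as $\lambda_i\to 1$, so it cannot be absorbed into the error budget needed to contradict $(d-2)\ge 1$.

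In summary, your detour through energy manufactures a difficulty that the paper never encounters; the missing idea is simply to run the monotonicity argument with the length functional from the outset. Your final paragraph on the induced dga structure of the truncation is correct and agrees with the paper's reasoning.
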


\begin{proof}
Arguing by contradiction, suppose there exist multipaths $\bm \gamma_1, \dots, \bm \gamma_d$ in $\tau_{\ge -M}(A_{\kappa, \varepsilon, \eta})$ with $d\geq 3$ such that $m_d(\bm \gamma_1, \dots, \bm \gamma_d) \neq 0$. In particular there is an MFTS connecting $(\bm \gamma_1, \dots, \bm \gamma_d)$ to some $\bm\gamma_0$, and we have 
$$\op{ind}(\bm \gamma_0)=\op{ind}(\bm \gamma_1)+\dots+\op{ind}(\bm \gamma_d)+(d-2) < \big \lfloor\tfrac{2(m-\varepsilon)}{1-\lambda} \big \rfloor.$$

By Lemma~\ref{lemma: approximate monotonicity}, we have:
\begin{gather}\label{eq:action-gamma0}
    \mathcal{A}_{\sqrt{L_\eta}}(\bm \gamma_0) \approx \frac{2\pi \op{ind}(\bm \gamma_0)}{1+\lambda}=\frac{2\pi(\op{ind}(\bm \gamma_1)+\dots+\op{ind}(\bm \gamma_d)+(d-2))}{1+\lambda},\\\label{eq:action-gamma_sum}
    \mathcal{A}_{\sqrt{L_\eta}}(\bm \gamma_1)+\dots+\mathcal{A}_{\sqrt{L_\eta}}(\bm \gamma_d)\approx\frac{2\pi(\op{ind}(\bm\gamma_1)+\dots+\op{ind}(\bm\gamma_d))}{1+\lambda},
\end{gather}
where the estimate is up to some terms involving the round metric distances among $\bm q$, $\bm q'$, $\bm q''$ etc., and the terms from Lemma~\ref{lemma: approximate monotonicity}, which can all be taken to be small.
Since the $\mathcal{A}_{\sqrt{L_\eta}}$-action is decreasing along the pseudogradient trajectories for $\mathcal{A}_{L_\eta}$, we obtain a contradiction since $d-2\ge 1$ by comparing~\eqref{eq:action-gamma0} and~\eqref{eq:action-gamma_sum}. The first part of the claim then follows. To show the latter, observe that $\mu^3_M$ vanishes on $\tau_{\ge -\lfloor\frac{M}{3} \rfloor+1}(A_{\kappa, \varepsilon, \eta})$ and therefore the tuple $(\tau_{\ge -\lfloor\frac{M}{3} \rfloor+1}(A_{\kappa, \varepsilon, \eta}), \mu^1_M, \mu^2_M)$ is a dga.
\end{proof}

\begin{claim} \label{claim: generation of algebra}
As an algebra over $\Z\llbracket \hbar\rrbracket$, the dga $\tau_{\ge -\lfloor\frac{M}{3} \rfloor+1}(A_{\kappa, \varepsilon, \eta})$ is generated by the elements $T_1,\dots,T_{\kappa-1}$ and $x_1,\dots, x_\kappa$.
\end{claim}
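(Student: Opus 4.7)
The plan is to show by induction on total Morse index that every module generator of $\tau_{\ge -\lfloor M/3\rfloor+1}(A_{\kappa,\varepsilon,\eta})$ is realized as a $\mu^2_M$-product of $T_1,\dots,T_{\kappa-1}$ and $x_1,\dots,x_\kappa$. First I would enumerate the generators explicitly. By the choice of $\bm q, \bm q'$ (perturbations of equatorial points) and Lemma~\ref{lemma: approximate monotonicity}, every critical point of $\mathcal{A}_{L_\eta}$ on $\Omega(S^2,q_i,q'_{\sigma(i)})$ of index $\le \lfloor M/3\rfloor -1$ is either a short equatorial geodesic or an iterated $\gamma_-^{m}$-type geodesic with winding $m_i\ge 0$; the $\gamma_+^{m}$ branch contributes nothing below the truncation level once $\lambda$ is taken close enough to $1$, since $\operatorname{ind}(\gamma_+^1)\approx \lfloor N+1\rfloor$ grows faster than $M/3$. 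Hence each module generator of the truncated complex has the form $\bm\gamma_{\sigma,\vec m}$ parametrized by a permutation $\sigma\in S_\kappa$ and a tuple of nonnegative winding numbers $\vec m=(m_1,\dots,m_\kappa)$.

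Next I would establish the base case $\vec m=0$. The degree-zero subspace is spanned by the multipaths $T_\sigma$ consisting of shortest geodesics from $q_i$ to $q'_{\sigma(i)}$, as discussed in the paragraph on generators in Section~\ref{subsection: verification}. The $\mu^2_M$-verification of relations~\eqref{first}--\eqref{third} together with the HOMFLY-type argument labeled (*) in Section~\ref{subsection: verification} shows that every $T_\sigma$ is a product of $T_1,\dots,T_{\kappa-1}$ modulo $\hbar$-corrections whose winding numbers are not larger, and these corrections are themselves absorbed by induction on the length of $\sigma$.

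For the inductive step I would fix a generator $\bm\gamma_{\sigma,\vec m}$ with $m\coloneqq\sum m_i>0$, choose $i$ with $m_i>0$, and compute $\mu^2_M(x_i,\bm\gamma_{\sigma',\vec m'})$ where $\vec m'$ is obtained from $\vec m$ by decrementing $m_i$ by $1$ and $\sigma'$ is adjusted so that the concatenation lands with permutation $\sigma$. The geometric effect of this product is to splice a $\gamma_-^1$-strand onto the $i$th strand at the appropriate basepoint, producing a geodesic of winding $m_i$ plus terms of strictly smaller total winding multiplied by positive powers of $\hbar$ (arising from switches encountered along the concatenation flow tree). These error terms are themselves products of $T_j$'s and $x_j$'s by the inductive hypothesis, and the structural identities~\eqref{eq Hn Tkxk1}--\eqref{eq Hn Tkxk3} and \eqref{eq xk+1}, already verified Morse-theoretically in Section~\ref{subsection: verification}, ensure that the algebraic manipulations track exactly the PBW rewriting of Lemma~\ref{lemma: PBW}. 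Hence $\bm\gamma_{\sigma,\vec m}$ lies in the subalgebra generated by $T_1,\dots,T_{\kappa-1},x_1,\dots,x_\kappa$.

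The main obstacle will be the last step: ruling out exotic contributions to $\mu^2_M(x_i,\bm\gamma_{\sigma',\vec m'})$ that would lie outside the inductively controlled range, in particular contributions that would secretly produce a $\gamma_+^{k}$-type geodesic on some strand. Here the approximate monotonicity of Lemma~\ref{lemma: approximate monotonicity} is decisive: any trajectory producing a $\gamma_+^{k}$-contribution has $\mathcal{A}_{\sqrt{L_\eta}}$-action at least $2\pi(k-\varepsilon)/(1-\lambda)$, which lies above the truncation threshold once $\lambda$ is close to $1$. Combined with the action-monotonicity of pseudogradient flow (as applied in the proof of Claim~\ref{claim: truncated-dga}), this forces all flow-tree output strands to be of $\gamma_-$-type with controlled winding, and the algebraic recursion closes. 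Together with the base case this proves that the subalgebra generated by the $T_i$'s and $x_j$'s exhausts the truncated dga.
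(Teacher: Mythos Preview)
Your approach is essentially the same as the paper's: express each module generator modulo $\hbar$ as a PBW-type monomial in the $T_i$ and $x_j$, then absorb the $\hbar$-corrections. However, there is a genuine gap in your inductive step. You assert that the $\hbar$-error terms in $\mu^2_M(x_i,\bm\gamma_{\sigma',\vec m'})$ have strictly smaller total winding, and then invoke the inductive hypothesis on them. This is false: since $n=\dim S^2=2$ we have $|\hbar|=0$, so the dimension formula gives $|\bm\gamma_0|=|\bm\gamma_1|+|\bm\gamma_2|$ regardless of the number of switches $\ell$. The $\hbar^\ell$-term with $\ell\ge 1$ is therefore a linear combination of generators of the \emph{same} total index $m$, not $<m$; the computation $\mu^2_M(x_1,x_2)=\bm\gamma+\hbar\bm\gamma'$ in Section~\ref{subsection: verification}, with $\bm\gamma$ and $\bm\gamma'$ both of degree $-2$, is already a counterexample. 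So your induction on total winding does not close.

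The fix, which is what the paper actually does, is to iterate on the $\hbar$-adic filtration rather than on $m$. First show that every generator can be written as a monomial $\pm x_1^{j_1}\cdots x_\kappa^{j_\kappa}T_\sigma$ modulo $\hbar$ (this part of your argument, reducing to the $\kappa=1$ product structure strand by strand together with the verified relations, is correct). The error is then $\hbar$ times a finite linear combination of generators of the same index; apply the same rewriting to each of those, picking up an additional factor of $\hbar$; and continue. Since there are only finitely many module generators at each fixed index, this yields a well-defined (possibly infinite) $\hbar$-series, which converges because we work over $\Z\llbracket\hbar\rrbracket$. The paper notes explicitly that the sums are only shown to be formal power series in $\hbar$; the energy functional does not give enough control to prove finiteness.
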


\begin{proof}[Proof of Claim~\ref{claim: generation of algebra}]
Using the fact that $A_{\kappa, \varepsilon, \eta}$ is generated by $\kappa$-tuples of geodesics from ${\bm q}$ to ${\bm q}'$, the algebra structure of $\tau_{\ge -\lfloor\frac{M}{3} \rfloor+1}(A_{\kappa, \varepsilon, \eta})$ for $\kappa=1$, and the relations \eqref{first}-\eqref{eq Hn x1x2} and \eqref{eq Hn xjxk}, any $\kappa$-tuple $\bm\gamma_0$ can be written as $\pm x_1^{j_1}\dots x_\kappa^{j_\kappa} T_\sigma$ for some $\sigma\in S_\kappa$, modulo $\hbar$ terms.  We then iterate this procedure to eventually write $\bm\gamma_0$ as a possibly infinite linear combination of terms of the form  $\pm \hbar^i x_1^{j_1}\dots x_\kappa^{j_\kappa} T_\sigma$.
\end{proof}

\begin{remark}
    The sums are expected to be finite, but the energy functional does not seem to give us sufficiently good control.
\end{remark}

\subsection{Completion of proof of Theorem~\ref{thm: multiloop algebra for S2 is Hn}}\label{subsection: completion}
We now complete the proof of Theorem~\ref{thm: multiloop algebra for S2 is Hn}.

\s\n
{\em No other relations.} We claim that in each of the algebras $\tau_{\ge -M}(A_{\kappa, \varepsilon, \eta})$ there are no other relations besides those that are in the ideal $\mathcal{I}$ generated by \eqref{first}--\eqref{eq Hn x1x2}. Suppose we have a relation 
\begin{equation} \label{eqn: linear combo}
    R_0\coloneqq w_0 + \hbar w_1 + \dots +\hbar^k w_k=0.
\end{equation} 
of fixed degree; each $w_i$ is a linear combination of finitely many monomials and $w_0\neq 0$. Moreover, by Lemma~\ref{lemma: PBW} we may assume that each $w_i$ is a linear combination of monomials $x^\alpha T_\sigma$ of PBW type. Now each monomial $x^\alpha T_\sigma$ has a representative of the form $\sum_{i\ge 0}(\sum_j a_{ij} \bm \gamma_{ij}) \hbar^i$, where each $\bm \gamma_{ij}$ is a tuple of geodesics and the leading term $\sum_j a_{0j}\bm \gamma_{0j}$ is a tuple of geodesics $x^{\alpha_k}$ of permutation type $\sigma$. After expressing $R_0$ via the above geodesic representatives, the terms with $\hbar^0$ all come from monomials of $w_0$, and all are represented by different tuples $\bm \gamma_j$, unless $w_0=0$. This is a contradiction.

\s
The truncations $\tau_{\ge -\lfloor\frac{M}{3} \rfloor}(A_{\kappa, \varepsilon, \eta})$ and $\tau_{\ge -\lfloor\frac{M}{3} \rfloor}(H_\kappa)$ therefore coincide. As a corollary the two $A_\infty$-algebras have the same cohomology rings. Hence by taking $M \to \infty$ the above argument already shows that the dga $H_\kappa$ has the same cohomology as $A_{\kappa, F}$.

\s\n
{\em Quasi-equivalence between $A_{\kappa, F}$ and $H_\kappa$.} 
Fix sequences $(N_i=i)_{i\in \Z_{\ge 0}}$ and $(\lambda_i)_{i \in \Z_{\ge 0} }$ as in Lemma~\ref{lemma: approximate monotonicity} and denote by $(A_i, \{\mu^d_i\}_{d \ge 1})$ the $A_\infty$-algebra $A_{\kappa, \varepsilon(\lambda_i), \eta(\lambda_i)}$ constructed above. As graded modules we can decompose $A_i=\bigoplus_{s\le 0}A_i(s)$, where $A_i(s)$ is the degree $s$ part. 
Note that each $A_i$ is non-positively graded. Claim~\ref{claim: truncated-dga} then implies that the following property holds. 
\begin{enumerate}[(A)]
    \item For any $s \le 0$, the $A_\infty$-multiplication $\mu_i^d: A_{i}^{\otimes d}(s) \to A_i(s-d+2)$ is zero for $i \ge -s+d-2$, where $d\ge 3$.
\end{enumerate}

There is a homotopy between $L_{\eta_i}$ and $L_{\eta_{i+1}}$ which arises from varying $\lambda$ from $\lambda_i$ to $\lambda_{i+1}$. Counting \emph{two-colored Morse flow trees with switches} corresponding to such homotopy as in \cite{mazuir2021I, mazuir2021II} introduces an $A_\infty$-morphism $\mathcal{F}_i \colon A_i \to A_{i+1}$ for each $i \ge 1$. We use without proof the fact that all the $\mathcal{F}_i$ are quasi-equivalances and refer the reader to \cite[Proposition 20]{mazuir2021I} for a detailed account of this claim in the finite-dimensional setting. A rigorous proof in our setting can be carried out in a similar manner by introducing moduli spaces of two-colored trees with switches.

Note that each $\mathcal{F}_i^1|_{\hbar=0}$ is an identity morphism, since for $\kappa=1$ in each degree both $A_i$ and $A_{i+1}$ have a unique generator (which is also a cycle) and $\mathcal{F}_i^1|_{\hbar=0}$ is a quasi-isomorphism. Since we are working over $\Z\llbracket \hbar \rrbracket$, it follows that $\mathcal{F}_i^1$ is an isomorphism. Therefore,
\begin{enumerate}[(B)]
    \item For any $s \le 0$, the map $\mathcal{F}_i^1: A_{i-1}(s) \to A_i(s)$ is an isomorphism of abelian groups for $i \ge -s$.
\end{enumerate}

Finally, using action arguments as in the proof of Claim~\ref{claim: truncated-dga} one may show that $\mathcal{F}_i^k(\bm \gamma_k, \ldots,\bm \gamma_1)=0$ whenever $k \ge 2$ and $|\bm \gamma_k|+\dots+|\bm \gamma_1| \ge k-1-i$.  Hence the $A_\infty$-morphisms $\mathcal{F}_i$ also satisfy:
\begin{enumerate}[(C)]
\item For any $s \le 0$, the map $\mathcal{F}_i^k: A_{i-1}^{\otimes k}(s) \to A_i(s-k+1)$ is zero for $i \ge -s+k-1$, where $k>1$. 
\end{enumerate}
Roughly speaking, Properties (A)--(C) say that, for a fixed degree $s$, the $A_{\infty}$-morphism $\mathcal{F}_i$ is the identity map for $i$ sufficiently large. 

We use Properties (A)--(C) to show that the direct limit $A$ of the system $(\mathcal{F}_i)_{i,k\in \Z_{\geq 1}}$ exists in Lemma \ref{thm: direct limit}.  The limit $A$ is quasi-isomorphic to each $A_i$.  Finally, we show in Lemma \ref{lemma: direct limit dga} that $A$ is isomorphic to the dga $H_{\kappa}$.  This completes the proof of Theorem \ref{thm: multiloop algebra for S2 is Hn}.

\subsection{A direct limit argument} \label{subsection: direct limit argument}

\begin{lemma}\label{thm: direct limit}
Suppose $(A_i)_{i\in \Z_{\geq 0}}$ and $(\mathcal{F}_i^k: A_{i-1}^{\otimes k}\to A_i)_{i,k\in \Z_{\geq 1}}$ are sequences of non-positively graded $A_\infty$-algebras and $A_\infty$-morphisms such that Properties (A)--(C) above hold.
Then the direct limit of the system $(\mathcal{F}_i)_{i\in \Z_{\geq 1}}$ exists. Moreover, such a limit can be explicitly described from $A_i$ and it is a dga. 
\end{lemma}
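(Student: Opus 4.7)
The strategy is to construct the limit degree-by-degree and check that properties (A)--(C) force all the $A_\infty$-data to stabilize to a dga structure. Concretely, for each $s\le 0$, property (B) tells us that the transition maps $\mathcal{F}_i^1\colon A_{i-1}(s)\to A_i(s)$ are isomorphisms once $i\geq -s$, so the sequential colimit
\[
A(s)\;\coloneqq\;\varinjlim_i\, A_i(s),
\]
(with transition maps the $\mathcal{F}_i^1$) exists, is achieved at any finite stage $i\geq -s$, and the structure map $\iota_{i,s}\colon A_i(s)\to A(s)$ is an isomorphism for all $i\geq -s$. I set $A=\bigoplus_{s\le 0}A(s)$ as a graded abelian group. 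This gives the explicit description: $A(s)$ is just $A_N(s)$ for any sufficiently large $N=N(s)$, and the various $A_i$ embed into $A$ stage by stage.

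Next I endow $A$ with $\mu^1$ and $\mu^2$. For $\mu^1$, given $a\in A(s)$, pick $i$ large enough that $\iota_{i,s}$ and $\iota_{i,s+1}$ are both isomorphisms, and set $\mu^1_A(a)\coloneqq \iota_{i,s+1}\bigl(\mu_i^1(\iota_{i,s}^{-1}a)\bigr)$. Independence of the choice of $i$ follows from the $A_\infty$-relation for $\mathcal{F}_i$ in arity $1$, namely $\mathcal{F}_i^1\circ \mu^1_{i-1}=\mu^1_i\circ\mathcal{F}_i^1$. For $\mu^2$, given $a\in A(s_1)$, $b\in A(s_2)$, I similarly define $\mu^2_A(b,a)\coloneqq \iota_{i,s_1+s_2}\bigl(\mu_i^2(\iota^{-1}b,\iota^{-1}a)\bigr)$. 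The well-definedness now requires the arity-$2$ relation for $\mathcal{F}_i$:
\[
\mathcal{F}_i^1\mu^2_{i-1}-\mu^2_i(\mathcal{F}_i^1\otimes\mathcal{F}_i^1) \;=\; \mu^1_i\mathcal{F}_i^2 + \mathcal{F}_i^2(\mu^1\otimes 1+ 1\otimes\mu^1).
\]
The point is that by property (C), for $i$ large enough relative to the degrees at hand, $\mathcal{F}_i^2$ vanishes on the relevant pieces of $A_{i-1}^{\otimes 2}$ and its image lands in pieces where it also vanishes, so the right-hand side is $0$. Thus the two candidates for $\mu^2_A$ coincide. Finally, for $d\ge 3$ property (A) tells us that $\mu^d_i$ vanishes on $A_i(s)^{\otimes d}$ for $i$ large enough in each fixed degree; consequently $\mu^d_A\equiv 0$ on $A$.

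With $\mu^d_A=0$ for $d\geq 3$, the $A_\infty$-relations on $A$ collapse to precisely the dga axioms: the $d=1$ relation is $\mu^1_A\circ\mu^1_A=0$; the $d=2$ relation is the graded Leibniz rule for $\mu^2_A$ with respect to $\mu^1_A$; and the $d=3$ relation reduces to strict associativity of $\mu^2_A$ (since all inner $\mu^3_A$ terms vanish). Each of these relations on $A$ is inherited by evaluating at a sufficiently large stage $i$, where it holds in $A_i$. Finally, the collection of maps $\iota_i\colon A_i\to A$ (with higher components zero) assembles into an $A_\infty$-morphism whose arity-$1$ part becomes an isomorphism in any bounded range of degrees, and a standard universal-property check shows $A$ is a colimit of the system $(\mathcal{F}_i)$ in the homotopy category of $A_\infty$-algebras.

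I expect the main obstacle to be the bookkeeping in the second paragraph: one must carefully track, for each fixed pair of degrees $(s_1,s_2)$, the minimum $i$ at which all the relevant instances of $\mathcal{F}_i^1$ are isomorphisms and all of $\mathcal{F}_i^2$, $\mu^3_i$, etc., entering the arity-$2$ and arity-$3$ $A_\infty$-relations vanish by (A) and (C). Once this bookkeeping is set up, the verification is formal; no new geometric input is needed and the $A_\infty$-relations for $A$ simply come from restricting those of a single $A_i$ with $i\gg 0$.
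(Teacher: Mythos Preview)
Your construction of $A$ as a graded module and of $\mu^1_A,\mu^2_A$ via ``evaluate at a sufficiently large stage'' is correct and is exactly what the paper does; the verification that $\mu^d_A=0$ for $d\ge 3$ and that the remaining $A_\infty$-relations collapse to the dga axioms is also fine.

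The gap is in your last paragraph. You assert that the structure maps $\iota_i\colon A_i\to A$ can be taken with \emph{higher components zero} and are then $A_\infty$-morphisms. This is false in general. The arity-$2$ relation for a strict morphism would require
\[
\iota_i^1\,\mu^2_i(b,a)\;=\;\mu^2_A\bigl(\iota_i^1 b,\iota_i^1 a\bigr),
\]
which unwinds to $\mathcal{F}_j^1\mu^2_{j-1}=\mu^2_j(\mathcal{F}_j^1\otimes\mathcal{F}_j^1)$ for every $j$ between $i+1$ and the large stage $N$ at which you defined $\mu^2_A$. But the $A_\infty$-functor relation only gives this up to terms involving $\mathcal{F}_j^2$, and Property~(C) kills $\mathcal{F}_j^2$ only once $j\ge -(s_1+s_2)+1$. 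For fixed small $i$ there are elements $a\in A_i(s_1),\, b\in A_i(s_2)$ with $s_1+s_2$ arbitrarily negative, so the intermediate $\mathcal{F}_j^2$ need not vanish and your strict $\iota_i$ fails to intertwine $\mu^2$. Consequently the ``standard universal-property check'' cannot go through with these maps.

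The fix, which is what the paper does, is to give $\iota_i$ nontrivial higher components: set
\[
\mathcal{G}_i^k\big|_{A_i^{\otimes k}(s)}\;=\;\iota_N^1\circ\bigl(\mathcal{F}_N\circ\cdots\circ\mathcal{F}_{i+1}\bigr)^k
\]
for any $N\ge\max\{i,-s+k-1\}$; Property~(C) makes this independent of $N$. Then $\mathcal{G}_i=(\mathcal{G}_i^k)_{k\ge1}$ is an honest $A_\infty$-morphism satisfying $\mathcal{G}_{i+1}\circ\mathcal{F}_{i+1}=\mathcal{G}_i$, and with these structure maps the universal property is verified by the same ``take $m\gg0$'' argument. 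So your bookkeeping remark at the end is on target for $\mu^1_A,\mu^2_A$, but the structure maps themselves require the same treatment.
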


\begin{proof}
The proof is carried out in three steps:
\begin{enumerate}
\item Define an $A_{\infty}$-algebra $A=\oplus_sA(s)$.
\item Construct a family of $A_{\infty}$-morphisms $\mathcal{G}_i =(\mathcal{G}_i^k)$, where $\mathcal{G}_i^k: A_i^{\otimes k} \to A$, and check that $\mathcal{G}_{i+1} \circ \mathcal{F}_{i+1}=\mathcal{F}_i$.
\item Verify that $A$ satisfies the universal property of a direct limit.
\end{enumerate} 

\s\n
{\em Step 1.} Define $A=\oplus_{s\le 0}A(s)$ and $A(s)=\varinjlim A_i(s)$, where we consider the directed system of $\Z\llbracket\hbar\rrbracket$-modules associated with $\mathcal{F}_i^1: A_{i-1}(s) \to A_i(s)$.
By Property (B) there exist $\mathcal{G}_{i}^1: A_i(s)\to A(s)$ such that the following diagram commutes:
$$\xymatrix{
A_{i-1}(s) \ar[r]^{\mathcal{F}_i^1} \ar[dr]_{\mathcal{G}_{i-1}^1} & A_i(s) \ar[d]^{\mathcal{G}_{i}^1} \\
& A(s),
}$$ 
where all the maps are isomorphisms for $i \ge -s$.

We now define the $A_{\infty}$-operations $(\mu^d)_{d\geq 1}$ on $A$. For any $s\le 0$, $\mu^1$ is defined by the following diagram:
$$\xymatrix{
A(s) \ar[r]^{\mu^1}  & A(s+1)  \\
A_{i}(s) \ar[r]^{\mu_i^1}  \ar[u]_{\mathcal{G}_i^1} \ar[d]^{\mathcal{F}_{i+1}^1} & A_i(s+1) \ar[u]^{\mathcal{G}_i^1} \ar[d]_{\mathcal{F}_{i+1}^1} \\
A_{i+1}(s) \ar[r]^{\mu_{i+1}^1} \ar@/^3pc/[uu]^{\mathcal{G}_{i+1}^1} & A_{i+1}(s+1) \ar@/_3pc/[uu]_{\mathcal{G}_{i+1}^1} ,
}$$
for $i \ge -s$. Here all vertical maps are isomorphisms. The map $\mu^1$ is independent of the choice of $i$ since the lower square is commutative. 

The map $\mu^2$ is defined via the following diagram:
$$\xymatrix{
A(s) \otimes A (t) \ar[r]^{\mu^2}  & A(s+t)  \\
A_{i}(s) \otimes A_{i}(t) \ar[r]^{\mu^2_i}  \ar[u]_{\mathcal{G}_i^1 \otimes \mathcal{G}_i^1}  & A_i(s+t) \ar[u]^{\mathcal{G}_i^1},
}$$
for $i \ge -s-t$. 
One can similarly check that $\mu^2$ is independent of the choice of $i$ as in the case of $\mu^1$.  
The corresponding square is commutative by Properties (B) and (C). 

The map $\mu^3$ is defined by the following diagram:
$$\xymatrix{
A(s) \otimes A (t) \otimes A(r) \ar[r]^{\mu^3}  & A(s+t+r)  \\
A_{i}(s) \otimes A_{i}(t) \otimes A_i(r) \ar[r]^{\mu_i^3}  \ar[u]_{\mathcal{G}_i^1 \otimes \mathcal{G}_i^1 \otimes \mathcal{G}_i^1}  & A_i(s+t+r-1) \ar[u]^{\mathcal{G}_i^1},
}$$
for $i \ge -s-t-r+1$. It is independent of choices of $i$. Moreover, $\mu_i^3$ is zero by Property (A). It follows that $\mu^3$ is the zero map. 
The maps $\mu^k$, $k \ge 3$, can be defined similarly and are zero maps.

To verify that the collection $(\mu^d)_{d \ge 1}$ satisfies the $A_{\infty}$-relations, observe that for any given generators as inputs, we can choose $i \gg 0$ such that $(\mu^d)_{d \ge 1}$ satisfies the $A_{\infty}$-relations for these generators if and only if $(\mu_i^d)_{d\geq 1}$ does for the corresponding generators in $A_i$. The latter is true since $A_i$ is an $A_{\infty}$-algebra. 

\s\n
{\em Step 2.} We construct a family of $A_{\infty}$-morphisms $\mathcal{G}_i=(\mathcal{G}_i^k)_{k \ge 1}$, where $\mathcal{G}_i^k: A_{i}^{\otimes k} \to A$ is defined by the following diagram:
$$\xymatrix{
A_{i}^{\otimes k}(s) \ar[r]^{\mathcal{G}_i^k} \ar[dr]_{(\mathcal{F}_m \circ \cdots \circ \mathcal{F}_{i+1})^k \quad \,} & A(s-k+1) \\
& A_m(s-k+1) \ar[u]^{\mathcal{G}_{m}^1},
}$$ 
for $m \ge \mbox{max}\{i,-s+k-1\}$.
Here $(\mathcal{F}_m \circ \cdots \circ \mathcal{F}_{i+1})^k$ denotes the degree $1-k$ part of the composition $\mathcal{F}_m \circ \cdots \circ \mathcal{F}_{i+1}$ of $A_{\infty}$-morphisms. 
It is straightforward to verify that $\mathcal{G}_i=(\mathcal{G}_i^k)_{k \ge 1}$ is an $A_{\infty}$-morphism. Roughly speaking, $\mathcal{G}_i=\mathcal{F}_m \circ \cdots \circ \mathcal{F}_{i+1}$ for sufficiently large $m$. 
It follows that 
\begin{equation}\label{eq: functors to direct limit commute}
\mathcal{G}_{i+1} \circ \mathcal{F}_{i+1}=\mathcal{G}_{i}. 
\end{equation}

\s\n{\em Step 3.} 
We verify that $A$ satisfies the universal property of the direct limit. 
Suppose that there is an $A_{\infty}$-algebra $B$ and a family of $A_{\infty}$-morphisms $\mathcal{H}_i=(\mathcal{H}_i^k)_{k\ge 1}$, where $\mathcal{H}_i^k: A_{i}^{\otimes k} \to B$, such that $\mathcal{H}_i=\mathcal{H}_{i+1} \circ \mathcal{F}_{i+1}$.
We construct an $A_{\infty}$-morphism $\mathcal{H}=(\mathcal{H}^k)_{k \ge 1}$, where $\mathcal{H}^k: A^{\otimes k} \to B$ is defined by the following diagram:
$$\xymatrix{
A^{\otimes k}(s) \ar[r]^{\mathcal{H}^k}  & B(s-k+1) \\
A_i^{\otimes k}(s) \ar[u]^{(\mathcal{G}_{i}^1)^{\otimes k}} \ar[ur]_{\mathcal{H}_i^k} &
}$$ 
for $i \ge -s+k-1$. It is easy to verify that $\mathcal{H}$ is an $A_{\infty}$-morphism. 
Moreover, $\mathcal{H} \circ \mathcal{G}_m=\mathcal{H}_m$ for sufficiently large $m$. Hence $\mathcal{H} \circ \mathcal{G}_i=\mathcal{H}_i$ holds for any $i$ as we may proceed by induction on $i$ using~\eqref{eq: functors to direct limit commute}: $$\mathcal{H} \circ \mathcal{G}_i=\mathcal{H} \circ \mathcal{G}_{i+1} \circ \mathcal{F}_{i+1}=\mathcal{H}_{i+1} \circ \mathcal{F}_{i+1}=\mathcal{H}_i.$$
\vskip-.25in
\end{proof}

The limit $A$ constructed above is a dga since $\mu^k=0$ for $k\ge3$. 

\begin{lemma}\label{lemma: direct limit dga}
Specializing to the case of $A_i=A_{\kappa, \varepsilon(\lambda_i), \eta(\lambda_i)}$, the direct limit dga $A$ is isomorphic to $H_{\kappa}$.
\end{lemma}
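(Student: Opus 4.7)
The plan is to construct a dga isomorphism $\phi \colon H_\kappa \to A$ generator-by-generator, and then verify bijectivity using the work already done in Sections~\ref{subsection: verification} and~\ref{subsection: the Katok examples}.

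\textbf{Step 1 (Construction of $\phi$).} Each $A_i$ contains distinguished elements which we also denote $T_1,\dots,T_{\kappa-1}$ (of degree $0$) and $x_1,\dots,x_{\kappa}$ (of degree $-1$), constructed as in the setup of Section~\ref{subsection: verification}: $T_j$ as the tuple of shortest geodesics realizing the positive half-twist, and $x_j$ via the recursive formula~\eqref{eq xk+1} starting from $x_1$. By Property~(B), for $i$ sufficiently large relative to the relevant degree, $\mathcal{F}_i^1$ is an isomorphism on each graded summand; moreover, because $\mathcal{F}_i^1$ reduces modulo $\hbar$ to the identity on the unique geodesic generator in each degree (for $\kappa=1$) and more generally fixes the geometric generators (the shortest geodesics representing $T_j$ and the index $-1$ equatorial geodesics representing $x_j$), the elements $T_j,x_j$ are compatible across the direct system. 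Applying $\mathcal{G}_i^1$ then gives well-defined elements $T_j,x_j \in A$.

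\textbf{Step 2 (Verification of relations).} The computations in Section~\ref{subsection: verification} verify the defining relations~\eqref{first}--\eqref{sixth} of $H_\kappa$ on the level of each $A_i$, once $i$ is chosen large enough that all participating elements and their products lie in the truncation $\tau_{\ge -\lfloor M/3\rfloor+1}(A_i)$ on which the dga structure is defined by Claim~\ref{claim: truncated-dga}. Since each relation involves only finitely many elements in bounded degrees, it persists under the $\mathcal{G}_i^1$ and thus holds in $A$. The universal property of $H_\kappa$ then produces the dga map $\phi \colon H_\kappa \to A$.

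\textbf{Step 3 (Surjectivity).} By Claim~\ref{claim: generation of algebra}, every element of the truncation $\tau_{\ge -\lfloor M/3\rfloor+1}(A_i)$ can be written as a (possibly infinite, $\hbar$-convergent) $\Z\llbracket\hbar\rrbracket$-linear combination of monomials $x^\alpha T_\sigma$. Since $A(s) = \varinjlim A_i(s)$ and, for any fixed $s$, the map $\mathcal{G}_i^1 \colon A_i(s)\to A(s)$ is an isomorphism for $i\gg 0$, every element of $A$ lies in the image of $\phi$.

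\textbf{Step 4 (Injectivity).} Suppose $\phi(R)=0$ for some nonzero $R \in H_\kappa$. Using the PBW basis of Lemma~\ref{lemma: PBW}, write $R = \sum_{j\ge 0} \hbar^j w_j$ with each $w_j$ a finite $\Z$-linear combination of basis monomials $x^\alpha T_\sigma$, and let $j_0$ be minimal with $w_{j_0}\ne 0$. Fix $i$ large enough that all basis monomials appearing in $R$ lie in the truncation of $A_i$ on which the dga relations hold and on which $\mathcal{G}_i^1$ is injective. The argument from the ``No other relations'' paragraph of Section~\ref{subsection: completion} then applies verbatim: the leading-order $\hbar^{j_0}$ component of $\phi(R)$ in $A_i$ is represented by a nonzero $\Z$-linear combination of pairwise distinct tuples of geodesics $x^\alpha T_\sigma|_{\hbar=0}$, which are linearly independent in $A_i(s)$. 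This contradicts $\phi(R)=0$, so $R=0$.

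The main obstacle will be Step~4: one must make rigorous the passage from the finite-$i$ linear independence of the PBW monomials (which comes from their having distinct geodesic representatives modulo $\hbar$) to injectivity on the direct limit. The key subtlety is controlling possibly infinite $\hbar$-sums on the $A$-side, but because $\phi$ is applied to elements of $H_\kappa$ (which are polynomial in $\hbar$ term-by-term in the PBW expansion up to each fixed $\hbar$-order), the leading-order argument suffices and no completion issues interfere with the proof.
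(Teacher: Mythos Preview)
Your proposal is correct and follows essentially the same route as the paper: construct $\phi$ on the generators $T_i,x_1$, check the defining relations of $H_\kappa$ hold in $A$ (via the computations of Section~\ref{subsection: verification}), deduce surjectivity from Claim~\ref{claim: generation of algebra}, and then argue injectivity. The only difference is in the injectivity step: the paper disposes of it in one line by observing that $\phi|_{\hbar=0}$ is the identity (both sides reduce to $\Z[x_1,\dots,x_\kappa]\rtimes S_\kappa$ by Lemma~\ref{lemma: Hecke hbar is zero} and the explicit description of generators), whereas you spell out the underlying leading-order argument via the PBW basis and the ``No other relations'' paragraph. These are two phrasings of the same idea; yours is more explicit, the paper's is more compact.
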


\begin{proof}
The dga $H_{\kappa}$ is generated by the $T_i$ and $x_1$. 
There are corresponding elements in each $A_i$, hence in $A$.
By abuse of notation, these elements still denoted by $T_i$ and $x_1$. 
We then define a map $\phi: H_{\kappa} \to A$ which sends $T_i$ to $T_i$ and $x_1$ to $x_1$. 
The map $\phi$ extends to a dga morphism since all the relations of $H_{\kappa}$ are preserved by $\phi$. 
By Claim~\ref{claim: generation of algebra}, each $A_i$ above a certain degree is generated by the $T_i$ and $x_1$, which clearly implies that $A$ is generated by the $T_i$ and $x_1$. Hence the morphism $\phi$ is surjective; it is also injective since $\phi|_{\hbar=0}$ is the identity.
\end{proof}

\appendix
\section{Regularity for unstable manifolds} \label{appendix: regularity for unstable submanifolds}

The goal of this appendix is to prove Theorem~\ref{thm: regularity for pseudogradient}.  We learned the main ideas of the proof from Alberto Abbondandolo, to whom we are grateful.

\subsection{Preliminaries on difference quotients}\label{appendix subsection: preliminaries-for-regularity}

Let $S^1=[0,1]/(0\sim 1)$. Given $f: S^1 \to  \R^n$ and a real number $h>0$, the difference quotient $D^h f$ is given by:
$$D^hf(x)=\frac{f(x+h)-f(x)}{h}.$$
The following is immediate from \cite[Theorem A.1.22]{wendl2008}; similar statements are standard and can be found for example in \cite[Section 5.8.2]{evans1998}.

\begin{lemma}\label{lemma: difference-quotients}
  Assume $1 \le p < \infty$ and $k \in \Z_{>0}$ .
  \begin{enumerate}
      \item If $f \in W^{k,p}(S^1, \R^n)$, then $D^hf$ converges to $f'$  in $W^{k-1,p}(S^1, \R^n)$ as $h \to 0$.
      \item Suppose $p >1$. If $f \in W^{k-1,p}(S^1, \R^n)$, and $\|D^hf\|_{L^p} \le C$ for some $C>0$ and all $h>0$ in a neighborhood of $0\in \R$, then $f \in W^{k,p}(S^1, \R^n)$. 
  \end{enumerate}
\end{lemma}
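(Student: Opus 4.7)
The plan is to reduce both parts to the case $k=1$ via the commutativity of the finite difference operator with weak differentiation, then argue by density (for (1)) and weak compactness (for (2)).

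For (1), I would first note that $D^h$ commutes with weak differentiation: if $f \in W^{k,p}$, then $(D^h f)^{(j)} = D^h(f^{(j)})$ for $0 \le j \le k-1$, since $D^h$ is a bounded linear combination of translations and translation commutes with weak derivatives. Summing the $L^p$-norms over all orders $j \le k-1$, it suffices to prove the $k=1$ case. For $f \in C^\infty(S^1, \R^n)$ the fundamental theorem of calculus gives
\[
D^h f(x) - f'(x) = \frac{1}{h}\int_0^h \bigl(f'(x+s) - f'(x)\bigr)\, ds,
\]
and Minkowski's integral inequality then yields $\|D^h f - f'\|_{L^p} \le \sup_{0 \le s \le h}\|\tau_s f' - f'\|_{L^p}$, where $\tau_s$ denotes translation by $s$. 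The right-hand side tends to $0$ as $h \to 0^+$ by $L^p$-continuity of translation (using $p < \infty$). The general case $f \in W^{1,p}$ follows by density of $C^\infty(S^1, \R^n)$ in $W^{1,p}(S^1,\R^n)$, combined with the uniform bound $\|D^h g\|_{L^p} \le \|g'\|_{L^p}$ (which itself is obtained from the smooth case and density) and a $3\varepsilon$-argument.

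For (2), I would again reduce to $k=1$: applying the commutativity above to $f^{(k-1)}$, the uniform $L^p$-boundedness of the appropriate difference quotient of $f^{(k-1)}$ gives, by the $k=1$ conclusion, $f^{(k)} \in L^p$, which is what is needed. In the $k=1$ case, since $p > 1$ the space $L^p(S^1,\R^n)$ is reflexive, so by Banach--Alaoglu (or Eberlein--\v{S}mulian) some subsequence $D^{h_j}f \rightharpoonup g$ weakly in $L^p$ with $\|g\|_{L^p} \le C$. To identify $g$ with $f'$ in the weak sense, I would use the discrete integration-by-parts identity
\[
\int_{S^1} (D^h f)(x)\,\varphi(x)\, dx = -\int_{S^1} f(x)\,(D^{-h}\varphi)(x)\, dx, \qquad \varphi \in C^\infty(S^1,\R^n),
\]
which on the circle has no boundary contribution. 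Passing $h_j \to 0$, the left-hand side tends to $\int g \cdot \varphi$ by weak convergence, while the right-hand side tends to $-\int f \cdot \varphi'$ by part (1) applied to the smooth function $\varphi$. Hence $g = f'$ as weak derivatives, so $f' \in L^p$ and therefore $f \in W^{k,p}$.

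The argument is essentially bookkeeping: the $k=1$ reductions are standard exercises in distribution theory, the smooth computation in (1) is elementary, and (2) hinges on reflexivity of $L^p$ for $p > 1$. The only mildly delicate point is the verification that the induction step for (2) can genuinely be set up from the stated hypothesis on $\|D^h f\|_{L^p}$ — interpreted, as in Wendl's treatment, as a uniform $L^p$-bound on difference quotients of $f^{(k-1)}$ — so that applying the $k=1$ case yields the top-order derivative in $L^p$. No deep obstacle arises, which is why the authors describe the lemma as immediate from Wendl.
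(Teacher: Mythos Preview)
Your proof is correct and follows the standard argument that the paper merely cites (Wendl, Evans) without reproducing; the paper's own proof of the twisted analog (the next lemma) follows exactly the same template of density plus $L^p$-continuity of translation for (1), and reflexivity/Banach--Alaoglu plus discrete integration by parts for (2). You are also right to flag that the hypothesis in part (2) as literally written, $\|D^h f\|_{L^p} \le C$, is automatically satisfied once $f \in W^{k-1,p}$ with $k \ge 2$ and hence cannot imply $f \in W^{k,p}$: the intended hypothesis is $\|D^h f\|_{W^{k-1,p}} \le C$ (equivalently, a uniform $L^p$-bound on $D^h f^{(k-1)}$), which is indeed how the paper states the corresponding hypothesis in the twisted version.
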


\begin{corollary}
    The following statements are equivalent for $k \in \Z_{>0}$:
\begin{enumerate}
    \item The $L^2$-map $\gamma \colon S^1 \to M$ represents an element in $W^{k,2}(S^1, M)$.
    \item The path $T_\tau (\gamma)_\tau \colon (-\varepsilon, \varepsilon) \to W^{k-1,2}(S^1, M)$, $\tau\mapsto (t\mapsto \gamma(\tau+t))$,
    is differentiable at $\tau=0$ with respect to the smooth structure on $W^{k-1, 2}(S^1, M)$.
\end{enumerate}
\end{corollary}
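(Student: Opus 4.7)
The plan is to reduce the manifold statement to a vector-space one via a smooth isometric embedding $\iota\colon M\hookrightarrow \R^N$, and then invoke Lemma~\ref{lemma: difference-quotients} directly. The Hilbert manifold structure on $W^{k-1,2}(S^1,M)$ is described locally near $\gamma$ by the exponential chart $\exp_\gamma\colon U\subset W^{k-1,2}(\gamma^*TM)\to W^{k-1,2}(S^1,M)$, applied pointwise on a $C^0$-neighborhood of $0$ (valid whenever $W^{k-1,2}\hookrightarrow C^0$, i.e.\ $k\ge 2$; the borderline $k=1$ case is discussed below). Writing $v_\tau\in W^{k-1,2}(\gamma^*TM)$ for the chart representative of $T_\tau\gamma$, the pointwise Taylor expansion of $\iota\circ\exp_{\gamma(t)}$ yields
\[
\iota(T_\tau\gamma)(t)-\iota(\gamma(t))=d\iota_{\gamma(t)}\,v_\tau(t)+R_\tau(t),
\]
with remainder controlled by the standard module estimate
\[
\|R_\tau\|_{W^{k-1,2}}\le C\,\|v_\tau\|_{L^\infty}\,\|v_\tau\|_{W^{k-1,2}},
\]
where $C$ depends on the second fundamental form of $\iota(M)$.

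For the direction (1)$\Rightarrow$(2), assume $\gamma\in W^{k,2}(S^1,M)$, so that $\iota\circ\gamma\in W^{k,2}(S^1,\R^N)$. Lemma~\ref{lemma: difference-quotients}(1) gives $D^h(\iota\circ\gamma)\to(\iota\circ\gamma)'$ in $W^{k-1,2}(S^1,\R^N)$ as $h\to 0$, hence in $C^0$ as well. Inverting the expansion above — a small perturbation of the identity $v_\tau\mapsto d\iota\cdot v_\tau$ when $\|v_\tau\|_{L^\infty}$ is small — converts ambient convergence into convergence of $v_h/h$ to $\gamma'$ in $W^{k-1,2}(\gamma^*TM)$, which is exactly differentiability of $\tau\mapsto T_\tau\gamma$ at $\tau=0$ in the chart.

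For the converse (2)$\Rightarrow$(1), differentiability at $\tau=0$ means that $v_\tau/\tau$ converges, and in particular is bounded, in $W^{k-1,2}(\gamma^*TM)$ as $\tau\to 0$. Reading the expansion in reverse and using the remainder estimate together with $v_\tau\to 0$ in $C^0$, one obtains uniform $W^{k-1,2}$-boundedness of the ambient difference quotients $D^\tau(\iota\circ\gamma)$. Lemma~\ref{lemma: difference-quotients}(2) with $p=2$ then yields $\iota\circ\gamma\in W^{k,2}(S^1,\R^N)$, and since the image is constrained to lie in $M$, we conclude $\gamma\in W^{k,2}(S^1,M)$.

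The main obstacle I expect is verifying the quadratic remainder estimate and the invertibility of the chart-to-ambient correspondence — i.e.\ that the pointwise map $v\mapsto \iota\circ\exp_\gamma(v)-\iota\circ\gamma$ is a $C^1$ bijection of a $W^{k-1,2}$-neighborhood of $0$ onto its image with uniform bounds, so that $W^{k-1,2}$-boundedness of difference quotients transfers cleanly in both directions. For $k\ge 2$ this is standard, using $W^{k-1,2}(S^1)\hookrightarrow C^0$ and the fact that $W^{k-1,2}(S^1)$ is a Banach algebra. The borderline case $k=1$ requires interpreting the "smooth structure on $L^2(S^1,M)$" as differentiability in the ambient $L^2(S^1,\R^N)$ subject to the constraint that values lie in $M$, in which case (1)$\Leftrightarrow$(2) reduces immediately to Lemma~\ref{lemma: difference-quotients} applied to $\iota\circ\gamma$.
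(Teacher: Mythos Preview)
Your proof is correct and follows essentially the same strategy as the paper's: reduce to the vector-valued case and apply Lemma~\ref{lemma: difference-quotients}. The paper's proof is a two-sentence sketch that simply notes the derivative of $\tau\mapsto T_\tau(\gamma)$ at $\tau=0$ equals the limit of difference quotients in the local chart modeled on $W^{k-1,2}(S^1,\gamma^*TM)$, citing Klingenberg for the chart structure. You unpack this by additionally introducing an isometric embedding $\iota\colon M\hookrightarrow\R^N$ and tracking the quadratic remainder explicitly, which is a perfectly reasonable way to make the chart-to-linear comparison precise; the embedding is a computational convenience rather than a genuinely different idea. Your careful treatment of the borderline case $k=1$ (where $W^{k-1,2}\not\hookrightarrow C^0$) goes beyond what the paper addresses.
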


\begin{proof}
This follows from Lemma~\ref{lemma: difference-quotients} since the derivative of the path $T_\tau(\gamma)$ at $\tau=0$ is equal to the limit of the difference quotients in the appropriate local chart of $W^{k,2}(S^1, \gamma^*TM)$, as in \cite{klingenberg2012lectures}.
\end{proof}

The goal of the remainder of this subappendix is to prove the analogs of the above results in the non-periodic setting using \emph{twisted difference quotients}. Let $T \colon (-\varepsilon_T, \varepsilon_T)_h \times [0,1]_x \to [0,1]$ be a smooth function satisfying the following: 
\begin{gather}\label{eq: deformation-condition}
    T_0 \equiv 0, T_h(0)=0, T_h(1)=0  \mbox{ for all $h$}; \nonumber \\
    (\bdry_h T_h)|_{h=0} (x) > 0 \text{ away from $x=0,1$};\\
    \partial_xT_{h}(x)>-1 \mbox{ for all $h$},
    \nonumber 
\end{gather}
where we are writing $T_h(x)=T(h,x).$

\begin{example}
    \label{example-of-translations}
    Choose $\delta>0$ small and $\varepsilon_T$ satisfying $0< \varepsilon_T<\delta$.  We set $T_h(x)=h$ for $x \in [\delta, 1-\delta]$, $T_h(x)=\frac{h}{\delta}x$ for $x \in [0, \delta]$, and $T_h(x)=\frac{h}{\delta}(1-x)$ for $x \in [1-\delta, 1]$, and then smooth out this family near the corners. 
\end{example}
\begin{figure}[ht]
    \centering
\begin{tikzpicture}
    \begin{axis}[
        axis lines = middle,
        xlabel = $x$,
        ylabel = {$A_h(x)$},
        xmin = 0, xmax = 1.2,
        ymin = 0, ymax = 1.2,
        xtick={0, 0.2, 0.8, 1},
        ytick={0.3, 0.9, 1},
        xticklabels={$0$, $\delta$, $(1-\delta)$, $1$},
        yticklabels={$\delta+h$, $(1-\delta)+h$, $1$},
        domain=0:1,
        samples=200,
        width=8cm,
        height=7cm,
        grid = both,
    ]
    
    \addplot [
        domain=0:0.2,
        samples=100,
        thick,
        black,
    ]
    {0.5*x + x};
    
    \addplot [
        domain=0.2:0.8,
        samples=100,
        thick,
        black,
    ]
    {x + 0.1};
    
    \addplot [
        domain=0.8:1,
        samples=100,
        thick,
        black,
    ]
    {0.5*(1-x) + x};
    
    \end{axis}
\end{tikzpicture}
    \caption{The graph of $A_h(x)=\op{id}+T_h(x)$ before smoothing the corners.}
    \label{fig: example-graph}
\end{figure}
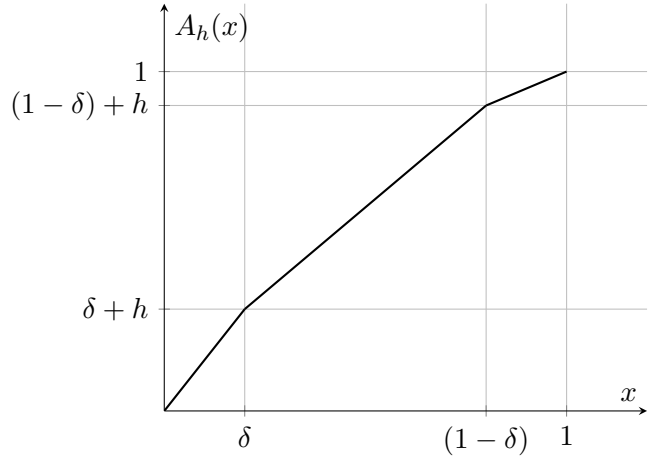
By \eqref{eq: deformation-condition}, $A_h=\op{id}+T_h$, $h\in (-\varepsilon_T, \varepsilon_T)$, is a smooth family of diffeomorphisms $[0,1]\stackrel\sim\to[0,1]$ such that $A_h(0)=0$, $A_h(1)=1$, and $A_0=\op{id}$. 

Given a function $f \colon [0,1] \to \R^n$, its \emph{$T$-twisted difference quotients} are:
\begin{equation}
    D^{T_h}f=\frac{f(x+T_h(x))-f(x)}{h}.
\end{equation}
We now prove the analog of Lemma~\ref{lemma: difference-quotients}, following the proof of \cite[Theorem A.1.22]{wendl2008}.

\begin{lemma}\label{lemma: twisted-difference-quotients}
Assume $1 \le p < \infty$ and $k \in \Z_{>0}$ .
\begin{enumerate}
      \item If $f \in W^{k,p}([0,1], \R^n)$, then $D^{T_h}f$ converges to $(\bdry_h T_h)|_{h=0}  \cdot f'$ 
      in $W^{k-1,p}([0,1], \R^n)$ as $h \to 0$.
      \item Suppose $p >1$, $f \in W^{k-1,p}([0,1], \R^n)$, and $\|D^{T_h}f\|_{W^{k-1,p}} \le C$ for some $C>0$ and all $h>0$ in a neighborhood of $0\in \R$.
      Then:
      \be
      \item $f \in W^{k,p}(\tilde I, \R^n)$ for any closed segment $\tilde I \subset [0,1]$ that does not intersect $\{0,1\}$. 
      \item If the $\|D^{T_h}f\|_{W^{k-1,p}}$ are uniformly bounded over all choices of $T_h$ satisfying~\eqref{eq: deformation-condition}, then $f \in  W^{k,p}([0,1], \R^n)$.
      \ee
\end{enumerate}
\end{lemma}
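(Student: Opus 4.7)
The strategy is to reduce to the untwisted Lemma~\ref{lemma: difference-quotients} by exploiting that, on any subinterval bounded away from $\{0,1\}$, the map $A_h = \op{id} + T_h$ is a bounded perturbation of a translation of size $\approx h\cdot(\bdry_h T_h)|_{h=0}(x)$, where this last factor is bounded away from $0$. I treat the three parts as follows.

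For (1), I first verify convergence on smooth $f$ using the elementary identity
\[
D^{T_h} f(x) = \frac{T_h(x)}{h}\int_0^1 f'(x + s T_h(x))\, ds.
\]
Here $T_h(x)/h \to (\bdry_h T_h)|_{h=0}(x)$ in $C^{k-1}([0,1])$ as $h \to 0$ by smoothness of $T$, while the integrand converges to $f'(x)$ in $W^{k-1,p}$ by continuity of the pull-back action along the $C^k$-family of diffeomorphisms $x \mapsto x + sT_h(x)$. The same formula yields a uniform bound on the operator norm of $D^{T_h}\colon W^{k,p}\to W^{k-1,p}$ in a neighborhood of $h=0$, depending only on the $C^k$-data of $T$. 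A standard density argument then extends the convergence to all $f \in W^{k,p}([0,1],\R^n)$.

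For (2)(a), I choose the specific family from Example~\ref{example-of-translations} with $\delta$ small enough that $\tilde I \subset [\delta, 1-\delta]$. On this region $T_h \equiv h$, so $D^{T_h} f|_{\tilde I}$ coincides with the untwisted difference quotient $D^h f|_{\tilde I}$. The hypothesis then supplies a uniform $W^{k-1,p}(\tilde I)$-bound on $D^h f$, and Lemma~\ref{lemma: difference-quotients}(2), applied to $f$ restricted to a slightly enlarged interior neighborhood of $\tilde I$ and extended off by a bounded extension operator (or equivalently via cutoffs), yields $f \in W^{k,p}(\tilde I, \R^n)$.

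For (2)(b), the uniformity of the hypothesis across all admissible $T_h$ permits me to apply the construction of (2)(a) to the entire $\delta$-family of Example~\ref{example-of-translations}. This produces a bound $\|D^h f\|_{W^{k-1,p}([\delta, 1-\delta])} \le C$ with $C$ independent of $\delta$ and of sufficiently small $h>0$. Exhausting $(0,1)$ by the nested intervals $[\delta, 1-\delta]$ and invoking Lemma~\ref{lemma: difference-quotients}(2) on each gives $f \in W^{k,p}(\tilde I, \R^n)$ for every $\tilde I \Subset (0,1)$, with norm controlled uniformly in $\tilde I$; passing to the limit $\delta \to 0$ via Fatou then delivers $f \in W^{k,p}([0,1],\R^n)$. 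The step I expect to be the main obstacle is precisely this boundary passage: since $T_h(0)=T_h(1)=0$ for every admissible $T_h$, the twisted quotients encode no direct first-order information about $f$ at the endpoints, and the uniform-in-$\delta$ bound must be shown to propagate across $\{0,1\}$. Here the constraint $\bdry_x T_h > -1$, which prevents the Jacobian of $A_h$ from degenerating and keeps the pull-back operator uniformly bounded on $W^{k-1,p}$, is what makes the limiting argument go through; the remaining verifications are routine.
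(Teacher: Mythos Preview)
Your treatment of (1) matches the paper's: both use the integral identity $D^{T_h}f(x)=\tfrac{T_h(x)}{h}\int_0^1 f'(x+sT_h(x))\,ds$ to obtain a uniform operator bound, verify convergence for smooth $f$, and conclude by density.

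Your (2)(a), however, has a genuine gap. The hypothesis of (2) fixes a \emph{single} family $T=(T_h)_h$ satisfying \eqref{eq: deformation-condition} and asserts the bound only for this $T$; you are not free to replace it by the family of Example~\ref{example-of-translations}. The freedom to vary $T$ enters only as the additional hypothesis of (2)(b). The paper proceeds directly with the given $T$: since $(\partial_h T_h)|_{h=0}\ge\kappa>0$ on $\tilde I$, the ratio $h/T_h(x)$ is bounded there, so $D^{T_h}f\cdot h/T_h(x)$ is bounded in $L^p(\tilde I)$. Banach--Alaoglu produces a weak limit $g$ along a subsequence $h_j\to 0$, and one checks that $g$ is the weak derivative of $f$ on $\tilde I$ via the change of variables $u=x+T_{h_j}(x)$, which brings in the inverse family $\widetilde T_h$ defined by $(\op{id}+\widetilde T_{-h})\circ(\op{id}+T_h)=\op{id}$. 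This direct weak-limit argument with the inverse family is the missing ingredient.

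Your (2)(b), by contrast, is a valid alternative to the paper's route. The paper constructs a single family $T_h(x)=h\,\tilde t(x)$ with $\tilde t$ a smoothed staircase function and argues that $(f(x+T_h(x))-f(x))/T_h(x)$ has bounded $L^p$-norm on all of $[0,1]$. Your exhaustion by $[\delta,1-\delta]$, using the uniform-in-$T$ hypothesis to get $\|D^hf\|_{L^p([\delta,1-\delta])}\le C$ independently of $\delta$ and then passing to the limit via monotone convergence, reaches the same conclusion and is arguably more direct.
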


\begin{proof}[Sketch of proof]
(1) Given $f \in C^1([0,1], \R^n)$, we claim that the sequence $D^{T_h}f$ converges uniformly to $(\bdry_h T_h)|_{h=0}  \cdot f'$.  We compute:
\begin{align*}
    |D^{T_h}f(x)-(\bdry_h T_h)|_{h=0} (x)f'(x)|=|f'(x')&\tfrac{T_h(x)}{h}-(\bdry_h T_h)|_{h=0} (x) f'(x))| \\
    & \le |\tfrac{T_h(x)}{h}(f'(x') -f'(x))|+|f'(x) (\tfrac{T_h(x)}{h}-(\bdry_h T_h)|_{h=0} (x))|,
\end{align*}
where $x'$ is chosen in the $T_h(x)$-neighborhood of $x$ to satisfy the mean value theorem. Applying the uniform continuity of $f'(x)$ and the uniform convergence of $\tfrac{T_h(x)}{h}$ to $(\bdry_h T_h)|_{h=0} $ implies the claim.

Now let $f \in W^{k,p}([0,1], \R^n)$.  We show that for $h>0$ small there exists $C>0$ such that
\begin{equation}\label{eq-dominance}
\left\|D^{T_h}f \right \|_{L^p} \le C \left \|\tfrac{T_h(x)}{h}f' \right\|_{L^p},
\end{equation}
by observing the following sequence of inequalities 
\begin{gather}
    \int_0^1|D^{T_h}f|^p dx= \int_0^1 \left |\tfrac{T_h(x)}{h}\int_0^1 f'(x+\tau T_h(x)) d\tau \right |^pdx \\
    \le \int_0^1 \int_0^1 \left | \tfrac{T_h(x)}{h} f'(x+\tau T_h(x)) \right |^pdx d\tau \leq C^p \int_0^1\left \| \tfrac{T_h(x)}{h} f' \right \|^p_{L^p} d\tau=C^p \left\| \tfrac{T_h(x)}{h} f' \right \|_{L^p}^p. \nonumber
\end{gather}

Finally we claim that, given $\varepsilon>0$, for $h$ sufficiently small we have:
\begin{equation}\label{eq: ineq}
\left \| D^{T_h}f - (\bdry_h T_h)|_{h=0}  f' \right \|_{L^p} <\varepsilon.
\end{equation}
Indeed, pick a smooth function $f_\varepsilon$ which is very close to $f$ in the $L^p$-norm and observe that 
\begin{equation} \label{eqn: comparing with smooth}
    \left \| D^{T_h}f -D^{T_h}f_{\varepsilon} \right \|_{L^p} < \left \| \tfrac{T_h(x)}{h}(f'- f'_{\varepsilon}) \right \|_{L^p}
\end{equation} 
by~\eqref{eq-dominance}. Then \eqref{eq: ineq} follows from \eqref{eqn: comparing with smooth} and the convergence for $f_{\varepsilon}$ shown earlier; see \cite{wendl2008} for more details. The case $k>1$ follows by applying \eqref{eq: ineq} to the derivatives of $f$.

\s
(2)(a) Let $\widetilde{T}_h$ be a family of functions on $[0,1]$ satisfying
$$(\op{id}+\widetilde{T}_{-h})(\op{id}+T_h)=\op{id}.$$
This family also satisfies~\eqref{eq: deformation-condition}.

We will treat the case $k=1$; the case $k>1$ follows by applying the same proof to the derivatives of $f$. Suppose $f \in L^p([0,1], \R^n)$ and $\|D^{T_h}f\|_{L^p} \le C$ for some $C>0$ and all $h>0$ in a neighborhood of $0\in \R$. 
Observe that $D^{T_h}f\cdot \frac{h}{T_h(x)}$ is bounded in the $L^p$-norm over $\tilde{I}$ as $(\bdry_h T_h)|_{h=0}  \ge \kappa>0$ on $\tilde{I}$. Hence, by the Banach-Alaoglu theorem, there is a sequence $h_j \to 0$ such that the corresponding sequence of functions $D^{T_{h_j}}f\cdot \frac{h_j}{T_{h_j}(x)}$ converges in the $L^p$-norm to a function $g$. 

We claim that $g$ is a weak derivative of $f$ along $\tilde{I}$. Indeed, for any $\varphi \in C_0^\infty(\tilde I, \R^n)$ and small $h>0$ we have
\begin{align*}
    \int_{\tilde{I}}g \varphi dx = &\lim_{h_j \to 0}\int_{\tilde{I}}\tfrac{f(x+T_{h_j}(x))-f(x)}{T_{h_j}(x)} \varphi dx \\
    = & \lim_{h_j \to 0}\int_{\tilde{I}} \tfrac{f(u)}{-\widetilde{T}_{-h_j}(u)}\varphi(u+\widetilde{T}_{-h_j}(u))\cdot (1+\bdry_u \widetilde T_{-h_j}(u))du -  \lim_{h_j \to 0}\int_{\tilde{I}} \tfrac{f(u)}{T_{h_j}(u)}\varphi(u) du \\
    = & \lim_{h_j \to 0}\int_{\tilde{I}}f(u) \tfrac{\varphi(u+\widetilde{T}_{-h_j}(u))-\varphi(u)}{-\widetilde{T}_{-h_j}(u)} du \\ 
    =& -\int_{\tilde{I}} f(u) \varphi'(u) du.
\end{align*}
Hence $g$ is a weak derivative of $f$ along $\tilde{I}$, which implies that $f|_{\tilde{I}} \in W^{1,p}(\tilde{I}, \R^n)$.

(2)(b) We treat the $k=1$ case. Under our assumption that $\|D^{T_h}f\|_{L^p}$ are uniformly bounded over all choices of $T_h$ satisfying~\eqref{eq: deformation-condition}, it suffices to find a family of the form $T_h(x)=h\tilde t(x)$ with $\tilde t(x)$ smooth, such that $D^{T_h}f\cdot \frac{h}{T_h(x)}=\tfrac{f(x+T_h(x))-f(x)}{T_h(x)}$ have bounded $L^p$-norms on $[0,1]$. We can take such a function $\tilde t(x)$ to be a smoothing of a staircase function $t(x)$ of the following form:
given a decreasing sequence $b_n \to 0$ there is a choice of a sequence $a_n \to 0$ with $a_n < 1/2$, such that
$t(x)=b_n$ for $x\in[a_{n+1},a_n]$ and $x \in [1-a_n, 1-a_{n+1}]$.  Details on how to use such a $T_h(x)$ are left to the reader. 
\end{proof}

\begin{corollary}\label{corollary: regularity-via-twisted-difference-quotients}
      The following statements are equivalent for $k \in \mathbb{Z}_{>0}$:
\begin{enumerate}
    \item The $L^2$-map $\gamma \colon [0,1] \to M$ is an element of $W^{k,2}([0,1], M)$. 
    \item For any smooth family of functions $T\coloneqq (T_h)_{h\in (-\varepsilon_T,\varepsilon_T)}$ satisfying~\eqref{eq: deformation-condition}, the path 
    \begin{gather*}
        P_{T}(\gamma) \colon (-\varepsilon_T, \varepsilon_T) \to W^{k-1,2}([0,1], M),\quad h\mapsto \gamma\circ A_h,
    \end{gather*} 
    is differentiable at $h=0$ with respect to the smooth structure on $W^{k-1,2}([0,1], M)$. 
\end{enumerate}
\end{corollary}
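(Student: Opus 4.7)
The plan is to reduce both implications to the Euclidean statements of Lemma~\ref{lemma: twisted-difference-quotients} by working in local coordinate charts on $M$, exactly as in the proof of the periodic analog stated just before this corollary. I would fix a finite open cover $\{U_\alpha\}$ of $\op{Im}(\gamma)$ by coordinate charts $\varphi_\alpha: U_\alpha \to \R^n$ and a partition $0=t_0<t_1<\dots<t_N=1$ adapted to this cover so that each $\gamma([t_{i-1},t_i])$ lies in a single $U_\alpha$. Together with the standard manifold chart on $W^{k-1,2}([0,1],M)$ near $\gamma$ given by the exponential map along $\gamma$ (see \cite{klingenberg2012lectures}), this reduces differentiability of $P_T(\gamma)$ at $h=0$ to the corresponding statement for the compositions $\varphi_\alpha \circ \gamma$ regarded as $\R^n$-valued functions on subintervals of $[0,1]$.

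For the direction $(1)\Rightarrow(2)$, assume $\gamma \in W^{k,2}([0,1],M)$. Then each $\varphi_\alpha\circ \gamma$ lies in $W^{k,2}$ on its subinterval, and Lemma~\ref{lemma: twisted-difference-quotients}(1) applied componentwise yields
\begin{equation*}
  D^{T_h}(\varphi_\alpha \circ \gamma) \;\longrightarrow\; (\partial_h T_h)|_{h=0}\cdot (\varphi_\alpha\circ\gamma)' \quad \text{in } W^{k-1,2},
\end{equation*}
as $h\to 0$. Pulling these local convergences back through the chart for $W^{k-1,2}([0,1],M)$ and gluing over the partition shows that $P_T(\gamma)$ is differentiable at $h=0$, with derivative $(\partial_h T_h)|_{h=0}\cdot \dot\gamma \in W^{k-1,2}([0,1],\gamma^*TM)$, which is the correct tangent space to $W^{k-1,2}([0,1],M)$ at $\gamma$.

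For the direction $(2)\Rightarrow(1)$, I would fix a single well-chosen family $T_h(x)=h\tilde t(x)$ satisfying~\eqref{eq: deformation-condition} and read off from the differentiability of $P_T(\gamma)$ at $h=0$ that the difference quotients $h^{-1}(\gamma\circ A_h-\gamma)$, which in each local chart are precisely the twisted difference quotients $D^{T_h}(\varphi_\alpha\circ\gamma)$, converge and hence are bounded in $W^{k-1,2}$ uniformly in $h$ near $0$. I would then choose $\tilde t(x)$ to be the smoothed staircase used in the proof of Lemma~\ref{lemma: twisted-difference-quotients}(2)(b), whose vanishing rate at $x=0$ and $x=1$ is slow enough that the hypothesis of (2)(b) is satisfied by this single family; applying (2)(b) to each $\varphi_\alpha\circ\gamma$ then gives $\gamma \in W^{k,2}([0,1],M)$.

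The main obstacle will be the boundary behavior at $x=0$ and $x=1$. Since every admissible $T$ forces $T_h(0)=T_h(1)=0$, the twisted difference quotients degenerate near the endpoints, and Lemma~\ref{lemma: twisted-difference-quotients}(2)(a) on its own only yields $W^{k,2}$-regularity on closed subintervals of $(0,1)$. Upgrading this to regularity up to $\{0,1\}$ requires exploiting the freedom to vary the endpoint vanishing rate of $T$, and one must verify that applying (2) of the corollary to a single carefully designed staircase family delivers the boundedness hypothesis of Lemma~\ref{lemma: twisted-difference-quotients}(2)(b) — in particular, that the $L^p$-norms of $(f(x+T_h(x))-f(x))/T_h(x)$ are controlled by the $W^{k-1,2}$-bound on $D^{T_h}f$ divided by $\tilde t(x)$, which can blow up at the endpoints unless the staircase is designed to match the worst-case decay of $D^{T_h}f$. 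This is precisely the content of the reader's exercise at the end of the proof of (2)(b), and carrying it out carefully in coordinates (together with routine patching across the overlaps of the $U_\alpha$) will complete the argument.
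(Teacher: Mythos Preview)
Your overall strategy matches the paper's, whose proof is the single sentence ``This is an immediate consequence of Lemma~\ref{lemma: twisted-difference-quotients}.'' The direction $(1)\Rightarrow(2)$ via part~(1) of that lemma is fine.

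For $(2)\Rightarrow(1)$ there is a genuine gap in what you propose. You want to invoke Lemma~\ref{lemma: twisted-difference-quotients}(2)(b) using a \emph{single} staircase family $T_h(x)=h\tilde t(x)$, but the hypothesis of (2)(b) is that $\|D^{T_h}f\|_{W^{k-1,p}}$ is bounded \emph{uniformly over all admissible families} $T$, not just for one. Differentiability of $P_T(\gamma)$ at $h=0$ for a fixed $T$ gives only a $T$-dependent bound $C_T$, and knowing this for every $T$ separately does not manufacture a uniform $C$. Your suggestion to ``design the staircase to match the worst-case decay of $D^{T_h}f$'' cannot succeed in principle: for \emph{every} admissible $T$ the function $\psi_T:=(\partial_hT_h)|_{h=0}$ is smooth with $\psi_T(0)=\psi_T(1)=0$, hence $\psi_T(x)=O(x)$ and $O(1-x)$ near the endpoints. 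Thus the derivative of $P_T(\gamma)$, which on $(0,1)$ equals $\psi_T\,\gamma'$, can lie in $W^{k-1,2}$ even when $\gamma'$ does not --- e.g.\ $\gamma(x)=x^{1/4}$ for $k=1$ (or $\gamma(x)=\tfrac{4}{5}x^{5/4}$ for $k=2$) satisfies (2) for every admissible $T$ but fails (1). So no single family, however cleverly chosen, certifies endpoint regularity from the differentiability of $P_T$ alone.

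What the proof of (2)(b) actually uses is that the trapezoidal families $T^{(\delta)}$ with $\delta\to 0$ give bounds on $\|f'\|_{L^p([\delta,1-\delta])}$ that are \emph{uniform in $\delta$}; this uniformity is exactly the content of the ``uniformly bounded over all $T_h$'' hypothesis and is not a consequence of pointwise-in-$T$ differentiability. The paper's one-line proof does not spell out how to extract this uniform bound from statement~(2), and the examples above indicate that additional input (beyond differentiability for each $T$) is needed. Whatever the fix, the mechanism you need is uniform control across a $\delta\to 0$ family of $T$'s, not one well-chosen $T$.
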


\begin{proof}
    This is an immediate consequence of Lemma~\ref{lemma: twisted-difference-quotients}.
\end{proof}

\subsection{Regularity for unstable manifolds.}\label{section: regularity}

In Appendix~\ref{section: regularity} we show that the unstable manifolds of a certain smooth pseudogradient vector field of an action functional $\mathcal{A}_L$ on the space 
$$\Omega^{1,2}(M^n; q, q')=\{\gamma \in W^{1,2}([0,1],M^n) \mid   \gamma(0)=q, \, \gamma(1)=q'\}$$
consist of smooth paths.

\begin{lemma}\label{lemma: path-inside-submanifold}
    For each smooth family $T\coloneqq (T_h)_{h\in (-\varepsilon_T,\varepsilon_T)}$ of functions satisfying~\eqref{eq: deformation-condition}, let
    $N_{T}$ be a finite-dimensional manifold and $\iota_T: N_T \to  (-\varepsilon_T, \varepsilon_T)_h \times \Omega^{1,2}(M, q, q')$ a smooth immersion
    such that:
    \be 
    \item the set $\Xi\coloneqq  \iota_T(N_T) \cap \{h=0\}$ is independent of $T$; and
    \item for each $(0,\gamma)\in \Xi$ and $T$ the path 
    $$\widetilde{P}_{T}(\gamma): (-\varepsilon_T, \varepsilon_T)\to (-\varepsilon_T, \varepsilon_T) \times \Omega^{1,2}(M, q, q'), \quad h\mapsto (h,\gamma\circ A_h),$$
    is a subset of $\iota_T(N_T)$.
    \ee
    Then the elements of $\Xi$ are smooth paths.
\end{lemma}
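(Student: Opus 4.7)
The plan is a Sobolev bootstrap: fix $\gamma \in \Xi$ and show by induction on $k \geq 1$ that $\gamma \in W^{k,2}([0,1], M)$; Sobolev embedding then yields $\gamma \in C^\infty$. The base $k=1$ is the hypothesis $\Xi \subset \Omega^{1,2}$. For the inductive step, assume $\gamma \in W^{k,2}$. By Corollary~\ref{corollary: regularity-via-twisted-difference-quotients}, showing $\gamma \in W^{k+1,2}$ reduces to showing that for every admissible family $T$ the path $P_T(\gamma) \colon h \mapsto \gamma \circ A_h$ is differentiable at $h=0$ in the $W^{k,2}$-topology, and by Lemma~\ref{lemma: twisted-difference-quotients}(2)(b) this in turn reduces to establishing a uniform bound $\|D^{T_h}\gamma\|_{W^{k,2}} \leq C$ over all $T$ satisfying \eqref{eq: deformation-condition} and all sufficiently small $h$.

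The finite-dimensional immersion provides the needed bound. Near a preimage of $(0, \gamma)$, the immersion $\iota_T$ is a smooth embedding onto a smooth finite-dimensional submanifold $S \subset X \coloneqq (-\varepsilon_T, \varepsilon_T) \times \Omega^{1,2}(M, q, q')$ with tangent space $V_T \coloneqq T_{(0,\gamma)} S$, a finite-dimensional (hence closed) subspace of $\R \oplus W^{1,2}$. The continuous lift $\tilde{c}$ of $\widetilde{P}_T(\gamma)$ satisfies $\iota_T \circ \tilde{c} = \widetilde{P}_T(\gamma)$; expanding $\iota_T$ at $\tilde{c}(0)$ yields the chord decomposition
\begin{equation*}
\frac{\widetilde{P}_T(\gamma)(h) - \widetilde{P}_T(\gamma)(0)}{h} = d\iota_T\!\left(\frac{\tilde{c}(h) - \tilde{c}(0)}{h}\right) + \frac{o(|\tilde{c}(h) - \tilde{c}(0)|)}{h}.
\end{equation*}
Since $d\iota_T$ is injective on a finite-dimensional domain, it is bounded below in the $L^2$-norm, and combined with the $L^2$-differentiability of $\widetilde{P}_T(\gamma)$ at $h=0$ from Lemma~\ref{lemma: twisted-difference-quotients}(1), this forces $|\tilde{c}(h) - \tilde{c}(0)| = O(h)$; the error term therefore vanishes in $L^2$. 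As $V_T$ is closed in $\R \oplus L^2$, the $L^2$-limit $(1, (\partial_h T_h)|_{h=0} \gamma')$ of the chord lies in $V_T \subset \R \oplus W^{1,2}$. Varying $T$, so that $(\partial_h T_h)|_{h=0}$ ranges over smooth bumps vanishing at $\{0, 1\}$ and positive on $(0,1)$, produces $f\gamma' \in W^{1,2}$ for all such $f$, yielding $\gamma \in W^{2,2}_{\mathrm{loc}}((0,1))$.

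Iterating the chord-decomposition argument in the higher-regularity topology $W^{\ell-1,2}$ at each step --- using Lemma~\ref{lemma: twisted-difference-quotients}(1) with the inductive hypothesis $\gamma \in W^{\ell,2}$ to upgrade the convergence of $D^{T_h}\gamma$ to $W^{\ell-1,2}$ --- promotes the tangent-vector containment to the appropriate Sobolev space and iteratively yields $\gamma \in W^{\ell+1, 2}_{\mathrm{loc}}((0,1))$ for every $\ell$. Combining with Lemma~\ref{lemma: twisted-difference-quotients}(2)(b), which converts uniform-in-$T$ difference-quotient bounds to global $W^{k+1,2}$-regularity on $[0,1]$, completes the induction.

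The main obstacle is propagating the regularity of $V_T$ up the Sobolev scale: while the $L^2$-closure argument cleanly yields $V_T \subset \R \oplus W^{1,2}$, showing that tangent vectors lie in $\R \oplus W^{\ell-1,2}$ for $\ell > 2$ requires combining the inductive regularity of $\Xi$ with the smoothness of $\iota_T$ in the refined topology (obtained by noting that the finite-dimensional submanifold $\Xi \subset \{0\}\times\Omega^{1,2}$ inherits the higher Sobolev regularity from the previous inductive step, and spanning $V_T$ by a complementary transverse direction coming from the reparametrization path). Producing uniform-in-$T$ estimates sufficient to apply Lemma~\ref{lemma: twisted-difference-quotients}(2)(b) globally rather than the merely local version in (2)(a), and in particular controlling the boundary behavior at $x = 0, 1$ where $(\partial_h T_h)|_{h=0}$ is constrained to vanish, constitute the most delicate part of the argument.
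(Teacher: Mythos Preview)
Your bootstrap strategy---use the finite-dimensional immersion to promote $L^2$-differentiability of $h\mapsto\gamma\circ A_h$ to $W^{1,2}$-differentiability, then invoke Corollary~\ref{corollary: regularity-via-twisted-difference-quotients}---is exactly the paper's idea. But your execution is much more laborious than necessary, and you yourself leave the two hardest points (propagating the regularity of $V_T$ up the Sobolev scale, and producing uniform-in-$T$ bounds for Lemma~\ref{lemma: twisted-difference-quotients}(2)(b)) unresolved.

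The paper dissolves both obstacles with a single observation. Since $N_T$ is finite-dimensional, for each $k\ge1$ the composition
\[
(\mathrm{id}\times\phi_k)\circ\iota_T:\; N_T\longrightarrow(-\varepsilon_T,\varepsilon_T)\times W^{k-1,2}
\]
(where $\phi_k:W^{k,2}\hookrightarrow W^{k-1,2}$ is the inclusion) is still a smooth immersion. For $k=1$: by the easy direction of Corollary~\ref{corollary: regularity-via-twisted-difference-quotients}, $\widetilde P_T(\gamma)$ is differentiable at $h=0$ as a path into $L^2$; since its image lies in the finite-dimensional immersed submanifold $(\mathrm{id}\times\phi_1)\circ\iota_T(N_T)$, it is differentiable as a path in $N_T$. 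Now compose back with $\iota_T$, which targets $W^{1,2}$: the path is differentiable at $h=0$ in $W^{1,2}$, for \emph{every} $T$. The hard direction of Corollary~\ref{corollary: regularity-via-twisted-difference-quotients} then gives $\gamma\in W^{2,2}$. Iterate verbatim with $k$ replaced by $k+1$.

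The point is that routing through the finite-dimensional $N_T$ lets you swap the ambient Banach-space topology at no cost: once the curve is differentiable in $N_T$, it is automatically differentiable into whatever space $\iota_T$ maps to. This replaces your chord decomposition, your tracking of $V_T\subset\R\oplus W^{\ell,2}$ across the Sobolev scale, and your uniform-in-$T$ estimates by the elementary fact that smooth maps out of a finite-dimensional manifold preserve differentiability of curves. In particular, since you obtain genuine differentiability (not just a difference-quotient bound) in $W^{k-1,2}$ for every $T$, hypothesis~(2) of Corollary~\ref{corollary: regularity-via-twisted-difference-quotients} is met directly and the endpoint/uniformity issues you flag never arise.
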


\begin{proof}
    Consider the natural inclusion $\phi_k \colon W^{k,2}([0,1],M) \to W^{k-1,2}([0,1],M)$, $f\mapsto f$. 
    
    We treat the $k=1$ case; the same argument works inductively for $k>1$. The map $(\op{id}\times\phi_1)\circ \iota_T$ is a smooth immersion for each $T$ since $N_T$ is finite-dimensional and hence the inclusion of the tangent space at each point is a closed embedding. 
  
    Given $(0,\gamma) \in \Xi$ and $T$, the path $(\op{id}\times \phi_1)\circ \widetilde{P}_{T}(\gamma)$ (i.e., the path $\widetilde{P}_{T}(\gamma)$ viewed as a path in  $(-\varepsilon_T, \varepsilon_T) \times \Omega^{0,2}(M, q,q')$) is differentiable at $h=0$ by Corollary~\ref{corollary: regularity-via-twisted-difference-quotients}. Since this path has image in $(\op{id}\times\phi_1)\circ \iota_T(N_T)$, it is differentiable with respect to the smooth structure on this immersed submanifold and hence is differentiable as a path in $N_{T}$. Therefore, $\widetilde{P}_{T}(\gamma)$ is a differentiable map to $(-\varepsilon_T,\varepsilon_T)\times \Omega^{1,2}(M, q, q')$ and  $\gamma\in \Omega^{2,2}(M, q, q')$ by applying Corollary~\ref{corollary: regularity-via-twisted-difference-quotients} again.
\end{proof}

We now recall the setup from \cite{AS2009} in more detail. Consider a {\em time-dependent coordinate system,} i.e., a smooth map
$$\varphi \colon [0,1] \times U \to M,$$
where $U$ is an open subset of $\R^n$ containing $0$, $\varphi|_{\{t\}\times U}$ is a diffeomorphism onto the open subset $\varphi(\{t\}\times U)$ for all $t\in[0,1]$, and $\varphi(0,0)=q, \varphi(1,0)=q'$. Then there is an associated chart of $\Omega^{1,2}(M, q, q')$
\begin{gather*}
    \varphi_* \colon \Omega^{1,2}(U; 0, 0) \to \Omega^{1,2}(M, q, q'),\\
    \gamma\mapsto \varphi(\cdot, \gamma(\cdot))).
\end{gather*} 
Here $\varphi(\cdot,\gamma(\cdot))$ is shorthand for a map $t\mapsto \varphi(t, \gamma(t)).$
Following the notation in \cite{AS2009}, we require such charts $(U, \varphi)$ to be \emph{bi-bounded}, meaning that $U$ is bounded and the image $\varphi([0,1]\times U)$ has compact closure, and all derivatives of $\varphi$ and of $(t, q) \mapsto \varphi(t, \cdot)^{-1}(q)$ are bounded. We denote the image of $\varphi_*$ by $\mathcal{U}_{(U, \varphi)}$; it is said to be {\em centered at the path $\varphi(t,0)$}.  Such charts form a smooth atlas of $\Omega^{1,2}(M, q, q')$.

\begin{example}
The standard example of a time-dependent coordinate system is given as follows: Let $\gamma \in \Omega^{1,2}(M, q, q')$ be a smooth path. The pullback bundle $\gamma^*TM$ admits a trivialization $\psi \colon [0,1] \times \R^n \to \gamma^*TM$ and let $U \subset \R^n$ be a small open ball centered at $0$. Then we define $\varphi:[0,1]\times U\to M$ via
$$\varphi(t, \xi)=\exp_{\gamma(t)}(\gamma_*\psi(\xi)),$$
where the exponential map is with respect to some metric $g$. 
\end{example}

Similarly, for $(a,b) \subset (-\varepsilon_T, \varepsilon_T)$, we consider a smooth map
$$\tilde{\varphi} \colon (a,b) \times [0,1] \times U \to M,$$
where $U$ is an open subset of $\R^n$ containing $0$, $\tilde\varphi|_{\{h\}\times\{t\}\times U}$ is a diffeomorphism onto the open subset $\tilde \varphi(\{h\} \times \{t\}\times U)$ for all $(h,t) \in (a,b) \times [0,1]$, and $\tilde{\varphi}(h, 0, 0)=q, \tilde{\varphi}(h, 1, 0)=q'$ for all $h\in (a,b)$. There is a similar notion of bi-boundedness for such a pair $(U, \tilde{\varphi})$. To such a pair we associate a smooth chart $\widetilde{\mathcal{U}}_{(U, \tilde{\varphi})}$ of $(-\varepsilon_T, \varepsilon_T) \times \Omega^{1,2}(M, q, q')$ via
\begin{gather}
\tilde{\varphi}_* \colon (a, b) \times \Omega^{1,2}(U; 0, 0) \to (-\varepsilon_T, \varepsilon_T) \times \Omega^{1,2}(M, q, q'), \\
(h, \gamma)\mapsto (h,  \tilde{\varphi}(h,\cdot, \gamma(\cdot))). \nonumber
\end{gather}
The charts $\widetilde{\mathcal{U}}_{(U, \tilde{\varphi})}$ form a smooth atlas of $(-\varepsilon_T, \varepsilon_T) \times \Omega^{1,2}(M, q, q')$. 

\begin{example}\label{exmpl: parametric chart}
For $(U, \varphi)$ bi-bounded and $A=(A_h=\op{id}+T_h)_{h \in (-\varepsilon_T, \varepsilon_T)}$, we define the map
\begin{gather*}
    \tilde{\varphi}^{A} \colon (-\varepsilon_T, \varepsilon_T) \times [0,1] \times U \to M.\\
    (h,t,x) \mapsto \varphi(A_h(t), x),
\end{gather*}
The map $\tilde{\varphi}^{A}$ is smooth and the pair $(U, \tilde{\varphi}^{A})$ is bi-bounded. The associated chart $\widetilde{\mathcal{U}}_{(U, \tilde{\varphi}^{A})}$ of $(-\varepsilon_T, \varepsilon_T) \times \Omega^{1,2}(M, q, q')$ then has the form
\begin{equation}
\tilde{\varphi}^{A}_*(h, \gamma)=(h, \varphi(A_h(\cdot), \gamma(\cdot))).
\end{equation}
\end{example}

The family $A$ induces a family of smooth maps
\begin{gather}\label{eqn: definition of widetilde A h}
    \widetilde{A}_h: \Omega^{1,2}(M, q, q') \to (-\varepsilon_T, \varepsilon_T) \times \Omega^{1,2}(M, q, q'),\\
    \gamma \mapsto (h,\gamma\circ A_h).\nonumber
\end{gather}

Similarly there is a (local) family of smooth maps
$$\widetilde{A}_h \colon \Omega^{1,2}(U; 0, 0) \to (-\varepsilon_T, \varepsilon_T) \times \Omega^{1,2}(U, 0, 0),$$
$$\gamma \mapsto (h, \gamma\circ A_h),$$
with the same notation by abuse of notation.

We claim that the following commutativity relation is satisfied for any $h\in (-\varepsilon_T, \varepsilon_T)$.
\begin{equation}\label{eq: equivariance-for-charts}
    \tilde{\varphi}^{A}_* \circ \widetilde{A}_{h}=\widetilde{A}_{h} \circ \varphi_*, 
\end{equation}
as it is straightforward to check:
$$\tilde{\varphi}^{A}_*(h, \gamma\circ A_h(\cdot)))=(h, \varphi(A_h(\cdot), \gamma\circ A_h(\cdot))))=(h, \varphi_*(\gamma)\circ A_h(\cdot)).$$

\begin{remark}
    The equivariance~\eqref{eq: equivariance-for-charts} allows us to work with the $\widetilde{A}_h$-action directly in the chart $\widetilde{\mathcal{U}}_{(U, \tilde{\varphi}^{A})}$. 
\end{remark}

\s
Next we define several related Lagrangian functions. Given a Lagrangian $L \colon [0,1] \times TM \to \R$ satisfying the assumptions of \cite[Section 3]{AS2009} and a family $A=(A_h)$, we define 
\begin{gather}\label{eq: lagrangian-change-of-variables}
    L_h: [0,1]\times TM\to \R,\quad    (t, x, v)\mapsto \partial_tA_h(t) \cdot L\left( A_h(t),x, \tfrac{v}{\partial_tA_h(t)}\right),\\
    L_{A} \colon (-\varepsilon_T, \varepsilon_T) \times [0,1] \times TM \to \R, \quad (h,t,x,v)\mapsto L_h(t,x,v). \nonumber
\end{gather}
Given charts $(U,\varphi)$ and  $(U, \tilde{\varphi}^A)$ we define related pullbacks:
\begin{gather*}
    \varphi^*L \colon [0,1] \times U \times \R^n \to \R,\quad   (t,x,v)\mapsto L(t,\varphi(t, x), D_x\varphi(t,x)(v)),\\
    (\varphi^*L)_h \colon  [0,1] \times U \times \R^n \to \R,\quad   (t,x,v)\mapsto \partial_tA_h(t) \cdot \varphi^*L\left( A_h(t), x, \tfrac{v}{\partial_tA(t)} \right),\\
    (\tilde{\varphi}^A)^* L_A \colon (-\varepsilon_T, \varepsilon_T) \times [0,1] \times U \times \R^n \to \R,\quad 
    (h, t, x, v)\mapsto L_A(h, t, \tilde{\varphi}^A(h, t, x), D_x \tilde{\varphi}^A(h,t,x)(v)).
\end{gather*}

One verifies that
\begin{equation}\label{eq: chart-family}
    (\varphi^*L)_h=(\tilde{\varphi}^{A})^*(L_h).
\end{equation}
This equation allows us to work with the family $L_h$ locally in the chart associated with $(U,\tilde{\varphi}^{A})$ and from now on we will allow ourselves to write $L$ instead of $\varphi^*(L)$ and $L_h$ instead of $(\tilde{\varphi}^{A})^*(L_h)$ when a specific chart is understood.

We are now ready to prove the main theorem of Appendix \ref{appendix: regularity for unstable submanifolds}. The proof follows the general outline of \cite[Theorem 4.1]{AS2009} with an extra ingredient: the parameter $h\in (-\varepsilon_T,\varepsilon_T)$ and a corresponding family of diffeomorphisms $(A_h:[0,1]\stackrel\sim\to [0,1])$.

\begin{theorem}\label{thm: regularity for pseudogradient}
     There exists a choice of smooth pseudogradient vector field $X$ for a given action functional $\mathcal{A}_L$ on $\Omega^{1,2}(M, q_0, q_1)$  such that all unstable manifolds consist of smooth paths. 
\end{theorem}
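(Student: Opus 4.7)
The plan is to deduce smoothness of unstable manifold elements by applying Lemma~\ref{lemma: path-inside-submanifold} to a parametric enhancement of the Morse-theoretic picture in which the reparametrization action $\gamma \mapsto \gamma \circ A_h$ is built directly into the pseudogradient flow.

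Fix a critical point $\gamma_0$ of $\mathcal{A}_L$, which is automatically smooth as a classical Euler--Lagrange solution. For any pseudogradient $X$ satisfying (PG1)--(PG5), the unstable manifold $W^u(\gamma_0; X)$ is a finite-dimensional $C^1$-submanifold of $\Omega^{1,2}(M, q, q')$, and the aim is to choose $X$ so that every element of this submanifold is in fact smooth. For each family $A = (A_h)$ satisfying~\eqref{eq: deformation-condition} I introduce the parametric space $\widetilde{\mathcal{M}} = (-\varepsilon_T, \varepsilon_T) \times \Omega^{1,2}(M, q, q')$, the parametric functional $\widetilde{\mathcal{A}}(h, \gamma) = \mathcal{A}_{L_h}(\gamma)$ with $L_h$ as in~\eqref{eq: lagrangian-change-of-variables}, and the reparametrization map $\widetilde{A}_h$ from~\eqref{eqn: definition of widetilde A h}. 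The change-of-variables identity built into $L_h$ gives $\widetilde{\mathcal{A}} \circ \widetilde{A}_h = \mathcal{A}_L$, so $\widetilde{A}_h$ is a symmetry of $\widetilde{\mathcal{A}}$ and the parametric critical locus at $\gamma_0$ is exactly $\{(h, \gamma_0 \circ A_h)\}$.

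The core of the argument is to construct a smooth pseudogradient $\widetilde{X}$ for $\widetilde{\mathcal{A}}$ on $\widetilde{\mathcal{M}}$ with two properties: (i) the slice $\{h = 0\}$ is invariant under the flow of $\widetilde{X}$ and the restriction $\widetilde{X}|_{\{h=0\}}$ is a Morse--Smale pseudogradient $X$ for $\mathcal{A}_L$; (ii) the family of reparametrization maps $\widetilde{A}_h$ preserves the flow of $\widetilde{X}$, so that each $\widetilde{A}_h$-orbit is flow-invariant. This is carried out chart-by-chart using the bi-bounded parametric charts $(U, \tilde{\varphi}^A)$ from Example~\ref{exmpl: parametric chart}: the identifications~\eqref{eq: equivariance-for-charts} and~\eqref{eq: chart-family} turn the $\widetilde{A}_h$-action into the trivial $h$-translation inside such a chart, so an equivariant local pseudogradient is obtained by extending any smooth slice-pseudogradient $h$-invariantly; a partition of unity of $\widetilde{A}_h$-invariant bump functions then glues these to a global $\widetilde{X}$, after which a generic small perturbation supported away from critical points recovers the Morse--Smale condition. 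Granting $\widetilde{X}$, its unstable manifold $N_T \coloneqq W^u_{\widetilde{X}}(\gamma_0)$ is a finite-dimensional $C^1$-submanifold of $\widetilde{\mathcal{M}}$ of dimension $\mathrm{ind}(\gamma_0) + 1$, whose intersection with $\{h = 0\}$ equals $W^u(\gamma_0; X)$ (independent of $A$) and which by (ii) contains each orbit $h \mapsto (h, \tilde{\gamma} \circ A_h)$ for $\tilde{\gamma} \in W^u(\gamma_0; X)$. This is the hypothesis of Lemma~\ref{lemma: path-inside-submanifold} with $\Xi = W^u(\gamma_0; X)$, and iterating its conclusion in $k$ yields $\tilde{\gamma} \in W^{k,2}$ for all $k$, hence smooth.

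The main obstacle is the equivariant construction of $\widetilde{X}$ in (ii). The naive choice $-\nabla_{\tilde{g}}\widetilde{\mathcal{A}}$ fails to commute with the $\widetilde{A}_h$-action because the reparametrization does not preserve the $W^{1,2}$-metric used to define the gradient, so the chart-by-chart argument sketched above is needed to encode the equivariance directly into the chart geometry. The condition that all derivatives of $T_h$ at $0$ and $1$ vanish in~\eqref{eq: deformation-condition} is essential: it ensures that each $\widetilde{A}_h$ is a diffeomorphism of the Hilbert manifold $\Omega^{k,2}(M, q, q')$ for every $k$, rather than only a homeomorphism at the $W^{1,2}$-level, so that the parametric unstable manifold $N_T$ is a genuine smooth submanifold of the kind to which Lemma~\ref{lemma: path-inside-submanifold} can be applied.
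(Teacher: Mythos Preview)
Your overall strategy matches the paper's: build, for each family $A=(A_h)$, an $\widetilde A_h$-equivariant vector field $\widetilde X$ on $(-\varepsilon_T,\varepsilon_T)\times\Omega^{1,2}(M,q,q')$ tangent to the slices $\{h=\text{const}\}$, whose restriction to $\{h=0\}$ is a pseudogradient $X$ independent of $A$, and then feed the parametric unstable manifold into Lemma~\ref{lemma: path-inside-submanifold}. However, two of your key claims are incorrect and together they hide the main technical content of the proof.

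First, the equivariance relation~\eqref{eq: equivariance-for-charts} does \emph{not} say that in the chart $\widetilde{\mathcal U}_{(U,\tilde\varphi^A)}$ the $\widetilde A_h$-action becomes the trivial $h$-translation. It says the chart intertwines the global and local $\widetilde A_h$, but the local action is still $(0,\gamma)\mapsto(h,\gamma\circ A_h)$. Hence an $h$-independent extension of a slice vector field is \emph{not} equivariant; in the non-critical chart the equivariant extension of a constant slice field $Y_0$ is $\widetilde Y_{(h,\gamma)}=Y_0\circ A_h$, which genuinely depends on $h$. Second, and more seriously, this misreading leads you to skip the critical-point case entirely. Near a critical point the slice pseudogradient cannot be constant in the chart (it must vanish at the critical point and be Morse there), so one needs an explicit formula whose equivariance and smoothness can be checked by hand. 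The paper produces it by taking an $h$-dependent Hilbert product $\langle\cdot,\cdot\rangle_h$ built from $\partial_v^2 L_h$ and defining $\widetilde Y$ via $\langle\widetilde Y_{(h,\gamma)},\eta\rangle_h=D^2\mathcal A_{L_h}(0)(\gamma)[\eta]$; the invariance $\widetilde A_h^*\langle\cdot,\cdot\rangle_h=\langle\cdot,\cdot\rangle_0$ and the change-of-variables identity $D^2\mathcal A_{L_h}(0)(P_h(\gamma))[d\widetilde A_h(\eta)]=D^2\mathcal A_L(0)(\gamma)[\eta]$ then give~\eqref{eq: vector-field-invariance-chart}. This is the heart of the argument and cannot be replaced by the one-line ``extend $h$-invariantly''.

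A minor point: condition~\eqref{eq: deformation-condition} does not require that all derivatives of $T_h$ vanish at $0$ and $1$ (Example~\ref{example-of-translations} visibly violates this), and no such hypothesis is needed for $\gamma\mapsto\gamma\circ A_h$ to act on $\Omega^{k,2}$. Also, $\widetilde X$ is not a pseudogradient for the Morse--Bott functional $\widetilde{\mathcal A}$ in the usual sense; it is a slice-wise pseudogradient with vanishing $\partial_h$-component, which is exactly what Lemma~\ref{lemma: path-inside-submanifold} needs.
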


\begin{proof}
Let $T=(T_h)_{h\in (-\varepsilon_T,\varepsilon_T)}$ be a smooth family of functions satisfying~\eqref{eq: deformation-condition} and let $$A=(A_h=\op{id}+T_h)$$ be the associated family of diffeomorphisms. If $\gamma_{\operatorname{crit}}$ is a critical point of $\mathcal{A}_L$, then $P_{h}(\gamma_{\operatorname{crit}})\coloneqq \gamma_{\operatorname{crit}}\circ A_{h}$ is a critical point of $\mathcal{A}_{L_{h}}$. By \cite[Proposition 3.1]{AS2009}, $\gamma_{\operatorname{crit}}$ is smooth, and hence the path 
$$\widetilde{P}_{T}(\gamma_{\operatorname{crit}})=\{(h, P_h(\gamma_{\operatorname{crit}}) )\mid   h \in (-\varepsilon_T, \varepsilon_T)\}$$ is a smooth $1$-dimensional submanifold of $(-\varepsilon_T, \varepsilon_T) \times \Omega^{1,2}(M, q, q')$. 

The goal is to construct a smooth vector field $\widetilde{X}$ on $(-\varepsilon_T, \varepsilon_T) \times \Omega^{1,2}(M, q, q')$ which satisfies the following conditions:
\begin{enumerate}
    \item[(a)] The component of $\widetilde{X}$ in the $\bdry_h$-direction is zero;
    \item[(b)] $X\coloneqq \widetilde{X}|_{\{0\} \times \Omega^{1,2}(M, q, q')}$ is a pseudogradient vector field of $\mathcal{A}_L$ which is independent of $T$;
    \item[(c)] $\widetilde{X}=0$ along all the paths $\widetilde{P}_{T}(\gamma_{\operatorname{crit}})$;
    \item[(d)] $\widetilde{X}_{\widetilde{A}_h(0, \gamma)}=d\widetilde{A}_h(\widetilde{X}_{(0, \gamma)})$ for any $\gamma \in W^u(\gamma_{\operatorname{crit}}; X)$, where $\widetilde{A}_h$ is given by Equation~\eqref{eqn: definition of widetilde A h}. 
\end{enumerate}
We note that (b) and (d) imply
\begin{enumerate}
    \item[(d')] the unstable manifolds $W^u(\widetilde{P}_{T}(\gamma_{\operatorname{crit}});\widetilde X)$ satisfy the conditions of Lemma~\ref{lemma: path-inside-submanifold}.
\end{enumerate}
Indeed, for any $h\in (-\varepsilon_T, \varepsilon_T)$, $\widetilde{A}_h$ takes the trajectory of $\widetilde{X}$ from $(0, \gamma_{\operatorname{crit}})$ to $(0, \gamma)$ to the trajectory of $\widetilde{X}$ from $(h, P_h(\gamma_{\operatorname{crit}}))$ to $(h,P_h(\gamma))$, giving Lemma~\ref{lemma: path-inside-submanifold}(2).  Moreover Lemma~\ref{lemma: path-inside-submanifold}(1) holds because $X$ is independent of $T$.
The existence of such a vector field $\widetilde{X}$ for each $T$ and Lemma~\ref{lemma: path-inside-submanifold} imply the theorem.

Let $\gamma_0 \in \Omega^{1,2}(M, q, q')$ be a smooth path.  We consider a time-dependent coordinate system $\varphi \colon [0,1] \times U \to M$ such that $\varphi(t,0)=\gamma_0(t)$ and the associated charts $\mathcal{U}_{(U, \varphi)}$ and $\widetilde{\mathcal{U}}_{(U, \tilde{\varphi}^{A})}$ centered at $\gamma_0$.  

To construct $\widetilde X$ we construct $\widetilde Y$ on the charts $\widetilde{\mathcal{U}}_{(U, \tilde{\varphi}^{A})}$ centered at $\gamma_0$ and apply a partition of unity argument.

\s\n
{\em $\gamma_0$ non-critical.}  Suppose $\gamma_0$ is a non-critical point of $\mathcal{A}_L$.  Let $Y_0$ be a tangent vector at $0 \in \Omega^{1,2}(U; 0, 0)$ such that $d\varphi(Y_0)$ is a smooth vector field along $\gamma_0$ which is sufficiently close to the gradient vector field $\operatorname{grad}_{\mathcal{A}_L}(\gamma_0)$ with respect to the $W^{1,2}$-norm (note that since we take the gradient with respect to the $W^{1,2}$-metric it is not immediate that such gradient is smooth as in the case of the $L^2$-metric). We extend $Y_0$ to $Y$ on all of $\Omega^{1,2}(U; 0, 0)$ by setting $Y_\gamma=Y_0$ for all $\gamma \in \Omega^{1,2}(U; 0,0 )$. Next, we extend $Y$ to $\widetilde{Y}$ on $(-\varepsilon_T, \varepsilon_T) \times \Omega^{1,2}(U; 0,0)$ by
\begin{equation}\label{eq: gradient-translations}
\widetilde{Y}_{(h, \gamma)}(\cdot)=d\widetilde{A}_h(Y_0)_{\widetilde{A}_h(0)}(\cdot)=Y_0(A_h(\cdot)).
\end{equation}
The term $Y_0(A_h(\cdot))$ does not depend on $\gamma$ but depends on $h$. Since $Y_0$ is a smooth vector field along $\gamma_0$, $\widetilde{Y}$ is smooth in the $h$-direction and hence smooth on $(-\varepsilon_T, \varepsilon_T) \times \Omega^{1,2}(U; 0, 0)$. Furthermore, by definition $\widetilde{Y}$ satisfies the invariance condition
\begin{equation}\label{eq: vector-field-invariance-chart} 
\widetilde{Y}_{\widetilde{A}_h(0, \gamma)}=d\widetilde{A}_h(\widetilde{Y}_{(0, \gamma})),
\end{equation}
for any $\gamma \in \Omega^{1,2}(U; 0, 0)$. Hence, by~\eqref{eq: equivariance-for-charts}, $d\tilde{\varphi}^{A}(\widetilde{Y})$ satisfies (d) over $\widetilde{\mathcal{U}}_{(U, \tilde{\varphi}^{A})}$.

\s\n
{\em $\gamma_0$ critical.} Let $\gamma_0$ be a critical point of $\mathcal{A}_L$. Note that $\gamma_0$ is a smooth path. Recall from \cite[Equations (3.8),(3.10)]{AS2009} that the Gateaux differential $d\mathcal{A}_L$ and the second Gateaux differential $d^2\mathcal{A}_L$ at $\gamma \in \Omega^{1,2}(U; 0, 0)$ have expressions
\begin{equation}\label{eq: differential formula}
    d\mathcal{A}_L(\gamma)[\xi]=\int_0^1(\partial_xL(t, \gamma, \dot{\gamma})\xi+\partial_vL(t, \gamma, \dot{\gamma})\dot{\xi})dt,
\end{equation}
\begin{gather}\label{eq: second differential formula}
    d^2\mathcal{A}_L(\gamma)[\xi, \eta]=\int_0^1\left(\frac{\partial^2 L}{\partial v^2}(t, \gamma, \dot{\gamma}) \dot{\xi} \dot{\eta}+\frac{\partial^2 L}{\partial v \partial x}(t, \gamma, \dot{\gamma}) \dot{\xi}\eta+\right. \\
    \left.+\frac{\partial^2 L}{\partial x\partial v}(t, \gamma, \dot{\gamma}) \xi \dot{\eta}+\frac{\partial^2 L}{\partial x^2}(t, \gamma, \dot{\gamma}) \xi \eta \right) dt, \nonumber
\end{gather}
for any $\xi, \eta \in \Omega^{1,2}(\R^n; 0,0)\subset W^{1,2}([0,1], \R^n)$. There is also an associated linear operator 
\begin{gather*}
    D^2\mathcal{A}_L(\gamma) \colon \Omega^{1,2}(\R^n; 0,0) \to \Omega^{1,2}(\R^n; 0,0)^*,\\
    D^2\mathcal{A}_L(\gamma)(\xi)[\eta]=d^2\mathcal{A}_L(\gamma)[\xi, \eta].
\end{gather*}

The pseudogradient $Y$ constructed in the proof of \cite[Theorem 4.1]{AS2009} satisfies 
$$\langle Y_\gamma, \eta\rangle_0=D^2\mathcal{A}_L(0)(\gamma)[\eta], \quad \gamma \in \Omega^{1,2}(U; 0, 0),$$
where the Hilbert inner product $\langle \cdot, \cdot \rangle_0$ is given by
$$\langle \xi, \eta \rangle_0=\int_0^1\left(\frac{\partial^2L}{\partial v^2}(t,0,0)\dot{\xi}\cdot\dot{\eta}+\xi\cdot\eta\right)dt$$
on $W^{1,2}([0,1], \R^n)$. We remark that $\langle \cdot, \cdot \rangle_0$ is equivalent to $\langle \cdot, \cdot \rangle_{{W^{1,2}}}$. To extend $Y$ to $(-\varepsilon_T, \varepsilon_T) \times \Omega^{1,2}(U; 0, 0)$ let us define a family of Hilbert products on $\Omega^{1,2}(\R^n;0,0)$ varying over $(-\varepsilon_T, \varepsilon_T)$ by the formula
$$\langle \xi, \eta \rangle_h=\int_0^1\left(\frac{\partial^2L_h}{\partial v^2}(t,0,0)\dot{\xi}\cdot\dot{\eta}+(\partial_tA_h)\xi\cdot\eta \right) dt.$$
This family of Hilbert products satisfies
\begin{equation}\label{eq: metric-invariance}
    \widetilde{A}_h^*\langle \cdot , \cdot \rangle_h=\langle \cdot, \cdot \rangle_0,
\end{equation}
which is verified by a direct computation
\begin{align*}
    \langle  d\widetilde{A}_h(\xi), d\widetilde{A}_h(\eta) \rangle_h &=\int_0^1\left(\frac{1}{\partial_tA_h}\frac{\partial^2L}{\partial v^2}(A_h(t),0,0)(\partial_tA_h)^2\dot{\xi}(A_h(t))\cdot \dot{\eta}(A_h(t))+ 
    (\partial_tA_h)\xi(A_h(t))\cdot \eta(A_h(t)\right)dt \\ 
    &=\int_0^1\left(\frac{\partial^2 L}{\partial v^2}(t,0,0)\xi(t) \cdot \eta(t)+\xi(t) \cdot \eta(t)\right)dt=\langle \xi, \eta\rangle_0,
\end{align*}
using the change-of-variables formula. The product $\langle \cdot, \cdot \rangle_h$ varies smoothly with respect to $h$ since $L_h$ and its partial derivatives vary smoothly.

Let $\widetilde{Y}$ be the vector field satisfying
\begin{equation}\label{eq: parametric-vector-field-defn}
    \langle \widetilde{Y}_{(h, \gamma)}, \eta \rangle_h=D^2\mathcal{A}_{L_h}(0)(\gamma)[\eta], 
\end{equation}
for any $(h, \gamma) \in (-\varepsilon_T, \varepsilon_T) \times \Omega^{1,2}(U; 0, 0)$ and $\eta \in \Omega^{1,2}(\R^n; 0,0)$.
Clearly $\widetilde{Y}$ is a smooth vector field since $\langle \cdot, \cdot \rangle_h$ and $D^2\mathcal{A}_{L_h}$ vary smoothly with respect to $h$ and $D^2\mathcal{A}_{L_h}(0)$ is linear for each value of $h \in (-\varepsilon_T, \varepsilon_T)$.

We claim that
\begin{equation}\label{eq: second-differential-invariance}
    D^2\mathcal{A}_{L_h}(0)(P_h(\gamma))[d\widetilde{A}_h(\eta)]=D^2\mathcal{A}_L(0)(\gamma)[\eta]
\end{equation}
for each $\gamma\in \Omega^{1,2}(U; 0,0)$, which is again verified by the direct computation
\begin{align*}
D^2\mathcal{A}_{L_h}(0)&(P_h(\gamma))[d\widetilde{A}_h(\eta)]=\int_0^1\left( \partial_tA_h\frac{\partial^2 L}{\partial v^2}(A_h(t), 0, 0)\dot{\gamma}(A_h(t))\cdot \dot{\eta}(A_h(t))\right.\\ 
& +\partial_tA_h \frac{\partial^2L}{\partial x \partial v}(A_h(t), 0,0)\gamma(A_h(t))\cdot\dot{\eta}(A_h(t))
+\partial_tA_h \frac{\partial^2L}{\partial v \partial x}(A_h(t), 0,0)\dot{\gamma}(A_h(t))\cdot\eta(A_h(t))\\
&\left. +\partial_tA_h \frac{\partial^2L}{\partial x^2}(A_h(t),0,0)\gamma(A_h(t))\cdot\eta(A_h(t))\right) dt= D^2\mathcal{A}_L(0)(\gamma)[\eta],
\end{align*}
using the change-of-variables formula.

Combining~\eqref{eq: metric-invariance} and~\eqref{eq: second-differential-invariance}, for any $\gamma \in \Omega^{1,2}(U; 0,0)$ and $h \in (-\varepsilon_T, \varepsilon_T)$,
\begin{gather*}
\langle d \widetilde{A}_h(\widetilde{Y}_{(0, \gamma)}), d\widetilde{A}_h(\eta) \rangle_h=D^2\mathcal{A}_L(0)(\gamma)[\eta]=
D^2\mathcal{A}_{L_h}(0)(P_h(\gamma))[d\widetilde{A}_h(\eta)]=\langle \widetilde{Y}_{\widetilde{A}_h(0,\gamma)}, d\widetilde{A}_h(\eta) \rangle_h.
\end{gather*}
Hence the invariance condition~\eqref{eq: vector-field-invariance-chart} holds for $\widetilde{Y}$.

\s\n
{\em Partition of unity argument.} 
Pick a cover of $(-\varepsilon_T, \varepsilon_T) \times \Omega^{1,2}(M, q, q')$ by open charts of the form $\widetilde{\mathcal{U}}_{(U, \tilde{\varphi}^{A})}$.  Denote by $\widetilde{\mathcal{U}}_\gamma$ such an open chart centered at the smooth curve $\gamma$ and $\mathcal{U}_\gamma$ the corresponding chart of the form $\mathcal{U}_{(U, \varphi)}$ centered at $\gamma$. By paracompactness, there is a locally finite, countable subcover $\{\mathcal{U}_{\gamma_j}\}_{j\in \mathcal{J}}$ of the cover 
$$\{\mathcal{U}_\gamma \mid   \gamma \in C^\infty([0,1], M) \cap \Omega^{1,2}(M, q, q')\}.$$ 
Writing $\varphi_j$ for the smooth map associated with the chart $\mathcal{U}_{\gamma_j}$, let $\widetilde{\mathcal{U}}_{\gamma_j}$ be the image of $(-\varepsilon_T, \varepsilon_T) \times (\varphi_j)_*^{-1}(\mathcal{U}_{\gamma_j})$ under $(\tilde{\varphi}_j)_*^{A}$. 

Let $\{\psi_j\}_{j \in \mathcal{J}}$ be a partition of unity subordinate to the cover $\{\mathcal{U}_{\gamma_j}\}_{j \in \mathcal{J}}$. We define a smooth function $\widetilde{\psi}_j$ supported on $\widetilde{\mathcal{U}}_{\gamma_j}$ by setting 
$$\widetilde{\psi}_{j}(h, \gamma(\cdot))=\psi_j(\gamma(A_h^{-1}(\cdot))$$ 
for each point $(h, \gamma)$ in $(-\varepsilon_T, \varepsilon_T) \times (\varphi_j)_*^{-1}(\mathcal{U}_{\gamma_j})$. Notice that $\{\widetilde{\psi}_j\}_{j \in \mathcal{J}}$ constitute a partition of unity and each one of them satisfies $\widetilde{\psi}_j(\widetilde{A}_h(0,\gamma))=\widetilde{\psi}_j(0, \gamma)$.
Finally, we set 
\begin{equation}
    \widetilde{X}=\textstyle\sum_{j \in \mathcal{J}}\widetilde{\psi}_j \widetilde Y_{\mathcal{I}(j)},
\end{equation}
where $\widetilde Y_{\mathcal{I}(j)}$ is of one of two types constructed above depending on whether ${\mathcal{I}(j)}$ is a critical point of $\mathcal{A}_L$. Combining the invariance properties of $\widetilde{\psi}_j$ and of $\widetilde Y_{\mathcal{I}(j)}$ shown above we conclude that $\widetilde{X}$ indeed satisfies (d).  Since our construction is compatible with the one in \cite[Theorem 4.1]{AS2009}, the restriction of $\widetilde{X}$ to $\{0\} \times \Omega^{1,2}(M, q, q')$ is guaranteed to be a pseudogradient.
\end{proof}

\section{Transversality}\label{appendix: transversality}

\subsection{Morse flow lines with switchings.}\label{subappendix: mfls} 

As in the proof of Theorem~\ref{thm: regularity for pseudogradient} (now we are in the case of arbitrary $\kappa\geq 1$), there exists a countable locally finite cover $\{\mathcal{U}_{\bm \gamma_j}\}_{j \in \mathcal{J}}$ of $\Omega^{1,2}(M, \bm q, \bm q')$, where $\bm \gamma_j=(\gamma_{j1},\dots, \gamma_{j\kappa})$ is a $\kappa$-tuple of smooth paths and $\mathcal{U}_{\bm \gamma_j}$ is the product of the corresponding $\mathcal{U}_{\gamma_{ji}}$.  We also assume the following: 
\be
\item[(C1)] if some $\gamma\in \mathcal{U}_{\gamma_{ji}}$ is a critical point, then $\gamma$ lies in only one chart $\mathcal{U}_{\gamma_{ji}}$ (called a {\em critical point chart}) and in that case $\gamma=\gamma_{ji}$;
\item [(C2)] critical point charts will be arbitrarily small and will be shrunk as necessary in response to given submanifolds in $[0,1]^\ell\times \Omega^{1,2}(M,\bm q, \bm q').$
\ee

Let $\mathfrak{X}_j$ be the subspace of $T_{\bm \gamma_j}\Omega^{1,2}(M, \bm q, \bm q')$ which is equal to the direct sum 
$$V_{j1} \oplus \dots \oplus V_{j\kappa} \subset T_{\gamma_{j1}}\Omega^{1,2}(M, q_1, q'_{\sigma(1)}) \oplus \dots \oplus T_{\gamma_{j\kappa}}\Omega^{1,2}(M, q_\kappa, q'_{\sigma(\kappa)})$$ 
where $V_{ji}= C_{\bm\varepsilon}(\bm \gamma_{ji}^*TM)$ if $\gamma_{ji}$ is not a critical point and $0$ otherwise. Here we denote by $C_{\bm\varepsilon}(\bm \gamma_{ji}^*TM)$ the separable Banach space of sections $Y$ with finite Floer $C_{\bm \varepsilon}$-norm \cite{Floer-unregularized}: 
$$\|Y \|_{C_{\bm \varepsilon}}=\sum_{k=1}^\infty \varepsilon_k \| Y \|_{C^k} < \infty,$$
where $\bm \varepsilon=(\varepsilon_k)_{k\ge 1}$ is a fixed sequence of sufficiently small positive numbers. We pick the sequence $\bm \varepsilon$ such that for each section $Y' \in W^{1,2}(\bm \gamma_{ji}^*TM)$  and each point $p$ on $\gamma_{ji}$ there is a section $Y \in C_{\bm\varepsilon}(\bm \gamma_{ji}^*TM)$ satisfying $Y(p)=Y'(p)$ and $\|Y-Y' \|_{W^{1,2}}$ is smaller than a given $\delta>0$. The existence of such $\bm \varepsilon$ is shown in \cite[Theorem B.6]{wendl2008}. Moreover we fix a choice of $\bm \varepsilon$ that satisfies the above property for all $\bm \gamma_j$ with $j \in \mathcal{J}$.

We regard an element $Y_j \in \mathfrak{X}_j$ as a vector field over $\mathcal{U}_{\bm \gamma_j}$.  
Consider the direct sum $\bigoplus_{j \in \mathcal{J}} \mathfrak{X}_j$ with supremum norm, i.e., it consists of bounded sequences $\{Y_j\}_{j \in \mathcal{J}}$. Clearly, it is a separable Banach space. We fix a partition of unity $\{\psi_j\}_{j\in \mathcal{J}}$ subordinate to the cover $\{\mathcal{U}_{\bm \gamma_j}\}_{j \in \mathcal{J}}$. We set
\begin{equation} \label{eqn: defn of frak X}
\mathfrak{X}=\left\{X \in \Gamma(T\Omega^{1,2}(M, \bm q, \bm q'))\mid   \, X= \sum_{j\in \mathcal{J}} \psi_jY_j, \, \text{ for some } (Y_j)_j \in \bigoplus_{j \in \mathcal{J}}\mathfrak{X}_j\right\}.
\end{equation}

For a fine enough cover $\{\mathcal{U}_{\bm \gamma_j}\}_{j \in \mathcal{J}}$ this space, which is a Banach subspace of the Banach space of bounded sections of $T\Omega^{1,2}(M, \bm q, \bm q')$, is isomorphic to the direct sum $\bigoplus_{j \in \mathcal{J}} \mathfrak{X}_j$. In particular, it is a separable smooth Banach manifold. 

\begin{lemma}\label{lemma: vector field}
    Given $\bm Z=(Z_1,\dots,Z_\kappa) \in T_{\bm \gamma}\Omega^{1,2}(M, \bm q, \bm q')$ for a smooth tuple of paths $\bm \gamma =(\gamma_1,\dots,\gamma_\kappa)\in \Omega^{1,2}(M, \bm q, \bm q')$ such that $\bm Z$ is of class $C_{\bm\varepsilon}$ along $\bm \gamma$ and $Z_k=0$ whenever $\gamma_k$ is in a critical point chart, there exists a vector field $X \in  \mathfrak{X}$ such that $X_{\bm \gamma}=Z$.
\end{lemma}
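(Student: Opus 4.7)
The plan is to construct $X$ as a partition-of-unity combination of chart-local ``constant'' vector fields, each of which individually realizes $\bm Z$ at $\bm \gamma$. First I would invoke local finiteness of the cover $\{\mathcal{U}_{\bm \gamma_j}\}_{j \in \mathcal{J}}$ to reduce to the finite set $J = \{j \in \mathcal{J} \mid \bm \gamma \in \mathcal{U}_{\bm \gamma_j}\}$, since $\psi_j(\bm \gamma) = 0$ for $j \notin J$.

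Next, within each chart $\mathcal{U}_{\bm \gamma_j}$, $j \in J$, I would use the trivialization $\varphi_{j*}$ to view a prospective $Y_j \in T_{\bm \gamma_j}\Omega^{1,2}(M,\bm q,\bm q')$ as a constant-in-chart-coordinates vector field on $\mathcal{U}_{\bm \gamma_j}$. Set $\bm \zeta_j := \varphi_j^{-1}(\bm \gamma)$. The definition of $Y_j$ would be componentwise: for each $k \in \{1,\dots,\kappa\}$, if $\gamma_{jk}$ is a critical point of $\mathcal{A}_L$ then (C1) tells us that the critical point chart $\mathcal{U}_{\gamma_{jk}}$ contains $\gamma_k$, so the hypothesis forces $Z_k = 0$; I would then set $Y_{jk} = 0$, which is forced anyway since $V_{jk} = 0$ in that case. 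If $\gamma_{jk}$ is non-critical, I would pull $Z_k$ back through the chart,
$$Y_{jk}(t) := d\varphi_j|_{(t,\,\zeta_{jk}(t))}^{-1}\bigl(Z_k(t)\bigr),$$
which I would then regard (via the chart-coordinate identification) as a section of $\gamma_{jk}^*TM$. Because $Z_k$ is $C_{\bm\varepsilon}$ along $\gamma_k$, and both $\gamma_k,\gamma_{jk}$ are smooth with $\varphi_j$ smooth, this section is again $C_{\bm\varepsilon}$, so $Y_{jk}\in V_{jk}$ and thus $Y_j \in \mathfrak{X}_j$.

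Finally I would set $X := \sum_{j \in J} \psi_j Y_j$ (with $Y_j = 0$ for $j \notin J$), which lies in $\mathfrak{X}$ by the definition \eqref{eqn: defn of frak X}, and evaluate
$$X_{\bm \gamma} \;=\; \sum_{j \in J} \psi_j(\bm \gamma)\,Y_j(\bm \gamma) \;=\; \Bigl(\sum_{j \in J}\psi_j(\bm \gamma)\Bigr)\bm Z \;=\; \bm Z,$$
using that $Y_j(\bm \gamma) = \bm Z$ by construction and that $\{\psi_j\}$ is a partition of unity.

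The main point where care is needed is the compatibility with critical point charts: one must show that the vanishing constraint ``$V_{jk} = 0$ whenever $\gamma_{jk}$ is critical'' is compatible with the requirement $Y_j(\bm \gamma) = \bm Z$. This is precisely the role of the hypothesis on $\bm Z$, combined with (C1) to match ``$\gamma_{jk}$ is critical'' with ``$\gamma_k$ lies in a critical point chart''. The remaining verification, that $Y_{jk}$ has finite $C_{\bm\varepsilon}$-norm, is routine given the smoothness of $\varphi_j$, $\bm \gamma$, and $\bm \gamma_j$, and the universality of the sequence $\bm\varepsilon$ fixed in the construction of $\mathfrak{X}_j$.
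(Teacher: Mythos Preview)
Your argument is correct. Both your proof and the paper's use the partition of unity $\{\psi_j\}$, but the constructions differ. The paper selects a \emph{single} index $i$ with $\psi_i(\bm\gamma)\neq 0$, sets $Y_i = \bm Z/\psi_i(\bm\gamma)$ (via the chart identification of $T_{\bm\gamma}\Omega^{1,2}$ with $T_{\bm\gamma_i}\Omega^{1,2}$), and takes $Y_j=0$ for all $j\neq i$; then $X=\sum_j\psi_jY_j$ evaluates to $\psi_i(\bm\gamma)\cdot\bm Z/\psi_i(\bm\gamma)=\bm Z$. You instead construct $Y_j$ realizing $\bm Z$ in \emph{every} chart $j\in J$ and invoke $\sum_{j\in J}\psi_j(\bm\gamma)=1$. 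Your version avoids the division (and the implicit choice of $i$), at the cost of constructing more $Y_j$'s; the paper's version is more economical but leaves the chart identification $T_{\bm\gamma}\Omega^{1,2}\cong T_{\bm\gamma_i}\Omega^{1,2}$ and the verification that $Y_i\in\mathfrak{X}_i$ (including the critical-component check) entirely implicit. You make both of these points explicit, which is an improvement in exposition even if the argument is a bit longer.
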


\begin{proof} 
Let $\{\psi_j\}_{j\in \mathcal{J}}$ be a partition of unity subordinate to the cover $\{\mathcal{U}_{\gamma_j}\}_{j\in \mathcal{J}}$ of $\Omega^{1,2}(M, \bm q,\bm q')$.
To construct the vector field $X\in \mathfrak{X}$, we set $Y_i=Z/\psi_i(\bm\gamma)$ for some $i$ such that $\bm \gamma \in \mathcal{U}_{\bm\gamma_i}$ and $\psi_i(\bm\gamma)\not=0$, and set $Y_j=0$ on $\mathcal{U}_{\bm\gamma_j}$ for $j \neq i$. Then $(Y_j)_j\in \bigoplus_{j \in \mathcal{J}} \mathfrak{X}_j$ and $X= \sum \psi_jY_j$ is the desired vector field.
\end{proof}

Since $\mathfrak{X}$ is separable, the space of maps $C_{\bm\varepsilon}([0,1], \mathfrak{X})$ is a separable Banach space.  The ``parameter'' spaces that we use are subspaces of spaces of a similar type and hence are also separable, allowing us to use the Sard-Smale theorem. In particular, we set
\begin{equation}
    \mathfrak{X}_-=
    \{X \in C_{\bm\varepsilon}((-\infty, 0], \mathfrak{X}) \, \mid   \, X(s)=0 \, \, \forall s \le -1 \ \},
\end{equation}
\begin{equation}
    \mathfrak{X}_+=
    \{X \in C_{\bm\varepsilon}([0,+\infty), \mathfrak{X}) \, \mid   \, X(s)=0 \, \, \forall s \ge 1 \ \}
\end{equation}
Next, fix a nondecreasing smooth function $g \colon [0, +\infty) \to [0, +\infty)$ such that $g(l)=\frac{l}{3}$ for $l \le 1$ and $g(l)=1$ for $l\ge 3$, and define
\begin{gather}\label{eq: finite-length-parameter-space}
    \mathfrak{X}_0=
    \{X\in C_{\bm\varepsilon}([0,+\infty)^2, \mathfrak{X})~ \mid  ~ \mbox{$X(l,s)=0$ for $s \in  (g(l), \op{max}(g(l),l-1))\cup [l, +\infty)$}; \\
    \lim_{l \to +\infty}\op{split}X(l,\cdot) \text{ exists in } \mathfrak{X}_+ \times \mathfrak{X}_-; \text{ all partial derivatives} \nonumber\\
    \text{ of } X 
    \text{ vanish at } (0,s) \text{ for all } s \in [0,+\infty)\}, \nonumber
\end{gather}
where, writing $X_l(s)\coloneqq X(l,s)$ and $X_l^T(s)\coloneqq  X_l(s+T)$,
$$\op{split}X_l=(X_l|_{[0,1]}\#0|_{[1, +\infty)}, 0|_{(-\infty,-1]}\#X_l^{l+1}|_{[-1,0]}).$$
We refer to \cite[Remark 5.8]{Mescher2018} for an explanation of the technical condition on the vanishing of the partial derivatives in the definition. For our purposes we will only need the restriction $X_l|_{[0,l]}$ and quite often we will regard $X_l$ as a path of vector fields $X_l \in C_{\bm\varepsilon}([0, l], \mathfrak{X})$. 

\s
Fix a pseudogradient vector field $X$ (i.e., satisfying (PG1)--(PG5)) for a given Lagrangian $L \colon [0,1] \times TM \to \R$ on $\Omega^{1,2}(M, \bm q, \bm q')$ satisfying Theorem~\ref{thm: regularity for pseudogradient}. 

Given a critical point $\bm\gamma$, we set
\begin{equation}
\mathcal{P}_-(\bm \gamma)=\{ \Gamma \in C^1((-\infty, 0], \Omega^{1,2}(M, \bm q, \bm q') \mid   \, \lim_{s \to -\infty} \Gamma= \bm \gamma,  \, \lim_{s \to -\infty} \partial_s \Gamma=0\},
\end{equation}
which is a smooth Banach manifold. There is a Banach vector bundle $\mathcal{F}_-$ over $\mathcal{P}_-(\bm\gamma)$ with fibers 
$$\mathcal{F}_-|_\Gamma=C^0_0(\Gamma^*(T\Omega^{1,2}(M, \bm q, \bm q'))),$$
where the subscript $0$ means that the limit $s \to -\infty$ is equal to $0$. Then the unstable manifold of $\bm\gamma$ with respect to $X$ can be identified with the set of zeros of the section 
\begin{gather*} 
    \mathcal{P}_-(\bm \gamma) \to \mathcal{F}_-, \quad \Gamma\mapsto (\partial_s\Gamma+X)|_{\Gamma(s)},
\end{gather*}
via the evaluation map $\Gamma\mapsto\Gamma(0)$.

More generally, there is a vector bundle $\mathcal{F}_-$ over $\mathcal{P}_- (\bm \gamma)\times \mathfrak{X}_-$
with fibers 
$$\mathcal{F}_-|_{(\Gamma,Y)}=C^0_0(\Gamma^*(T\Omega^{1,2}(M, \bm q, \bm q'))),$$
and a section 
\begin{gather*}
    \sigma_- \colon \mathcal{P}_-(\bm \gamma) \times \mathfrak{X}_-\to \mathcal{F}_-, \quad (\Gamma, Y)\mapsto (\partial_s\Gamma+X+Y)|_{\Gamma(s)}.
\end{gather*}
We then define
\begin{equation}
    W^u(\bm \gamma; \mathfrak{X}_-)\coloneqq \sigma_-^{-1}(0), \quad W^u(\bm \gamma; Y)=  W^u(\bm \gamma; \mathfrak{X}_-)\cap (\mathcal{P}_-(\bm \gamma) \times \{Y\}).
\end{equation}
We also have an evaluation map
\begin{equation}
E_- \colon W^u(\bm \gamma; \mathfrak{X}_-) \to \Omega^{1,2}(M, \bm q, \bm q'),\quad 
(\Gamma, Y)\mapsto \Gamma(0).
\end{equation}

\begin{lemma}\label{lemma: manifold of trajectories to submanifold} $\mbox{}$
\be
\item
The space $W^u(\bm \gamma; \mathfrak{X}_-)$ is smooth Banach manifold.
\item The evaluation map $E_-$ has image in the space of smooth paths $C^\infty(M, \bm q, \bm q')$.
\item Let $U$ be an open subset of $W^u(\bm \gamma; \mathfrak{X}_-)$ consisting of $(\Gamma, Y)$ such that $\Gamma(0)$ is contained in a neighborhood $\mathcal{U}_{\bm \gamma_j}=\mathcal{U}_{\gamma_{j1}}\times\dots\times \mathcal{U}_{\gamma_{j\kappa}}$ with all $\gamma_{ji}$ non-critical. 
Then the differential of $E_-$ restricted to $U$ has any $C_{\bm\varepsilon}$-section in its image.
\ee
\end{lemma}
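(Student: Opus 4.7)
I would prove the three statements in order, setting up each as an application of the infinite-dimensional implicit function theorem, Theorem~\ref{thm: regularity for pseudogradient}, and Lemma~\ref{lemma: vector field}, respectively.

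For (1), the strategy is to realize $W^u(\bm\gamma;\mathfrak{X}_-)$ as the regular zero set of the section $\sigma_-$. The linearization at $(\Gamma,Y)\in \sigma_-^{-1}(0)$ takes the form
$$(\dot\Gamma,\dot Y)\;\longmapsto\; \partial_s\dot\Gamma + D(X+Y)(\Gamma)\cdot\dot\Gamma + \dot Y(\Gamma),$$
regarded as a bounded operator between the appropriate Banach spaces. The $\dot\Gamma$-slice alone is Fredholm, with the standard asymptotic weighting at $s=-\infty$ supplied by the non-degeneracy of $\bm\gamma$; this is the same operator that appears in the proof of Theorem~\ref{thm: summary of morse in infinite dimensions}(iv). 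Surjectivity is then a duality argument: if a continuous functional $\eta$ on the fiber annihilates the image, then by restricting to $\dot\Gamma = 0$ it annihilates $\dot Y(\Gamma(s))$ for every $\dot Y\in\mathfrak{X}_-$. Since elements of $\mathfrak{X}_-$ vanish for $s\le -1$, we first shrink the critical chart about $\bm\gamma$ (allowed by (C1)--(C2)) so that $\Gamma(s)$ is non-critical for every $s\in[-1,0]$. Lemma~\ref{lemma: vector field} then produces, for each such $s$, a rich supply of vector fields in $\mathfrak{X}$ realizing prescribed tangent values at $\Gamma(s)$; the associated parameter space of $s$-dependent realizations in $\mathfrak{X}_-$ is dense in the target fiber, forcing $\eta=0$.

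For (2), I combine Theorem~\ref{thm: regularity for pseudogradient} with a basic ODE-regularity propagation. On the half-line $s\le -1$, the perturbation $Y$ vanishes, so $\Gamma|_{(-\infty,-1]}$ is an honest unstable trajectory of the pure pseudogradient $X$, and Theorem~\ref{thm: regularity for pseudogradient} gives $\Gamma(-1)\in C^{\infty}([0,1],M)^{\kappa}$. On the compact interval $[-1,0]$ the equation reads $\partial_s\Gamma = -(X+Y)(\Gamma)$ with $X+Y$ a smooth vector field on $\Omega^{1,2}(M,\bm q,\bm q')$ produced via the partition of unity from sections of class $C_{\bm\varepsilon}$ (hence smooth in the $t$-variable). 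In any time-dependent chart of the form $(U,\varphi)$ used in Section~\ref{section: regularity}, the flow equation becomes an ODE in a Banach space whose coefficients depend smoothly on the base parameter $t\in[0,1]$; since $\Gamma(-1)$ is smooth in $t$, iterating the equation in $s$ preserves smoothness of $\Gamma(s)(\cdot)$ in $t$. In particular $\Gamma(0)$ lies in $C^{\infty}(M,\bm q,\bm q')$.

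For (3), given a $C_{\bm\varepsilon}$-section $Z$ along $\Gamma(0)$, I construct a preimage $(\dot\Gamma,\dot Y)$ by hand. Choose a smooth cutoff $\beta\colon(-\infty,0]\to[0,1]$ with $\beta\equiv 0$ on $(-\infty,-\tfrac12]$ and $\beta(0)=1$, and propagate $Z$ backward along $\Gamma$ by the linearized flow of $X+Y$, multiplied by $\beta(s)$, to obtain a section $\dot\Gamma(s)\in T_{\Gamma(s)}\Omega^{1,2}(M,\bm q,\bm q')$ with $\dot\Gamma(0)=Z$ and $\dot\Gamma(s)=0$ for $s\le -\tfrac12$. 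Then define pointwise
$$\dot Y(s)\big|_{\Gamma(s)} \;=\; -\,\partial_s\dot\Gamma(s) - D(X+Y)(\Gamma(s))\cdot\dot\Gamma(s),$$
which vanishes for $s\le -\tfrac12$. Because $\Gamma(s)$ lies in a non-critical chart $\mathcal{U}_{\bm\gamma_j}$ for all $s\in[-\tfrac12,0]$, Lemma~\ref{lemma: vector field} extends each $\dot Y(s)$ to a vector field on $\Omega^{1,2}(M,\bm q,\bm q')$ of class $\mathfrak{X}$; the construction (divide by $\psi_j(\Gamma(s))$ and place the result in the $j$-th summand) depends smoothly on $s$, so the resulting family lies in $\mathfrak{X}_-$. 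By construction $(\dot\Gamma,\dot Y)\in T_{(\Gamma,Y)}W^u(\bm\gamma;\mathfrak{X}_-)$ and $DE_-(\dot\Gamma,\dot Y)=Z$.

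The main obstacle I anticipate is in (3): verifying that the partition-of-unity extension of the pointwise prescription $\dot Y(s)|_{\Gamma(s)}$ assembles into a genuine element of $\mathfrak{X}_-$ with the required $C_{\bm\varepsilon}$-regularity in the $s$-variable, and uniformly near $s=-\tfrac12$ where the cutoff is turned on. This requires careful bookkeeping of the Floer weights $\bm\varepsilon$ against the smoothness of $\Gamma$, $X$, and $Y$, and is the same kind of control that underlies the construction of $\mathfrak{X}_-$ in the first place.
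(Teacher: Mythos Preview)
Your approach to (3) matches the paper's almost exactly: build an interpolating variation $\Xi$ (your $\dot\Gamma$) with $\Xi(0)=v$ supported near $s=0$, read off the required $Z(s)|_{\Gamma(s)}$ from the linearized equation, and then extend to an element of $\mathfrak{X}_-$ via Lemma~\ref{lemma: vector field}. One small correction: you assume $\Gamma(s)\in\mathcal{U}_{\bm\gamma_j}$ for all $s\in[-\tfrac12,0]$, which is not given; the paper handles this either by taking the support of $\Xi$ close enough to $s=0$, or by using a finite subcover of $\Gamma([-1,0])$ and a subordinate partition of unity on $[-1,0]$ to build the extension.

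In (1) there is a gap. Your duality argument asserts that the $\dot Y$-variations are ``dense in the target fiber'', but every $\dot Y\in\mathfrak{X}_-$ vanishes for $s\le -1$, so the sections $s\mapsto\dot Y(s)|_{\Gamma(s)}$ are all supported on $[-1,0]$ and cannot be dense in $C^0_0((-\infty,0],\Gamma^*T\Omega^{1,2})$. The paper avoids this entirely: the linearized operator $\Xi\mapsto\nabla_s\Xi+\nabla_\Xi(X+Y)$ on the half-line $(-\infty,0]$ with nondegenerate limit $\bm\gamma$ is already \emph{surjective} (not merely Fredholm) by \cite[Proposition~1.6]{abbondandolo-majer2006}, so the $Z$-direction is not even needed for (1).

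In (2) your splitting into $(-\infty,-1]$ and $[-1,0]$ and invoking Theorem~\ref{thm: regularity for pseudogradient} on the first piece is fine. But the statement ``the flow equation becomes an ODE in a Banach space whose coefficients depend smoothly on $t$; iterating in $s$ preserves smoothness in $t$'' is not a proof: the equation $\partial_s\Gamma=-(X+Y)(\Gamma)$ is not pointwise in $t$ (for instance, the critical-chart part of $X$ is an integral operator), so smoothness of each value $(X+Y)_\gamma$ as a function of $t$ does not by itself force the flow on $\Omega^{1,2}$ to preserve $C^\infty$. The paper instead observes that the construction of $Y\in\mathfrak{X}$ (constant $C_{\bm\varepsilon}$-sections in each chart, glued by the same partition of unity) is exactly the non-critical-chart model in the proof of Theorem~\ref{thm: regularity for pseudogradient}, so one can extend $X+Y$ to $\widetilde{X+Y}$ on $(-\varepsilon_T,\varepsilon_T)\times\Omega^{1,2}$ satisfying the equivariance (d) there and re-run the Lemma~\ref{lemma: path-inside-submanifold} argument. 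If you want to salvage your route, you would need to verify that $X+Y$ restricts to a smooth vector field on each $\Omega^{m,2}$ (the elliptic-regularity step for the critical-chart part) and then argue that the $\Omega^{m,2}$-flow and the $\Omega^{1,2}$-flow coincide on the compact interval.
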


\begin{proof}
    (1) As in \cite[Theorem 12]{Schwarz93} one can show that $\sigma_-$ is a smooth map. Hence it remains to show that $\sigma_-$ is transverse to the $0$-section. We compute:
    $$D\sigma_-(\Gamma, Y)(\Xi, Z)=(\nabla_s\Xi+\nabla_{\Xi}(X+Y)+Z)|_{\Gamma(s)}.$$
    Its surjectivity is standard and shown in \cite[Proposition 1.6]{abbondandolo-majer2006} (in fact the $Z$ term is not necessary for the surjectivity). Hence $W^u(\bm \gamma; \mathfrak{X}_-)$ is a smooth Banach manifold.
    
    (2) One can show as in Appendix~\ref{section: regularity} that for each $Y \in \mathfrak{X}_-$ the unstable manifold $W^u(\bm \gamma; Y)$ consists of smooth paths. Hence the image of $E_-$ is contained in $C^\infty(M, \bm q, \bm q')$.

    (3)  Let $(\Gamma,Y)\in U$. Given $v \in T_{\Gamma(0)}\Omega^{1,2}(M, \bm q, \bm q')$ with finite $C_{\bm\varepsilon}$-norm, we claim that there exists 
    $$(\Xi, Z) \in \operatorname{ker}D\sigma_-(\Gamma,Y)=T_{(\Gamma, Y)}W^u(\bm \gamma; \mathfrak{X}_-)$$ 
    such that $DE_- (\Gamma,Y)(\Xi, Z)=v.$ To this end, let $\Xi=0$ (and hence satisfies 
    $\nabla_s\Xi(s)+\nabla_{\Xi}(X+Y)=0$) on $s\in (-\infty,-1]$ 
    and extend $\Xi$ over $s\in [-1, 0]$ so that $\Xi(0)=v$, $\Xi\in T_\Gamma \mathcal{P}_-(\bm\gamma)$, and 
    $$Z(s)|_{\Gamma(s)}=-\nabla_s\Xi(s)-\nabla_{\Xi(s)}(X_{\Gamma(s)}+Y(s)|_{\Gamma(s)})$$
    is nonzero only near $s=0$ where $\Gamma(s)\in \mathcal{U}_{\bm\gamma_j}$. Moreover we can guarantee that it has finite $C_{\bm\varepsilon}$-norm as a section over $(-\infty, 0]$.
    
    Finally, using the method of proof of Lemma~\ref{lemma: vector field}, there exists an extension of $Z(s)|_{\Gamma(s)}$, $s \in (-\infty, 0]$, to a vector field $Z(s) \in \mathfrak{X}$, subject to $Z$ being a  $C_{\bm\varepsilon}$-section over $(-\infty, 0]$.
    
    Explicitly, let $\{\mathcal{U}_{\bm\gamma_{j_1}}, \dots, \mathcal{U}_{\bm\gamma_{j_k}}\}$ be a finite subcollection of $\{\mathcal{U}_{\bm \gamma_j}\}_{j \in \mathcal{J}}$ covering the image of $[-1, 0]$ under $\Gamma$ and let $\nu_{j_1}, \dots, \nu_{j_k} \colon [-1, 0] \to \R$ be a partition of unity subordinate to the pullback cover of $[-1, 0]$. We then set 
    $$X_i(s)=\frac{Z(s)|_{\Gamma(s)}\nu_i(s)}{\psi_i}, \quad \text{ for  $i=j_1, \dots, j_k$ ~~ and ~~ $\psi_i\not=0$},$$ 
    and set $X_i=0$ otherwise. Then $Z(s)=\sum \psi_iX_i(s)$ is the desired family of vector fields in $\mathfrak{X}_-$.
\end{proof}

\begin{lemma}\label{lemma: unstable-to-diagonal}
Let $N \subset [0,1]^{\ell} \times \Omega^{m,2}(M, \bm q, \bm q')$ be a $C^{m-1}$-submanifold of codimension $n>0$ which is disjoint from 
\begin{equation} \label{eqn: tilde mathcal C}
    \tilde{\mathcal{C}}\coloneqq [0,1]^\ell\times \{ \bm \gamma=(\gamma_1,\dots,\gamma_\kappa)\mid \mbox{some $\gamma_i$ is a critical point }\},
\end{equation}
and, at each point $(\bm \theta, \bm \gamma) \in N$ with $\bm \gamma$ a tuple of smooth paths,
\begin{gather}\label{eq: transversality condition}
    \text{the tangent space } T_{(\bm \theta, \bm \gamma)}N \text{ is transverse to the space of $C_{\bm\varepsilon}$-sections along $\bm \gamma$,  viewed as a subset of}\\
    \{0\} \times T\Omega^{m,2}(M, \bm q, \bm q') \subset \R^{\ell} \times T\Omega^{m,2}(M,\bm q, \bm q').\nonumber
\end{gather}
Then:
\be
\item The space 
$$\tilde W^u(\bm \gamma, N; \mathfrak{X}_-) \coloneqq  \{(\bm\theta, \Gamma) \in [0,1]^{\ell} \times W^u(\bm \gamma; \mathfrak{X}_-) \mid  (\bm \theta, \Gamma(0)) \in N\}$$
is a $C^{m-1}$-smooth Banach manifold.\footnote{``Thickened'' versions of $W^u(\bm\gamma;\mathfrak{X}_-)$ etc.\ obtained by multiplying by $[0,1]^\ell$ will have tildes over them.}
\item For a generic $Y \in \mathfrak{X}_-$, provided $m>\operatorname{ind}(\bm \gamma)-n$,
$$\tilde W^u(\bm \gamma, N; Y) \coloneqq \tilde W^u(\bm \gamma, N; \mathfrak{X}_-) \cap W^u(\bm\gamma; Y)$$ 
is a $C^{m-1}$-smooth manifold of dimension $\operatorname{ind}(\bm \gamma)+\ell-n$ and the evaluation map 
$$\tilde W^u(\bm \gamma, N; Y)\to N, \quad (\bm\theta,\Gamma)\mapsto \Gamma(0),$$ 
is an immersion.
\ee
\end{lemma}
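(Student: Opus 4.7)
The approach is as follows. For part (1), realize $\tilde W^u(\bm \gamma, N; \mathfrak{X}_-)$ as the preimage of $N$ under a parametric evaluation map and apply a transversality argument. For part (2), apply the Sard-Smale theorem to the projection onto the parameter space $\mathfrak{X}_-$.

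More concretely, for part (1) consider the evaluation map
$$\tilde E_-\colon [0,1]^\ell \times W^u(\bm\gamma;\mathfrak{X}_-) \to [0,1]^\ell \times \Omega^{m,2}(M,\bm q,\bm q'), \quad (\bm\theta,\Gamma,Y)\mapsto (\bm\theta,\Gamma(0)).$$
By Lemma~\ref{lemma: manifold of trajectories to submanifold}(1) the domain is a smooth Banach manifold and by Lemma~\ref{lemma: manifold of trajectories to submanifold}(2) the image of $E_-$ lies in the space of smooth paths. Since $N\cap\tilde{\mathcal{C}}=\varnothing$, any $(\bm\theta,\bm\gamma)\in N$ has no component $\gamma_i$ equal to a critical point, so $\Gamma(0)\in N$ implies that $\Gamma(0)$ lies in a product of non-critical charts; Lemma~\ref{lemma: manifold of trajectories to submanifold}(3) then yields that the image of $D\tilde E_-$ at such a point contains $\R^\ell$ times the space of all $C_{\bm\varepsilon}$-sections along $\Gamma(0)$. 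Combined with the transversality hypothesis~\eqref{eq: transversality condition}, this proves that $\tilde E_-$ is transverse to $N$, so $\tilde W^u(\bm\gamma,N;\mathfrak{X}_-)=\tilde E_-^{-1}(N)$ is a $C^{m-1}$-Banach submanifold of codimension $n$.

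For part (2), consider the projection $\pi\colon \tilde W^u(\bm\gamma,N;\mathfrak{X}_-)\to\mathfrak{X}_-$. For each fixed $Y\in\mathfrak{X}_-$ the unstable manifold $W^u(\bm\gamma;Y)$ is finite-dimensional of dimension $\operatorname{ind}(\bm\gamma)$, which combined with the factor $[0,1]^\ell$ and the codimension $n$ cutoff by $N$ makes $D\pi$ Fredholm of index $\operatorname{ind}(\bm\gamma)+\ell-n$. Under the regularity hypothesis $m>\operatorname{ind}(\bm\gamma)-n$, the Sard-Smale theorem applies and the regular values of $\pi$ form a generic subset of $\mathfrak{X}_-$; for any such $Y$ the fiber $\tilde W^u(\bm\gamma,N;Y)=\pi^{-1}(Y)$ is a $C^{m-1}$-smooth manifold of the stated dimension. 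Finally, the evaluation map $(\bm\theta,\Gamma)\mapsto(\bm\theta,\Gamma(0))$ on this fiber is an immersion: a tangent vector $(\bm\xi,\Xi)$ with $\Xi\in T_\Gamma W^u(\bm\gamma;Y)$ satisfies the linearized flow equation $\nabla_s\Xi+\nabla_\Xi(\bm X+Y)=0$, which by the hyperbolicity of the linearized flow at the Morse critical point $\bm\gamma$ is uniquely solved backward in time from the datum $\Xi(0)$; hence $(\bm\xi,\Xi(0))=(0,0)$ implies $\Xi\equiv0$.

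The main technical obstacle is Step 1, where one must verify that~\eqref{eq: transversality condition} is precisely the right condition to match the image of $DE_-$ given by Lemma~\ref{lemma: manifold of trajectories to submanifold}(3). The subtlety is that the image of $DE_-$ is only large enough to cover $C_{\bm\varepsilon}$-directions (not arbitrary $W^{1,2}$-directions), and this is exactly compensated by the fact that $N$ is assumed transverse to this subspace; the disjointness from $\tilde{\mathcal{C}}$ is essential since it ensures the perturbation of $Y$ supported near $s=0$ (where $\Gamma(0)\in N$ is non-critical) does not interfere with the asymptotic condition at $s=-\infty$. All other steps—the Fredholm index computation, the Sard-Smale application in appropriate regularity, and the immersion property—follow established patterns.
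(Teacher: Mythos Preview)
Your proposal is correct and follows essentially the same approach as the paper's proof: realize $\tilde W^u(\bm\gamma,N;\mathfrak{X}_-)$ as $\tilde E_-^{-1}(N)$ and use Lemma~\ref{lemma: manifold of trajectories to submanifold}(3) together with hypothesis~\eqref{eq: transversality condition} for part (1), then apply Sard--Smale to the projection onto $\mathfrak{X}_-$ for part (2). One small point you glide over: the implication ``$\Gamma(0)\in N$ implies $\Gamma(0)$ lies in a product of non-critical charts'' is not automatic from $N\cap\tilde{\mathcal{C}}=\varnothing$ alone, since a non-critical path can still lie in a critical-point chart; the paper invokes condition (C2) (critical-point charts may be shrunk in response to $N$) to arrange this, and you should cite it. For part (2) the paper packages the Fredholm computation via the restricted section $\tilde\sigma_-|_{\mathcal{P}_-(\bm\gamma,N)\times\mathfrak{X}_-}$ and \cite[Lemma A.3.6]{McDuff-Salamon-2012}, which is equivalent to your direct projection argument; your explicit justification of the immersion claim via backward uniqueness of the linearized flow is a nice addition that the paper leaves implicit.
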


\begin{proof}
(1) Consider the evaluation map (a ``thickening'' of $E_-$) 
\begin{gather} \label{eqn: tilde E minus}
    \tilde E_-\colon [0,1]^{\ell} \times W^u(\bm \gamma; \mathfrak{X}_-) \to [0,1]^{\ell} \times \Omega^{m,2}(M, \bm q, \bm q'),\\
    (\bm\theta, \Gamma) \mapsto (\bm\theta, \Gamma(0)).\nonumber
\end{gather}
There exists an open subset $U\subset W^u(\bm \gamma; \mathfrak{X}_-)$ such that $N\cap \op{Im}(\tilde E_-)\subset \tilde E_-([0,1]^\ell\times U)$ and $U$ satisfies the assumptions of Lemma~\ref{lemma: manifold of trajectories to submanifold}(3).  This is because $N$ is a submanifold and (C2) allows us to shrink critical point charts in response to $N$.  Hence differential of $\tilde E_-$ on $U$ by Lemma~\ref{lemma: manifold of trajectories to submanifold}(3) has any $C_{\bm\varepsilon}$-section in its image and using~\eqref{eq: transversality condition}  we conclude that the preimage $\tilde W^u(\bm \gamma, N; \mathfrak{X}_-)=\tilde E_-^{-1}(N)$ is a $C^{m-1}$-submanifold.

(2) Let $\tilde \sigma_-= \op{id}_{[0,1]^\ell}\times \sigma_-$ and let $D_1\tilde\sigma_-{(\bm\theta,\Gamma, Y)}$ be the component of $D\tilde\sigma_-{(\bm\theta,\Gamma, Y)}$ in the $(\bm\theta,\Gamma)$-direction, i.e.,
\begin{gather*}
   D_1\tilde\sigma_-{(\bm\theta,\Gamma, Y)} \colon \R^{\ell} \times C^1_0(\Gamma^*T\Omega^{m,2}(M, \bm q, \bm q')) \to C^0_0(\Gamma^*T\Omega^{m,2}(M, \bm q, \bm q')),\\
   (\varphi, \Xi)\mapsto \nabla_s\Xi+\nabla_{\Xi}(X+Y);
\end{gather*}
it is Fredholm of index $\operatorname{ind}(\bm \gamma)+\ell$  at any point $(\bm\theta, \Gamma, Y) \in [0,1]^{\ell} \times W^u(\bm \gamma; \mathfrak{X}_-)$. 
Here we have used the fact that $X|_\Gamma$ is (at least) $C^m$. Then $\tilde W^u(\bm \gamma, N; \mathfrak{X}_-)=(\tilde\sigma_-|_{\mathcal{P}_-(\bm\gamma,N)\times \mathfrak{X}_-})^{-1}(0)$, where
$$\mathcal{P}_-(\bm \gamma, N)=\{(\bm\theta, \Gamma) \in [0,1]^{\ell} \times \mathcal{P}_-(\bm \gamma)\mid   (\bm\theta, \Gamma(0)) \in N\}.$$
The tangent space of this manifold at $(\bm\theta, \Gamma)$ is given by those $(\varphi,\Xi)$ such that $(\varphi, \Xi(0)) \in TN$, which is a codimension $n$ linear subspace of $\R^{\ell} \times C^1_0(\Gamma^*T\Omega^{m,2}(M, \bm q, \bm q'))$. The restriction of $D_1\tilde \sigma_-$ to this subspace has index $\operatorname{ind}(D_1\tilde\sigma_-)-n$ and 
the claim now follows from \cite[Lemma A.3.6]{McDuff-Salamon-2012} and the Sard-Smale theorem.
\end{proof}

\begin{remark}
   The manifold $\tilde W^u(\bm \gamma, N; \mathfrak{X}_-)$ is a fiber product $$([0,1]^{\ell}\times W^u(\bm \gamma; \mathfrak{X}_-)) \times_{[0,1]^{\ell} \times \Omega^{m,2}(M, \bm q, \bm q')} N$$ and fits into the following pullback diagram
    \[\begin{tikzcd}
	\tilde W^u(\bm \gamma, N; \mathfrak{X}_-) & {[0,1]^{\ell}\times W^u(\bm \gamma; \mathfrak{X}_-)}\\
	N &{[0,1]^{\ell}\times \Omega^{m,2}(M, \bm q, \bm q').}
	\arrow[hook, from=1-1, to=1-2]
	\arrow["\tilde{E}_-", from=1-1, to=2-1]
	\arrow["\tilde{E}_-", from=1-2, to=2-2]
	\arrow[hook, from=2-1, to=2-2]
\end{tikzcd}\]
\end{remark}

\smallskip
Let $N_1, N_2$ be $C^{m-1}$-smooth manifolds and let $\iota_j \colon N_j \to [0,1]^{\ell} \times \Omega^{m,2}(M, \bm q, \bm q')$ be $C^{m-1}$-smooth maps for $j=1,2$. We introduce the following spaces (note that we often write $N_j$ instead of $(N_j, \iota_j)$, for simplicity): 
\begin{gather}
    \tilde W^{ft}(N_1, N_2; \mathfrak{X}_0)=\{(\bm\theta, l, \Gamma, Y,  x_1, x_2) \mid   \bm\theta\in [0,1]^\ell,  \, l>0,\\
    \Gamma \colon [0, l] \to [0,1]^{\ell} \times \Omega^{m,2}(M, \bm q, \bm q'), Y\in \mathfrak{X}_0, (x_1,x_2) \in N_1 \times N_2,\nonumber \\ 
    \partial_s \Gamma+X+Y=0, (\bm\theta, \Gamma(0))=\iota_1(x_1), (\bm\theta, \Gamma(l))=\iota_2(x_2)\}, \nonumber
\end{gather}
\begin{gather}
    \tilde W^{ft}(N_1, -; \mathfrak{X}_0)=\{(\bm\theta, l, \Gamma, Y, x_1) \mid   \bm\theta\in [0,1]^\ell, \, l>0,\\
    \Gamma \colon [0, l] \to \Omega^{m,2}(M, \bm q, \bm q'), \, Y\in \mathfrak{X}_0, x_1 \in N_1,\nonumber\\ 
    \partial_s \Gamma+X+Y=0, (\bm\theta, \Gamma(0)) = \iota_1(x_1)\}, \nonumber
\end{gather}
\begin{gather}
    \tilde W^{ft}(-, N_2; \mathfrak{X}_0)=\{(\bm\theta, l, \Gamma, Y, x_2) \mid   \bm\theta\in [0,1]^\ell,  \, l>0,\\
    \Gamma \colon [0, l] \to  \Omega^{1,2}(M, \bm q, \bm q'), \, Y\in \mathfrak{X}_0, x_2 \in N_2,\nonumber \\ 
    \partial_s \Gamma+X+Y=0, (\bm\theta, \Gamma(l)) = \iota_2(x_2)\}, \nonumber
\end{gather}
\begin{gather}
    \tilde W^{ft}(-, -; \mathfrak{X}_0)=\{(\bm\theta, l, \Gamma, Y) \mid   \bm\theta\in [0,1]^\ell,  \, l>0,\\ 
    \Gamma \colon [0, l] \to \Omega^{1,2}(M, \bm q, \bm q'), \, Y\in \mathfrak{X}_0, \, \partial_s \Gamma+X+Y=0\}. \nonumber
\end{gather}
The superscript ``$ft$'' stands for ``finite time''. 

The space $\tilde W^{ft}(-, -; \mathfrak{X}_0)$ comes with an evaluation map 
\begin{gather} \label{eqn: tilde E zero}
    \tilde E_0=(\tilde E_0^s, \tilde E_0^t) \colon \tilde W^{ft}(-, -; \mathfrak{X}_0) \to ([0,1]^{\ell} \times \Omega^{1,2}(M, \bm q, \bm q'))^{2},\\
    (\bm\theta, l, \Gamma, Y) \mapsto (\bm\theta, \Gamma(0), \bm\theta, \Gamma(l)).\nonumber
\end{gather}
The evaluation maps of the other spaces are defined analogously.

\begin{lemma}\label{lemma: finite-trajectories-two-immersions} 
Let $N_1$ and $N_2$ be $C^{m-1}$-smooth Banach manifolds and $\iota_j \colon N_j \to [0,1]^{\ell} \times \Omega^{m,2}(M, \bm q, \bm q')$, $j=1,2$, be $C^{m-1}$-immersions such that $\op{dim} N_1$ is finite and $\iota(N_1) \subset [0,1]^{\ell} \times C^\infty(M, \bm q, \bm q')$, $\iota_2$ is Fredholm, $\iota_1$ and $\iota_2$ intersect transversely, and
\be
\item[(*)] at each point $(\bm\theta,\bm\gamma)$ of $\iota_2(N_2)$ which belongs to $ [0,1]^{\ell} \times C^\infty(M, \bm q, \bm q')$, $T_{(\bm\theta,\bm\gamma)}\iota_2(N_2)$ is transverse to the space of $C_{\bm\varepsilon}$-sections as in \eqref{eq: transversality condition};   and
\item[(**)] $\iota_2(N_2)$ is disjoint from an open neighborhood of $\tilde{\mathcal{C}}$ given by \eqref{eqn: tilde mathcal C}.
\ee
Then:
\begin{enumerate}
    \item $\tilde W^{ft}(-, -; \mathfrak{X}_0)$ is a $C^m$-smooth Banach manifold with boundary.
    \item $\tilde W^{ft}(N_1, N_2; \mathfrak{X}_0)$ and $\tilde W^{ft}(N_1, -; \mathfrak{X}_0)$ are $C^{m-1}$-smooth Banach manifolds with boundary.
    \item For a generic $Y\in \mathfrak{X}_0$, $\tilde W^{ft}(N_1, N_2; Y)$ and $\tilde W^{ft}(N_1, -; Y)$ are smooth manifolds with boundary and
    \begin{gather}
    \operatorname{dim}\tilde W^{ft}(N_1, N_2; Y)=\operatorname{dim}N_1+1-\operatorname{codim}\iota_2(N_2), \label{eq: dimension-lemma finite trajectories-2}\\
    \operatorname{dim}\tilde W^{ft}(N_1, -; Y)=\operatorname{dim}N_1+1. \label{eq: dimension-lemma finite trajectories-1}
    \end{gather}
\end{enumerate}
\end{lemma}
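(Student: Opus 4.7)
The plan is to realize each of the spaces $\tilde W^{ft}(-,-;\mathfrak{X}_0)$, $\tilde W^{ft}(N_1,-;\mathfrak{X}_0)$, $\tilde W^{ft}(N_1,N_2;\mathfrak{X}_0)$ as the zero locus of a smooth Fredholm section of a Banach bundle over a parametrized path space, and then apply Sard--Smale to the projection to $\mathfrak{X}_0$. Let $\mathcal{P}^{ft}$ denote the Banach manifold consisting of tuples $(\bm\theta, l, \Gamma, Y)$ with $\bm\theta\in[0,1]^\ell$, $l\in(0,+\infty)$, $\Gamma\in W^{1,2}([0,l],\Omega^{m,2}(M,\bm q,\bm q'))$, and $Y\in\mathfrak{X}_0$ (handled by a standard treatment of the varying-domain issue by reparametrizing $[0,l]\cong[0,1]$ and carrying $l$ as a scalar parameter). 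Over $\mathcal{P}^{ft}$ sits the Banach bundle $\mathcal{F}^{ft}$ whose fiber over $(\bm\theta,l,\Gamma,Y)$ is $L^2([0,l],\Gamma^*T\Omega^{m,2}(M,\bm q,\bm q'))$, and we consider the section $\sigma(\bm\theta,l,\Gamma,Y)=\partial_s\Gamma+X+Y$.

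For (1), surjectivity of $D\sigma$ at a zero is immediate: the $Y$-component is unrestricted within $\mathfrak{X}_0$, and by the definition of $\mathfrak{X}_0$ together with Lemma~\ref{lemma: vector field} one can realize any $C_{\bm\varepsilon}$-section along the interior of $\Gamma$, which is dense in $L^2$. Since the $\Gamma$-linearization $\nabla_s+\nabla_\cdot X$ is Fredholm on $W^{1,2}$-paths of finite length, $\sigma^{-1}(0)=\tilde W^{ft}(-,-;\mathfrak{X}_0)$ is a $C^m$-Banach manifold with boundary (the boundary component $l\to 0^+$). The $C^m$-regularity comes from the smoothness of $X$ along smooth paths (Theorem~\ref{thm: regularity for pseudogradient}) and the fact that the graph of the flow is cut out by a smooth ODE on $\Omega^{m,2}$.

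For (2), introduce the endpoint constraints via fiber products. Consider the evaluation map $\tilde E_0=(\tilde E_0^s,\tilde E_0^t)$ from (\ref{eqn: tilde E zero}). Because $\mathfrak{X}_0$ allows perturbations $Y(l,s)$ to be nonzero precisely on a neighborhood of $s=0$ and on a neighborhood of $s=l$, and because $\iota_1(N_1)$ consists of smooth paths while $\iota_2(N_2)$ is disjoint from a neighborhood of $\tilde{\mathcal{C}}$ by (**), Lemma~\ref{lemma: vector field} allows the image of $D\tilde E_0$ to contain arbitrary $C_{\bm\varepsilon}$-sections at both endpoints independently. Hypothesis (*) on $\iota_2$ then says that $T\iota_2(N_2)$ is transverse to these $C_{\bm\varepsilon}$-sections inside $\{0\}\times T\Omega^{m,2}$, and the analogous statement for $\iota_1$ follows from $\dim N_1<\infty$. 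Consequently $\tilde E_0$ is transverse to $\iota_1(N_1)\times\iota_2(N_2)$, and the fiber product $\tilde W^{ft}(N_1,N_2;\mathfrak{X}_0)=\tilde E_0^{-1}(\iota_1(N_1)\times\iota_2(N_2))$ is a $C^{m-1}$-Banach manifold with boundary. The case $\tilde W^{ft}(N_1,-;\mathfrak{X}_0)$ is analogous, using only $\tilde E_0^s$.

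For (3), apply Sard--Smale to the projections $\pi_Y$ from $\tilde W^{ft}(N_1,N_2;\mathfrak{X}_0)$ and $\tilde W^{ft}(N_1,-;\mathfrak{X}_0)$ to $\mathfrak{X}_0$. At a point of $\pi_Y^{-1}(Y)$, the kernel of $D\pi_Y$ consists of $(\delta\bm\theta,\delta l,\Xi)$ satisfying $\nabla_s\Xi+\nabla_\Xi(X+Y)=0$ together with the linearized endpoint conditions $(\delta\bm\theta,\Xi(0))\in T\iota_1(N_1)$ and $(\delta\bm\theta,\Xi(l))\in T\iota_2(N_2)$. Propagation of $\Xi$ by the linearized flow identifies the unconstrained solution space with $\R^\ell\oplus\R\oplus T\Omega^{m,2}$, so the first constraint (finite-dimensional $N_1$) fixes $\dim N_1+1$ degrees of freedom, and the second (a Fredholm condition of cokernel dimension $\mathrm{codim}\,\iota_2(N_2)$) then cuts this down to a Fredholm operator of index $\dim N_1+1-\mathrm{codim}\,\iota_2(N_2)$, yielding \eqref{eq: dimension-lemma finite trajectories-2} and similarly \eqref{eq: dimension-lemma finite trajectories-1}. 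A standard index-comparison shows $\pi_Y$ is $C^{m-1}$-Fredholm of this same index, and a generic value of $Y$ is regular, at which point $\pi_Y^{-1}(Y)$ is the claimed smooth manifold. The principal obstacle is the combined verification that $D\sigma$ coupled with $D\tilde E_0$ is surjective onto the appropriate product space: this is exactly where (*), (**), and the support condition in the definition of $\mathfrak{X}_0$ must be used simultaneously, and where Lemma~\ref{lemma: vector field} is applied twice (once near each endpoint) to produce localized $\mathfrak{X}_0$-perturbations realizing prescribed variations of $\Gamma(0)$ and $\Gamma(l)$.
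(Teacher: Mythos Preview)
Your approach is essentially the same as the paper's: realize the constrained spaces as fiber products via the evaluation $\tilde E_0$, verify transversality using (*), (**), and the support conditions on $\mathfrak{X}_0$, then apply Sard--Smale to the projection to $\mathfrak{X}_0$ for the index computation. Two points are worth flagging.

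For (1) the paper's argument is considerably simpler than your section-theoretic one: the map sending $(\bm\theta,l,\bm\gamma,Y)$ to the unique time-$l$ trajectory of $X+Y_l$ starting at $\bm\gamma$ gives a $C^m$-diffeomorphism
\[
\tilde W^{ft}(-,-;\mathfrak{X}_0)\;\cong\;[0,1]^\ell\times[0,\infty)\times\Omega^{1,2}(M,\bm q,\bm q')\times\mathfrak{X}_0,
\]
so no Fredholm analysis is needed at this stage.

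More importantly, your claim in (1) that elements of $\mathfrak{X}_0$ ``can realize any $C_{\bm\varepsilon}$-section along the interior of $\Gamma$'' is false: by the definition of $\mathfrak{X}_0$ one has $Y(l,s)=0$ for $s\in(g(l),\max(g(l),l-1))$, so the perturbation is supported only near the two endpoints, exactly as you yourself state correctly in your argument for (2). This does not invalidate the conclusion, because on a finite interval the $\Gamma$-linearization $\nabla_s+\nabla_\cdot(X+Y)$ is already surjective by existence and uniqueness of ODEs, with kernel parametrized by initial conditions; this is precisely what the paper invokes (via Abbondandolo--Majer) in part (3) to get $\dim\ker D_1\sigma_0=\dim N_1+1$ and surjectivity of $D_1\sigma_0$ before imposing the $N_2$-constraint. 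You should replace the incorrect surjectivity argument in (1) by this ODE observation.
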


\begin{proof}[Sketch of proof]
(1) There is a $C^{m}$-diffeomorphism 
$$\tilde W^{ft}(-, -; \mathfrak{X}_0)\cong  [0,1]^{\ell} \times [0, +\infty) \times \Omega^{1,2}(M, \bm q, \bm q') \times \mathfrak{X}_0,$$ 
which associates to any $(\bm\theta, l, \bm\gamma, Y)$ a unique trajectory of length $l$ starting at $\gamma$ along $Y$. 

(2) This is analogous to the proof of Lemma~\ref{lemma: unstable-to-diagonal}(1) and a detailed account in the finite-dimensional setting can be found in \cite[Theorem 1.21]{Mescher2018}. 

We briefly discuss the case $\tilde W^{ft}(N_1, N_2; \mathfrak{X}_0)$; the case of $\tilde W^{ft}(N_1, -; \mathfrak{X}_0)$ is straightforward.
As in the proof of Lemma~\ref{lemma: unstable-to-diagonal}(1),  Condition (**) and (C2) allow us to shrink critical point charts in response to $\iota_2(N_2)$. By the proof of Lemma~\ref{lemma: manifold of trajectories to submanifold}(3) and Condition (*), the map $\tilde E_0$ is transverse to $(\iota_1, \iota_2)$.  (Roughly speaking, the transversality with $\iota_1$ is freely achieved, and the transversality with $\iota_2$ is a constrained one, analogous to the construction of $(\Xi,Z)$ which are zero away from $s=0$ in the proof of Lemma~\ref{lemma: manifold of trajectories to submanifold}(3).)
(2) then follows since 
$$\tilde W^{ft}(N_1, N_2; \mathfrak{X}_0)=N_1\tensor*[_{\iota_1}]{\times}{_{\tilde E_0^s}} \tilde W^{ft}(-, -; \mathfrak{X}_0) \tensor*[_{\tilde E_0^t}]{\times}{_{\iota_2}}N_2,$$
which is a $C^{m-1}$-smooth Banach manifold.

(3)  We will treat the $\tilde W^{ft}(N_1, N_2; Y)$ case.  Consider the manifold
\begin{align*}
\mathcal{P}(\iota_1, \iota_2) &\coloneqq \{(\bm\theta, l, \Gamma, x_1, x_2)\mid    \bm\theta \in [0,1]^{\ell}, l \in [0;+\infty), \Gamma \colon [0, l] \to \Omega^{m,2}(M, \bm q, \bm q'), (x_1, x_2) \in N_1 \times N_2, \\
& \qquad \qquad \qquad \qquad (\bm\theta, \Gamma(0))=\iota_1(x_1), (\bm\theta, \Gamma(l))=\iota_2(x_2) \}.
\end{align*}
Then $\tilde W^{ft}(N_1, N_2; \mathfrak{X}_0)$ is the zero set of the following map: 
\begin{gather*}
\tilde\sigma_0 \colon \mathcal{P}(\iota_1, \iota_2) \times \mathfrak{X}_0 \to \mathcal{F}_0,\quad 
(\bm\theta,l,\Gamma,x_1, x_2, Y)\mapsto\partial_s\Gamma+X+Y_l,
\end{gather*}
where $\mathcal{F}_0$ is the vector bundle over $\mathcal{P}(\iota_1, \iota_2) \times \mathfrak{X}_0$ with fiber
$$\mathcal{F}_0|_{(\bm\theta,l,\Gamma,x_1, x_2, Y)}=C^0_0(\Gamma^*(T\Omega^{1,2}(M, \bm q, \bm q'))).$$ 
Here the unstable manifold $\tilde W^{ft}(N_1, -; \mathfrak{X}_0)$ consists of (trajectories along) smooth paths, by our choice of $X$ and construction of $\mathfrak{X}_0$, and an argument similar to that of the proof of Theorem~\ref{thm: regularity for pseudogradient}.

As before, we write $D_1\sigma_0$ for the component of $D\sigma_0$ that does not depend on the variation along the parameter space $\mathfrak{X}_0$. Then by the existence and uniqueness of solutions of ODEs we conclude that $\operatorname{dim} \operatorname{ker} D_1\sigma_0=\operatorname{dim}N_1+1$, and by \cite[Lemma 2.21 (iii)]{abbondandolo-majer2006} this operator is surjective. Clearly, the restriction of $D_1\sigma_0$ to $\mathcal{P}(\iota_1, \iota_2) \times \mathfrak{X}_0 $ then has index $\operatorname{dim}N_1+1-\operatorname{codim}_{\iota_2}N_2$ as in the proof of Lemma~\ref{lemma: unstable-to-diagonal}, which concludes the stated index computation.
\end{proof}

We are finally in a position to introduce the cast of characters that appear in the statement and the proof of the main result (Theorem~\ref{theorem: transversality-for-MFLS}) of Appendix~\ref{subappendix: mfls} . 

Let $\bm \gamma, \bm \gamma' \in \Omega^{1,2}(M, \bm q, \bm q')$ be critical points, $\ell$ a positive integer, and $\bm \tau=(\tau_1, \dots, \tau_\ell)$ an $\ell$-tuple of pairs $\tau_1=\{i_1, j_1\}, \dots, \tau_\ell=\{i_\ell, j_\ell\}$ of distinct elements of $\{1, \dots, \kappa\}$.  Recall from Section~\ref{section: switching map} that we have switching maps $sw_k^{ij} \colon \overline{\Delta}_{I_k}^{ij} \to \overline{\Delta}_{I_k}^{ij}$.
Let $\bm s=(s_1, \dots, s_\ell)$ be a total order on $[\ell]$ and let 
$$\bm Y=(Y_-, Y_1, \dots, Y_{\ell-1}, Y_+) \in \mathfrak{X}_- \times (\mathfrak{X}_0)^{\ell-1} \times \mathfrak{X}_+.$$  
Then we set:

\begin{gather}
    \tilde W_\ell(\bm \gamma, \Delta_{I_{s_\ell}}^{i_\ell j_\ell}; \mathfrak{X}_- \times (\mathfrak{X}_0)^{\ell-1}, \bm \tau, \bm s )=\tilde W^u(\bm \gamma, {\Delta}_{I_{s_1}}^{i_1j_1};\mathfrak{X}_-) \tensor*[_{sw^{i_1j_1}_{I_{s_1}} \circ \tilde E_-}]{\times}{_{\tilde E_0^s}}\tilde W^{ft}(-,\Delta^{i_2j_2}_{t_{s_2}}; \mathfrak{X}_0 ) \\ 
    \tensor*[_{sw^{i_2j_2}_{I_{s_2}} \circ \tilde E_0^t}]{\times}{_{\tilde E_0^s}} 
    \dots \tensor*[_{sw^{i_{\ell-1}j_{\ell-1}}_{I_{s_{\ell-1}}} \circ \tilde E_0^t}]{\times}{_{\tilde E_0^s}}\tilde W^{ft}(-,\Delta_{I_{s_\ell}}^{i_\ell j_\ell};\mathfrak{X}_0 ),  \nonumber
\end{gather}
\begin{gather}\label{eq: defn-MFLS-parameters}
    \tilde W_\ell(\bm \gamma, \bm \gamma'; \mathfrak{X}_- \times  (\mathfrak{X}_0)^{\ell-1} \times \mathfrak{X}_+, \bm \tau, \bm s)=\tilde W^u(\bm \gamma, \Delta_{I_{s_1}}^{i_1j_1};\mathfrak{X}_-)\tensor*[_{sw^{i_1j_1}_{I_{s_1}} \circ \tilde E_-}]{\times}{_{\tilde E_0^s}}\tilde W^{ft}(-,\Delta^{i_2j_2}_{t_{s_2}}; \mathfrak{X}_0 )\\
    \qquad\qquad 
    \tensor*[_{sw^{i_2j_2}_{I_{s_2}} \circ \tilde E_0^t}]{\times}{_{\tilde E_0^s}} 
    \dots
    \tensor*[_{sw^{i_{\ell-1}j_{\ell-1}}_{I_{s_{\ell-1}}} \circ \tilde E_0^t}]{\times}{_{\tilde E_0^s}}\tilde W^{ft}(-,\Delta_{I_\ell}^{i_\ell j_\ell};\mathfrak{X}_0 )\tensor*[_{sw^{i_\ell j_\ell}_{I_{s_\ell}} \circ \tilde E_0^t}]{\times}{_{\tilde E_+}} \tilde W^s(\Delta_{I_{s_\ell}}^{i_\ell j_\ell},\bm \gamma'; \mathfrak{X}_+). \nonumber
\end{gather}

The fiber $\mathcal{M}_{\ell}(\bm \gamma, \bm \gamma'; \bm Y, \bm \tau, \bm s)$ over $\bm Y$ of the natural projection 
$$\tilde W_\ell(\bm \gamma, \bm \gamma'; \mathfrak{X}_- \times (\mathfrak{X}_0)^{\ell-1} \times \mathfrak{X}_+, \bm \tau, \bm s) \to \mathfrak{X}_- \times (\mathfrak{X}_0)^{\ell-1} \times \mathfrak{X}_+$$
is the space of Morse flow lines with switchings (corresponding to the permutation $\bm s$) from Definition~\ref{def-diff}. For $i=0, \dots, \ell-1$ there exists an evaluation map 
\begin{equation}
    \tilde E_i \colon \tilde W_\ell(\bm \gamma, \bm \gamma'; \mathfrak{X}_- \times (\mathfrak{X}_0)^{\ell-1} \times \mathfrak{X}_+, \bm \tau, \bm s) \to \op{Conf}_\ell([0,1]) \times \Omega^{1,2}(M, \bm q, \bm q'),
\end{equation} 
given by an intermediate evaluation map of the form $\tilde E_0^t$ in~\eqref{eq: defn-MFLS-parameters}. Its restriction to $\mathcal{M}_{\ell}(\bm \gamma, \bm \gamma'; \bm Y, \bm \tau, \bm s)$, also denoted $\tilde E_i$, sends $\bm \Gamma$ to $(\bm\theta, \Gamma_i(l_i))$, using the notation from Definition~\ref{def-diff}. 

\s
We also introduce some notation to describe higher-codimensional phenomena in the moduli spaces $\tilde W_\ell(\bm \gamma, \bm \gamma'; \mathfrak{X}_- \times  (\mathfrak{X}_0)^{\ell-1} \times \mathfrak{X}_+, \bm \tau, \bm s)$, i.e., when many switches occur at the same moment.

Let $k_1+\dots+k_r=\ell$ be a partition of $\ell$ (with all $k_i$ positive). Write $\bm\tau=(\bm \tau_{k_1}, \dots, \bm \tau_{k_r})$, where $\bm \tau_{k_1}=(\{i_1,j_1\},\dots, \{i_{k_1},j_{k_1}\})$ consists of the first $k_1$ elements of $\bm \tau$, $\bm \tau_{k_2}$ consists of the next $k_2$ elements of $\bm\tau$, etc. We have closed subsets 
\begin{equation}\label{eqn: bunch of Deltas}
    \Delta^{\bm \tau_{k_1}}= \Delta^{\bm \tau_{k_1}}_{I_{s_1},\dots,I_{s_{k_1}}}, \dots, \Delta^{\bm \tau_{k_r}}= \Delta^{\bm \tau_{k_r}}_{I_{s_{\ell-k_r+1}},\dots,I_{s_\ell}} \subset \op{Conf}_\ell([0,1]) \times \Omega^{1,2}(M, \bm q, \bm q'),
\end{equation}
as defined in~\eqref{eq: diagonal-with-many-intersections}, whose restrictions to $\op{Conf}_\ell([0,1]) \times \Omega^{m,2}(M, \bm q, \bm q')$ are $C^{m-1}$ submanifolds (and we denote these restrictions similarly by abuse of notation).

As discussed at the end of Section~\ref{section: switching map}, the closure $\overline{\Delta}^{\bm \tau_{k_j}}$ in $[0,1]^\ell\times \Omega^{m,2}(M, \bm q, \bm q')$ can be stratified by $C^{m-k_j}$ submanifolds of codimension at least $k_jn$ for each $j \colon 1 \le j \le r$.

We use the notation 
\begin{equation}\label{eq: multiple-switching}
    sw^{\bm \tau_{k_1}}=sw^{i_{k_1}j_{k_1}}_{I_{s_{k_1}}} \circ \dots \circ sw^{i_1j_1}_{I_{s_1}} \colon \overline{\Delta}^{\bm \tau_{k_1}} \to \overline{\Delta}^{\bm \tau_{k_1}},
\end{equation}
etc.\ for the composition of switching maps on the locus where it is well-defined. Let $\bm N=(N_1, \dots, N_r)$, where $N_j$ is one of the strata of $\overline{\Delta}^{\bm \tau_{k_j}}$ from the stratification that we mentioned above.
We define 
\begin{align}\label{eq: MFTS-with-deep-strata}
    \tilde W_\ell(\bm \gamma, \bm \gamma', & \bm N; \mathfrak{X}_- \times (\mathfrak{X}_0)^{\ell-1} \times \mathfrak{X}_+, \bm \tau, \bm s)= \\
    & \tilde W^u(\bm \gamma, N_1;\mathfrak{X}_-)\tensor*[_{sw^{\bm \tau_{k_1}} \circ \tilde E_-}]{\times}{_{\tilde E_0^s}} \dots \tensor*[_{sw^{\bm \tau_{k_r}} \circ \tilde E_0^t}]{\times}{_{E_+}} \tilde W^s(\bm \gamma', N_r; \mathfrak{X}_+),\nonumber
\end{align}
by analogy with Equation~\eqref{eq: defn-MFLS-parameters}.  By the discussion right after Definition~\ref{defn: switching-map}, the restrictions of $sw^{\bm \tau_{k_j}}$ to the strata $N_j$ are $C^{m-k_j}$-smooth. We note that there are only $r-1$ spaces of finite-length trajectories in the above definition, although in the notation we declare that we use $\ell-1$ corresponding perturbation spaces. That is because for each $k_j$ we ``collapse'' $k_j-1$ finite-length trajectories and the corresponding perturbations are not used. 
The fiber over $\bm Y \in \mathfrak{X}_- \times (\mathfrak{X}_0)^{\ell-1} \times \mathfrak{X}_+$ is denoted by
\begin{equation}\label{eq-deep-boundary-strata}
    \mathcal{M}_\ell(\bm \gamma, \bm \gamma', \bm N; \bm Y, \bm \tau, \bm s).
\end{equation}

We also write:
\begin{gather*}
\tilde W_{\ell}(\bm\gamma, \bm\gamma', \Delta_{I_{s_i}, I_{s_{i+1}}}^{\tau_i, \tau_{i+1}}; \mathfrak{X}_- \times (\mathfrak{X}_0)^{\ell-1} \times \mathfrak{X}_+, \bm \tau, \bm s)= \tilde W_\ell(\bm \gamma, \bm \gamma', \bm N; \mathfrak{X}_- \times (\mathfrak{X}_0)^{\ell-1} \times \mathfrak{X}_+, \bm \tau, \bm s),\\
\mathcal{M}_{\ell}(\bm\gamma, \bm\gamma', \Delta_{I_{s_i}, I_{s_{i+1}}}^{\tau_i, \tau_{i+1}};\bm Y, \bm \tau, \bm s)= \mathcal{M}_\ell(\bm \gamma, \bm \gamma', \bm N; \bm Y, \bm \tau, \bm s),
\end{gather*}
where $(k_1, \dots, k_{i-1}, k_i, k_{i+1}, \dots, k_{\ell-1})=(1, \dots, 1, 2, 1, \dots, 1)$,  $N_i=\Delta_{I_{s_i}, I_{s_{i+1}}}^{\tau_i, \tau_{i+1}}$, and $N_j= \Delta^{\tau_j}_{I_j}$ for $j\not=i$. 

The following is the main result of Appendix~\ref{subappendix: mfls}:

\begin{theorem}\label{theorem: transversality-for-MFLS} $\mbox{}$
\begin{enumerate}
       \item  For a generic $\bm Y\in \mathfrak{X}_- \times (\mathfrak{X}_0)^{\ell-1} \times\mathfrak{X}_+$, given any $\bm\gamma\in \mathcal{P}_{\sigma}$, $\bm\gamma'\in \mathcal{P}_{\sigma'}$, $\ell$, and $\bm\tau$, the space $\mathcal{M}_{\ell}(\bm \gamma, \bm \gamma'; \bm Y, \bm \tau, \bm s)$ is a smooth manifold with corners of dimension
\begin{equation}
    \operatorname{dim}\mathcal{M}_{\ell}(\bm \gamma, \bm \gamma'; \bm Y, \bm \tau, \bm s)=\operatorname{ind}(\bm \gamma)-\operatorname{ind}(\bm \gamma')-(n-2)\ell-1,
\end{equation}
    \item For a generic $\bm Y$, given any collection of submanifolds $\bm N=(N_1, \dots, N_r)$ associated with a given tuple $\bm \tau$ and a partition $(k_1, \dots, k_r)$ of $\ell$, the space $\mathcal{M}_\ell(\bm \gamma, \bm \gamma', \bm N; \bm Y,\bm \tau,\bm s)$ is a smooth manifold of dimension at most
\begin{equation}
\label{eqn: dimension-deep-strata for flow lines}
    \op{dim}\mathcal{M}_\ell(\bm \gamma, \bm \gamma', \bm N; \bm Y, \bm \tau, \bm s)=\op{ind}(\bm \gamma)-\op{ind}(\bm \gamma')-(n-1)\ell+r-1,
\end{equation}
where $k=\op{max}(k_1, \dots, k_r)$.
\end{enumerate}
\end{theorem}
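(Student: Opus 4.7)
The plan is to realize $\tilde W_\ell(\bm\gamma,\bm\gamma';\mathfrak{X}_-\times (\mathfrak{X}_0)^{\ell-1}\times \mathfrak{X}_+,\bm\tau,\bm s)$ as the iterated fiber product~\eqref{eq: defn-MFLS-parameters} and to build its Banach manifold structure one factor at a time, alternating between Lemmas~\ref{lemma: unstable-to-diagonal}, \ref{lemma: finite-trajectories-two-immersions}, and~\ref{lemma: manifold of trajectories to submanifold}. The initial factor $\tilde W^u(\bm\gamma,\Delta^{i_1j_1}_{I_{s_1}};\mathfrak{X}_-)$ is a $C^{m-1}$-Banach manifold by Lemma~\ref{lemma: unstable-to-diagonal}(1); the transversality hypothesis~\eqref{eq: transversality condition} is satisfied because each $\Delta^{ij}_{I_k}$ is the preimage of the diagonal in $M\times M$ under a submersion whose fiber directions are realized by $C_{\bm\varepsilon}$-sections. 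Before each intermediate finite-time factor we apply the switching map $sw^{i_kj_k}_{I_{s_k}}$, which is a $C^{m-1}$-immersion on the open stratum of distinct $\bm\theta$-coordinates by Proposition~\ref{prop: switching-properties}(1); Lemma~\ref{lemma: finite-trajectories-two-immersions} then identifies the next fiber product as a $C^{m-1}$-Banach manifold. Condition (*) there propagates inductively from Lemma~\ref{lemma: manifold of trajectories to submanifold}(2)--(3), which ensures that the output of each stage consists of smooth multipaths along which every $C_{\bm\varepsilon}$-section is realized in the image of the differential of the evaluation map; condition (**) follows from Remark~\ref{rmk: V=0 case}($\dagger$), which keeps the critical multipaths disjoint from the other basepoints. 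Iterating through the $\ell-1$ intermediate pieces and closing with the time-reversed analog of Lemma~\ref{lemma: unstable-to-diagonal} for $\tilde W^s(\Delta^{i_\ell j_\ell}_{I_{s_\ell}},\bm\gamma';\mathfrak{X}_+)$ yields the total Banach manifold structure.

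Since the projection onto the separable parameter space is Fredholm, Sard--Smale produces a dense set of regular values $\bm Y$ for which $\mathcal{M}_\ell(\bm\gamma,\bm\gamma';\bm Y,\bm\tau,\bm s)$ is a smooth manifold with corners, the corner strata arising from the vanishing of the length parameters $l_i$. Dimensions add along the fiber product: $\tilde W^u$ contributes $\op{ind}(\bm\gamma)+\ell-n$ by Lemma~\ref{lemma: unstable-to-diagonal}(2); each of the $\ell-1$ intermediate $\tilde W^{ft}$-pieces contributes $1-n$ by Lemma~\ref{lemma: finite-trajectories-two-immersions}(3); and the $\tilde W^s$-factor contributes $-\op{ind}(\bm\gamma')$. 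Summing gives $\op{ind}(\bm\gamma)-\op{ind}(\bm\gamma')-(n-2)\ell-1$, proving part (1). For part (2), repeat the recipe with each group of $k_j$ consecutive switchings replaced by a single submanifold $N_j\subseteq\overline{\Delta}^{\bm\tau_{k_j}}$ of codimension at least $k_jn$, and the corresponding composition of switching maps replaced by $sw^{\bm\tau_{k_j}}$ from~\eqref{eq: multiple-switching}. By the discussion following Proposition~\ref{prop: switching-properties}, $sw^{\bm\tau_{k_j}}|_{N_j}$ is $C^{m-k_j}$, so choosing $m$ large relative to $\max_j k_j$ keeps the argument in the smooth category. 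Only $r-1$ intermediate finite-time pieces now appear, and the dimension count becomes at most $\op{ind}(\bm\gamma)+\ell+(r-1)-\ell n-\op{ind}(\bm\gamma')=\op{ind}(\bm\gamma)-\op{ind}(\bm\gamma')-(n-1)\ell+r-1$; the bound ``at most'' reflects the fact that $N_j$ may carry codimension strictly greater than $k_jn$ from higher-order tangencies or from further collapses of $\bm\theta$-coordinates.

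The main obstacle will be the inductive verification of condition (*) of Lemma~\ref{lemma: finite-trajectories-two-immersions}: at stage $k$ the effective ``$N_1$'' is the image of the previous fiber product under evaluation followed by a switching, and one must confirm that its tangent space is transverse to the space of $C_{\bm\varepsilon}$-sections along every smooth multipath in that image. This transversality is inherited from the characterization of the differential of $\tilde E_0$ given in the proof of Lemma~\ref{lemma: finite-trajectories-two-immersions}(3), combined with the immersion property of the switching map on distinct-$\bm\theta$ strata; the cumulative drop of one derivative per switching composition is absorbed by fixing $m$ large at the outset. For the deep-strata count in part (2), the additional delicate point is that $sw^{\bm\tau_{k_j}}|_{N_j}$ loses $k_j$ derivatives, so $m$ must be chosen large enough for the induction to yield at least $C^1$-regularity at every stage, and one must check that the pointwise jet-bundle conditions defining the deeper strata continue to cut out smooth submanifolds after each fiber-product step.
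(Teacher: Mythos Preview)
Your overall architecture matches the paper's: build the parametrized space $\tilde W_\ell(\bm\gamma,\bm\gamma';\mathfrak{X}_-\times(\mathfrak{X}_0)^{\ell-1}\times\mathfrak{X}_+,\bm\tau,\bm s)$ as an iterated fiber product, check each stage is a $C^{m-1}$ Banach manifold, then apply Sard--Smale to the Fredholm projection. The dimension counts in both parts are correct.

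However, you have swapped the roles of $N_1$ and $N_2$ when invoking Lemma~\ref{lemma: finite-trajectories-two-immersions}. Condition~(*) there concerns $\iota_2$, i.e.\ the \emph{target} diagonal $\Delta^{i_kj_k}_{I_{s_k}}$, not the image of the previous fiber product; it is verified at every stage by exactly the same argument as in the base case (a $C_{\bm\varepsilon}$-section along $\gamma_{i_k}$ can realize any prescribed direction at $\theta_{s_k}$), so nothing needs to ``propagate inductively''. The genuine inductive burden is on the $N_1$ side---showing that the output of each stage consists of \emph{smooth} multipaths---and this is handled by the regularity theory of Appendix~\ref{appendix: regularity for unstable submanifolds}, not by Lemma~\ref{lemma: manifold of trajectories to submanifold}(2)--(3), which only treats the initial unstable manifold. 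Likewise, your appeal to Remark~\ref{rmk: V=0 case}($\dagger$) for condition~(**) is off: ($\dagger$) says critical geodesics avoid the basepoints, which does not imply that $\Delta^{i_kj_k}_{I_{s_k}}$ is disjoint from $\tilde{\mathcal{C}}$. The paper does not verify~(**) literally for the diagonals either; it instead argues transversality directly (``as in the proof of Lemma~\ref{lemma: finite-trajectories-two-immersions}''), using that the \emph{relevant} locus---the image under evaluation from the previous stage---already avoids critical multipaths, so~(C2) suffices. Finally, note that Lemma~\ref{lemma: finite-trajectories-two-immersions} as stated requires $\dim N_1<\infty$, which fails for the universal space $\tilde W_{k-1}$; the paper therefore does not apply the lemma verbatim in the inductive step but rather its proof ideas, keeping the full parametrized space intact and deferring to a single Sard--Smale application at the end.
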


\begin{proof}
    (1) We fix some $m \gg1$ and inductively prove that $\tilde W_k\coloneqq \tilde W_k(\bm \gamma, \Delta_{I_{s_k}}^{i_kj_k}; \mathfrak{X}_- \times (\mathfrak{X}_0)^{k-1}, \bm \tau|_{k}, \bm s|_k)$ is a $C^{m-1}$-smooth Banach manifold, where $1\leq k\leq \ell$ and $\bm \tau|_k=(\tau_1, \dots, \tau_k)$.  
    
    Lemma \ref{lemma: unstable-to-diagonal} provides the base case $k=1$ of the induction, i.e., for 
    $\tilde W_1=\tilde W^u(\bm \gamma, \Delta_{I_{s_1}}^{i_1j_1}; \mathfrak{X}_-).$ It suffices to verify Condition~\eqref{eq: transversality condition}, which is indeed satisfied since for any $(\bm \theta, \bm \gamma) \in \Delta_{I_{s_1}}^{i_1j_1}$, there exists a $C_{\bm\varepsilon}$-section along $\gamma_{i_1}$ which points at $\theta_{s_1}$ in an arbitrary given direction $X \in T_{\gamma_{i_1}(\theta_{s_1})}M$, by our choice of $\bm \varepsilon$. Such sections clearly form a subspace transverse to the tangent space $T_{(\bm \theta, \bm \gamma)}\Delta_{I_{s_1}}^{i_1j_1}$.
    
    For $k>1$ we have:
    \begin{gather*}
    \tilde W_k= \tilde W_{k-1} \tensor*[_{sw^{i_{k-1}j_{k-1}}_{I_{s_{k-1}}} \circ \tilde E_0^t}]{\times}{_{\tilde E_0^s}}\tilde W^{ft}(-,-;\mathfrak{X}_0 ) \tensor*[_{\tilde E_0^t}]{\times}{} \Delta^{i_kj_k}_{I_{s_k}}.
    \end{gather*}
    As in the $k=1$ case, $\tilde E_0^t \colon \tilde W^{ft}(-,-;\mathfrak{X}_0 ) \to [0,1]^\ell\times\Omega^{1,2}(M, \bm q, \bm q')$ is transverse to $\Delta^{i_kj_k}_{I_{s_k}}$ at any point $(\bm \theta, \bm \gamma) \in \Delta^{i_kj_k}_{I_{s_k}}$ with $\bm \gamma \in C^\infty(M, \bm q, \bm q')$. The transversality of $sw^{i_{k-1}j_{k-1}}_{I_{s_{k-1}}} \circ \tilde E_0^t$ and $\tilde E_0^s$ is achieved freely as in the proof of Lemma~\ref{lemma: finite-trajectories-two-immersions}, and $\tilde W_k$ is a $C^{m-1}$-smooth Banach manifold. Note that  $\tilde W_k$ consists of trajectories of smooth paths by Appendix~\ref{section: regularity}.  Hence $\tilde W_\ell$ is a $C^{m-1}$-smooth Banach manifold.
    
    By a similar argument,
    $$\tilde W_\ell^\dagger\coloneqq \tilde W_\ell(\bm \gamma, \bm \gamma'; \mathfrak{X}_- \times (\mathfrak{X}_0)^{\ell-1} \times \mathfrak{X}_+, \bm \tau, \bm s)=\tilde W_\ell\tensor*[_{sw^{i_\ell j_\ell}_{I_{s_\ell}} \circ \tilde E_0^t}]{\times}{_{\tilde E_+}} \tilde W^s( \Delta_{I_{s_\ell}}^{i_\ell j_\ell},\bm \gamma'; \mathfrak{X}_+)$$
    is a $C^{m-1}$-smooth Banach manifold.

    Finally, by the arguments from Lemmas \ref{lemma: unstable-to-diagonal} and \ref{lemma: finite-trajectories-two-immersions}, the projection $\tilde W_\ell^\dagger\to \mathfrak{X}_- \times (\mathfrak{X}_0)^{\ell-1} \times \mathfrak{X}_+$ is Fredholm of index
    $\operatorname{ind}(\bm \gamma)-\operatorname{ind}(\bm \gamma')-(n-2)\ell-1$. Here we rely on the fact that $sw^{i_{k-1}j_{k-1}} _{I_{s_{k-1}}} \circ \tilde E_0^t$ is an immersion for any $k$ when restricted to a portion of $\tilde W_k$ corresponding to any given choice of perturbation data.
    Then it follows from the Sard-Smale theorem that for a generic $\bm Y$ the space $\mathcal{M}_{\ell}(\bm \gamma, \bm \gamma'; \bm Y, \bm \tau, \bm s)$ is a $C^{m-1}$ manifold of dimension stated in (1). Since $m$ in the above argument is arbitrary we conclude that this space is a smooth manifold.

    (2) The above proof can be repeated by replacing $\Delta^{i_kj_k}_{I_{s_k}}$ by the appropriate $N_k$. We point out again that our definition of switching maps guarantees that the restrictions to the strata $N_k$ regarded as subsets of $[0,1]^\ell \times \Omega^{m,2}(M, \bm q, \bm q')$ are at least $C^{m-k}$-smooth.
\end{proof}
    
\subsection{Morse flow trees with switchings}\label{section: appendix mfts} 

In Appendix~\ref{section: appendix mfts} we prove Theorem~\ref{theorem: transversality for MFTS}, which implies the first half of Lemma~\ref{lemma-ainfty-regular}(a) in Section~\ref{subsection: the A infty structure}. The relevant moduli spaces will be constructed inductively in terms of fiber products, following the approach of Appendix~\ref{subappendix: mfls}. 

Let $\bm q^0, \dots, \bm q^d$ be $\kappa$-tuples of distinct points on $M$, let $\vv{\bm \gamma}=(\bm \gamma_1, \dots, \bm \gamma_d)$, where $\bm \gamma_i \in \Omega^{1,2}(M, \bm q^i, \bm q^{i+1})$ is a $\kappa$-tuple of critical points, and let $\bm \gamma_0 \in \Omega^{1,2}(M, \bm q^0, \bm q^d)$ be a $\kappa$-tuple of critical points.

Let $(T,\bm\tau) \in \mathcal{T}_d^\ell$ be a ribbon tree $T$ with switching data $\bm\tau$; for $\mathcal{T}_d^\ell$, there are no vertex switchings, so $\bm\tau=(\bm\tau_e)_{e\in E(T)}$. The perturbation space $\mathfrak{X}(T,\bm\tau )$ is given by Equation~\eqref{eq: tree-perturbations}. Let $\bm s$ be a total ordering on all the switchings.   

We will inductively associate to each vertex $v \in V(T)$ a certain space of trajectories $\tilde W_v(\vv{\bm \gamma};\mathfrak{X}(T,\bm\tau),\bm s)$ and an evaluation/concatenation map
$$C_v \colon \tilde W_v(\vv{\bm \gamma};\mathfrak{X}(T,\bm\tau),\bm s) \to \Omega^{1,2}(M, \bm q^{R(v)-1}, \bm q^{L(v)}),$$
where $R(v)$ and $L(v)$ are as defined in Definition~\ref{def: MFTS}.

First, for each incoming vertex $v_i$, let
\begin{equation}
    \tilde W_{v_i}(\vv{\bm \gamma};\mathfrak{X}(T,\bm\tau),\bm s)=[0,1]^\ell \times W^u(\bm \gamma_i; (\mathfrak{X}_-)_{i}),
\end{equation}
where $(\mathfrak{X}_-)_{i}$ is the $i$th copy of $\mathfrak{X}_-$ in the product $\mathfrak{X}(T,\bm\tau)$, and let $C_{v_i}$ be the evaluation map $\tilde E_-$ from \eqref{eqn: tilde E minus}.

Now let $v \in V_{\operatorname{int}}(T)$ and suppose that for all of its incoming vertices $v'=v^1, \dots, v^{|v|-1}$ (i.e., the edge $e^v_i$ is from $v^i$ to $v$) the space $\tilde W_{v'}(\vv{\bm \gamma};\mathfrak{X}(T,\bm\tau),\bm s)$ and the map $C_{v'}$ have already been defined. For each $v^i$ we set:
\begin{gather}\label{eqn: MFTS-intermediate-space}
    \tilde W_{v^i, e^v_i}(\vv{\bm \gamma}; \mathfrak{X}(T,\bm\tau),\bm s)=\tilde W_{v^i}(\vv{\bm \gamma};\mathfrak{X}(T,\bm\tau),\bm s)\tensor*[_{C_{v^i}}]{\times}{_{\tilde E_0^s}}\tilde W^{ft}(-,{\Delta}^{\tilde{\tau}_1}_{I_{s(\tilde \tau_1)}};\mathfrak{X}_0) \\
    \qquad\qquad\qquad\qquad\qquad \qquad\qquad\qquad\qquad\qquad \tensor*[_{sw^{\tilde{\tau}_1}_{I_{s(\tilde \tau_1)}} \circ \tilde E_0^t}]{\times}{_{\tilde E_0^s}}\dots \tensor*[_{sw^{\tilde{\tau}_{\ell(e_i^v)}}_{I_{s(\tilde \tau_{\ell(e_i^v)})}} \circ \tilde E_0^t}]{\times}{_{\tilde E_0^s}}\tilde W^{ft}(-,-;\mathfrak{X}_0 ),\nonumber
\end{gather} 
where $\bm \tau_{e^v_i}=(\tilde{\tau}_1, \dots, \tilde{\tau}_{\ell(e_i^v)})$ and $\bm s(\tilde\tau_i)$ is the ordering assigned to the switching data $\tilde\tau_i$. Let 
\begin{equation}
    \tilde E_0^t \colon \tilde  W_{v^i, e^v_i}(\vv{\bm \gamma}; \mathfrak{X}(T,\bm\tau),\bm s) \to \Omega^{1,2}(M, \bm q^{R(v^i)-1}, \bm q^{L(v^i)})
\end{equation}
denote the composition of the projection to the last factor in~\eqref{eqn: MFTS-intermediate-space} and $\tilde E_0^t$ given by \eqref{eqn: tilde E zero}.  We then set: 
\begin{equation}
    \tilde W_v(\vv{\bm \gamma};\mathfrak{X}(T,\bm\tau),\bm s)=\tilde W_{v^1, e^v_1}(\vv{\bm \gamma}; \mathfrak{X}(T,\bm\tau),\bm s)  \tensor*[_{[0,1]^\ell}]{\times}{_{[0,1]^\ell}}\dots \tensor*[_{[0,1]^\ell}]{\times}{_{[0,1]^\ell}}\tilde W_{v^{|v|-1}, e^v_{|v|-1}}(\vv{\bm \gamma}; \mathfrak{X}(T,\bm\tau),\bm s),
\end{equation}
where we are taking fiber products over $[0,1]^\ell$.
Given $\bm \Gamma \in \tilde W_v(\vv{\bm \gamma};\mathfrak{X}(T,\bm\tau),\bm s)$, its component corresponding to the last factor $\tilde W^{ft}(-,-;\mathfrak{X}_0 )$ of $\tilde W_{v^i, e^v_i}(\vv{\bm \gamma}; \mathfrak{X}(T,\bm\tau),\bm s)$ is a trajectory
$$\Gamma^{e^{v}_i}_{\ell(e^v_i)} \colon [0, l^{e^{v}_i}_{\ell(e^v_i)}] \to \Omega^{1,2}(M, \bm q^{R(v^i)-1}, \bm q^{L(v^i)}).$$
Then we set
\begin{equation}
    C_v(\bm \Gamma)=c^{\bm w(\bm \Gamma)} \left(\bm \theta, \Gamma^{e^v_{|v|-1}}_{\ell_{e^v_{|v|-1}}}(l^{e^v_{|v|-1}}_{\ell_{e_{|v|-1}^v}}),\dots,\Gamma^{e^v_1}_{\ell_{e^v_1}}(l^{e^v_1}_{\ell_{e_1^v}}) \right),
\end{equation}
as in Equation~\eqref{eqn: concatenation at interior vertices}. Here $\bm w(\bm \Gamma)=(w_{e^v_1}(\bm \Gamma), \dots, w_{e^v_{|v|-1}}(\bm \Gamma))$, with each $w_{e^v_i}(\bm \Gamma)$ defined inductively as in Definition~\ref{def: MFTS}(6).

Finally, writing $v'_0$ for the unique vertex with an edge to $v_0$, we set:
\begin{gather} \label{eqn: MFTS-final-space}
    \tilde W_T(\vv{\bm \gamma}, \bm \gamma_0;\mathfrak{X}(T,\bm\tau), \bm s)=\tilde W_{v'_0}(\vv{\bm \gamma};\mathfrak{X}(T,\bm\tau),\bm s)\tensor*[_{C_{v'_0}}]{\times}{_{\tilde E_0^s}}\tilde W^{ft}(-,\Delta^{\tilde{\tau}_1}_{I_{s(\tilde\tau_1)}};\mathfrak{X}_0 ) \\
    \qquad\qquad\qquad\qquad\qquad \tensor*[_{sw^{\tilde\tau_1}_{I_{s(\tilde\tau_1)}} \circ \tilde E_0^t}]{\times}{_{E_0^s}}\dots \tensor*[_{sw^{\tilde\tau_{\ell(e_0)}}_{I_{s(\tilde\tau_{\ell(e_0)})}} \circ \tilde E_0^t}]{\times}{_{\tilde E_+}} \tilde W^s(\Delta^{\tilde \tau_{\ell(e_0)}}_{I_{s(\tilde\tau_{\ell(e_0)})}}, \bm \gamma_0; \mathfrak{X}_+).\nonumber 
\end{gather}
where $\bm \tau_{e_0}=(\tilde{\tau}_1, \dots, \tilde{\tau}_{\ell(e_0)})$. 

The fiber $\mathcal{M}_T(\vv{\bm \gamma}, \bm \gamma_0; \bm Y, \bm \tau, \bm s)$ over $\bm Y$ of the natural projection 
$$\tilde W_T( \vv{\bm \gamma}, \bm \gamma_0;\mathfrak{X}(T,\bm\tau), \bm s) \to \mathfrak{X}(T,\bm\tau)$$ 
coincides with the MFTS moduli space introduced in Definition~\ref{def: MFTS}.

We also have analogous spaces $\tilde W_T(\vv{\bm \gamma}, \bm \gamma_0;\mathfrak{X}(T, \bm \tau), \bm s)$ and $\mathcal{M}_T(\vv{\bm \gamma}, \bm \gamma_0; \bm Y, \bm \tau, \bm s)$, $\bm Y\in \mathfrak{X}(T, \bm \tau)$, defined using $C_{\bm\varepsilon}$-spaces. 

As in Appendix~\ref{subappendix: mfls}, given a partition $k_1+\dots+k_{r_e}=\ell_e$ and a decomposition $\bm\tau_e=(\bm\tau_{e,k_1},\dots, \bm\tau_{e,k_{r_e}})$, we may choose a tuple $\bm N_e=(N_1^e, \dots, N_{r_e}^e)$ of strata in $\overline\Delta^{\bm \tau_{e,k_1}}, \dots, \overline\Delta^{\bm \tau_{e,k_{r_e}}}$, defined as in Equation~\eqref{eqn: bunch of Deltas}. Given a collection $(\bm N_e)_{e \in E(T)}$, we may define a spaces
\begin{equation}
     \tilde W_T(\vv{\bm \gamma}, \bm \gamma_0, (\bm N_e)_{e \in E(T)};\mathfrak{X}(T,\bm\tau), \bm s), \quad \mathcal{M}_T(\vv{\bm \gamma}, \bm \gamma_0, (\bm N_e)_{e \in E(T)};\bm Y, \bm \tau, \bm s),
\end{equation}
by analogy with~\eqref{eq: MFTS-with-deep-strata} and \eqref{eq-deep-boundary-strata}. Here $\bm Y\in \mathfrak{X}(T,\bm\tau)$.

Given $(T,\bm\tau) \in \mathcal{T}_d^{\ell, \ell'}$, we consider $\mathfrak{X}(T,(\bm\tau_e)_{e\in E(T)})$ the space obtained by forgetting the vertex switches $(\bm \tau_v)_{v\in V(T)}$. To each vertex $v \in V_{\op{int}}(T)$ we additionally associate $\bm N_v=(N_{v,1}, \dots, N_{v, |v|-1})$, where $N_{v,i}$ is a stratum of $\overline\Delta^{\bm \tau_{v,i}}$. As above, to $\bm Y \in \mathfrak{X}(T,(\bm\tau_e)_{e\in E(T)})$ we may associate a space
     \begin{equation}
         \mathcal{M}_T(\vv{\bm \gamma}, \bm \gamma_0, (\bm N_e)_{e \in E(T)}, (\bm N_v)_{v \in V_{\op{int}}(T)};\bm Y, \bm \tau, \bm s),
     \end{equation}
by analogy with~\eqref{eq-deep-boundary-strata} and taking into account Remark~\ref{remark: how-to-define-vertex-deep-strata}.

\begin{theorem}\label{theorem: transversality for MFTS}$\mbox{}$
For $(T,\bm\tau) \in \mathcal{T}_d^{\ell, \ell'}$ and a generic $\bm Y\in  \mathfrak{X}(T,\bm\tau)$, the spaces 
$$\mathcal{M}_T(\vv{\bm \gamma}, \bm \gamma_0; \bm Y, \bm \tau, \bm s) \quad\mbox{and}\quad \mathcal{M}_T(\vv{\bm \gamma}, \bm \gamma_0, (\bm N_e)_{e \in E(T)};\bm Y, \bm \tau, \bm s),$$ 
are smooth manifolds of dimensions
\begin{gather}\label{eq: tree-space-dim}
    \operatorname{dim}\mathcal{M}_T(\vv{\bm \gamma}, \bm \gamma_0; \bm Y, \bm \tau, \bm s)=\operatorname{ind}(\bm \gamma_0)-\operatorname{ind}(\bm \gamma_1)-\dots-\operatorname{ind}(\bm \gamma_d)-(n-2)\ell+|E_{\operatorname{int}}(T)|,\\
    \label{eq: tree-space-deep-strata-dim}
    \operatorname{dim}\mathcal{M}_T(\vv{\bm \gamma}, \bm \gamma_0, (\bm N_e)_{e \in E(T)};\bm Y, \bm \tau, \bm s)=\operatorname{ind}(\bm \gamma_0)-\operatorname{ind}(\bm \gamma_1)-\dots-\operatorname{ind}(\bm \gamma_d)\\
    \nonumber
    \qquad \qquad \qquad  -(n-1)\ell+|E_{\operatorname{int}}(T)|+\textstyle \sum_{e \in E(T)}r_e-1, 
\end{gather}
and the space $\mathcal{M}_T(\vv{\bm \gamma}, \bm \gamma_0, (\bm N_e)_{e \in E(T)}, (\bm N_v)_{v \in V_{\op{int}}(T)};\bm Y, \bm \tau, \bm s)$ is a smooth manifold of dimension 
\begin{gather} \label{eq: tree-space-deep-strata-dim-with-vert}
    \operatorname{dim}\mathcal{M}_T(\vv{\bm \gamma}, \bm \gamma_0, (\bm N_e)_{e \in E(T)}, (\bm N_v)_{v \in V_{\op{int}}(T)};\bm Y, \bm \tau, \bm s)=\operatorname{ind}(\bm \gamma_1)+\dots+\operatorname{ind}(\bm \gamma_d)-\operatorname{ind}(\bm \gamma')- \\
    \nonumber 
    \qquad \qquad \qquad (n-1)\ell+|E_{\op{int}}(T)|+ \textstyle \sum_{e \in E(T)}r_e-1-(n-1)\ell'. 
\end{gather}
\end{theorem}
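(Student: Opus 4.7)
The plan is to argue by induction on the combinatorial structure of the tree $T$, building $\tilde W_T$ as the nested sequence of fiber products spelled out in \eqref{eqn: MFTS-intermediate-space}--\eqref{eqn: MFTS-final-space}, and then to apply the Sard-Smale theorem to the natural Fredholm projection $\tilde W_T(\vv{\bm\gamma},\bm\gamma_0;\mathfrak{X}(T,\bm\tau),\bm s)\to\mathfrak{X}(T,\bm\tau)$. Fix a regularity exponent $m\gg \ell+d$ throughout; a smooth generic $\bm Y$ will be extracted at the end by a Baire-category intersection argument over increasing $m$, exactly as in the concluding paragraph of the proof of Theorem~\ref{theorem: transversality-for-MFLS}.

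For the base case, at each incoming vertex $v_i$, the space $\tilde W_{v_i}=[0,1]^\ell\times W^u(\bm\gamma_i;(\mathfrak{X}_-)_i)$ is a $C^{m-1}$-smooth Banach manifold by Lemma~\ref{lemma: unstable-to-diagonal}(1), and its evaluation $C_{v_i}=\tilde E_-$ lands in $[0,1]^\ell\times C^\infty(M,\bm q^{i-1},\bm q^i)$ with $C_{\bm\varepsilon}$-dense differential, ensuring the transversality hypothesis \eqref{eq: transversality condition} at the next step. For the inductive step at an interior vertex $v$, the intermediate space $\tilde W_{v^i,e^v_i}$ in \eqref{eqn: MFTS-intermediate-space} is built by alternately applying a switching map $sw^{\tilde\tau_j}_{I_{s(\tilde\tau_j)}}$ (which is $C^{m-1}$ on the relevant stratum by Proposition~\ref{prop: switching-properties}(1)) and forming a further fiber product with $\tilde W^{ft}(-,\Delta^{\tilde\tau_{j+1}}_{I_{s(\tilde\tau_{j+1})}};\mathfrak{X}_0)$, each such step falling directly into the framework of Lemma~\ref{lemma: finite-trajectories-two-immersions} because the incoming factor is an immersion into smooth multipaths disjoint from $\tilde{\mathcal{C}}$. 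The subsequent fiber product of the $|v|-1$ spaces $\tilde W_{v^i,e^v_i}$ over the common $[0,1]^\ell$ is automatically transverse (the projection is a submersion), and the weighted concatenation $C_v=c^{\bm w_v}$ is an immersion on the non-critical locus because the $\kappa$ strands can be deformed independently. Hence $\tilde W_v$ is again a $C^{m-1}$-Banach manifold and $C_v$ is transverse to the switching diagonals along the outgoing edge of $v$, feeding the induction. The final step handles the outgoing edge $e_0$ identically, closing with a fiber product against $\tilde W^s(\Delta^{\tilde\tau_{\ell(e_0)}};\mathfrak{X}_+)$ dual to Lemma~\ref{lemma: unstable-to-diagonal}. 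The dimension formula \eqref{eq: tree-space-dim} is then pure bookkeeping: each $\bm\gamma_i$ contributes $\op{ind}(\bm\gamma_i)$ via its unstable manifold, $\bm\gamma_0$ contributes $-\op{ind}(\bm\gamma_0)$ via the stable one, every internal edge contributes $+1$ from its length parameter $l^e_{\ell_e}$, and every switch contributes $+2-n$ (the $\theta_k,l_k$ pair minus the codimension $n$ of the switching diagonal).

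The deep-strata variants \eqref{eq: tree-space-deep-strata-dim} and \eqref{eq: tree-space-deep-strata-dim-with-vert} are proved by the identical scheme with each $\Delta^{\tau_j}_{I_j}$ replaced by a stratum $N_j^e\subset\overline{\Delta}^{\bm\tau_{e,k_j^e}}$ of codimension at least $k_j^e n$; vertex switchings are inserted before the concatenation $C_v$ via Remark~\ref{remark: how-to-define-vertex-deep-strata}. The switching maps on depth-$k$ strata are only $C^{m-k}$, which forces us to take $m$ still larger, but the transversality analysis is unaffected. The corrected dimension comes from the fact that collapsing $k_j^e$ switches into one multi-switch loses $k_j^e-1$ independent $\theta$-parameters while gaining $(k_j^e-1)n$ codimension, producing the $-(n-1)\ell+\sum_e r_e$ correction in the edge case and an analogous $-(n-1)\ell'$ term for vertex switches. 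The main obstacle is the transversality analysis at the weighted concatenation $C_v$: because the weights $\bm w_v(\bm\Gamma)$ are a nontrivial function of the incoming edge lengths $l^{e^v_i}_{\ell_{e^v_i}}$, the construction is not a static Cartesian product, and one must verify that the dependence of $\bm w_v$ on the flow data does not destroy the immersion property of $c^{\bm w_v}$. This is resolved by observing that $c^{\bm w}$ is an immersion for each fixed $\bm w$ and that the derivative of $\bm w_v$ with respect to the edge lengths is transverse to its kernel, so an implicit-function argument produces the required tangential freedom; once this is checked, all remaining inputs reduce cleanly to the lemmas already at our disposal and the induction closes.
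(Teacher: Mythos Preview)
Your proposal follows essentially the same inductive fiber-product strategy as the paper's own proof, invoking the same lemmas (Lemmas~\ref{lemma: unstable-to-diagonal} and~\ref{lemma: finite-trajectories-two-immersions}) and concluding via Sard--Smale on the projection to $\mathfrak{X}(T,\bm\tau)$. The one place you diverge slightly is in your treatment of the weighted concatenation $C_v$: you flag the dependence of $\bm w_v$ on the edge-length data as the ``main obstacle'' and propose an implicit-function argument, whereas the paper dispatches this in a single sentence by noting that the differential of $c^{\bm w}$ is an injective linear map at every point for each fixed $\bm w$. Since the weights vary smoothly and injectivity of the differential is an open condition, this already suffices---your extra analysis of the derivative of $\bm w_v$ with respect to edge lengths is not needed, though it is not wrong either.
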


\begin{proof}
    This is similar to the proof of Theorem~\ref{theorem: transversality-for-MFLS}.
    
    We can inductively show that $\tilde W_v(\vv{\bm \gamma};\mathfrak{X}(T,\bm\tau),\bm s)$ and $\tilde W_T( \vv{\bm \gamma}, \bm \gamma^0;\mathfrak{X}(T,\bm\tau),\bm s)$ are $C^{m-1}$-smooth Banach manifolds for an arbitrary large integer $m$ by observing that the maps involved in \eqref{eqn: MFTS-intermediate-space} and \eqref{eqn: MFTS-final-space} intersect transversely since the differentials of the maps of the form $\tilde E_0^s, \tilde E_0^t$ has all $C_{\bm\varepsilon}$-sections at smooth paths away from the critical points.

    Next, the projection map $\tilde W_T(\vv{\bm \gamma}, \bm \gamma^0;\mathfrak{X}(T,\bm\tau),\bm s) \to \mathfrak{X}(T,\bm\tau)$ has index 
    $$\operatorname{ind}(\bm \gamma_1)+\dots+\operatorname{ind}(\bm \gamma_d)-\operatorname{ind}(\bm \gamma')-(n-2)\ell+|E_{\operatorname{ind}}(T)|.$$
    The only additional step compared to that of Theorem~\ref{theorem: transversality-for-MFLS}(1)(2) is that the differential of concatenation $c^{\bm w}$ is an injective linear map at any point for any choice of weights $\bm w$. (1) then follows from the Sard-Smale theorem. 
\end{proof}

\section{Compactness}
\label{section: appendix-compactness}

\subsection{Construction of a consistent collection of perturbation data}

The goal of this subsection is to construct a consistent collection of perturbation spaces so that the boundaries of $\mathcal{M}(\bm \gamma, \bm \gamma'; \bm Y, \bm \tau, \bm s)$ and $\mathcal{M}_T(\vv{\bm \gamma}, \bm \gamma_0; \bm Y, \bm \tau, \bm s)$ are covered by products of moduli spaces for smaller $\ell$ and subtrees of $T$. The perturbation spaces $\mathfrak{X}_- \times (\mathfrak{X}_0)^{\ell-1} \times \mathfrak{X}_+$ from Appendix~\ref{appendix: transversality} are clearly not sufficient since our choices of $\bm Y\in \mathfrak{X}_- \times (\mathfrak{X}_0)^{\ell-1} \times \mathfrak{X}_+$ for various $\ell$, $\bm \tau$, and $\bm s$ are unrelated. The classical approach from \cite{seidel2008fukaya} requires making a choice of consistent perturbation data $\bm Y$ for all $\ell$ and all trees $T$.  Our construction will follow the approach of Abouzaid \cite[Lemma 7.2]{abouzaid2011plumbings} and Mescher \cite[Section 2]{Mescher2018} for the finite-dimensional case of Morse gradient trees.

The construction of a consistent collection of perturbation spaces is rather involved and will be carried out over the next few pages.  

The domains of our trajectories $\bm\Gamma$ are parametrized by $\mathcal{D}_\ell:=[0,+\infty)^\ell$, $\ell=0,1,2,\dots$, where $\ell+1$ is the number of switches and if $(\lambda_1,\dots,\lambda_\ell)\in \mathcal{D}_\ell$, then $\lambda_i$ is the length of the trajectory between the $(i-1)$st switch and the $i$th switch. 

Our basic building blocks are the spaces $\mathfrak{X}_{\pm}(k)$ and $\mathfrak{X}_0^i(k)$, $k=0,1,2,\dots$, $i=1,\dots, k$. (They are components of $\mathfrak{X}_{\op{sw}}(\ell)$ below.)  We set $\mathfrak{X}_{\pm}(0)=\mathfrak{X}_\pm$ and $\mathfrak{X}_0(0)=\mathfrak{X}_0$ and for $k>0$ we recursively define 
\begin{equation}
\mathfrak{X}_{-}(k)\coloneqq \left \{
\begin{array}{c}
Y \in C_{\bm \varepsilon}(\mathcal{D}_k \times(-\infty, 0], \mathfrak{X}) \, \, \mid   \, Y(\lambda, \cdot) \in \mathfrak{X}_{-} ~~ \forall  \lambda=(\lambda_1,\dots,\lambda_k)\in\mathcal{D}_k,\\
\lim _{\lambda_j' \rightarrow+\infty} Y|_{\lambda_j=\lambda_j'} \text { exists in } \mathfrak{X}_{-}(k-1) ~~ \forall j \in\{1,2, \dots, k\}
\end{array}
\right\};
\end{equation}
\begin{equation}
\mathfrak{X}_{+}(k)\coloneqq \left \{
\begin{array}{c}
Y \in C_{\bm \varepsilon}(\mathcal{D}_k \times [0,+\infty), \mathfrak{X}) \, \, \mid   \, Y(\lambda, \cdot) \in \mathfrak{X}_{+} ~~ \forall  \lambda=(\lambda_1,\dots,\lambda_k)\in\mathcal{D}_k,\\
\lim _{\lambda_j' \rightarrow+\infty} Y|_{\lambda_j=\lambda_j'} \text { exists in } \mathfrak{X}_{+}(k-1) ~~ \forall j \in\{1,2, \dots, k\}
\end{array}
\right\};
\end{equation}
\begin{equation}
\mathfrak{X}_0^i(k)\coloneqq \left \{
\begin{array}{c}
Y \in  C_{\bm \varepsilon}(\mathcal{D}_k  \times[0,+\infty), \mathfrak{X}) \mid  Y_{\lambda,i}\in \mathfrak{X}_0 
~~\forall \lambda=(\lambda_1,\dots,\lambda_k)\in\mathcal{D}_k,\\
\lim _{\lambda_j' \rightarrow+\infty} Y|_{\lambda_j=\lambda_j'} \text { exists in $\mathfrak{X}_0^{i-1}(k-1)$ or in $\mathfrak{X}_0^i(k-1)$} \\
 ~~ \forall j \in\{1, \dots, \hat i, \dots,k\} \text{~depending on whether $j<i$ or $j>i$},\\
\lim _{\lambda_i'\to +\infty} \operatorname{split}Y|_{\lambda_i=\lambda_i'} \text { exists in } \mathfrak{X}_{+}(k-1) \times \mathfrak{X}_{-}(k-1)
\end{array}
\right\},
\end{equation}
where $\hat{\cdot}$ means the term is omitted, and $Y_{\lambda,i}\in \mathfrak{X}_0$ is $Y(\lambda_1,\dots,\lambda_{i-1},\cdot,\lambda_{i+1},\dots,\lambda_k,\cdot)$, i.e., with all but the $i$th and last spots specified.

Next we define the {\em background perturbation space} for $\mathfrak{X}_\pm(k)$ for $k\geq 1$:
\begin{equation} \label{back pm}
    \mathfrak{X}_{ \pm}^{\op{back}}(k)\coloneqq  \left\{\bm Z=(Z_1, \dots, Z_k) \in \mathfrak{X}_{\pm}(k-1)^{\times k} \mid   \lim _{\lambda_i' \rightarrow \infty} Z_j|_{\lambda_i=\lambda_i'} =\lim _{\lambda_{j-1}' \rightarrow \infty}Z_i|_{\lambda_{j-1}=\lambda_{j-1}'} ~\forall j>i\right\}.
\end{equation}
Viewing $\lambda_j=+\infty$ as the faces of $\mathcal{D}_k$ at $+\infty$, there is a ``face map'' which assigns to $Y \in \mathfrak{X}_{\pm}(k)$ its {\em background perturbation data}\footnote{We keep Mescher's terminology of ``background perturbation data'', but perhaps it's better to call it ``boundary perturbation data'', where the boundary refers to the boundary at $+\infty$.}
\begin{gather}\label{eq-constructing-background}
F_{\pm,k}:\mathfrak{X}_{\pm}(k) \to \mathfrak{X}_{ \pm}^{\op{back} }(k), \quad
Y\mapsto  (\lim_{\lambda_1'\rightarrow \infty} Y|_{\lambda_1=\lambda_1'}, \dots, \lim_{\lambda_k' \rightarrow \infty} Y|_{\lambda_k=\lambda_k'}).   
\end{gather}
Similarly, we define the background perturbation spaces
\begin{equation}\label{back 0}
\mathfrak{X}_0^{\op{back},i}(k)\coloneqq \left\{
\begin{array}{c}
\bm Z'=(Z_1, \dots,\widehat{Z}_i,\dots Z_k, Z^{+}, Z^{-}) \in \\ 
\mathfrak{X}^{i-1}_0(k-1)^{\times i-1} \times \mathfrak{X}_0^{i}(k-1)^{\times k-i}\times \mathfrak{X}_{+}(k-1) \times \mathfrak{X}_{-}(k-1) \bigm| \\
\displaystyle\lim_{\lambda_a' \rightarrow+\infty}  Z_b|_{\lambda_a=\lambda_a'}=\lim _{\lambda_{b-1}' \rightarrow+\infty} Z_a|_{\lambda_{b-1}=\lambda_{b-1}'} \quad \forall a<b \in\{1, \dots,\hat i, \dots, k\},\\
\displaystyle \lim_{\lambda_i' \to +\infty} \op{split} Z_a|_{\lambda_i=\lambda_i'}=\lim _{\lambda_{a-1}' \rightarrow+\infty}(Z^{+}|_{\lambda_{a-1}=\lambda_{a-1}'},Z^{-}|_{\lambda_{a-1}=\lambda_{a-1}'})  \quad \forall a>i, \\
\displaystyle \lim_{\lambda_{i-1}' \to +\infty} \op{split} Z_a|_{\lambda_{i-1}=\lambda_{i-1}'}=\lim _{\lambda_a' \rightarrow+\infty}(Z^{+}|_{\lambda_a=\lambda_a'},Z^{-}|_{\lambda_a=\lambda_a'})  \quad \forall a<i
\end{array}
\right\},
\end{equation}
and there is a ``face map''
\begin{gather} \label{eq-constructing-background-part2}
    F_{0,k,i}:  \mathfrak{X}_0^i(k) \to \mathfrak{X}_0^{\op{back},i}(k),\\
    Y \mapsto (\lim_{\lambda_1' \rightarrow \infty}  Y|_{\lambda_1=\lambda_1'}, \dots,\widehat{\lim_{\lambda_i' \rightarrow \infty} Y|_{\lambda_i=\lambda_i'}},\dots, \lim_{\lambda_k' \rightarrow \infty} Y|_{\lambda_k=\lambda_k'}, \lim_{\lambda_i' \to \infty} \op{split}Y|_{\lambda_i=\lambda_i'}).\nonumber
\end{gather}
Given $\bm Z \in \mathfrak{X}^{\op{back}}_{\pm}(k)$ and $\bm Z' \in \mathfrak{X}^{\op{back},i}_{0}(k)$ we set $\mathfrak{X}_{ \pm}(k, \bm Z):=F_{\pm,k}^{-1}(\bm Z)$ and $\mathfrak{X}_{0}(k, \bm Z'):=F_{0,k,i}^{-1} (\bm Z')$.

We now assemble the above building blocks into spaces $\mathfrak{X}_{\op{sw}}(\ell)$, $\ell\geq 1$, of \emph{$\ell$-switching perturbation data} and $\mathfrak{X}^{\op{back}}_{\op{sw}}(\ell)$ of \emph{background $\ell$-switching perturbation data}: Given 
$\bm \tau=(\tau_1, \dots, \tau_\ell)$ and an ordering $\bm s$ on $[\ell]$, we set:
\begin{gather}\label{eqn: sw}
\mathfrak{X}_{\op{sw}}(\ell, \bm \tau, \bm s)=  \mathfrak{X}_-(\ell-1) \times \mathfrak{X}_0^1(\ell-1)\times\dots\times  \mathfrak{X}_0^{\ell-1}(\ell-1) \times \mathfrak{X}_+(\ell-1),\\
\mathfrak{X}_{\op{sw}}(\ell)=\prod_{\bm \tau, \bm s}  \mathfrak{X}_{\op{sw}}(\ell, \bm \tau, \bm s),\\
\label{eqn: sw2}
    \mathfrak{X}^{\op{back}}_{\op{sw}}(\ell, \bm \tau, \bm s)= \mathfrak{X}^{\op{back}}_-(\ell-1) \times \mathfrak{X}_0^{\op{back},1}(\ell-1)\times\dots\times  \mathfrak{X}_0^{\op{back},\ell-1}(\ell-1) \times \mathfrak{X}^{\op{back}}_+(\ell-1),\\
    \mathfrak{X}^{\op{back}}_{\op{sw}}(\ell)=\prod_{\bm \tau, \bm s} \mathfrak{X}^{\op{back}}_{\op{sw}}(\ell, \bm \tau, \bm s).
\end{gather}
When $\ell=1$ there is no background perturbation data. We remark that the right-hand sides of \eqref{eqn: sw} and \eqref{eqn: sw2} do not depend on $\bm\tau$ and $\bm s$.  

\begin{remark}
    An element of $\mathfrak{X}_{\op{sw}}(\ell, \bm \tau, \bm s)$ is to be viewed as an assignment of time-dependent vector fields
$$(-\infty,0]\to \mathfrak{X},~ [0,\lambda_1]\to \mathfrak{X},~ \dots ~,[0,\lambda_{\ell-1}]\to \mathfrak{X}, ~[0,+\infty)\to \mathfrak{X}$$
to each $(\lambda_1,\dots,\lambda_{\ell-1})\in \mathcal{D}_{\ell-1}$.
\end{remark}

Given $\bm Z_{\bm \tau, \bm s}=(\bm Z^-, \bm Z_1^0, \dots, \bm Z_{\ell-1}^0, \bm Z^+) \in \mathfrak{X}^{\op{back}}_{\op{sw}}(\ell, \bm \tau, \bm s)$ we define
the space of {\em $\ell$-switching data consistent with background data $\bm Z_{\bm \tau, \bm s}$}:
\begin{equation}
    \mathfrak{X}_{\op{sw}}(\ell, \bm \tau, \bm s, \bm Z_{\bm \tau, \bm s})=\mathfrak{X}_-(\ell-1, \bm Z^-) \times \mathfrak{X}_0^1(\ell-1, \bm Z^0_1) \times \dots \times \mathfrak{X}_0^{\ell-1}(\ell-1, \bm Z^0_{\ell-1}) \times \mathfrak{X}_+(\ell-1, \bm Z^+).
\end{equation}
For later use we also define the following:
\begin{gather}
    \label{perturbation-neg-semi-ray-back}
    \mathfrak{X}_-(\ell, \bm \tau, \bm s, \bm Z_{\bm \tau, \bm s})=\mathfrak{X}_-(\ell-1, \bm Z^-) \times \mathfrak{X}_0^1(\ell-1, \bm Z^0_1) \times \dots \times \mathfrak{X}_0^{\ell-1}(\ell-1, \bm Z^0_{\ell-1}),\\
    \label{perturbation-pos-semi-ray-back}
    \mathfrak{X}_+(\ell, \bm \tau, \bm s, \bm Z_{\bm \tau, \bm s})=\mathfrak{X}_0^1(\ell-1, \bm Z^0_1) \times \dots \times \mathfrak{X}_0^{\ell-1}(\ell-1, \bm Z^0_{\ell-1}) \times \mathfrak{X}_+(\ell-1, \bm Z^+),\\ 
    \label{perturbation-segment-back}
    \mathfrak{X}_0(\ell-1, \bm \tau, \bm s, \bm Z_{\bm \tau, \bm s})=\mathfrak{X}_0^1(\ell-1, \bm Z^0_1) \times \dots \times \mathfrak{X}_0^{\ell-1}(\ell-1, \bm Z^0_{\ell-1}).
\end{gather}
Similarly, for a given tuple $\bm Z=(\bm Z_{\bm \tau, \bm s})_{\bm \tau, \bm s} \in \mathfrak{X}^{\op{back}}_{\op{sw}}(\ell)$, we define 
\begin{equation}
    \mathfrak{X}_{\op{sw}}(\ell, \bm Z)=\prod_{\bm \tau, \bm s}\mathfrak{X}_{\op{sw}}(\ell, \bm \tau, \bm s, \bm Z_{\bm \tau, \bm s}).
\end{equation}

We say that $\bm Y^\ell= (\bm Y^\ell_{\bm \tau, \bm s}) \in \mathfrak{X}_{\op{sw}}(\ell)$\footnote{For a while we will make the dependency of $\bm Y$ and $\bm Z$ on $\ell$ explicit.} is \emph{symmetric}
if
\begin{equation}\label{eq-perturbation-symmetric}
    \bm Y^\ell_{\bm \tau, \bm s}= \bm Y^\ell_{\bm \tau, \bm s'} \text{ for any two orderings } \bm s, \bm s',
    \end{equation}
and \emph{commutative} if   
    \begin{equation}\label{eq-perturbation-switching}
    \bm Y^\ell_{\bm \tau, \bm s}|_{l_i=0}=\bm Y^\ell_{\bm \tau^{i,1}, \bm s^i}|_{l_i=0} =\bm Y^\ell_{\bm \tau^{i,2}, \bm s^i}|_{l_i=0}\text{ for all }i=1, \dots, \ell-1,
 \end{equation}
where $\bm s^i$ is obtained from $\bm s$ by transposing $s_{i}$ and $s_{i+1}$ and $\bm \tau^{i,1}$ and $\bm \tau^{i,2}$ are obtained from $\bm \tau$ by modifying $\tau_i$ and $\tau_{i+1}$ using the following rules:
\begin{gather}
    \tau_i^{i,1}=\tau_i^{i,2}=\tau_{i+1} \text{ and } \tau_{i+1}^{i,1}=\tau_{i+1}^{i,2}=\tau_i \text{ if } |\tau_i \cap \tau_{i+1}| \neq 1;\label{eqL data-commutativity-rule-1} \\
    \text{if }\tau_i=\{a, b\}\text{ and }\tau_{i+1}=\{b, c\},\text{ then } \tau_i^{i,1}=\{a,c\}, \tau_{i+1}^{i,1}=\{a,b\}, \tau_i^{i,2}=\{b,c\}, \tau_{i+1}^{i,2}=\{a,c\}. \label{eq: data-commutativity-rule-2}
\end{gather}
\eqref{eq: data-commutativity-rule-2} is related to the commutativity property of the switching map as explained in Proposition~\ref{prop: switching-properties}.

As in \eqref{eq: defn-MFLS-parameters} we define $\tilde W_\ell(\bm \gamma, \bm \gamma'; \mathfrak{X}_{\op{sw}}(\ell, \bm \tau, \bm s, \bm Z_{\bm \tau, \bm s}^\ell), \bm \tau, \bm s)$
and let $\mathcal{M}_{\ell}(\bm \gamma, \bm \gamma'; \bm Y^\ell_{\bm \tau, \bm s}, \bm \tau, \bm s)$ be the fiber over $\bm Y^\ell_{\bm \tau, \bm s} \in \mathfrak{X}_{\op{sw}}(\ell, \bm \tau,\bm s, \bm Z^\ell_{\bm \tau, \bm s})$ of the natural projection (note that we are slightly abusing notation here). We say that the perturbation data $\bm Y^\ell_{\bm \tau, \bm s}$ is {\em regular} if all the spaces $\mathcal{M}_\ell(\bm \gamma, \bm \gamma', \bm N; \bm Y^\ell_{\bm\tau,\bm s},\bm \tau,\bm s)$ are smooth manifolds for all choices of $\bm N$ as in Appendix~\ref{appendix: transversality}.
\begin{figure}[h]
    \centering
    \includegraphics[width=5cm]{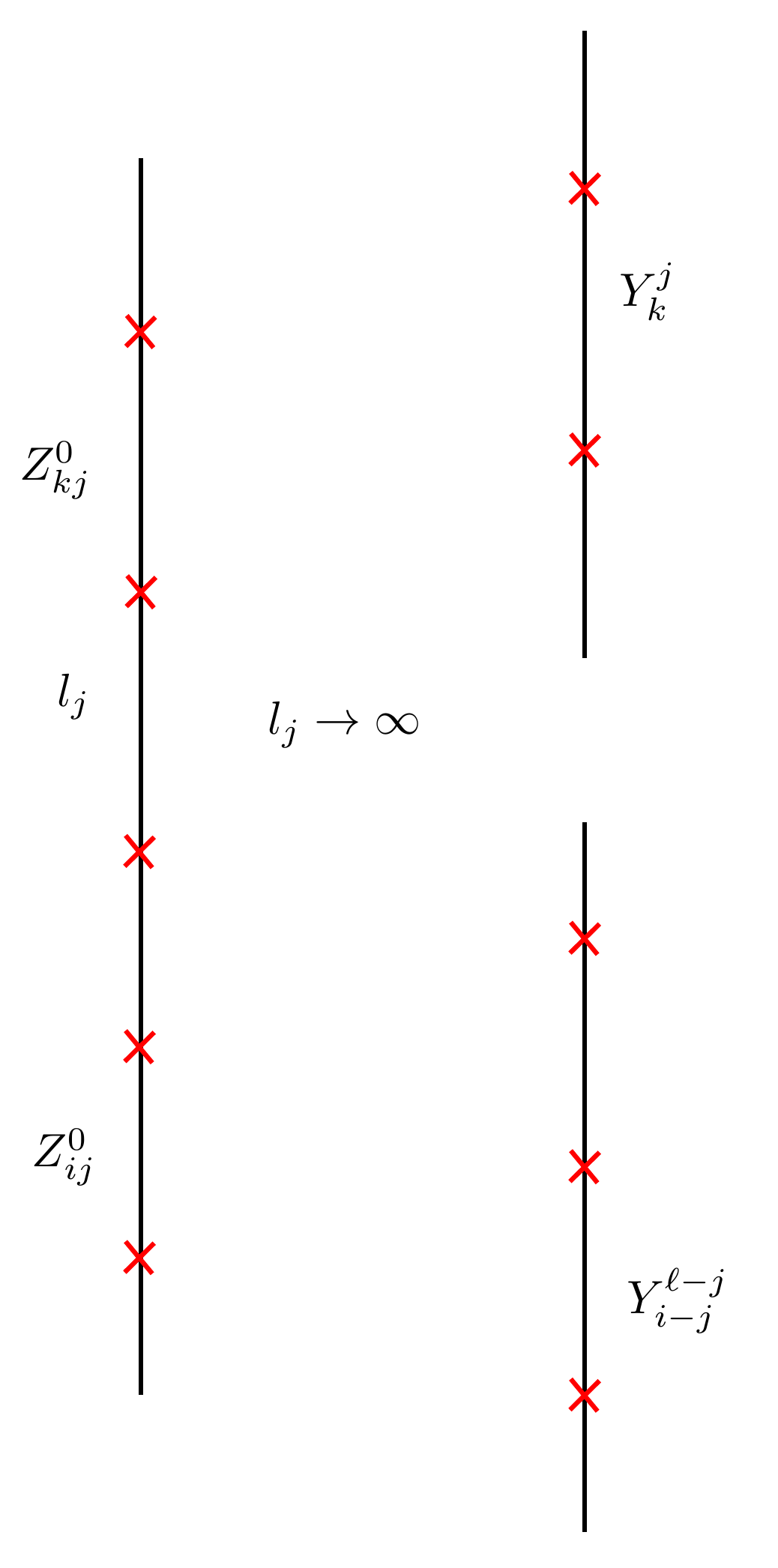}
    \caption{On the left we schematically show the domain of an MFLS with $\ell$ switchings, with switching markers denoted by crosses. When the $j$th length $l_j$ goes to $\infty$, we prescribe background data dictated by the data $\bm Y^j$ and $\bm Y^{\ell-j}$ picked for moduli spaces of MFLS with $j$ and $\ell-j$ switchings, respectively. Explicitly, for $i>j$ we have $Z^0_{ij}=Y^{\ell-j}_{i-j}$ and for $k<j$ we have $Z^0_{kj}=Y^j_k$.}
    \label{fig: data-consistency}
\end{figure}

\begin{definition}\label{defn-universal-data}
    A \emph{universal switching data} is a collection $\bm Y=(\bm Y^\ell)_{\ell \ge 1}$ of $\ell$-switching data $\bm Y^\ell\in \mathfrak{X}_{\op{sw}}(\ell)$ for all integers $\ell \ge 1$ such that the following hold:
    \begin{enumerate}
        \item For each $\ell$ the data $\bm Y^\ell$ is symmetric, commutative and regular.
        \item The data is \emph{consistent}, i.e., $\bm Y^\ell_{\bm \tau, \bm s} \in \mathfrak{X}_{\op{sw}}(\ell, \bm Z_{\bm \tau, \bm s}^{< \ell})$, where $\bm Z_{\bm \tau, \bm s}^{<\ell}$ is obtained from the collection $(\bm Y^1, \dots, \bm Y^{\ell-1}$) as follows:
        Let us write $\bm Z_{\bm \tau, \bm s}^{< \ell}=(\bm Z^-, \bm Z_1^0, \dots, \bm Z_{\ell-1}^0, \bm Z^+)$, where 
        $$\bm Z^{\pm}=(Z^{\pm}_1, \dots, Z^{\pm}_{\ell-1}) \quad \mbox{ and } \quad\bm Z^0_i=(Z^0_{i1}, \dots, Z^{0}_{i(i-1)}, Z_{ii}^+, Z_{ii}^-, Z^{0}_{i(i+1)}, \dots,  Z^0_{i(\ell-1)}).$$ Then the vector 
        $$(Z^-_{\ell'}, Z^0_{1\ell'}, \dots, Z^0_{(\ell'-1)\ell'}, Z^+_{\ell'\ell'}, Z^-_{\ell'\ell'}, Z^0_{(\ell'+1)\ell'}, \dots, Z^0_{(\ell-1)\ell'}, Z^+_{\ell'})$$
        is equal to $(\bm Y^{\ell'}, \bm Y^{\ell-\ell'})$ for each $\ell'$ such that $1 \le \ell' \le \ell-1$; see Figure~\ref{fig: data-consistency}.
    \end{enumerate}
\end{definition}

The following result is a generalization of Theorem~\ref{theorem: transversality-for-MFLS} and asserts the existence of regular $\ell$-switching data consistent with a given background data.

\begin{lemma}\label{lemma-regularity-with-background}
Given $\bm Z^\ell \in \mathfrak{X}^{\op{back}}_{\op{sw}}(\ell)$, for a generic $\bm Y^\ell \in \mathfrak{X}_{\op{sw}}(\ell, \bm Z^\ell)$:
    \be
    \item The spaces $\mathcal{M}_{\ell}(\bm \gamma, \bm \gamma'; \bm Y^\ell_{\bm \tau, \bm s}, \bm \tau, \bm s)$ are smooth manifolds with corners of dimension
    \begin{equation}
        \operatorname{dim}\mathcal{M}_{\ell}(\bm \gamma, \bm \gamma'; \bm Y^\ell_{\bm\tau, \bm s}, \bm \tau, \bm s)=\operatorname{ind}(\bm \gamma)-\operatorname{ind}(\bm \gamma')-(n-2)\ell-1.
    \end{equation}
    \item Given any collection of submanifolds $\bm N= (N_1, \dots, N_r)$ associated with a given tuple $\bm\tau$ and a partition of $(k_1, \dots, k_r)$ of $\ell$  as in Theorem~\ref{theorem: transversality-for-MFLS}, the space $\mathcal{M}_\ell(\bm \gamma, \bm \gamma', \bm N; \bm Y^\ell_{\bm\tau,\bm s},\bm \tau,\bm s)$ is a smooth manifold of dimension at most
    \begin{equation}\label{eq-dimension-deep-strata}
        \op{dim}\mathcal{M}_\ell(\bm \gamma, \bm \gamma', \bm N; \bm Y^\ell_{\bm \tau,\bm s}, \bm \tau, \bm s)=\op{ind}(\bm \gamma)-\op{ind}(\bm \gamma')-(n-1)\ell+r-1,
    \end{equation}
    where $k=\op{max}(k_1, \dots, k_r)$.
    \ee
\end{lemma}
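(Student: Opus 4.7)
The plan is to adapt the Sard--Smale argument from Theorem~\ref{theorem: transversality-for-MFLS} to the constrained parameter space $\mathfrak{X}_{\op{sw}}(\ell, \bm\tau, \bm s, \bm Z^\ell_{\bm\tau, \bm s})$. The only new issue is that perturbation data is required to agree with prescribed background data along every infinity face of $\mathcal{D}_{\ell-1}$. My first step is to observe that $\mathfrak{X}_{\op{sw}}(\ell, \bm\tau, \bm s, \bm Z^\ell_{\bm\tau,\bm s})$ is an affine Banach manifold modeled on the separable Banach subspace of $\mathfrak{X}_{\op{sw}}(\ell, \bm\tau, \bm s)$ whose elements have vanishing face limits at every $\lambda_j = +\infty$, together with vanishing split limits for the $\mathfrak{X}_0^j$-factors; this is immediate from the definitions~\eqref{back pm}--\eqref{back 0} and the face maps~\eqref{eq-constructing-background}--\eqref{eq-constructing-background-part2}. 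With this in place, I will construct the universal moduli space as an iterated fiber product exactly as in~\eqref{eq: defn-MFLS-parameters}, now fibered over the constrained space.

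The core step is to prove that the linearized universal section is surjective at every zero. The key observation is that any solution $(\bm\theta, \bm\Gamma, \bm Y)$ in the moduli space has \emph{finite} intermediate lengths $l_1, \dots, l_{\ell-1}$, so the corresponding parameter point lies in $\op{int}(\mathcal{D}_{\ell-1})$. I can therefore multiply any desired variation by a smooth cutoff $\chi(\lambda)$ supported in a compact neighborhood of $(l_1, \dots, l_{\ell-1})$; the product automatically vanishes near every infinity face and lies in the model vector space of $\mathfrak{X}_{\op{sw}}(\ell, \bm\tau, \bm s, \bm Z^\ell_{\bm\tau,\bm s})$. Combined with the $C_{\bm\varepsilon}$-section construction of Lemma~\ref{lemma: vector field}, the local surjectivity arguments from Lemmas~\ref{lemma: unstable-to-diagonal} and~\ref{lemma: finite-trajectories-two-immersions} then transfer verbatim, and the standard Sard--Smale theorem yields (1) together with the Fredholm index computation from the proof of Theorem~\ref{theorem: transversality-for-MFLS}(1).

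Part (2) will be handled by running the same argument with the $\Delta_{I_{s_k}}^{i_kj_k}$ replaced by the deeper strata $N_k$ in the stratification of $\overline\Delta^{\bm\tau_{k_j}}$ discussed at the end of Section~\ref{section: switching map}. The regularity drop from $C^{m-1}$ to $C^{m-k}$ on the $N_k$-factors and the extra codimension coming from the stratification are already accounted for in the proof of Theorem~\ref{theorem: transversality-for-MFLS}(2), and repeating that index computation with the consistency constraint gives~\eqref{eq-dimension-deep-strata}.

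The main obstacle is the surjectivity verification, i.e.\ ensuring that imposing the background condition does not destroy transversality. The cutoff trick resolves this cleanly because the finite intermediate lengths $l_j$ stay away from the infinity faces and it is precisely this interior freedom that the unconstrained transversality proof exploited. A subtlety to keep in mind is the splitting condition for $\mathfrak{X}_0^j$ at $\lambda_j = +\infty$ appearing in the definition of $\mathfrak{X}_0^{\op{back},i}(k)$: cutoff variations respect it trivially since they are supported away from the face, but one should explicitly note this when passing to the deeper strata in (2).
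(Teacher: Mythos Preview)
Your proposal is correct and follows essentially the same approach as the paper. The paper's proof is very brief: it simply notes that the argument of Theorem~\ref{theorem: transversality-for-MFLS} goes through unchanged once one observes that $\mathfrak{X}_{\op{sw}}(\ell, \bm\tau, \bm s, \bm Z^\ell_{\bm\tau,\bm s})$ is a closed nonempty affine subset of $\mathfrak{X}_{\op{sw}}(\ell, \bm\tau, \bm s)$, and defers further details to \cite[Lemma 5.1]{Mescher2018}. Your cutoff argument exploiting the finiteness of the intermediate lengths $l_1,\dots,l_{\ell-1}$ is exactly the mechanism that makes this work and is left implicit in the paper's one-paragraph proof.
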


\begin{proof}
    The proof is similar to that of Theorem~\ref{theorem: transversality-for-MFLS}.  The main difference is that, for each pair $(\bm \tau, \bm s)$, instead of looking for a residual subset of $\mathfrak{X}_{\op{sw}}(\ell, \bm \tau, \bm s)$, we now have to find a residual subset of the closed nonempty affine subset $\mathfrak{X}_{\op{sw}}(\ell, \bm \tau, \bm s, \bm Z_{\bm \tau, \bm s}^\ell)\subset \mathfrak{X}_{\op{sw}}(\ell, \bm \tau, \bm s)$. The proof of Theorem~\ref{theorem: transversality-for-MFLS} remains essentially unaffected; see \cite[Lemma 5.1]{Mescher2018} for more details.  
\end{proof}

\begin{theorem}\label{thm: existence of universal switching data}
There exists a universal switching data $\bm Y=(\bm Y^\ell)_{\ell \ge 1}$.
\end{theorem}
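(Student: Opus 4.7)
The plan is to proceed by induction on $\ell$, constructing $\bm Y^\ell$ from the previously chosen $\bm Y^1,\dots,\bm Y^{\ell-1}$ so as to satisfy simultaneously (i) the consistency prescription of Definition~\ref{defn-universal-data}(2), (ii) the symmetry relation~\eqref{eq-perturbation-symmetric}, (iii) the commutativity relations~\eqref{eq-perturbation-switching}, and (iv) regularity in the sense of Lemma~\ref{lemma-regularity-with-background}.

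For the base case $\ell=1$ there is no background data, the symmetry condition is vacuous as $S_1$ is trivial, and the commutativity condition is vacuous because there are no adjacent switches. So we just need $\bm Y^1_{\bm\tau}\in \mathfrak{X}_-\times \mathfrak{X}_+$ to be regular for every pair $\bm\tau=\{\tau_1\}$, which is a generic condition by Theorem~\ref{theorem: transversality-for-MFLS}.

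For the inductive step, assume $\bm Y^1,\dots,\bm Y^{\ell-1}$ have been chosen. First I would assemble, for every pair $(\bm\tau,\bm s)$ with $|\bm\tau|=\ell$, the background tuple $\bm Z_{\bm\tau,\bm s}^{<\ell}\in \mathfrak{X}^{\op{back}}_{\op{sw}}(\ell,\bm\tau,\bm s)$ by the formula of Definition~\ref{defn-universal-data}(2). The first thing to check is that this tuple actually lies in $\mathfrak{X}^{\op{back}}_{\op{sw}}(\ell,\bm\tau,\bm s)$, i.e., that the iterated limits $\lim_{\lambda_a\to\infty}\lim_{\lambda_b\to\infty}$ appearing in~\eqref{back pm} and~\eqref{back 0} are independent of the order in which they are taken. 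This reduces, after unwinding, to the consistency of the lower-level data $\bm Y^{\ell'}$ ($\ell'<\ell$) together with the fact that each $\bm Y^{\ell'}$ was itself consistent with its own background $\bm Z^{<\ell'}$; this is a purely combinatorial verification along the lines of \cite[Section~2]{Mescher2018}.

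Next I would cut out the affine subspace
\[
\mathfrak{X}^{\op{sym},\op{comm}}_{\op{sw}}(\ell,\bm Z^{<\ell}) \subset \mathfrak{X}_{\op{sw}}(\ell, \bm Z^{<\ell})
\]
of those $\bm Y^\ell=(\bm Y^\ell_{\bm\tau,\bm s})$ that satisfy both~\eqref{eq-perturbation-symmetric} and~\eqref{eq-perturbation-switching}. Non-emptiness is the key compatibility check: the symmetry identity across reorderings $\bm s\leftrightarrow \bm s'$ and the commutativity identities at $l_i=0$ must be compatible with the already-fixed boundary values $\bm Z^{<\ell}_{\bm\tau,\bm s}$ at $\lambda_j=+\infty$. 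Symmetry compatibility is clear because $\bm Y^{\ell'}$ is itself symmetric by induction, so the assembled background tuple is symmetric under reorderings. Commutativity compatibility uses the commutativity already in the $\bm Y^{\ell'}$ together with the combinatorial rules~\eqref{eqL data-commutativity-rule-1}--\eqref{eq: data-commutativity-rule-2}, which were designed precisely to match the identities in Proposition~\ref{prop: switching-properties}. Non-emptiness then follows by exhibiting one element: for example, start from any consistent extension of $\bm Z^{<\ell}$ produced by cutoff functions and average over the finite symmetry/commutativity groupoid.

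Finally, within the affine space $\mathfrak{X}^{\op{sym},\op{comm}}_{\op{sw}}(\ell,\bm Z^{<\ell})$, I would invoke the parametric transversality machinery of Lemma~\ref{lemma-regularity-with-background} to obtain a residual subset of $\bm Y^\ell$ for which all the moduli spaces $\mathcal{M}_\ell(\bm\gamma,\bm\gamma',\bm N;\bm Y^\ell_{\bm\tau,\bm s},\bm\tau,\bm s)$ are cut out transversely. The hard part here, and the main technical obstacle I anticipate, is verifying that the universal moduli space still admits a surjective linearization when restricted to the symmetric/commutative slice: one must check that the perturbations one may still vary are rich enough to produce any prescribed $C_{\bm\varepsilon}$-section along a smooth non-critical multipath (as in Lemma~\ref{lemma: vector field}). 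This holds because symmetry in $\bm s$ is only a finite-group averaging constraint and the commutativity conditions are imposed only on the codimension-one loci $l_i=0$; away from these codimension-one strata, the perturbation can be chosen completely freely, and the transversality argument of Theorem~\ref{theorem: transversality-for-MFLS} applies verbatim inside the symmetric/commutative subspace. A generic choice in this residual set then yields $\bm Y^\ell$ satisfying all four properties, completing the induction.
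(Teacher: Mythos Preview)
Your proposal is correct and follows essentially the same inductive approach as the paper's proof, which is extremely terse: the paper simply says that for $\ell>1$ one assembles the background $\bm Z^{<\ell}$ from the previously chosen $(\bm Y^{\ell'},\bm Y^{\ell-\ell'})$ via Definition~\ref{defn-universal-data}(2) and then invokes Lemma~\ref{lemma-regularity-with-background} to find a regular $\bm Y^\ell$. You fill in several details the paper leaves implicit---the well-definedness of the background tuple, the compatibility of the symmetry and commutativity constraints with the prescribed boundary values, and why the Sard--Smale argument survives restriction to the symmetric/commutative affine slice---but the skeleton of the argument is identical.
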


\begin{proof}
     This is proved by induction. For $\ell=1$, there is no background and we take a regular $\bm Y^1\in \mathfrak{X}_{\op{sw}}(1)=\prod_{\bm\tau,\bm s}\mathfrak{X}_-\times \mathfrak{X}_+$. For $\ell>1$, $\bm Z^{\ell}$ is obtained from pairs $(\bm Y^{\ell'}, \bm Y^{\ell-\ell'})$ for all $\ell'$ such that $1 \le \ell' \le \ell-1$, subject to the consistency condition of Definition~\ref{defn-universal-data}(2).  Lemma~\ref{lemma-regularity-with-background} guarantees the existence of a regular $\bm Z^{\ell}$.
\end{proof}

\subsection{Compactness for MFLS}

\begin{theorem}\label{thm: compactness-for-MFLS} 
Let $\bm Y=(\bm Y^\ell)_{\ell \ge 1}$ be a choice of universal switching data.
\begin{enumerate}
    \item If $\op{ind}(\bm \gamma')-\op{ind}(\bm \gamma)-(n-2)\ell-1=0$, then $\mathcal{M}_{\ell}(\bm \gamma, \bm \gamma'; \bm Y^{\ell}_{\bm \tau, \bm s}, \bm \tau, \bm s)$ consists of a finite number of points.
    \item If $\op{ind}(\bm \gamma')-\op{ind}(\bm \gamma)-(n-2)\ell-1=1$, then $\mathcal{M}_{\ell}(\bm \gamma, \bm \gamma'; \bm Y^{\ell}_{\bm \tau, \bm s}, \bm \tau, \bm s)$ admits a compactification $\overline{\mathcal{M}}_{\ell}(\bm \gamma, \bm \gamma'; \bm Y^{\ell}_{\bm \tau, \bm s}, \bm \tau, \bm s)$ whose boundary is covered by the following strata:
    \begin{gather}
        \mathcal{M}_{\ell'}(\bm \gamma, \bm \gamma''; \bm Y^{\ell'}_{\bm \tau_1, \bm s_1}, \bm \tau_1, \bm s_1) \times \mathcal{M}_{\ell-\ell'}(\bm \gamma'', \bm \gamma'; \bm Y^{\ell-\ell'}_{\bm \tau_2, \bm s_2}, \bm \tau_1, \bm s_2), \quad 0 \le \ell' \le \ell,\label{eq-compactness-breaking} \\
        \mathcal{M}_{\ell}(\bm \gamma, \bm \gamma', \Delta^{\tau_i, \tau_{i+1}}_{I_{s_i}, I_{s_{i+1}}}; \bm Y^{\ell}_{\bm \tau, \bm s}, \bm \tau, \bm s), \label{eq-compactness-two-switches}
    \end{gather}
    where $(\bm \tau_1, \bm \tau_2)=\bm \tau$ with $|\bm \tau_1|=\ell'$; $\bm s_1$ and $\bm s_2$ are permutations of $\{1, \dots, \ell'\}$ and $\{1, \dots, \ell-\ell'\}$, such that for $\bm s=(\bm s_1', \bm s_2')$ with $|\bm s_1'|=\ell'$ the ordering of $\bm s_1$ (resp.\ $\bm s_2$) coincides with that of $\bm s_1'$ (resp.\ $\bm s_2'$); and $\bm \gamma''$ is a critical point such that 
    $$\op{ind}(\bm \gamma)-\op{ind}(\bm \gamma'') -(n-2)\ell'-1=\op{ind}(\bm \gamma'')-\op{ind}(\bm \gamma') -(n-2)(\ell-\ell')-1=0.$$
    \end{enumerate}
\end{theorem}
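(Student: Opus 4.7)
The proof will follow the standard Morse-theoretic compactness strategy: first an a priori bound, then extraction of a convergent subsequence via Palais-Smale, and finally a classification of possible degenerations according to the behavior of the interior lengths $l_1,\dots,l_{\ell-1}$. My first step is to observe that the action $\mathcal{A}_L$ is strictly decreasing along each gradient segment $\Gamma_i$ (by (PG1)) and is preserved under every switching map $sw^{ij}_{I_{s_k}}$, since switching reshuffles strands at a point of equal evaluation. Combined with the boundedness of $\mathcal{A}_L$ on its critical set (Theorem~\ref{thm: summary of morse in infinite dimensions}(i)), the total action along any MFLS from $\bm\gamma$ to $\bm\gamma'$ is uniformly bounded, which yields uniform $W^{1,2}$-control. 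Palais-Smale then gives, after passing to a subsequence and modulo reparametrizations, $C^1_{\op{loc}}$-convergence on each open subinterval of $(-\infty,0]$, $[0,l_i]$, or $[0,+\infty)$.

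Next, for a sequence $\bm\Gamma^{(k)}$, I would classify the degenerations by the limits $l_i^{(k)} \in [0,+\infty]$. If all limits are finite and positive, the limit is again an element of $\mathcal{M}_\ell$. If some $l_i^{(k)} \to +\infty$, the standard breaking argument (splitting at the unique intermediate critical point that captures the missing action) yields a limit in $\mathcal{M}_{\ell'} \times \mathcal{M}_{\ell-\ell'}$; this is where the consistency condition of Definition~\ref{defn-universal-data}(2) becomes essential, as it ensures that the restriction of $\bm Y^\ell_{\bm\tau,\bm s}$ to the face $l_i = +\infty$ coincides with the product data $(\bm Y^{\ell'}_{\bm\tau_1,\bm s_1}, \bm Y^{\ell-\ell'}_{\bm\tau_2,\bm s_2})$, so the limit lies in the stratum~\eqref{eq-compactness-breaking}. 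If some $l_i^{(k)} \to 0$, the $i$th and $(i+1)$st switchings collide at a single point of $[0,1]^\ell$, and the common endpoint simultaneously satisfies both incidence conditions, placing the limit in the deep stratum $\mathcal{M}_\ell(\bm\gamma,\bm\gamma',\Delta^{\tau_i,\tau_{i+1}}_{I_{s_i},I_{s_{i+1}}};\bm Y^\ell_{\bm\tau,\bm s},\bm\tau,\bm s)$ appearing in~\eqref{eq-compactness-two-switches}. The commutativity property~\eqref{eq-perturbation-switching} guarantees that the perturbation data on this deeper stratum is well-defined regardless of which of the two orderings one starts from. Other candidate degenerations (e.g.\ escape of a $\theta_{s_i}$ to $\{0,1\}$, or collision with an endpoint) are ruled out by the reparametrization scheme $\nu_{[\ell]}$, which is the identity on neighborhoods of the endpoints.

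Both parts of the theorem then follow by a dimension count using Lemma~\ref{lemma-regularity-with-background}. Each of the two classes of degenerations is codimension one in the parameter space, so in part (1), where the expected dimension is zero, both candidate strata have negative expected dimension and are therefore empty by regularity; $\mathcal{M}_\ell$ is thus already compact and, being a smooth $0$-manifold, is finite. In part (2), where the expected dimension is one, both classes have expected dimension zero and are thus finite collections of points, and together they exhaust the boundary of the compactification. The main obstacle I expect is establishing that the limit at a collision $l_i \to 0$ retains enough jet regularity along the collided strand to meet the double-incidence condition cut out by $\Delta^{\tau_i,\tau_{i+1}}_{I_{s_i},I_{s_{i+1}}}$: this requires promoting $C^1_{\op{loc}}$-convergence to higher regularity at the relevant points, which I would handle by combining local elliptic estimates for the gradient flow on $\Omega^{m,2}(M,\bm q)$ with the $C^{m-2}$-smoothness of the deeper boundary strata of $\overline{\Delta}_{I_1,I_2}$ recorded in Lemma~\ref{lemma: two-intersection-moments}.
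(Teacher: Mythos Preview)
Your outline is essentially the same as the paper's proof: classify degenerations by whether some $l_i\to+\infty$ (giving~\eqref{eq-compactness-breaking}) or some $l_i\to 0$ (giving~\eqref{eq-compactness-two-switches}), invoke the consistency of the universal data for the former, and use regularity plus dimension counting (Lemma~\ref{lemma-regularity-with-background}) to eliminate everything else.

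A few points to tighten. First, you do not address the case where \emph{several} $l_i$ tend to $0$ simultaneously; the paper handles this by observing that the limit then lies in a stratum $\mathcal{M}_\ell(\bm\gamma,\bm\gamma',\bm N;\dots)$ with $r\le \ell-2$, which has negative dimension by~\eqref{eq-dimension-deep-strata} and is therefore empty. Second, in the single $l_i\to 0$ case you should also rule out the sub-case $\theta_{s_i}=\theta_{s_{i+1}}$, again by a dimension count: that deeper stratum of $\overline{\Delta}^{\tau_i,\tau_{i+1}}_{I_{s_i},I_{s_{i+1}}}$ has codimension at least $2n+1$ (Lemma~\ref{lemma: two-intersection-moments}(1)) and hence the corresponding moduli space is empty even when $\dim\mathcal{M}_\ell=1$. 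Third, your stated reason for $\theta_{s_i}\not\to\{0,1\}$ is not the right one: the reparametrization $\nu_{[\ell]}$ plays no role here; rather, at $\theta=0,1$ the strands are at the distinct basepoints $q_i,q_j$ (or $q'_{\sigma(i)},q'_{\sigma(j)}$), so $\Delta^{ij}_I$ simply does not meet those faces. Finally, the claim that $\mathcal{A}_L$ is exactly preserved by $sw^{ij}_{I_{s_k}}$ is not correct, since the reparametrization by $\nu_{[\ell]}$ changes the speed; what you actually need is only a uniform bound on the action change, and the paper sidesteps this entirely by noting that $\bm Y$ agrees with the fixed pseudogradient $X$ away from the switches and citing \cite[Theorem 3.5]{abbondandolo-majer2006} directly.
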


\begin{proof}
    If a sequence $(\bm \Gamma^j)_{j \ge 1}$ of MFLS's in $\mathcal{M}_{\ell}(\bm \gamma, \bm \gamma'; \bm Y^{\ell}_{\bm \tau, \bm s}, \bm \tau, \bm s)$ does not converge to an MFLS in this space, then either one of the finite trajectories $\bm \Gamma^j_i$ converges to a pair of semi-infinite trajectories or a semi-infinite trajectory converges to a pair consisting of an infinite trajectory and a semi-infinite trajectory; this corresponds to $l_i \to \infty$ for some $i$ such that $1 \le i \le \ell$ and is described by~\eqref{eq-compactness-breaking}.  Indeed the convergence takes place by \cite[Theorem 3.5]{abbondandolo-majer2006}, since $X$ is a pseudogradient and $\bm Y=X$ on the regions of the MFLS that are far away from the switches. In the case of (1), such convergence does not occur by regularity and index considerations; in the case of (2) such convergence is possible. 

    The other possibility is that $l_i \to 0$ for at least one index $i$ satisfying $1 \le i \le \ell-1$. If there is only one such index, then the limit $\bm\Gamma^\infty$ is in \eqref{eq-compactness-two-switches} (see Figure~\ref{fig: coordinates}), unless $\theta_{s_i}=\theta_{s_{i+1}}$. The latter does not occur because such a stratum has dimension $-1$; hence by regularity the stratum must be empty. On the other hand, if more than one $l_i$ converges to $0$, then such $\bm \Gamma^\infty$ belongs to a manifold of type $\mathcal{M}_\ell(\bm \gamma, \bm \gamma', \bm N; \bm Y, \bm \tau, \bm s)$, which has negative dimension by ~\eqref{eq-dimension-deep-strata}; hence such $\bm \Gamma^\infty$ cannot be in the limit.
\end{proof}

\begin{remark}
    Observe that Theorem~\ref{thm: compactness-for-MFLS}(2) is not entirely accurate, because for moduli spaces of MFLS with $\ell$ switchings we used switching maps that depend on all $\ell$ time coordinates $\theta$ and even when the trajectory breaks those switching maps still depend on all $\ell$ coordinates, although we claim that it would depend on a fewer number of coordinates. To fix this issue one should also make switching maps depend on length coordinates $l_i$, in particular on distances towards nearest switchings along the trajectory. We leave this to the attentive reader.
\end{remark}

\begin{corollary} \label{cor: extend the moduli space}
If $\op{ind}(\bm \gamma')-\op{ind}(\bm \gamma)-(n-2)\ell-1=1$, then
$$\bigcup_{\bm \tau, \bm s} \mathcal{M}_{\ell}(\bm \gamma, \bm \gamma'; \bm Y^\ell_{\bm \tau, \bm s}, \bm \tau, \bm s)$$
admits a compactification with boundary consisting only of strata of the form~\eqref{eq-compactness-breaking}.
\end{corollary}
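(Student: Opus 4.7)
The plan is to show that the codimension-$1$ strata of type \eqref{eq-compactness-two-switches} are not ``true'' boundaries of the union $\bigsqcup_{\bm\tau,\bm s}\overline{\mathcal{M}}_\ell(\bm\gamma,\bm\gamma';\bm Y^\ell_{\bm\tau,\bm s},\bm\tau,\bm s)$, but rather appear as common interior transition loci between two compactified moduli spaces associated with different pairs $(\bm\tau,\bm s)$. Once all such transition loci are glued, only the stratum \eqref{eq-compactness-breaking} remains on the boundary.

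Fix a stratum $\mathcal{M}_\ell(\bm\gamma,\bm\gamma',\Delta^{\tau_i,\tau_{i+1}}_{I_{s_i},I_{s_{i+1}}};\bm Y^\ell_{\bm\tau,\bm s},\bm\tau,\bm s)$, which by regularity is a finite set of points obtained as the limit $l_i\to 0^+$. For each such limit $\bm\Gamma^\infty$, we construct a continuation into a second compactified moduli space $\overline{\mathcal{M}}_\ell(\bm\gamma,\bm\gamma';\bm Y^\ell_{\bm\tau',\bm s^i},\bm\tau',\bm s^i)$, where $\bm s^i$ is $\bm s$ with $s_i$ and $s_{i+1}$ transposed, and $\bm\tau'$ is chosen according to the commutation rules \eqref{eqL data-commutativity-rule-1}--\eqref{eq: data-commutativity-rule-2}: if $|\tau_i\cap\tau_{i+1}|\neq 1$, then $\bm\tau'=\bm\tau^{i,1}=\bm\tau^{i,2}$; if $|\tau_i\cap\tau_{i+1}|=1$, then $\bm\tau'=\bm\tau^{i,1}$ when $\theta_{s_i}<\theta_{s_{i+1}}$ and $\bm\tau'=\bm\tau^{i,2}$ when $\theta_{s_i}>\theta_{s_{i+1}}$. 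The case $\theta_{s_i}=\theta_{s_{i+1}}$ does not arise by the dimension/regularity argument already given in the proof of Theorem~\ref{thm: compactness-for-MFLS}.

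The key verification is that such a continuation exists, is unique, and identifies a neighborhood of $\bm\Gamma^\infty$ in $\overline{\mathcal{M}}_\ell(\bm\tau,\bm s)$ with a neighborhood in $\overline{\mathcal{M}}_\ell(\bm\tau',\bm s^i)$. This relies on two inputs: (i) the commutativity property \eqref{eq-perturbation-switching} of the universal perturbation data, which gives $\bm Y^\ell_{\bm\tau,\bm s}|_{l_i=0}=\bm Y^\ell_{\bm\tau',\bm s^i}|_{l_i=0}$, so the underlying trajectory and switching pattern at $l_i=0$ are the same on both sides; and (ii) the commutativity relations for switching maps in Proposition~\ref{prop: switching-properties}, which match the algebraic effect of ``doing switch $s_i$ then $s_{i+1}$'' on one side with ``doing switch $s_{i+1}$ then $s_i$ (with possibly transformed pairs)'' on the other. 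Combined with a standard gluing argument for $l_i\in(-\varepsilon,\varepsilon)$ via the implicit function theorem (using regularity of $\bm Y^\ell$), this produces a smooth $1$-manifold chart around $\bm\Gamma^\infty$ inside the disjoint union.

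The main obstacle I anticipate is carrying out this local gluing rigorously: one must verify the implicit function setup near the collision locus, where the switching map is only $C^{m-1}$ and the length parameter $l_i$ passes through zero. However, the hard work has already been done in setting up the universal data $\bm Y$ in Theorem~\ref{thm: existence of universal switching data} precisely so that the perturbations on the two sides agree on their common face, and in Proposition~\ref{prop: switching-properties}(2) where the algebraic commutation is established. Performing this identification for every collision stratum and taking the quotient of $\bigsqcup_{\bm\tau,\bm s}\overline{\mathcal{M}}_\ell(\bm\tau,\bm s)$ by these identifications yields the desired compactification whose boundary is covered only by strata of the form \eqref{eq-compactness-breaking}.
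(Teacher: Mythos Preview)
Your proposal is correct and follows essentially the same approach as the paper: identify each collision stratum \eqref{eq-compactness-two-switches} as a common boundary face of two compactifications $\overline{\mathcal{M}}_\ell(\bm\tau,\bm s)$ and $\overline{\mathcal{M}}_\ell(\bm\tau',\bm s^i)$, using the commutativity of the perturbation data \eqref{eq-perturbation-switching} and the switching-map commutation relations of Proposition~\ref{prop: switching-properties}, then glue. The paper's proof is terser---it simply asserts the identification of points without invoking an implicit function/gluing argument---but adds one point you omit: after gluing, one must check that the orientations on the two sides are compatible so that the identified locus is genuinely interior (not a fold); the paper handles this by noting that passing from $(\bm\tau,\bm s)$ to $(\bm\tau',\bm s^i)$ swaps the two length coordinates $l_i,l_{i+1}$ in the orientation convention~\eqref{eq: tree-orientation}.
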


\begin{proof}
    The main observation here is that for any tuple $\bm \tau$ and any ordering $\bm s$, every point of 
    $$\mathcal{M}_{\ell}(\bm \gamma, \bm \gamma', \Delta^{\tau_i, \tau_{i+1}}_{I_{s_i}, I_{s_{i+1}}}; \bm Y^{\ell}_{\bm \tau, \bm s}, \bm \tau, \bm s)$$ 
    is also a point of 
    $$\mathcal{M}_{\ell}(\bm \gamma, \bm \gamma', \Delta^{\tau^{i,j}_i, \tau^{i,j}_{i+1}}_{I_{s^i_i}, I_{s^i_{i+1}}}; \bm Y^{\ell}_{\bm \tau^{i,j}, \bm s}, \bm \tau^{i,j}, \bm s^i),$$ 
    where $j=1,2$, depending on whether $\theta_{s_i}$ is greater than $\theta_{s_{i+1}}$; for the notation see \eqref{eq: data-commutativity-rule-2}. This holds, since the data $\bm Y$ is commutative and the switching maps satisfy the commutativity relations by Proposition~\ref{eq : commutativity-switching}, which allows us to switch $s_i$ and $s_{i+1}$. Note that by definition the switching maps associated to the remaining $\theta$-coordinates do not change. Therefore, we may identify these points of compactifications $\overline{\mathcal{M}}_{\ell}(\bm \gamma, \bm \gamma'; \bm Y^{\ell}_{\bm \tau, \bm s}, \bm \tau, \bm s)$ and make them into interior points of the union of such compactifications, while respecting the orientations. This last observation is clearly seen from our orientation conventions since essentially while switching the order of $s_i$ and $s_{i+1}$ in our convention~\eqref{eq: tree-orientation} on the orientations of these spaces we only change the order of two length coordinates $l_i$ and $l_{i+1}$.
\end{proof}

\begin{figure}[ht]
	\begin{overpic}[scale=.35]{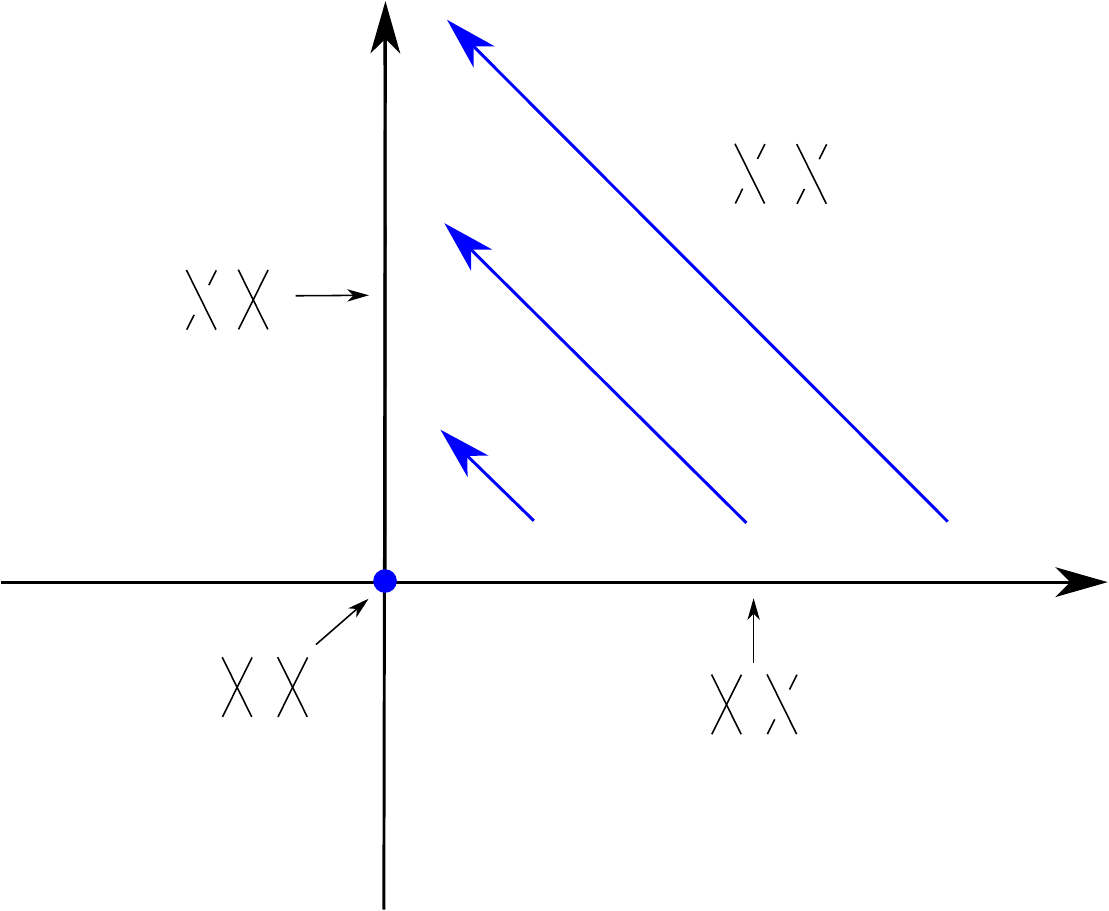}
    \put(70,53){\tiny \color{blue} $\bs Y^\ell_{\bm\tau,\bm s}$}
    \put(75,33){\tiny $\Delta_{I_{s_i}}^{\tau_i}$} \put(36,68){\tiny $\Delta_{I_{s_{i+1}}}^{\tau_{i+1}}$} \put(13, 34){\tiny $\Delta^{\tau_i, \tau_{i+1}}_{I_{s_i}, I_{s_{i+1}}}$}
	\end{overpic}
	\caption{Here we schematically depict portions of MFLS in $\mathcal{M}_{\ell}(\bm \gamma, \bm \gamma'; \bm Y^\ell_{\bm \tau, \bm s}, \bm \tau, \bm s)$ perturbed using $\bm Y^\ell_{\bm\tau,\bm s}$. The dot in the middle is a zero-length trajectory that passes through the deeper stratum $\Delta^{\tau_i, \tau_{i+1}}_{I_{s_i}, I_{s_{i+1}}}$.}
	\label{fig: coordinates}
\end{figure}

\subsection{Compactness for MFTS}

We introduce the various parameter spaces that ensure the compatibility on the boundary of moduli spaces of MFTS. Given $(T,\bm\tau) \in \mathcal{T}^{\ell, \ell'}_d$ and $\bm s\in S(T,\bm\tau)$ we define:
\begin{gather}\label{eqn: Xsw for trees}
    \mathfrak{X}_{\op{sw}} (T, \bm \tau, \bm s)=\prod_{i=1}^d\Big( \mathfrak{X}_-(|E_{\op{int}}(T)|+\ell) \times \prod_{j=1}^{\ell_{e_i}} \mathfrak{X}_0 (|E_{\op{int}}(T)|+\ell) \Big) \times \prod_{e \in E_{\op{int}}(T)} \prod_{j=0}^{\ell_{e}} \mathfrak{X}_0 (|E_{\op{int}}(T)|+\ell) \\
    \times \prod_{j=1}^{\ell_{e_0}} \mathfrak{X}_0 (|E_{\op{int}}(T)|+\ell) \times \mathfrak{X}_-(|E_{\op{int}}(T)|+\ell).\nonumber
\end{gather}
{\em For simplicity we have suppressed the superscripts $j$ in summands of the form $\mathfrak{X}_0^j(k)$; compare with \eqref{eqn: sw}.} The three groups of factors of $\mathfrak{X}_{\op{sw}} (T, \bm \tau, \bm s)$ correspond to the incoming edges $e_1,\dots, e_d$, the interior edges $E_{\op{int}}(T)$, and the outgoing edge $e_0$.  Observe that the right-hand side of \eqref{eqn: Xsw for trees} does not depend on $\ell'$.
We write $\bm\tau_e$ for the restriction of $\bm \tau$ to $e\in E(T)$ and $\bm s_e$ for the ordering of the switches on $e$ induced by $\bm s$.

The space $\mathfrak{X}_{\op{sw}}^{\op{back}}(T, \bm \tau, \bm s)$ is defined using \eqref{eqn: Xsw for trees} by adding superscripts ``back'' to each factor to indicate background perturbation spaces; these background perturbation spaces are defined in a manner analogous to \eqref{back pm} and \eqref{back 0}. Given an element $\bm Z=((\bm Z_{\tau_e, \bm s_{e}})_{e \in E(T)}) \in \mathfrak{X}_{\op{sw}}^{\op{back}}(T, \bm \tau, \bm s)$, we define the subspace $\mathfrak{X}_{\op{sw}}(T, \bm \tau, \bm s, \bm Z)$ of $\mathfrak{X}_{\op{sw}}(T,\bm \tau, \bm s)$ to be equal to
$$\prod_{i=1}^d \mathfrak{X}_-(\ell_{e_i}, \bm \tau_{e_i}, \bm s_{e_i}, \bm Z_{\tau_{e_i}, \bm s_{e_i}}) \times \prod_{e \in E_{\op{int}}(T)} \mathfrak{X}_0(\ell_{e}, \bm \tau_{e}, \bm s_{e}, \bm Z_{\tau_{e}, \bm s_{e}}) \times \mathfrak{X}_+(\ell_{e_0}, \bm \tau_{e_0}, \bm s_{e_0}, \bm Z_{\tau_{e_0}, \bm s_{e_0}}),$$
where the notation that we use here is analogous to \eqref{perturbation-neg-semi-ray-back}--\eqref{perturbation-segment-back}.

Given a collection of edges $F \subset E_{\op{int}}(T)$, there is a tree with switching data $(T/F,\bm \tau')$, obtained from $(T,\bm \tau)$ by contracting all the edges in $F$, following the prescription given in Section~\ref{subsection: the A infty structure}. Moreover, given $\bm\tau$ and $\bm s\in S(T,\bm\tau)$, there are induced $\bm\tau_{T/F}$ and $\bm s_{T/F}\in S(T/F,\bm\tau_{T/F})$. There exist forgetful maps
\begin{gather}
    \pi_F \colon \mathfrak{X}_{\op{sw}}(T,\bm \tau, \bm s) \to \mathfrak{X}_{\op{sw}}(T/F,\bm \tau_{T/F}, \bm s_{T/F}), \quad 
    \pi_F \colon \mathfrak{X}_{\op{sw}}^{\op{back}}(T, \bm \tau, \bm s) \to \mathfrak{X}_{\op{sw}}^{\op{back}}(T/F, \bm \tau_{T/F}, \bm s_{T/F}),
\end{gather}
which simply ``forget'' the coordinates corresponding to contracted edges; see \cite[Definition 4.13]{Mescher2018} for more details.

Given $d\geq 2$, a {\em $d$-perturbation data $\bm Y^d$} is a collection of $\bm Y_{T, \bm \tau, \bm s} \in \mathfrak{X}_{\op{sw}}(T, \bm \tau, \bm s)$ for all $\ell,\ell' \in \Z_{\ge 0}$, $(T,\bm\tau) \in \mathcal{T}_d^{\ell,\ell'}$, and $\bm s\in (T,\bm\tau)$. We say that $\bm Y^d$ is \emph{regular} if all the spaces 
$$\mathcal{M}_T(\vv{\bm \gamma}, \bm \gamma_0, (\bm N_e)_{e \in E(T)}, (\bm N_v)_{v \in V_{\op{int}}(T)};\bm Y_{T, \bm \tau, \bm s}, \bm\tau, \bm s)$$
are transversely cut out smooth manifolds; \emph{symmetric} if 
\begin{equation}\label{eq: MFTS0data-symmetric}
    \bm Y_{T, \bm \tau, \bm s}=\bm Y_{T, \bm \tau, \bm s'}
\end{equation}
for any $(T,\bm\tau) \in \mathcal{T}_d^{\ell, \ell'}$ and $\bm s,\bm s'\in S(T,\bm\tau)$;  and {\em commutative} if all the relations akin to~\eqref{eq-perturbation-switching} corresponding to ``collapsing'' two interior markers are satisfied.

Given $d\geq 2$, a {\em background $d$-perturbation data $\bm Z^d$} is a collection $\bm Z_{T, \bm \tau, \bm s} \in \mathfrak{X}^{\op{back}}_{\op{sw}}(T, \bm \tau, \bm s)$ for all $\ell,\ell' \in \Z_{\ge 0}$, $(T,\bm\tau) \in \mathcal{T}_{d}^{\ell, \ell'}$ with at least one finite-length segment (i.e., $|E_{\op{int}}(T)|+\ell>0$), and $\bm s\in S(T,\bm\tau)$. Here a {\em segment} is (the closure of) a component of an edge cut along the switching locus. The data $\bm Z^d$ is \emph{admissible} if $\pi_F(\bm Z_{T,\bm \tau,\bm s})=\bm Z_{T/F,\bm \tau_{T/F},\bm s_{T/F}}$ for all $(T,\bm\tau) \in \mathcal{T}_d^{\ell, \ell'}$, $\bm s\in S(T,\bm\tau)$, and $F \subset E_{\op{int}}(T)$.

Given a tuple $\bm Y^{<d}:=(\bm Y^2, \dots, \bm Y^{d-1})$, we may associate an admissible background $d$-perturbation data $\bm Z^{<d}_{T, \bm \tau, \bm s} \in \mathfrak{X}^{\op{back}}_{\op{sw}}(T, \bm \tau, \bm s)$ for each $(T,\bm\tau) \in \mathcal{T}_d^{\ell, \ell'}$ and each $\bm s$. By cutting the $i$th segment of an edge $e \in E(T)$ into two half-infinite segments, one splits $(T,\bm\tau)$ into $(T_1^{e,i},\bm \tau_{T_1}) \in \mathcal{T}_{k}^{\ell_1, \ell'_1}$ and $(T_2^{e,i},\bm\tau_{T_2}) \in \mathcal{T}_{d-k+1}^{\ell-\ell_1, \ell'-\ell'_1}$, and we simply use $\bm Y_{T^{e, i}_1, \bm \tau_{T_1}, \bm s_{T_1}}$ and $\bm Y_{T^{e,i}_2,\bm \tau_{T_2}, \bm s_{T_2}}$ to assign such background data; see the discussion prior to \cite[Definition 5.15]{Mescher2018} for more details. Here $\bm\tau_{T_1}$ is the restriction of $\bm \tau$ to $T_1^{e,i}$ and $\bm s_{T_1}$ is the ordering of the $\ell_1+\ell_1'$ switches of $T_1^{e,i}$ induced from $\bm s$.

\begin{definition} \label{defn: tree perturbation data} $\mbox{}$
\begin{enumerate}[(a)]
    \item A {\em tree perturbation data} is a family $\bm Y = (\bm Y^d)_{d \ge 2}$ of $d$-perturbation data $\bm Y^d$. The data $\bm Y$ is {\em regular (resp.\ symmetric, commutative)} if each $\bm Y^d$ is regular (resp.\ symmetric, commutative).
    \item A tree perturbation data $\bm Y = (\bm Y^d)_{d \ge 2}$ is {\em consistent} if for all $d \geq 2$, $(T,\bm \tau) \in \mathcal{T}^{\ell, \ell'}_d$, and $\bm s\in S(T,\bm\tau)$, $\bm Y_{T,\bm \tau,\bm s} \in \mathfrak{X}_{\op{sw}}(T, \bm Z^{<d}_{T, \bm \tau, \bm s})$, where $\bm Z^{<d}_{T, \bm \tau, \bm s} \in \mathfrak{X}^{\op{back}}_{\op{sw}}(T,\bm \tau,\bm s)$ is the background $d$-perturbation associated to $\bm Y^{<d}$ as described above.
    \item A tree perturbation data $\bm Y = (\bm Y^d)_{d \ge 2}$ is {\em universal} if it is symmetric, commutative, regular, consistent, and the following hold:
    \be[(1)]
    \item  for all $(T,\bm\tau)\in \mathcal{T}_d^{\ell, \ell'}$, $\bm s\in S(T,\bm\tau)$, and $F \subset E_{\op{int}}(T)$
    \begin{equation}
        \pi_F(\bm Y_{T,\bm\tau,\bm s})=\bm Y_{{T/F},\bm\tau_{T/F},\bm s_{T/F}};
    \end{equation} 
    \item for all $v \in V_{\op{int}}(T)$ and $i>0$, if $\bm\tau'$ is obtained from $\bm \tau$ by moving the last term of $\bm \tau_{e^v_i}$ to the first term of $\bm\tau_{v}$ (see Definitions~\ref{defn: metric ribbon tree} and \ref{defn: switching data tau} for the notation) and $\bm s'\in S(T,\bm\tau')$ is the induced ordering, then 
    \begin{equation}\label{eq: MFTS-data-vertex-compatibility-incoming-edge}
        \bm Y_{T, \bm \tau, \bm s}|_{ \{l^{e_i^v}_{\ell_{e_i^v}}=0\}}= \bm Y_{T,\bm \tau', \bm s'};
    \end{equation}
    \item
    for all $v \in V_{\op{int}}(T)$, if $\bm\tau'$ is obtained from $\bm\tau$ by moving the first term of $\bm\tau_{e^v_0}$ to the last term of $\bm\tau_v$ and $\bm s'\in S(T,\bm\tau')$ is the induced ordering, then
     \begin{equation}\label{eq: MFTS-data-vertex-compatibility-outgoing-edge}
        \bm Y_{T, \bm \tau, \bm s}|_{\{l^{e_0^v}_{\ell_{e_0^v}}=0\}}= \bm Y_{T, \bm s',\bm\tau'}.
    \end{equation}  
    \ee
\end{enumerate}
\end{definition}

\begin{theorem}\label{thm: MFTS-compactness}
A universal tree perturbation data $\bm Y = (\bm Y^d)_{d \ge 2}$ exists. Moreover, given $\vv{\bm \gamma}$, $\bm\gamma_0$, $\ell\in \Z_{\geq 0}$, and a binary tree $(T,\bm\tau) \in \mathcal{T}_d^{\ell}$ such that $\op{ind}(\vv{\bm \gamma},\bm\gamma_0, T,\bm s)=1$, each moduli space $\mathcal{M}_T(\vv{\bm \gamma}, \bm \gamma_0; \bm Y_{T, \bm \tau, \bm s},\bm \tau, \bm s)$ admits a compactification with boundary $\partial \mathcal{M}_T(\vv{\bm \gamma}, \bm \gamma_0; \bm Y_{T, \bm \tau, \bm s},\bm \tau, \bm s)$ covered by the following strata:
    \begin{gather}
    \label{eq: MFTS-boundary-breaking} 
    \bigcup_{e \in E(T), 0 \le i \le \ell_e,\bm\gamma_0'} \mathcal{M}_{T_1^{e, i}} (\bm \gamma_{j+1}, \dots, \bm \gamma_{j+d'}, \bm \gamma_0'; \bm Y_{T_1^{e, i}, \bm \tau_{T_1}, \bm s_{T_1}}, \bm \tau_{T_1}, \bm s_{T_1}) \times ~~~~~~~~~ \\
    ~~~~~~~~~\mathcal{M}_{T_2^{e,i}}(\bm \gamma_1, \dots, \bm \gamma_j, \bm \gamma_0', \bm \gamma_{j+d'+1}, \dots, \bm \gamma_{d}, \bm \gamma_0;\bm Y_{T_2^{e, i}, \bm \tau_{T_2},\bm s_{T_2}},\bm \tau_{T_2} , \bm s_{T_2} ),\nonumber \\
    \bigcup_{e \in E(T), 1 \le i \le \ell_e-1}\mathcal{M}_T(\vv{\bm \gamma}, \bm \gamma_0, \Delta^{\tau_i^e, \tau_{i+1}^e}_{t_{s^e_i}, t_{s^e_{i+1}}}; \bm Y_{T, \bm \tau, \bm s},\bm\tau, \bm s), \label{eq: MFTS-boundary-internal-segment}\\
    \bigcup_{e \in E_{\op{int}}(T) \op{ s.t. } \ell_e=0} \mathcal{M}_{T/e}(\vv{\bm \gamma}, \bm \gamma_0; \bm Y_{T/e,\bm\tau_{T/e}, \bm s_{T/e}}, \bm\tau_{T/e}, \bm s_{T/e}), \label{eq: MFTS-boundary-edge-contraction} \\
    \bigcup_{v \in V_{\op{int}}, 0 \le i \le |v|-1}\mathcal{M}_{T}(\vv{\bm \gamma}, \bm \gamma_0; \bm Y_{T,\bm\tau', \bm s'},\bm\tau',\bm s'),\label{eq: MFTS-boundary-switch-to-vertex} 
    \end{gather}
where $(T,\bm\tau',\bm s')$ in \eqref{eq: MFTS-boundary-switch-to-vertex} is given by Definition~\eqref{defn: tree perturbation data}(c)(2) or (3).
Moreover, the points of~\eqref{eq: MFTS-boundary-switch-to-vertex} are interior points of the compactification.
\end{theorem}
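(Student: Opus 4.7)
The plan is to construct the universal tree perturbation data by induction on $d$, and then analyze the codimension-one degenerations of $\mathcal{M}_T(\vv{\bm \gamma},\bm\gamma_0;\bm Y_{T,\bm\tau,\bm s},\bm\tau,\bm s)$ stratum by stratum. For the inductive construction I would proceed exactly in parallel with Theorem~\ref{thm: existence of universal switching data}: assuming a consistent, symmetric, commutative, regular $\bm Y^{<d}=(\bm Y^2,\dots,\bm Y^{d-1})$ has been chosen, assemble the associated background data $\bm Z^{<d}_{T,\bm\tau,\bm s}$ by splitting each tree along each segment as prescribed in Definition~\ref{defn: tree perturbation data}(c)(1) and applying the inductive hypothesis on the two pieces $(T_1^{e,i},\bm\tau_{T_1})$ and $(T_2^{e,i},\bm\tau_{T_2})$. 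Admissibility of $\bm Z^{<d}_{T,\bm\tau,\bm s}$ under edge forgetting follows from Property (1) of the inductive hypothesis. The compatibility relations \eqref{eq: MFTS-data-vertex-compatibility-incoming-edge} and \eqref{eq: MFTS-data-vertex-compatibility-outgoing-edge} impose further linear restrictions on $\bm Y^d_{T,\bm\tau,\bm s}$ along the loci $\{l^{e^v_i}_{\ell_{e^v_i}}=0\}$, but these restrictions only involve data already chosen on trees of the same $d$ with fewer edge-switches or on $T$ with a modified $\bm\tau$; so they can be enforced by a straightforward ``boundary first'' extension argument (choose data on the higher-codimensional loci first, then extend to the whole parameter space using the cutoff techniques of Section~\ref{section: appendix-compactness}). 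Once the affine constraint set is nonempty, applying Lemma~\ref{lemma-regularity-with-background} in the style of Theorem~\ref{theorem: transversality for MFTS} on each $(T,\bm\tau,\bm s)$ yields regularity on a residual subset. Symmetry over $\bm s$ is imposed by averaging (or by picking once on a fundamental domain in $S(T,\bm\tau)$ and extending); commutativity at interior markers is imposed in the same fashion as in \eqref{eq-perturbation-switching}, using that switching maps commute on the relevant multidiagonals by Proposition~\ref{prop: switching-properties}.

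For the boundary analysis, let $\bm\Gamma^j$ be a sequence in a $1$-dimensional $\mathcal{M}_T(\vv{\bm\gamma},\bm\gamma_0;\bm Y_{T,\bm\tau,\bm s},\bm\tau,\bm s)$ that fails to converge in the interior. The possible non-compactnesses are: (i) a length parameter $l^e_i\to\infty$ on some segment, (ii) two switching times $\theta$ colliding on a single edge, (iii) an interior edge length going to $0$, and (iv) a switching marker approaching an endpoint of an edge, i.e., approaching a vertex. Case (i) is the standard gradient breaking and, because the background data was chosen consistently by our inductive construction, produces exactly the fiber products of smaller MFTS moduli spaces in \eqref{eq: MFTS-boundary-breaking}. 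Case (ii) is the analogue of the $l_i\to 0$ phenomenon treated in Theorem~\ref{thm: compactness-for-MFLS}: two consecutive switches collapse to a single moment and the limit lies in \eqref{eq: MFTS-boundary-internal-segment}; triple collisions and synchronous collisions have virtual dimension at most $-1$ by \eqref{eq: tree-space-deep-strata-dim}, so by regularity they do not occur. Case (iii) produces \eqref{eq: MFTS-boundary-edge-contraction} by definition of the contracted tree $T/e$. Finally, case (iv) produces \eqref{eq: MFTS-boundary-switch-to-vertex}.

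The main content then is to verify that the stratum \eqref{eq: MFTS-boundary-switch-to-vertex} consists of \emph{interior} points of the compactified moduli space, so that they cancel out in the differential/associativity computations and do not contribute to the topological boundary. Here I would argue as follows: a point $\bm\Gamma$ where a switch has moved onto a vertex $v$ from an incoming (resp.\ outgoing) edge appears as a one-sided limit when one approaches $l^{e^v_i}_{\ell_{e^v_i}}=0$ (resp.\ $l^{e^v_0}_{\ell_{e^v_0}}=0$) from the edge-switch side. The identities \eqref{eq: MFTS-data-vertex-compatibility-incoming-edge} and \eqref{eq: MFTS-data-vertex-compatibility-outgoing-edge} built into the universal data assert that the restriction of $\bm Y_{T,\bm\tau,\bm s}$ to the locus $l=0$ agrees with $\bm Y_{T,\bm\tau',\bm s'}$, where $\bm\tau'$ has the switch reassigned to the vertex. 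Combined with the weighted concatenation formula~\eqref{eqn: concatenation at interior vertices} and the commutativity of switching with concatenation (Lemma~\ref{lemma: concatenation-commutativity}), the two $\ell=0$ limits from the two sides parametrize the same geometric configuration, so the two local half-pieces of moduli space glue smoothly through this locus. Hence the locus is interior, exactly as in the finite-dimensional argument of \cite[Lemma 5.19]{Mescher2018}.

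The principal obstacle I anticipate is the simultaneous enforcement, during the inductive construction of $\bm Y^d$, of all three compatibility constraints: the edge-forgetful condition~(1), the two vertex-merger conditions~\eqref{eq: MFTS-data-vertex-compatibility-incoming-edge}–\eqref{eq: MFTS-data-vertex-compatibility-outgoing-edge}, together with symmetry and commutativity. Showing that the intersection of these affine constraints is still a nonempty separable Banach submanifold (so that Sard-Smale yields a dense regular subset) requires a careful bookkeeping of corner strata of the parameter space $\mathfrak{X}_{\op{sw}}(T,\bm\tau,\bm s)$, analogous to but more intricate than the single-segment bookkeeping of~\eqref{back pm} and~\eqref{back 0}. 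In the finite-dimensional Morse-tree setting this bookkeeping was carried out by Mescher \cite{Mescher2018}, and the main technical point will be to verify that the $C_{\bm\varepsilon}$-norm setup of Appendix~\ref{appendix: transversality} continues to produce separable parameter spaces after these additional constraints are imposed; once that is in hand the induction closes in the same manner as in Theorem~\ref{thm: existence of universal switching data}.
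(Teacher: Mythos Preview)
Your proposal is correct and follows essentially the same approach as the paper: an inductive construction of the universal data paralleling Theorem~\ref{thm: existence of universal switching data} (with the finite-dimensional bookkeeping deferred to \cite{Mescher2018}), followed by a case analysis of the four degeneration types, with the switch-to-vertex stratum shown to be interior via the compatibility conditions \eqref{eq: MFTS-data-vertex-compatibility-incoming-edge}--\eqref{eq: MFTS-data-vertex-compatibility-outgoing-edge} and Lemma~\ref{lemma: concatenation-commutativity}. The paper adds one detail you leave implicit: when a switch passes through a vertex $v$ from the outgoing edge $e^v_0$, the limiting $t$-coordinate of the switch singles out a unique incoming edge $e^v_j$ (via the weight partition of $[0,1]$), which is what determines the ``other side'' of the gluing.
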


\begin{proof}
    The existence of a universal tree perturbation data is proved by an induction similar to the proof of Theorem~\ref{thm: compactness-for-MFLS}(1) and \cite[Lemma 5.16]{Mescher2018} and we leave details to the reader. 

    First, if one of the segments is ``stretched'', then the sequence of such trajectories approaches an element of \eqref{eq: MFTS-boundary-breaking} as in the proof of Theorem~\ref{thm: compactness-for-MFLS}(3). Note that if two or more such segments are ``stretched'', the limit belongs to a moduli space of negative dimension by the regularity and consistency of the data $\bm Y$, so this does not occur.

    Next suppose that the length of a finite-length segment $a$ converges to zero. By dimension reasons only one segment $a$ has length that goes to zero. If no endpoint of $a$ is in $V_{\op{int}}(T)$, then we have the boundary stratum \eqref{eq: MFTS-boundary-internal-segment} and the analysis of this case is completely analogous to that of \eqref{eq-compactness-two-switches} in Theorem~\ref{thm: compactness-for-MFLS}.  If both endpoints of $a$ are in $V_{\op{int}}(T)$, i.e., $a=e\in E(T)$, then we are in the case of \eqref{eq: MFTS-boundary-edge-contraction}.  The analysis of this is similar to the finite-dimensional case; see \cite[Theorem 5.17]{Mescher2018}.
    
    Finally assume that exactly one endpoint of $a$ is in $V_{\op{int}}(T)$.  Let $v$ be such a vertex and let $a\subset e_i^v$. If $i>0$ (resp.\ $i=0$), then the limit MFTS belongs to $\mathcal{M}_{T}(\vv{\bm \gamma}, \bm \gamma_0; \bm Y_{T,\bm\tau', \bm s'},\bm\tau',\bm s')$, where $(T,\bm\tau', \bm s')$ satisfies Definition~\eqref{defn: tree perturbation data}(c)(2) (resp.\ (c)(3)), and has dimension $0$ by Theorem~\ref{theorem: transversality for MFTS}. The switching point will pass through the vertex $v$ from the $i>0$ side to the $i=0$ side, or vice versa:  For example, if $i=0$, then the limit of the $t$-coordinate of the corresponding switching point belongs to the unique segment among the $|v|-1$ segments partitioning $[0,1]$ according to the weights at $v$; see Definition~\ref{def: MFTS}. Such a segment corresponds to a unique incoming edge $e^v_j$. Now, because $\bm Y$ satisfies \eqref{eq: MFTS-data-vertex-compatibility-incoming-edge} and \eqref{eq: MFTS-data-vertex-compatibility-outgoing-edge} and the weighted concatenation commutes with the switching map by Lemma~\ref{lemma: concatenation-commutativity}, this limit MFTS belongs to $\mathcal{M}_{T}(\vv{\bm \gamma}, \bm \gamma_0; \bm Y_{T,\bm \tau'',\bm s''},\bm \tau'', \bm s'')$ as in Definition~\eqref{defn: tree perturbation data}(c)(2) for $e_j^v$. This analysis also confirms the last claim assuming the appropriate gluing statement.
\end{proof}

\printbibliography

\end{document}